\title{A classifying space for Phillips' equivariant K-theory}
\author{Clément de Seguins Pazzis
\footnote{e-mail adress: dsp.prof@gmail.com}
\footnote{This work is part of the author's PhD thesis that he completed at the Institut Galil\'ee in Universit\'e Paris Nord,
99 avenue Jean-Baptiste Cl\'ement, 93430 Villetaneuse, FRANCE}}
\def\calE{\mathcal{E}}
\def\defterm{\textbf}
\def\N{\mathbb{N}}
\def\R{\mathbb{R}}
\def\C{\mathbb{C}}
\def\diff{\text{d}}
\def\calC{\mathcal{C}}
\def\calD{\mathcal{D}}
\newcommand{\Hom}{\operatorname{Hom}}
\newcommand{\vEc}{\operatorname{Vec}}
\newcommand{\VEct}{\operatorname{\mathbb{V}ect}}
\newcommand{\Func}{\operatorname{Func}}
\newcommand{\im}{\operatorname{Im}}
\newcommand{\ktimes}{\operatorname{\underset{k}{\times}}}
\newcommand{\Ob}{\operatorname{Ob}}
\newcommand{\Mor}{\operatorname{Hom}}
\newcommand{\In}{\operatorname{In}}
\newcommand{\Fin}{\operatorname{Fin}}
\newcommand{\Id}{\operatorname{Id}}
\newcommand{\id}{\operatorname{id}}
\newcommand{\Comp}{\operatorname{Comp}}
\newcommand{\Fred}{\operatorname{Fred}}
\newcommand{\Ker}{\operatorname{Ker}}
\newcommand{\sub}{\operatorname{sub}}
\newcommand{\Lat}{\operatorname{Lat}}
\newcommand{\Mod}{\operatorname{\text{-}mod}}
\newcommand{\smod}{\operatorname{\text{-}smod}}
\newcommand{\sframe}{\operatorname{\text{-}sframe}}
\newcommand{\sBdl}{\operatorname{\text{-}sBdl}}
\newcommand{\Sim}{\operatorname{GU}}
\newcommand{\colim}{\operatorname{colim}}
\newcommand{\Indlim}{\operatorname{\underset{\longrightarrow}{\colim}}}
\newcommand{\Shift}{\operatorname{Sh}}
\renewcommand{\setminus}{\smallsetminus}
\renewcommand{\epsilon}{\varepsilon}
\newcommand{\longleft}[1]{\;{\leftarrow%
    \count255=0 \loop \mathrel{\mkern-6mu}%
    \relbar\advance\count255 by1\ifnum\count255<#1\repeat}\;}
\newcommand{\longright}[1]{\;{\count255=0 \loop \relbar\mathrel{\mkern-6mu}%
    \advance\count255 by1\ifnum\count255<#1\repeat\rightarrow}\;}
\newcommand{\Right}[2]{\overset{#2}{\longright#1}}
\newcommand{\RIGHT}[3]{\mathrel{\mathop{\kern0pt\longright#1}
   \limits^{#2}_{#3}}}
\newcommand{\dRIGHT}[3]{\mathrel{%
   \mathop{\vcenter{\baselineskip=0pt\hbox{$\kern0pt\longright#1$}%
   \hbox{$\kern0pt\longright#1$}}}\limits^{#2}_{#3}}}
\newcommand{\LRIGHT}[3]{\mathrel{%
   \mathop{\vcenter{\baselineskip=0pt\hbox{$\kern0pt\longleft#1$}%
   \hbox{$\kern0pt\longright#1$}}}\limits^{#2}_{#3}}}
\theoremstyle{plain}
\newtheorem{theo}{Theorem}[section]
\newtheorem{prop}[theo]{Proposition}
\newtheorem{cor}[theo]{Corollary}
\newtheorem{lemme}[theo]{Lemma}
\newtheorem{conj}[theo]{Conjecture}
\theoremstyle{definition}
\newtheorem{Def}[theo]{Definition}
\theoremstyle{remark}
\begin{document}

\maketitle

\begin{abstract}
In a previous paper, we have constructed, for an arbitrary Lie group $G$ and any of the fields $F=\R$ or $\C$,
a good equivariant cohomology theory $KF_G^*(-)$ on the category of proper $G$-CW-complex
and have justified why it deserved the label ``equivariant K-theory". It was shown in particular how
this theory was a logical extension of the construction of Lück and Oliver for discrete groups
and coincided with Segal's classical K-theory when $G$ is a compact group and only finite $G$-CW-complexes
are considered. Here, we compare our new equivariant K-theory with that of N.C. Phillips:
it is shown how a natural transformation from ours to his may be constructed which
gives rises to an isomorphism when $G$ is second-countable and only finite proper $G$-CW-complexes are considered.
This solves the long-standing issue of the existence of a classifying space for Phillips' equivariant K-theory.
\end{abstract}

\section{Introduction}

\subsection{The problem}

In this paper, $G$ will denote a second-countable Lie group, $\mu$ a right Haar
measure on $G$, and $F$ one of the fields $\R$ or $\C$.
We wish to compare our equivariant K-theory \cite{Ktheo1}, which is defined on the category of proper $G$-CW-complexes, with
Phillips' equivariant K-theory $KF_G^{\text{Ph}}(-)$ which is defined on the category of proper locally
compact Hausdorff $G$-spaces (see \cite{Phillips}).

To do this, we will construct a natural transformation $\eta : KF_G(-) \longrightarrow KF_G^{\text{Ph}}(-)$
on the category of finite proper $G$-CW-complexes, such that the diagram
$$\xymatrix{
KF_G(-) \ar[rr]^{\eta} & & KF_G^{\text{Ph}}(-) \\
& \mathbb{K}F_G(-) \ar[ul]^{\gamma} \ar[ur]
}
$$
commutes (the natural transformation $\mathbb{K}F_G(-) \longrightarrow  KF_G^{\text{Ph}}(-)$ is the one defined by Phillips
p.40 of \cite{Phillips}).
Once we will have done so, it will be easy to check that $\eta$ is an isomorphism on coefficients
(i.e. for spaces of the type $G/H \times Y$ where $Y$ is a finite CW-complex with trivial action of $G$),
and it will easily follow that $\eta_X$ is an isomorphism for every finite proper $G$-CW-complex $X$.
Before explaining
the construction, we will start with a quick recollection of the definition of our version of equivariant K-theory.

\subsection{A review of equivariant K-theory for proper $G$-CW-complexes}

\subsubsection{$\Gamma$-spaces}

The simplicial category is denoted by $\Delta$ (cf.\ \cite{G-Z}).
Recall that the category $\Gamma$ (see \cite{Segal-cat}) has
the finite sets as objects, a morphism from $S$ to $T$ being a map from
$\mathcal{P}(S)$ to $\mathcal{P}(T)$ which preserves disjoint unions (with obvious composition of morphisms);
this is equivalent to having a map $f$ from $S$ to $\mathcal{P}(T)$
such that $f(s) \cap f(s') =\emptyset$ whenever $s \neq s'$.

For every $n \in \N$, we set $\mathbf{n}:=\{1,\dots,n\}$ and $[n]:=\{0,\dots,n\}$.
Recall the canonical functor $\Delta \rightarrow \Gamma$ obtained
by mapping $[n]$ to $\mathbf{n}$ and the morphism $\delta: [n] \rightarrow [m]$
to $$\begin{cases}
\mathbf{n} & \longrightarrow \mathcal{P}(\mathbf{m}) \\
k & \longmapsto \{j \in \N: \bigl\{\delta(k-1) < j \leq \delta(k)\bigr\}.
\end{cases}$$

By a $\Gamma$-space, we mean a
\emph{contravariant} functor $\underline{A}: \Gamma \rightarrow \text{CG}$
to the category of k-spaces
such that $\underline{A}(\mathbf{0})$ is a well-pointed contractible space.
The space $\underline{A}(\mathbf{1})$ is then simply denoted by $A$.
We say that $\underline{A}$ is a  \defterm{good} $\Gamma$-space when, in addition, for all
$n \in \mathbb{N}^*$, the continuous map $\underline{A}(\mathbf{n}) \rightarrow \underset{i=1}{\overset{n}{\prod}} A$, induced by all morphisms
$\mathbf{1} \rightarrow \mathbf{n}$ which map $1$ to $\{i\}$, is a homotopy equivalence.
From now on, when we talk of $\Gamma$-spaces, we will actually mean good $\Gamma$-spaces.

When $\underline{A}$ is a $\Gamma$-space, composition with the previously defined functor
$\Delta \rightarrow \Gamma$ yields a simplicial space, which we still write $\underline{A}$,
and we can take its thick geometric realization (as defined in appendix A of \cite{Segal-cat}), which we write $BA$.
Since $\underline{A}(\mathbf{0})$ is well-pointed and contractible, we have a map $A \rightarrow \Omega B\underline{A}$
that is ``canonical up to homotopy''. Recall that we have an H-space structure on
$A$ by composing the map $\underline{A}(\mathbf{2}) \rightarrow A$ induced by
$\begin{cases}
\{1\} & \rightarrow \mathcal{P}(\mathbf{2}) \\
1 & \mapsto \{1,2\}
\end{cases}$ and a homotopy inverse of the map $\underline{A}(\mathbf{2}) \rightarrow A \times A$ mentioned earlier.

Given a topological group $G$, a \defterm{$\Gamma-G$-space} is a contravariant functor
$\underline{A}: \Gamma \rightarrow \text{CG}_G$ such that:
\begin{enumerate}[(i)]
\item $\underline{A}(\mathbf{0})$ is equivariantly well-pointed and equivariantly contractible;
\item For any $n \in \N^*$, the canonical map $\underline{A}(\mathbf{n}) \rightarrow \underset{i=1}{\overset{n}{\prod}} A$
is an equivariant homotopy equivalence.
\end{enumerate}
When $\underline{A}$ is a $\Gamma-G$-space, we may define as before
a $G$-map $A \rightarrow \Omega BA$.

\subsubsection{k-categories}

\label{struct}If $\mathcal{C}$ is a small category, then:
\begin{itemize}
\item $\Ob(\mathcal{C})$ (resp. $\Mor(\mathcal{C})$) will denote its set of objects (resp. of morphisms).
\item The structural maps of $\calC$ i.e. the initial, final, identity and composition maps are respectively denoted by
$$\In_{\mathcal{C}}: \Mor(\mathcal{C}) \rightarrow \Ob(\mathcal{C}) \quad ; \quad
\Fin_{\mathcal{C}}: \Mor(\mathcal{C}) \rightarrow \Ob(\mathcal{C});$$
$$\Id_{\mathcal{C}}: \Ob(\mathcal{C}) \rightarrow \Mor(\mathcal{C}) \quad \text{and} \quad
\Comp_{\mathcal{C}}: \Mor(\mathcal{C}) \underset{\vartriangle}{\times} \Mor(\mathcal{C}) \rightarrow \Mor(\mathcal{C}).$$
\item The nerve of $\calC$ is denoted by $\mathcal{N}(\calC)$, whilst
$\mathcal{N}(\calC)_m$ will denote its $m$-th component for any $m\in \N$.
\end{itemize}
By a \defterm{k-category}, we mean a small category with k-space topologies on the sets
of objects and spaces, such that the structural maps induce continuous maps in the category of k-spaces.
To every topological category $\calC$, we assign a k-category whose space of objects and space of morphisms are
respectively $\Ob(\calC)_{(k)}$ and $\Mor(\calC)_{(k)}$.

To a k-category, we may assign its \emph{nerve} in the category of k-spaces,
and then take one of the two geometric realizations $\| \quad \|$ (the ``thick realization") or $| \quad |$
(the ``thin realization") of it in the category of k-spaces (see \cite{Segal-cat}).
When $\calC$ and $\calD$ are two k-categories, we may define another k-category, denoted by
$\Func(\calC,\calD)$, whose objects are the topological
functors from $\calC$ to $\calD$, and
whose morphisms are the continuous natural transformations between continuous functors from
$\calC$ to $\calD$. The structural maps are obvious, as are the topologies on the sets of objects and morphisms
(see \cite{Ktheo1} for more details).

Given a $k$-space $X$, we define a $k$-category $\calE X$ with $X$ as space of objects, $X \ktimes X$
as space of morphisms, and $(x,y)$ as the only morphism from $x$ to $y$ in $\calE X$.

The category of k-categories is denoted by $\text{kCat}$.

\subsubsection{Proper $G$-CW-complexes}

A $G$-space $X$ is called a $G$\defterm{-CW-complex} when it is obtained as the direct limit of a
sequence $(X_{(n)})_{n \in \mathbb{N}}$ of subspaces for which there exists, for every  $n \in \mathbb{N}$,
a set $I_n$, a family $(H_i)_{i \in I_n}$ of closed subgroups of $G$ and a push-out square
$$\begin{CD}
\underset{i \in I_n}{\coprod} (G/H_i) \times S^{n-1} @>>> \underset{i \in I_n}{\coprod} (G/H_i) \times D^n \\
@VVV @VVV \\
X_{(n-1)} @>>> X_{(n)}
\end{CD}$$ in the category of $G$-spaces
(where we have a trivial action of $G$ on both the $(n-1)$-sphere $S^{n-1}$ and the closed $n$-disk $D^n$), with the convention that
$X_{-1}=\emptyset$. The spaces $(G/H_i) \times \overset{\circ}{D^n}$
are called the equivariant cells (or $G$-cells) of $X$.
A $G$-CW-complex is \defterm{proper} when all its isotropy subgroups are compact,
i.e. all the groups $H_i$ in the preceding description are compact.

Relative $G$-CW-complexes are defined accordingly.
A \defterm{pointed proper $G$-CW-complex} is a relative $G$-CW-complex $(X,*)$, with $*$ a point, such that the $G$-space $X \setminus *$ is proper.
Notice that whenever $(X,A)$ is a relative $G$-CW-complex such that $X \setminus A$ is proper, the $G$-space $X/A$
inherits a natural structure of pointed proper $G$-CW-complex.

\subsubsection{The category of compactly-generated $G$-spaces}

Let $G$ be a topological group.
A \defterm{$G$-pointed k-space} consists of a $G$-space which is a k-space
together with a point in it which is fixed by the action of $G$.

The category $CG_G^{\bullet}$ is the one whose objects are the
$G$-pointed k-spaces and whose morphisms are the pointed $G$-maps.
The category $CG_G^{h\bullet}$ is the category with the same objects as
$CG_G^{\bullet}$, and whose morphisms are the equivariant pointed homotopy classes
of $G$-maps between objects (i.e. $CG_G^{h\bullet}$ it is the homotopy category of $CG_G^{\bullet})$.
Given two $G$-spaces (resp. two pointed $G$-spaces) $X$ and $Y$, we let $[X,Y]_G$ (resp. $[X,Y]_G^\bullet$)
denote the set of equivariant homotopy classes of $G$-maps (resp. pointed $G$-maps) from $X$ to $Y$.

A morphism $f: X \rightarrow Y$ is called a \defterm{$G$-weak equivalence}
when the restriction $f^H:X^H \rightarrow Y^H$
is a weak equivalence for every compact subgroup $H$ of $G$
(here, $X^H$ denotes the subspace $\bigl\{x \in X : \; \forall h \in H, \; h.x=x\bigr\}$ of $X$).
We finally define $W_G$ as the class of morphisms in $CG_G^{h\bullet}$
which have $G$-weak equivalences as representative maps.
We may then consider the category of fractions $CG_G^{h\bullet}[W_G^{-1}]$, with
its usual universal property. The crucial property in this paper is the following one:

\begin{prop}\label{pointedweakeq}
Let $Y \overset{f}{\rightarrow} Y'$ be a $G$-weak equivalence between
pointed $G$-spaces. \\
Then, for every proper pointed $G$-CW-complex $X$, the map $f$ induces a bijection
$$f_*: [X,Y]_G^\bullet \longrightarrow [X,Y']_{G.}^\bullet$$
\end{prop}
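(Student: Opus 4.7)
The plan is to establish this as an equivariant Whitehead-type theorem, proving surjectivity and injectivity of $f_*$ separately by induction on the equivariant skeleta of $X$. The key observation is that, by properness, the cells of $X$ are of the form $(G/H)\times D^n$ with $H$ a \emph{compact} subgroup of $G$, and the adjunction
\[
[(G/H)\times D^n,Z]^\bullet_G \;\cong\; [D^n,Z^H]
\]
(with an analogous identification for $(G/H)\times S^{n-1}$ and for the pair $(G/H)\times(D^n\times I, D^n\times\partial I)$) converts every equivariant extension/lifting problem into an ordinary extension/lifting problem for $f^H:Y^H\to (Y')^H$. By hypothesis, each such $f^H$ is a weak equivalence, so it satisfies the ordinary lifting property with respect to the pair $(D^n,S^{n-1})$ up to homotopy rel boundary.

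For surjectivity, I would start with a pointed $G$-map $g:X\to Y'$ and inductively build a pair $(\tilde g_{(n)},H_{(n)})$ consisting of a pointed $G$-map $\tilde g_{(n)}:X_{(n)}\to Y$ and a pointed $G$-homotopy $H_{(n)}$ from $f\circ\tilde g_{(n)}$ to $g|_{X_{(n)}}$. The base case is immediate by sending the basepoint to the basepoint. For the inductive step, pushing the problem along the pushout square defining $X_{(n)}$ reduces the extension problem, cell by cell, to the ordinary lifting problem described above; once an extension is found cell by cell, the homotopy extension property of the $G$-CW-pair $(X_{(n)},X_{(n-1)})$ is invoked to glue $H_{(n-1)}$ to the resulting local homotopies. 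Passing to the colimit yields the required pointed $G$-map $\tilde g$ and homotopy $f\circ\tilde g\simeq g$.

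For injectivity, suppose $g_0,g_1:X\to Y$ are pointed $G$-maps and $K:X\times I\to Y'$ is a pointed $G$-homotopy from $f\circ g_0$ to $f\circ g_1$. Form the $G$-space
\[
A \;:=\; (X\times\partial I)\cup(\{*\}\times I),
\]
equip $X\times I$ with its natural structure of pointed proper $G$-CW-complex (where the cells of $X\times I$ relative to $A$ are of the form $(G/H_i)\times(D^n\times I)$, still with $H_i$ compact), and view the combined data $(g_0,g_1,K)$ as a partial lift to $Y$ defined on $A$ together with the map $K$ into $Y'$. Exactly the same cell-by-cell lifting argument, now applied to the pair $(D^n\times I,D^n\times\partial I)\cong(D^{n+1},S^n)$, produces a pointed $G$-homotopy $\widetilde K:X\times I\to Y$ from $g_0$ to $g_1$ satisfying $f\circ\widetilde K\simeq K$ rel~$A$, hence in particular $g_0$ and $g_1$ represent the same class in $[X,Y]_G^\bullet$.

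The heart of the argument is the cell-by-cell lifting step, and this is where I expect the only real work: one must verify that the hypothesis ``$f^H$ is a weak equivalence for every compact $H$'' combines cleanly with the homotopy extension property for equivariant CW-pairs in the pointed setting, and that the basepoint condition is preserved at every stage (which it is, since the induction starts at $\{*\}$ and only cells with compact isotropy are attached, thanks to the properness of $X\setminus\{*\}$). Everything else is bookkeeping along the skeletal filtration and passage to the colimit.
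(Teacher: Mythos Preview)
Your approach is correct: this is the standard equivariant Whitehead-type argument, and your cell-by-cell lifting using the adjunction $[(G/H)\times D^n,Z]_G\cong[D^n,Z^H]$ together with the homotopy extension property for $G$-CW pairs is exactly how one proves it.

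Note, however, that the paper does not actually supply a proof of this proposition. It appears in the ``review'' section and is stated as a known fact (presumably established in \cite{Ktheo1} or taken as a standard consequence of the equivariant Whitehead theorem). So there is no proof in the paper to compare yours against; your outline is the expected argument behind the statement, and it is sound. The one place to be slightly careful in writing it out fully is the injectivity step: make sure your induction starts with the lift already fixed on $A=(X\times\partial I)\cup(\{*\}\times I)$, so that the resulting $\widetilde K$ restricts \emph{exactly} to $g_0$ and $g_1$ on the ends (and to the basepoint on $\{*\}\times I$), rather than only up to homotopy. You have indicated this, but it is the spot where carelessness most often creeps in.
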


\subsubsection{$G$-fibre bundles}

Let $G$ be a topological group. Given
a $G$-space $X$, we call \defterm{pseudo-$G$-vector bundle}
(resp.\ $G$-\defterm{vector bundle}) over $X$ the data consisting of
a pseudo-vector bundle (resp.\ a vector bundle)
$p : E \rightarrow X$ over $X$ and of a (left) $G$-action on $E$, such that
$p$ is a $G$-map, and, for all $g \in G$ and $x \in X$, the map $E_x \rightarrow E_{g.x}$
induced by the $G$-action on $E$ is a linear isomorphism.

Given an integer $n \in \N$ and a $G$-space $X$,
$\mathbb{V}\text{ect}_G^{F,n}(X)$ will denote the set of isomorphism classes of
$n$-dimensional $G$-vector bundles over $X$. Accordingly,
$\mathbb{V}\text{ect}_G^{F}(X)$ will denote the abelian monoid
of isomorphism classes of finite-dimensional $G$-vector bundles over $X$. \label{Gvectorbundle}

Given another topological group $H$, a \defterm{$(G,H)$-principal bundle} is
an $H$-principal bundle $\pi : E \rightarrow X$ with structures of $G$-spaces on $E$ and $X$
for which $\pi$ is a $G$-map and $\forall (g,h,x)\in G \times H \times E, \; g.(x.h)=(g.x).h$.

\subsubsection{Topological categories attached to simi-Hilbert bundles}

\label{orthogroups}
We denote by $U_n(F)$ (resp.\ $\Sim_n(F)$) the group of orthogonal automorphisms
(resp. of similarities) of the vector space $F^n$.

\begin{Def}A \defterm{simi-Hilbert space} is a finite-dimensional vector space $V$ (with ground field $F$)
with a linear family $(\lambda \langle -,- \rangle)_{\lambda \in \mathbb{R}_+^*}$ of inner products on $V$.
\end{Def}

The relevant notion of isomorphisms between two simi-Hilbert spaces is that of similarities.
We do have a notion of orthogonality, but no notion of orthonormal families.
The relevant notion is that of \defterm{simi-orthonormal} families:
a family will be said to be simi-orthonormal when it is orthogonal and all its vectors share the same positive norm (for any inner product in the
linear family). Equivalently, a family of vectors is simi-orthonormal if and only if it is orthonormal
for some inner product in the linear family.

\begin{Def}
Let $G$ be a topological group. \\
For $n \in \mathbb{N}$, an $n$-dimensional simi-$G$-Hilbert bundle is a $G$-vector bundle with fiber $F^n$ and structural group $\Sim_n(F)$. \\
A disjoint union of $k$-dimensional $G$-simi-Hilbert bundles, for $k\in \mathbb{N}$, is called a $G$-\textbf{simi-Hilbert bundle.}
\end{Def}

We fix an integer $n \in \mathbb{N}$ for the rest of the paragraph.
Let $\varphi: E \rightarrow X$ be an $n$-dimensional simi-Hilbert over a locally-countable CW-complex, and $\tilde{\varphi}: \tilde{E} \rightarrow X$
the $\Sim_n(F)$-principal bundle canonically associated to it (by considering $\tilde{E}$ as a subspace of $E^{\oplus n}$). \\
\label{defsframe}\label{defsmod}\label{defsBdl}We define:
\begin{itemize}
\item $\varphi \sframe$ as the category $\mathcal{E}\tilde{E}$, with a natural right-action of $\Sim_n(F)$;
\item $\varphi\smod$ as $\varphi\sframe/\Sim_n(F)$:
an object of $\varphi\smod$ corresponds to a point of $X$, and
a morphism from $x$ to $y$ (with $(x,y) \in X^2$) corresponds to
a similarity $E_x \overset{\cong}{\rightarrow} E_y$;
\item $\varphi\sBdl$ as the category whose space of objects is $E$,
and whose space of morphisms is the (closed) subspace of
$E \ktimes E \ktimes \Mor(\varphi\smod)$
consisting of those triples $(e,e',f)$ such that
$\varphi(e) \underset{f}{\longrightarrow} \varphi(e')$ and $f(e)=e'$.
\end{itemize}

\subsubsection{The category $\Gamma \text{-Fib}_F$ and the $\smod$ functor}

\label{4.1.1}We define the category $\Gamma \text{-Fib}_F$ as follows:
\begin{itemize}
\item An object of $\Gamma \text{-Fib}_F$ consists of a finite
set $S$, a locally-countable CW-complex $X$,
and, for every $s \in S$, of a Hilbert bundle
$p_s:E_s \rightarrow X$ with ground field $F$.
Such an object is called an \textbf{$S$-object} over $X$.
If $S=\mathbf{n}$ for some $n \in \mathbb{N}$,
an $S$-object will be called an $n$-object.
\item A morphism $f:(S,X,(p_s)_{s\in S}) \longrightarrow
(T,Y,(q_t)_{t\in T})$ consists of a morphism $\gamma: T \rightarrow S$
in the category $\Gamma$, a continuous map
$\bar{f}:X \rightarrow Y$, and, for every $t \in T$,
a strong morphism of Hilbert bundles
$$\begin{CD}
\underset{s \in \gamma(t)}{\oplus}E_s @>{f_t}>> E'_t \\
@V{\underset{s \in \gamma(t)}{\oplus}p_s}VV @VV{q_t}V \\
X @>>{\bar{f}}> Y.
\end{CD}$$
\end{itemize}
If $f:(S,X,(p_s)_{s\in S}) \rightarrow (T,Y,(q_t)_{t\in T})$ is the morphism
in $\Gamma \text{-Fib}_F$ corresponding to $(\gamma,\bar{f},(f_t)_{t\in T})$, and
$g:(T,Y,(q_t)_{t\in T}) \rightarrow (U,Z,(r_u)_{u\in U})$ is the morphism
in $\Gamma \text{-Fib}_F$ corresponding to $(\gamma',\bar{g},(g_u)_{u\in U})$, then the composite
morphism $g \circ f: (S,X,(p_s)_{s\in S}) \rightarrow (U,Z,(r_u)_{u\in U})$
is the one which corresponds to the triple consisting of $\gamma \circ \gamma'$, $\bar{g} \circ \bar{f}$, and
the family
$\left(g_u \circ \left[\underset{t\in \gamma'(u)}{\oplus}f_t\right]\right)_{u \in U.}$

\label{4.2.2}
Let $(X,p:E \rightarrow X)$ be a $1$-object of $\Gamma \text{-Fib}_F$.
The natural map $\dim_p:\begin{cases}
X & \longrightarrow \mathbb{N} \\
x  & \longmapsto \dim(E_x)
\end{cases}$ is continuous because $X$ is a CW-complex.
Setting $X_{n}:=\dim_p^{-1}\{n\}$, $E_n:=p^{-1}(X_n)$, and
$p_n=p_{|E_n}: E_n \rightarrow X_{n}$, we then have $p=\underset{n \in \mathbb{N}}{\coprod}p_n.$ \\
Set $$p \smod
:=\underset{n \in \mathbb{N}}{\coprod}(p_n \smod).$$
We obtain a functor
$p \smod \rightarrow \mathcal{E}X \times \mathcal{B}\mathbb{R}_+^*$ by assigning $(x,y,\|\varphi\|)$ to every
morphism $\varphi:E_x \rightarrow E_y$ (here, $\|\varphi\|$ denotes the norm of the similarity
$\varphi$ with respect to the respective inner product structures on $E_x$ and $E_y$):
this is compatible with the composition of morphisms, since we are dealing
with similarities here. \\
For any $S$-object $\varphi=(S,X,(p_s)_{s \in S})$,
$\varphi \smod$ is defined as the fiber product of the categories
$p_s \smod$ over $\mathcal{E}X \times \mathcal{B}\mathbb{R}_+^*$ for all $s\in S$.
For any $S$-object $\varphi=(S,X,(p_s:E_s \rightarrow X)_{s \in S})$, an object of
$\varphi \smod$ simply corresponds to a point $x\in X$, while
a morphism $x \rightarrow y$ in $\varphi \smod$ is a family $(\varphi_s)_{s\in S}$
of similarities $\varphi_s: (E_s)_x \overset{\cong}{\rightarrow} (E_s)_y$ \emph{which share the same norm}.
It is then easy to extend this construction to obtain a functor
$$\smod: \Gamma \text{-Fib}_F \longrightarrow \text{kCat}.$$

\subsubsection{Hilbert $\Gamma$-bundles}
\label{4.4}
\begin{Def}
A \defterm{Hilbert $\Gamma$-bundle} is a contravariant functor
$\varphi:\Gamma \longrightarrow \Gamma \text{-Fib}_F$
which satisfies the following conditions:

\begin{enumerate}[(i)]
\item $\mathcal{O}^F_\Gamma \circ \varphi=\id_{\Gamma}$;
\item $\varphi(\mathbf{0})=(\mathbf{0},*,\emptyset)$;
\item For every $n \in \mathbb{N}^*$, there exists a morphism $f_n:n.\varphi
(\mathbf{1}) \rightarrow \varphi(\mathbf{n})$ in $\Gamma \text{-Fib}_F$ such that
$\mathcal{O}^F_\Gamma(f_n)=\id_{\mathbf{n}}$.
\end{enumerate}
\end{Def}

\label{gammaspaces}
\begin{Def}
Let $\varphi$ be an object of $\Gamma-\text{Fib}_F$ and $G$ be a Lie group. We define
$$s\vEc_G^\varphi:=\left|\Func(\mathcal{E}G,\varphi \smod)\right|.$$
\end{Def}

In what follows, we let $\varphi$ be a Hilbert $\Gamma$-bundle and
$G$ be a Lie group. We define a functor from $\Gamma$ to $\text{CG}_G$:
$$s\underline{\vEc}_G^\varphi: S \longmapsto s\vEc_G^{\varphi(S)}.$$
This is actually a $\Gamma-G$-space.

\begin{Def}
For every finite set $S$, we let $\Gamma (S)$ denote the set of maps $f:\mathcal{P}(S)
\rightarrow \mathcal{P}(\mathbb{N})$ which respect disjoint unions (and in particular $f(\emptyset)=\emptyset$),
and such that $f(S)$ is finite. We will write $S \overset{f}{\rightarrow}
\mathbb{N}$ when $f\in \Gamma(S)$.

For an inner product space $\mathcal{H}$ (with underlying field $F$) of finite dimension or
isomorphic to $F^{(\infty)}$, and for a finite subset $A$ of $\mathbb{N}$, we may consider the inner product space
$\mathcal{H}^A$ as embedded in the Hilbert space $\mathcal{H}^\infty$.
We then define $G_A(\mathcal{H})$ as the set of subspaces of dimension $\# A$ of $\mathcal{H}^A$,
with the limit topology for the inclusion of $G_{\#A}(E)$, where $E$ ranges over
the finite dimensional subspaces of $\mathcal{H}$.
When $A$ is empty, we set $G_\emptyset(\mathcal{H})=*$.

We let $p_A(\mathcal{H}): E_A(\mathcal{H}) \longrightarrow G_A(\mathcal{H})$
denote the canonical Hilbert bundle of dimension $\#A$ over $G_A(\mathcal{H})$.
The set $E_A(\mathcal{H})$ is constructed as a subspace of
the product of $\mathcal{H}^A$ (with the limit topology described above)
with $G_A(\mathcal{H})$.
\end{Def}

\label{4.5}For every finite set $S$, we define the following object of $\Gamma \text{-Fib}_F$:
$$\text{Fib}^{\mathcal{H}}(S):=
\left(S,X^\mathcal{H}(S),(p^\mathcal{H}(s))_{s \in S}\right),$$
where
$$X^\mathcal{H}(S):=\underset{f \in \Gamma(S)}{\coprod}\left[\underset{s \in S}{\prod}
G_{f(s)}(\mathcal{H})\right]$$
and, for every $s \in S$,
$$p^\mathcal{H}(s):=
\underset{f \in \Gamma(S)}{\coprod}\left[p_{f(s)}(\mathcal{H})
\times \underset{s' \in S\setminus\{s\}}{\prod}
\id_{G_{f(s')}(\mathcal{H})}\right].
$$
Let $\gamma: S \rightarrow T$ be a morphism in $\Gamma$.
We define a morphism
$\text{Fib}^{\mathcal{H}}(\gamma):\text{Fib}^{\mathcal{H}}(T) \rightarrow
\text{Fib}^{\mathcal{H}}(S)$ of $\Gamma \text{-Fib}_F$ for which
$\mathcal{O}^F_\Gamma(\text{Fib}^{\mathcal{H}}(\gamma))=\gamma$, in the following way:
for every $f \in \Gamma(S)$, we consider the map
$$\begin{cases}
\underset{t \in \gamma(s)}{\prod}
G_{f(t)}(\mathcal{H}) & \longrightarrow
G_{f \circ \gamma (s)}(\mathcal{H}) \\
(E_t)_{t \in \gamma(s)} & \longmapsto \underset{t \in \gamma(s)}{\overset{\bot}{\oplus}}
E_t,
\end{cases}$$
and, for every $s \in S$ and $f \in \Gamma(S)$, we have a commutative
square:
$$\begin{CD}
\left[\underset{t \in \gamma(s)}{\oplus}
E_{f(t)}(\mathcal{H})\right]
\times \underset{t \in T \setminus \gamma(s)}{\prod}
G_{f(t)}(\mathcal{H}) @>>>
E_{f\circ \gamma(s)}(\mathcal{H})
\times \underset{s' \in S \setminus \{s\}}{\prod}
G_{f(s')}(\mathcal{H}) \\
@VVV @VVV \\
\underset{t \in T}{\prod}
G_{f(t)}(\mathcal{H}) @>>>
\underset{s_1 \in S}{\prod}
G_{f\circ \gamma(s_1)}(\mathcal{H}),
\end{CD}
$$
where the upper morphism is given by the previous map and the following one:
$$\begin{cases}
\underset{t \in \gamma(s)}{\oplus}
E_{f(t)}(\mathcal{H}) & \longrightarrow
E_{f \circ \gamma (s)}(\mathcal{H}) \\
(x_t)_{t \in \gamma(s)} & \longmapsto
\underset{t \in \gamma(s)}{\sum} x_t.
\end{cases}$$

\label{defFoo}Denoting by $F^{(\infty)}$ the direct limit of the sequence $(F^k)_{k \geq 0}$
for the standard inclusion of inner product spaces, we have the following result:

\begin{prop}
Let $\mathcal{H}$ be an inner product space with ground field $F$. Assume that
$\mathcal{H}$ is finite-dimensional or isomorphic to $F^{(\infty)}$. Then
$\text{Fib}^{\mathcal{H}}$ is a Hilbert $\Gamma$-bundle.
\end{prop}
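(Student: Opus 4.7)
The plan is to verify the three axioms of a Hilbert $\Gamma$-bundle separately, after first confirming that $\text{Fib}^{\mathcal{H}}$ is genuinely a contravariant functor $\Gamma \to \Gamma\text{-Fib}_F$. Functoriality amounts to checking that $\text{Fib}^\mathcal{H}(\id_S) = \id_{\text{Fib}^\mathcal{H}(S)}$ --- immediate, since when $\gamma=\id_S$ each $\gamma(s)=\{s\}$ is a singleton and the orthogonal direct sum is trivial --- and that $\text{Fib}^\mathcal{H}(\gamma' \circ \gamma) = \text{Fib}^\mathcal{H}(\gamma) \circ \text{Fib}^\mathcal{H}(\gamma')$ for composable morphisms $\gamma : S \to T$ and $\gamma' : T \to U$ in $\Gamma$; this is essentially a reindexing using the associativity of orthogonal direct sum across the nested disjoint-union decompositions supplied by $\gamma$ and $\gamma'$. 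Axioms (i) and (ii) are then tautological: (i) is built into the very definition of $\text{Fib}^\mathcal{H}(\gamma)$, since by construction $\mathcal{O}^F_\Gamma(\text{Fib}^\mathcal{H}(\gamma))=\gamma$, and (ii) holds because $\Gamma(\emptyset)$ contains only the empty map, so that $X^\mathcal{H}(\emptyset)$ is the empty product, hence a point, with no bundles above it.

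The actual content of the proposition lies in axiom (iii). Fix $n \geq 1$. To build a morphism $f_n : n.\text{Fib}^\mathcal{H}(\mathbf{1}) \to \text{Fib}^\mathcal{H}(\mathbf{n})$ lifting $\id_{\mathbf{n}}$, I would pick once and for all pairwise disjoint infinite subsets $\mathbb{N}_1,\dots,\mathbb{N}_n\subset\mathbb{N}$ (for instance the arithmetic progressions modulo $n$) together with bijections $\beta_i : \mathbb{N} \to \mathbb{N}_i$. Given a tuple of finite subsets $(A_1,\dots,A_n)$ of $\mathbb{N}$ indexing a component of $X^\mathcal{H}(\mathbf{1})^n$, the images $B_i := \beta_i(A_i)$ are pairwise disjoint finite subsets of $\mathbb{N}$ and therefore assemble into an element $f \in \Gamma(\mathbf{n})$ with $f(\{i\})=B_i$. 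Each restriction $\beta_i : A_i \to B_i$ induces an isometric isomorphism $\mathcal{H}^{A_i} \cong \mathcal{H}^{B_i}$, hence a homeomorphism $G_{A_i}(\mathcal{H}) \cong G_{B_i}(\mathcal{H})$ and a strong bundle isomorphism $p_{A_i}(\mathcal{H}) \cong p_{B_i}(\mathcal{H})$. Gathering these component by component over the tuples $(A_1,\dots,A_n)$ yields both the base map $\bar{f} : X^\mathcal{H}(\mathbf{1})^n \to X^\mathcal{H}(\mathbf{n})$ and, for each $t\in\mathbf{n}$, the required strong bundle morphism from the pullback of $p^\mathcal{H}(1)$ along the $t$-th projection to $p^\mathcal{H}(t)$. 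The condition $\mathcal{O}^F_\Gamma(f_n)=\id_\mathbf{n}$ is then automatic by construction.

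The only step meriting real care is continuity: one must check that $\bar{f}$ and the accompanying bundle morphisms are continuous with respect to the coproduct topology on $X^\mathcal{H}(\mathbf{1})^n$ and $X^\mathcal{H}(\mathbf{n})$, and with respect to the limit topology on $\mathcal{H}^\infty$ underlying $G_A(\mathcal{H})$. This is not a real obstacle, since the map respects the coproduct decomposition (the component indexed by $(A_1,\dots,A_n)$ is sent into the component indexed by $(B_1,\dots,B_n)$) and on each such component is a product of homeomorphisms induced by the finite bijections $\beta_i$, each of which extends to an isometric isomorphism of the ambient $\mathcal{H}^{\infty}$. I therefore expect the only mild obstacle to be purely notational bookkeeping: unpacking the dense indexing of subsection~\ref{4.5} and confirming clause by clause that the triple $(\id_\mathbf{n}, \bar{f}, (f_t)_{t\in\mathbf{n}})$ satisfies the morphism axioms of $\Gamma\text{-Fib}_F$.
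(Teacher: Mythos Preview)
The paper does not actually prove this proposition: it sits in the review section~1.2.8 summarizing constructions from \cite{Ktheo1}, and is stated without argument. Your verification is correct and follows the expected route. Axioms (i) and (ii) are indeed immediate from the definitions, as you note. For axiom (iii), your device of fixing pairwise disjoint infinite subsets $\mathbb{N}_1,\dots,\mathbb{N}_n\subset\mathbb{N}$ together with bijections $\beta_i:\mathbb{N}\to\mathbb{N}_i$ is exactly the right idea: the point is to turn an arbitrary $n$-tuple $(A_1,\dots,A_n)$ of finite subsets of $\mathbb{N}$ into a \emph{disjoint} $n$-tuple $(B_1,\dots,B_n)$, so that the latter indexes a component of $X^{\mathcal{H}}(\mathbf{n})$; the finite bijections $\beta_i|_{A_i}:A_i\to B_i$ then induce isometries $\mathcal{H}^{A_i}\to\mathcal{H}^{B_i}$ and hence the required strong bundle isomorphisms $p_{A_i}(\mathcal{H})\cong p_{B_i}(\mathcal{H})$ component by component. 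Your continuity remark is also on target, since the map respects the coproduct decomposition and on each piece is induced by an isometry of finite-dimensional ambient spaces, compatibly with the limit topology defining $G_A(\mathcal{H})$.
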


\label{deftypique}We may now set:
$$s\underline{\vEc}_G^{F,\infty}:=
s\underline{\vEc}_G^{\text{Fib}^{F^{(\infty)}}} \quad; \quad
s\vEc_G^{F,\infty}:=
s\vEc_G^{\text{Fib}^{F^{(\infty)}}(\mathbf{1})}
\quad; \quad
Es\vEc_G^{F,\infty}:=
Es\vEc_G^{\text{Fib}^{F^{(\infty)}}(\mathbf{1})}.
$$
The $\Gamma$-space structure of $\underline{\vEc}_G^{F,\infty}$
induces a structure of equivariant H-space on
$\vEc_G^{F,\infty}=\underline{\vEc}_G^{F,\infty}(\mathbf{1})$. The following result was established in \cite{Ktheo1}:

\begin{prop}
Let $G$ be a second-countable Lie group
Then $Es\vEc_G^{F,\infty} \rightarrow s\vEc_G^{F,\infty}$ is universal
for finite-dimensional $G$-simi-Hilbert bundles
over $G$-CW-complexes, and, for every $G$-CW-complex
$X$, the induced bijection
$$\Phi: [X,\vEc_G^{F,\infty}]_G \overset{\cong}{\longrightarrow}
\VEct_G^F(X)$$ is a homomorphism of abelian monoids.
\end{prop}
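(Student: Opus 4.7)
The plan is to proceed in three stages: interpret maps into $s\vEc_G^{F,\infty}$ as classifying data for $G$-simi-Hilbert bundles, check bijectivity of $\Phi$ on homotopy classes, then verify compatibility with the monoid structures.

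For the first stage I would unpack the definition $s\vEc_G^{F,\infty}=\left|\Func(\calE G,\text{Fib}^{F^{(\infty)}}(\mathbf{1})\smod)\right|$. Following \ref{defsmod} and \ref{4.5}, an object of the target category is a finite-dimensional subspace of $(F^{(\infty)})^A$ for some finite $A\subset\N$, and a morphism is a similarity between two such subspaces. Via the standard adjunction between $G$-maps $X\to|\calC|$ and $G$-equivariant simplicial/categorical data on $X$, a continuous $G$-map $X\to s\vEc_G^{F,\infty}$ produces an equivariant classifying map, and the pullback of the tautological simi-Hilbert bundle over $\text{Fib}^{F^{(\infty)}}(\mathbf{1})$ yields a finite-dimensional $G$-simi-Hilbert bundle on $X$, thereby defining $\Phi^{-1}$ on representatives.

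For the second stage, surjectivity of $\Phi$ amounts to showing that every $G$-simi-Hilbert bundle $\xi$ over a $G$-CW-complex $X$ embeds equivariantly into the trivial bundle $X\times F^{(\infty)}$. I would build such an embedding cell by cell, using the local triviality of $\xi$ with structural group $\Sim_n(F)$ together with a $G$-invariant partition of unity subordinate to the CW-structure; second countability of $G$ guarantees that countably many equivariant local sections suffice, which is precisely the room offered by $F^{(\infty)}$. Such an embedding is exactly a classifying $G$-map $X\to s\vEc_G^{F,\infty}$ inducing $\xi$. For injectivity, a $G$-homotopy $X\times I\to s\vEc_G^{F,\infty}$ between two classifying maps yields, by the same mechanism, a $G$-simi-Hilbert bundle on $X\times I$, whose two endpoint restrictions are isomorphic by the standard homotopy invariance of equivariant bundles on $G$-CW-complexes.

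The third stage, the monoid homomorphism property, falls out of the $\Gamma\text{-}G$-space structure of $s\underline{\vEc}_G^{F,\infty}$: the H-space multiplication on $\vEc_G^{F,\infty}$ is, by construction, the one induced by the $\Gamma$-morphism $\mathbf{1}\to\mathbf{2}$, $1\mapsto\{1,2\}$, whose corresponding operation on $\text{Fib}^{F^{(\infty)}}$ is precisely the orthogonal direct sum $(E_1,E_2)\mapsto E_1\overset{\bot}{\oplus}E_2$ displayed in \ref{4.5}. Tracing the definitions then shows that the product of two classifying maps classifies the direct sum of the classified bundles. The main obstacle I anticipate is the surjectivity step: producing, for an arbitrary finite-dimensional $G$-simi-Hilbert bundle over a general $G$-CW-complex, a genuinely equivariant classifying map into the realization $|\Func(\calE G,-)|$. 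Combining an equivariant partition-of-unity argument for the similitude group $\Sim_n(F)$ with a careful treatment of the simplicial layer hidden inside this realization is where the second-countability hypothesis on $G$ is really used.
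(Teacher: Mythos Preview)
The paper does not prove this proposition: it is quoted in the review section as ``established in \cite{Ktheo1}'', so there is no proof here to compare against. That said, your sketch has a real gap at the surjectivity step that is worth flagging.

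Your second stage treats $s\vEc_G^{F,\infty}$ as if it were an infinite Grassmannian: you say that producing a classifying map amounts to embedding $\xi$ equivariantly into $X\times F^{(\infty)}$. But $s\vEc_G^{F,\infty}=\left|\Func(\calE G,\text{Fib}^{F^{(\infty)}}(\mathbf{1})\smod)\right|$ is not a Grassmannian; it is the realization of a functor category whose objects are continuous functors $\calE G\to\varphi\smod$. A point of this space over an $n$-dimensional component is (up to simplicial data) a $G$-indexed family of $n$-planes in $(F^{(\infty)})^A$ together with coherent similarities between them, not a single subspace. An equivariant embedding $\xi\hookrightarrow X\times F^{(\infty)}$ gives you a $G$-map from $X$ to an ordinary Grassmannian, and there is no obvious way to promote that to a $G$-map into the realization $|\Func(\calE G,-)|$; the ``standard adjunction'' you invoke does not exist in this form. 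The actual argument in \cite{Ktheo1} (cf.\ Theorem~2.6 there, alluded to in the proof of Proposition~\ref{theoL2}) goes through the frame bundle $s\widetilde{\vEc}_G^{\varphi_n}$: one shows this $(G,\Sim_n(F))$-principal bundle has, for each compact $H\leq G$, an $H$-contractible total space, which is what makes it universal for $G$-simi-Hilbert bundles over proper $G$-CW-complexes. Your partition-of-unity idea is not wrong in spirit, but it is aimed at the wrong target space.
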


\subsubsection{A definition of equivariant K-theory}

\label{6}
For any Lie group $G$, we set:
$$sKF_G^{[\infty]}:=\Omega Bs\vEc_G^{F,\infty}.$$

\begin{Def}
Let $G$ be a Lie group, $F=\mathbb{R}$ or $\mathbb{C}$,
$(X,A)$ a proper $G$-CW-pair and $n \in \mathbb{N}$.
We set:
$$KF_G^{-n}(X,A):=\bigl[\Sigma ^n(X/A),sKF_G^{[\infty]}\bigr]_G^\bullet,$$
and
$$KF_G^{-n}(X):=KF_G^{-n}(X\cup \{*\},\{*\}).$$
In particular, for every proper $G$-CW-complex $X$,
$$KF_G(X):=KF_G^0(X)=\bigl[X,sKF_G^{[\infty]}\bigr]_{G.}$$
\end{Def}

It was shown in \cite{Ktheo1} that $KF_G(-)$ may be extended to positive degrees in order to recover
a good equivariant cohomology theory.
Given a $G$-CW-complex $X$,
the canonical map $i:s\vEc_G^{F,\infty} = \underline{s\vEc}_G^{F,\infty}(\mathbf{1}) \rightarrow sKF_G^{[\infty]}$
induces a natural homomorphism $[X,s\vEc_G^{F,\infty}]_G \overset{i_*}{\rightarrow} [X,sKF_G^{[\infty]}]_G$
of abelian monoids, which, pre-composed with the inverse of the natural isomorphism
$[X,s\vEc_G^{F,\infty}]_G \overset{\cong}{\rightarrow} \mathbb{V}\text{ect}_G^{F}(X)$, yields
a natural homomorphism of abelian monoids $$\mathbb{V}\text{ect}_G^{F}(X) \longrightarrow [X,sKF_G^{[\infty]}]_G.$$
By the universal property of the Grothendieck construction, this yields a natural homomorphism of abelian groups
$$\gamma _X: \mathbb{K}F_G(X) \longrightarrow KF_G(X).$$
This clearly defines a natural transformation
$\gamma: \mathbb{K}F_G(-) \longrightarrow KF_G(-)$
on the category of proper $G$-CW-complexes. It was shown in \cite{Ktheo1} how this natural transformation
may be extended in arbitrary degrees so as to be compatible with long exact sequences.
Finally, the following property of $\gamma$ was established in \cite{Ktheo1}:

\begin{prop}
For every compact subgroup $H$ of $G$, every integer $n$ and every finite CW-complex $Y$ on which $G$ acts
trivially, the map
$$\gamma _{(G/H) \times Y}^{-n}: \mathbb{K}F_G^{-n}((G/H) \times Y)
\overset{\cong}{\longrightarrow} KF_G^{-n}((G/H) \times Y)$$
is an isomorphism.
\end{prop}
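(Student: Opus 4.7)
The strategy is to combine a change-of-groups reduction with the comparison between $KF_H$ and Segal's classical equivariant K-theory for compact $H$ that was established in the predecessor paper \cite{Ktheo1}.

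First I would reduce the statement to the case where $H$ is compact and acts trivially on the whole space. On the source side of $\gamma$, restriction of a $G$-vector bundle to the fiber over $\{eH\} \times Y$ gives the classical isomorphism $\VEct_G^F\bigl((G/H)\times Y\bigr) \overset{\cong}{\longrightarrow} \VEct_H^F(Y)$ (with $H$ acting trivially on $Y$), and hence an isomorphism $\mathbb{K}F_G^{-n}((G/H)\times Y)\cong \mathbb{K}F_H^{-n}(Y)$. On the target side, the standard adjunction between induction and restriction gives
$$\bigl[(G/H)\times Y,\, sKF_G^{[\infty]}\bigr]_G^\bullet \;\cong\; \bigl[Y,\,(sKF_G^{[\infty]})^H\bigr]^\bullet,$$
and inspection of the construction of $sKF_G^{[\infty]}$ via $\Func(\calE G,\varphi\smod)$ identifies the $H$-fixed points $(sKF_G^{[\infty]})^H$ with $sKF_H^{[\infty]}$ up to $H$-weak equivalence: an $H$-invariant functor out of $\calE G$ carries the same data as a functor out of $\calE H$ into the $H$-fixed subcategory of $\varphi\smod$, which recovers the construction for the subgroup $H$. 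Since $\gamma$ is produced universally from equivariant vector bundles and is natural in the group, it is compatible with both reductions.

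It therefore suffices to prove that $\gamma_Y^{-n}:\mathbb{K}F_H^{-n}(Y)\longrightarrow KF_H^{-n}(Y)$ is an isomorphism when $H$ is compact and $Y$ is a finite CW-complex with trivial $H$-action. Here the decisive input from \cite{Ktheo1} is that, for compact $H$ and finite $H$-CW-complexes, $KF_H^{*}$ coincides naturally with Segal's classical equivariant K-theory, and this identification is compatible with the natural map from equivariant vector bundles. In degree zero, $\gamma_Y^{0}$ then becomes Segal's canonical map from the Grothendieck group of $H$-equivariant vector bundles over $Y$ to $KF_H^0(Y)$, which is an isomorphism by Segal's classical representability result applied to finite CW-complexes with trivial action. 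The extension from $n=0$ to every $n\leq 0$ is purely formal: $\gamma$ is natural, both theories satisfy the suspension axiom by construction, and the five-lemma applied to the long exact sequences attached to the pair $(Y\times D^1,\,Y\times S^0)$ propagates the isomorphism by induction on $-n$.

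The main obstacle lies in the first step, namely verifying the fixed-point identification $(sKF_G^{[\infty]})^H \simeq sKF_H^{[\infty]}$ with naturality compatible with $\gamma$; this requires carefully tracing $H$-fixed points through the $\Gamma$-space machinery, the nerve, and the loop-space adjunction. Once this compatibility is in place, the remainder of the argument is a formal appeal to Segal's theorem and the representability of equivariant vector bundles.
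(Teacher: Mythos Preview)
The paper does not prove this proposition: it is quoted as a result ``established in \cite{Ktheo1}'' (and later invoked as Proposition~4.4 of \cite{Ktheo1}). There is therefore no proof in the present paper to compare your proposal against; your outline is essentially a reconstruction of what the predecessor paper must contain.

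Your overall strategy---reduce to the compact subgroup $H$ by a change-of-groups argument, then invoke the identification of $KF_H^*$ with Segal's classical theory on finite $H$-CW-complexes, and finally propagate by suspension---is the natural one and is almost certainly what \cite{Ktheo1} does.

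One inaccuracy is worth flagging. You write that an $H$-invariant functor out of $\calE G$ ``carries the same data as a functor out of $\calE H$ into the $H$-fixed subcategory of $\varphi\smod$''. But $\varphi\smod$ carries no $G$-action at all: the $G$-action on $\Func(\calE G,\varphi\smod)$ arises entirely from right translation on the source $\calE G$. An $H$-fixed object is thus a functor that factors through the right $H$-orbits in $G$, not one landing in any fixed subcategory of the target. The honest comparison goes via the restriction functor
\[
\Func(\calE G,\varphi\smod)\longrightarrow \Func(\calE H,\varphi\smod)
\]
induced by $\calE H\hookrightarrow \calE G$, which is $H$-equivariant; one then argues that it induces a weak equivalence on $H$-fixed geometric realizations (using that both $\calE G$ and $\calE H$ have contractible nerves and that the construction is suitably functorial). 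You correctly identify this step as the main obstacle; just be aware that the mechanism is on the source side, not the target side.
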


\subsection{Main ideas and structure of the paper}
The basic idea is to start from the ``quasi''-classifying space that naturally arises in the study of Phillips' equivariant K-theory,
i.e.\ the space $\Fred(L^2(G)^\infty)$ of Fredholm operators on the Hilbert $G$-module $L^2(G)^\infty$, with the norm topology.
Indeed, any $G$-map $f : X \longrightarrow \Fred(L^2(G)^\infty)$, where $X$ is a proper locally-compact Hausdorff $G$-space, yields
a morphism $$\begin{cases}
X \times L^2(G)^\infty & \longrightarrow X \times L^2(G)^\infty \\
(x,y) & \longmapsto (x,f(x)[y])
\end{cases}$$
of $G$-Hilbert bundle over $X$, which in turns yields an element of $KF_G^{\text{Ph}}(X)$ (cf.\ example 3.5 p.41 of \cite{Phillips}).
This gives rise to a natural transformation $[-,\Fred(L^2(G)^\infty)]_G \longrightarrow KF_G^{\text{Ph}}(-)$ on the category of
proper locally compact Hausdorff $G$-spaces, and it is quite easy to check that, if we consider the structure of topological
monoid on $\Fred(L^2(G)^\infty)$ induced by the composition of Fredholm morphisms, this is
actually a natural transformation between monoid-valued functors.

However, $\Fred(L^2(G)^\infty)$ is not always a classifying space for $KF_G^{\text{Ph}}(-)$: unless
$G$ is a discrete group, it may be shown indeed that
$\Fred(L^2(G)^\infty)$ is not a $G$-space (since the usual $G$-action on this topological space is not continuous).
Classically, this is handled by replacing $\Fred(L^2(G)^\infty)$ with its subspace
$\underline{\Fred}(L^2(G)^\infty)$ consisting of those elements $x$ such that the action of $G$ on the orbit of $x$ is continuous.
Classically, $\underline{\Fred}(L^2(G)^\infty)$ is a $G$-space and every continuous equivariant map
$X \longrightarrow \Fred(L^2(G)^\infty)$ has its range in $\underline{\Fred}(L^2(G)^\infty)$ when $X$ is a $G$-space.
Hence we really have a natural transformation
$$[-,\underline{\Fred}(L^2(G)^\infty)]_G \longrightarrow KF_G^{\text{Ph}}(-)$$
which is not known in general to be an isomorphism (the compact case however has been solved, see \cite{Twisted}).

\begin{figure}[h]
\begin{center}
\psfrag{C}{$s\vEc_G^{F,\infty}$}
\psfrag{C1}{$s\underline{\vEc}_G^{F,\infty}(\mathbf{n})$}
\psfrag{C2}{$\Omega Bs\underline{\vEc}_G^{F,\infty}$}
\psfrag{B}{$s\vEc_{G,L^2}^{F,\infty}$}
\psfrag{B1}{$s\underline{\vEc}_G^{F,\infty}(\mathbf{n})$}
\psfrag{B2}{$\Omega Bs\underline{\vEc}_{G,L^2}^{F,\infty}$}
\psfrag{A}{$s\vEc_{G,L^2}^{F,\infty*}$}
\psfrag{A1}{$s\vEc_{G,L^2}^{F,\infty*}[n]$}
\psfrag{A2}{$\Omega Bs\vEc_{G,L^2}^{F,\infty*}$}
\psfrag{D}{$\underset{k \in \mathbb{N}}{\bigcup}\sub_k(L^2(G,\mathcal{H}))$}
\psfrag{E}{$\underline{\Fred}(L^2(G,\mathcal{H}^\infty))$}
\psfrag{E1}{$\Fred(G,\mathcal{H}^\infty)[n]$}
\psfrag{E2}{$\Omega B\Fred(G,\mathcal{H}^\infty))$}
\includegraphics{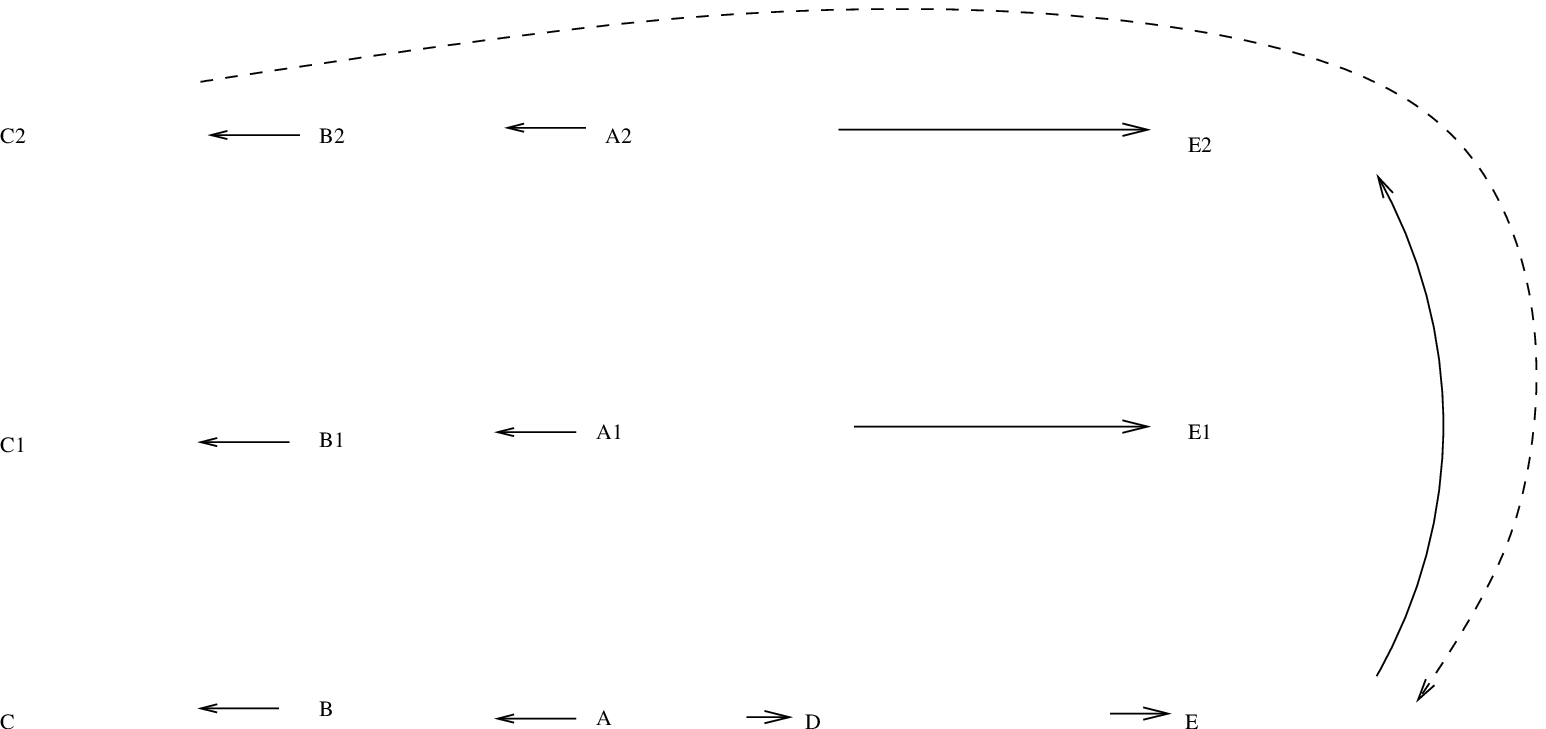}
\caption{Going from $sKF_G^{[\infty]}$ to $\underline{\Fred}(L^2(G,\mathcal{H})^\infty)$}
\end{center}
\end{figure}
\paragraph{}
Our basic idea is to construct a reasonable morphism $sKF_G^{[\infty]} \longrightarrow \underline{\Fred}(L^2(G)^\infty)$ in the category $CG_G^{h\bullet}[W_G^{-1}]$, and then compose the natural transformation $KF_G(-) \longrightarrow [-,\underline{\Fred}(L^2(G)^\infty)]_G$
derived from it with the one discussed earlier. The construction of this morphism is now briefly explained:
\begin{itemize}
\item In Section \ref{10.1}, we will define $L^2$ functors from $\mathcal{E}G$ to $\varphi \smod$ (where $\varphi$ is a Hilbert bundle), we define the space $s\vEc_{G,L^2}^{\varphi}$ from those functors very much like
$s\vEc_G^{\varphi}$, and will construct a canonical $G$-weak equivalence
$s\vEc_{G,L^2}^{\varphi} \rightarrow s\vEc_G^{\varphi}$. This construction will be extended to a
$\Gamma-G$-space $s\underline{\vEc}_{G,L^2}^{F,\infty}$, and a canonical $G$-weak equivalence
$\Omega B s \vEc_{G,L^2}^{F,\infty} \longrightarrow \Omega B s \vEc_{G}^{F,\infty}$ will be produced.
\item In Section \ref{10.2}, we will define the simplicial $G$-space $s\underline{\vEc}_{G,L^2}^{F,\infty *}$
by only considering functors that are $L^2$ and ``contiguous'' label-wise, and natural transformations that are
non-decreasing label-wise. There will then be a canonical morphism
$s\underline{\vEc}_{G,L^2}^{F,\infty*} \longrightarrow s\underline{\vEc}_{G,L^2}^{F,\infty}$
which will induce a $G$-weak equivalence $\Omega B s\vEc_{G,L^2}^{F,\infty*} \longrightarrow \Omega B s \vEc_{G,L^2}^{F,\infty}$.
\item In Section \ref{10.3}, we will briefly introduce the $G$-space $\underline{\Fred}(L^2(G,\mathcal{H}))$
together with the simplicial $G$-space $\Fred(G,\mathcal{H})$ associated to this monoid, and prove that the canonical map
$\underline{\Fred}(L^2(G,\mathcal{H})) \longrightarrow \Omega B \underline{\Fred} (L^2(G,\mathcal{H}))$
is a $G$-weak equivalence. We will also briefly recall the definition of the shift map
$\Shift : \underset{n\in \mathbb{N}}{\bigcup}\sub_n(L^2(G,\mathcal{H})) \longrightarrow \underline{\Fred}(L^2(G,\mathcal{H})^\infty)$
and its basic properties.

\item Section \ref{10.4} is devoted to the construction of a morphism of simplicial $G$-spaces
$s\vEc_{G,L^2}^{F,\infty} \longrightarrow \underline{\Fred}(G,\mathcal{H}^\infty)$: this part is very
technical so we will start with a lengthy explanation of the main ideas before going into more details.
Note that this construction uses an unresolved - yet very reasonable - conjecture on
relative triangulations of smooth compact manifolds, see Section \ref{conjecturesection}.

\item Finally, we wrap up the construction in Section \ref{10.5}, where we obtain a natural transformation
$KF_G(-) \longrightarrow KF_G^{\text{Ph}}(-)$ (that is independent of some choices made during the construction),
and we finish by proving that it is an isomorphism on the category of finite proper $G$-CW-complexes.
\end{itemize}

\subsection{Additional notations and definitions}

Given an integer $n \in \mathbb{N}$, and $G$-space $X$,
$\mathbb{V}\text{ect}_G^{F,n}(X)$ will denote the set of isomorphism classes of
$n$-dimensional $G$-vector bundles over $X$. Accordingly,
$\mathbb{V}\text{ect}_G^{F}(X)$ will denote the abelian monoid
of isomorphism classes of finite-dimensional $G$-vector bundles over $X$.

\vskip 2mm
We denote by $\Delta^*$ the hemi-simplicial category, i.e.\ the subcategory of $\Delta$ with the same objects, and
with the monomorphisms of $\Delta$ as morphisms.
A \defterm{hemi-simplicial} $G$-space is a contravariant functor from $\Delta^*$ to the category of $G$-spaces.

\vskip 2mm
\label{hilbertspace}When $\mathcal{H}$ is an inner product space, and $n \in \mathbb{N}^*$, $B_n(\mathcal{H}) \subset \mathcal{H}^n$
will denote the space of linearly independent
$n$-tuples of elements of $\mathcal{H}$ (with the convention that $B_0(\mathcal{H})=*$), while
$V_n(\mathcal{H}) \subset \mathcal{H}^n$ will denote the space of orthonormal $n$-tuples of elements of $\mathcal{H}$ (with the convention that $V_0(\mathcal{H})=*$).
We also let $B_{\mathcal{H}}$ denote the unit ball of $\mathcal{H}$, $\sub(\mathcal{H})$ denote the
set of closed linear subspaces of $\mathcal{H}$, and, for $n \in \mathbb{N}, \sub_n(\mathcal{H})$
the set of $n$-dimensional linear subspaces of $\mathcal{H}$.
Finally, $\mathcal{B}(H)$ will denote the space of bounded linear operators on $H$.

\vskip 2mm
For every smooth manifold $N$, and every set $x$, we will let $T_x N$ denote the
tangent space of $N$ at $x$ when $x \in N$, and we set $T_x N=\emptyset$  when $x \not\in N$.

\vskip 2mm \label{undercat}Given a category $\mathcal{C}$ and an object $X$ of $\mathcal{C}$, we denote by
$\mathcal{C} \downarrow X$ will denote the category whose objects are the morphisms
$Y \rightarrow X$ of $\mathcal{C}$, and the morphisms from $Y \overset{f}{\rightarrow} X$
to $Y' \overset{g}{\rightarrow} X$ are the morphisms $Y \overset{\varphi}{\rightarrow} Y'$ of $\mathcal{C}$
such that $g \circ \varphi=f$.

\paragraph{List of important notation} ${}$
\vskip 1mm
\def\8[#1;#2]{\hbox to \textwidth{#1 \hfill p. #2}\vskip2truemm}
\8[$\Ob(\mathcal{C}), \Mor(\mathcal{C}), \In_{\mathcal{C}}, \Fin_{\mathcal{C}}, \Id_{\mathcal{C}}, \Comp_\mathcal{C},
\mathcal{N}(\mathcal{C})$
\quad ($\mathcal{C}$ a small category);\pageref{struct}]
\8[$U_n(F), \Sim_n(F)$;\pageref{orthogroups}]
\8[$\Gamma\text{-Fib}_F$;\pageref{4.1.1}]
\8[$\smod, \sBdl : \Gamma \text{-Fib}_F\rightarrow \text{kCat}$;\pageref{4.2.2}]
\8[Hilbert $\Gamma$-bundles $\phi:\Gamma\rightarrow \Gamma \text{-Fib}_F$;\pageref{4.4}]
\8[$s\underline{\vEc}_G^\phi$;\pageref{gammaspaces}]
\8[$\text{Fib}^{\mathcal{H}}(S)$;\pageref{4.5}]
\8[$F^{(\infty)}$;\pageref{defFoo}]
\8[$s\underline{\vEc}_G^{F,\infty}$;\pageref{deftypique}]
\8[$sKF_G^{[\infty]}$;\pageref{6}]
\8[$B_n(\mathcal{H}), V_n(\mathcal{H}), B_{\mathcal{H}}, \sub(\mathcal{H}), \sub_n(\mathcal{H}),
\mathcal{B}(\mathcal{H})$ \quad  ($\mathcal{H}$ an inner product space);\pageref{hilbertspace}]
\8[$\mathcal{C} \downarrow X$;\pageref{undercat}]
\8[$s\underline{\vEc}_{G,L^2}^\phi$ ($\phi$ a Hilbert $\Gamma$-bundle);\pageref{10.1.2}]
\8[$sKF_{G,L^2}^\phi$;\pageref{10.1.3}]
\8[$Es\vEc_{G,L^2}^\phi, s\vEc_{G,L^2}^\phi, s\widetilde{\vEc}_{G,L^2}^\phi$;\pageref{10.1.4}]
\8[$\Gamma^*$;\pageref{gammastar}]
\8[$\Func_{\uparrow L^2}(\mathcal{E}G,\phi \Mod)$;\pageref{increasingfunc}]
\8[$s\vEc_{G,L^2}^{F,m*}$;\pageref{ultimatevec}]
\8[$Es\vEc_{G,L^2}^{F,\infty*}$;\pageref{ultimatebdl}]
\8[$s\widetilde{\vEc}_{G,L^2}^{n,F,\infty*}, Es\vEc_{G,L^2}^{n,F,\infty*}$ ;\pageref{ultimateframe}]
\8[$(\Fred(G,\mathcal{H})[n])_{n \in \mathbb{N}}$;\pageref{10.3.2}]
\8[$\Shift$;\pageref{10.3.3}]
\8[$\Hom_{\uparrow}([n],\mathbb{N})$;\pageref{10.4.1}]
\8[$f+N$;\pageref{10.4.1}]
\8[$S(n),S_{u,v},\Lat(u)$;\pageref{10.4.1}]
\8[$\Delta_i^n, C(\Delta^n)$;\pageref{core}]
\8[$\Delta_u$;\pageref{deltau}]
\8[$\sigma\natural \delta, \delta \natural \sigma, \sigma \natural i$;\pageref{10.4.2}]
\8[$G_\sigma(\delta)$;\pageref{gsigma}]
\8[$\nu_\sigma$;\pageref{nusigma}]
\8[$\mathcal{H}_f,\mathcal{H}_n,\mathcal{H}_f^{(l)}, \mathcal{H}_n^(l)$;\pageref{10.4.3}]
\8[$\mathcal{H}_{n,u,f}, \mathcal{H}_{n,u,f}^{(l)}, \phi_{n,u,f}$;\pageref{hnuf}]
\8[$V_n(f), V_n^{(l)}(f)$;\pageref{10.4.4}]
\8[$\alpha_G^\psi$;\pageref{10.4.7}]

\section{The equivariant $\Gamma$-space $s\vEc_{G,L^2}^\varphi$}\label{10.1}

In this section, $L^2_*(G)$ will denote $L^2(G,\mathbb{R}_+^*) \coprod \{0\}$
(with the $L^2$-topology on $L^2(G,\mathbb{R}_+^*)$), and
$C_*(G)$ will denote $C(G,\mathbb{R}_+^*)\coprod \{0\}$ (with the compact-open topology on
$C(G,\mathbb{R}_+^*)$).
Finally, $C_*(G)\cap L^2_*(G)$
is equipped with the initial topology for the diagonal map
$C_*(G)\cap L^2_*(G) \longrightarrow C_*(G)\times L^2_*(G)$, i.e.
the topology with the fewest open sets so that the diagonal map is continuous.

\subsection{The topological category $\Func_{L^2}(\mathcal{E}G,\varphi \smod)$}\label{10.1.1}

\subsubsection{The topological category induced by a functor}
Let $\mathcal{C}$ be a small category, $\mathcal{D}$ a topological category, and $F: \mathcal{C} \rightarrow \mathcal{D}$ a functor.
We define $\Ob(\mathcal{C}_F)$ as the topological space with underlying set $\Ob(\mathcal{C})$ and
the initial topology with respect to $F_{|\Ob} : \Ob(\mathcal{C}) \rightarrow \Ob(\mathcal{D})$ (i.e. the
topology with the fewest open sets such that $F_{|\Ob}$ is continuous).
We define $\Mor(\mathcal{C}_F)$ as the topological space with underlying set $\Mor(\mathcal{C})$ and
the initial topology with respect to $F_{|\Mor} : \Mor(\mathcal{C}) \rightarrow \Mor(\mathcal{D})$.
Since the squares
$$\begin{CD}
\Ob (\mathcal{C}) @>{F_{|\Ob}}>> \Ob (\mathcal{D}) \\
@V{\Id_\mathcal{C}}VV @V{\Id_\mathcal{D}}VV \\
\Mor (\mathcal{C}) @>{F_{|\Mor}}>> \Mor (\mathcal{D})
\end{CD} \quad , \quad
\begin{CD}
\Mor (\mathcal{C}) @>{F_{|\Mor}}>> \Mor (\mathcal{D}) \\
@V{\Fin_\mathcal{C}}VV @V{\Fin_\mathcal{D}}VV \\
\Ob (\mathcal{C}) @>{F_{|\Ob}}>> \Ob (\mathcal{D})
\end{CD} \quad , \quad
\begin{CD}
\Mor (\mathcal{C}) @>{F_{|\Mor}}>> \Mor (\mathcal{D}) \\
@V{\In_\mathcal{C}}VV @V{\In_\mathcal{D}}VV \\
\Ob (\mathcal{C}) @>{F_{|\Ob}}>> \Ob (\mathcal{D})
\end{CD}
$$
and
$$\begin{CD}
\Mor (\mathcal{C}) \underset{\vartriangle}{\times} \Mor(\mathcal{C}) @>{F_{|\Mor} \times F_{|\Mor}}>> \Mor (\mathcal{D}) \underset{\vartriangle}{\times} \Mor(\mathcal{D}) \\
@V{\Comp_\mathcal{C}}VV @V{\Comp_\mathcal{D}}VV \\
\Ob (\mathcal{C}) @>{F_{|\Ob}}>> \Ob (\mathcal{D})
\end{CD}$$
are all commutative, $\mathcal{C}_F$ is a topological category, and we call it the
category $\mathcal{C}$ \emph{with the topology induced by} $F$.

\begin{lemme}\label{L2nerve}
For every $n \in \mathbb{N}$, $\mathcal{N}(\mathcal{C}_F)_n$ has the initial topology for
$\mathcal{N}(F)_n: \mathcal{N}(\mathcal{C}_F)_n \rightarrow \mathcal{N}(\mathcal{D})_n$.
\end{lemme}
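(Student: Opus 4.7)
The plan is to unfold the definition of the nerve, use the standard transitivity property of initial topologies, and check that each intermediate topology in sight is already of the expected initial form.

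First, I would recall that $\mathcal{N}(\mathcal{C})_n$ is nothing but the subspace of the $n$-fold product $\Mor(\mathcal{C})^n$ consisting of those tuples $(f_1,\dots,f_n)$ with $\Fin_{\mathcal{C}}(f_i)=\In_{\mathcal{C}}(f_{i+1})$ for every $i\in\{1,\dots,n-1\}$, and similarly for $\mathcal{D}$; the map $\mathcal{N}(F)_n$ is then simply the restriction to these subspaces of the product map $F_{|\Mor}^{\,n}:\Mor(\mathcal{C}_F)^n \rightarrow \Mor(\mathcal{D})^n$.

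Next, I would invoke three elementary facts about initial topologies, each applied once. (a) A product of spaces, each of which carries the initial topology with respect to a single map, carries the initial topology with respect to the product map: hence $\Mor(\mathcal{C}_F)^n$ has the initial topology for $F_{|\Mor}^{\,n}$. (b) The subspace topology on any subset is, tautologically, the initial topology for the inclusion; so $\mathcal{N}(\mathcal{C}_F)_n \hookrightarrow \Mor(\mathcal{C}_F)^n$ has the initial topology. (c) Initial topologies are transitive: if $X$ has the initial topology for $g:X\to Y$ and $Y$ has the initial topology for $h:Y\to Z$, then $X$ has the initial topology for $h\circ g$. Combining (a), (b) and (c) shows that $\mathcal{N}(\mathcal{C}_F)_n$ has the initial topology for the composite
$$\mathcal{N}(\mathcal{C}_F)_n \hookrightarrow \Mor(\mathcal{C}_F)^n \xrightarrow{\;F_{|\Mor}^{\,n}\;} \Mor(\mathcal{D})^n,$$
a map which, by the first paragraph, factors as $\mathcal{N}(\mathcal{C}_F)_n \xrightarrow{\mathcal{N}(F)_n} \mathcal{N}(\mathcal{D})_n \hookrightarrow \Mor(\mathcal{D})^n$.

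Finally, I would conclude by the general principle that the initial topology for a map factoring through a topological embedding coincides with the initial topology for the first factor alone: since $\mathcal{N}(\mathcal{D})_n \hookrightarrow \Mor(\mathcal{D})^n$ is by definition a subspace inclusion (hence an embedding), the initial topology for the composite above equals the initial topology for $\mathcal{N}(F)_n$ by itself, which is precisely what we wanted. The proof is almost entirely formal; I expect the only mild nuisance to be checking the transitivity of initial topologies in the product-with-embedding form I use, which is a standard point-set exercise and not an actual obstacle.
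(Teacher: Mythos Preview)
Your proof is correct and follows essentially the same approach as the paper's: both exploit that $\mathcal{N}(\mathcal{C}_F)_n$ sits as a subspace of $\Mor(\mathcal{C}_F)^n$, that the product $\Mor(\mathcal{C}_F)^n$ carries the initial topology for $F_{|\Mor}^{\,n}$, and that $\mathcal{N}(\mathcal{D})_n\hookrightarrow\Mor(\mathcal{D})^n$ is an embedding. The only difference is packaging: you invoke the transitivity and product-stability of initial topologies abstractly, whereas the paper verifies the universal property by hand (taking a test space $Z$ and a map $\alpha$ whose composite with $\mathcal{N}(F)_n$ is continuous, then checking that each component $\alpha_i:Z\to\Mor(\mathcal{C}_F)$ is continuous).
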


\begin{proof}
This is true by definition for $n\in \{0,1\}$. Let $n\in \mathbb{N} \setminus\{0,1\}$.
Then $\mathcal{N}(\mathcal{C}_F)_n \rightarrow \mathcal{N}(\mathcal{D})_n$ is continuous as a restriction of the continuous
map $\Mor(\mathcal{C}_F)^n \rightarrow \Mor(\mathcal{D})^n$. Let $Z$ be a topological space, and
$\alpha : Z \rightarrow \mathcal{N}(\mathcal{C}_F)_n$ be a map such that
the composite map $\beta : Z \rightarrow \mathcal{N}(\mathcal{C}_F)_n \rightarrow \mathcal{N}(\mathcal{D})_n$ is continuous.
Then all the corresponding maps $\beta_1,\dots,\beta_n$ from $Z$ to $\Mor(\mathcal{D})$ are continuous, and it follows that
the maps $\alpha_1,\dots,\alpha_n$ from $Z$ to $\Mor(\mathcal{C}_F)$ which correspond to $\alpha$ are continuous. Hence $\alpha$ is continuous.
This proves that $\mathcal{N}(\mathcal{C}_F)_n$ has the initial topology for $\mathcal{N}(F)_n$.
\end{proof}

\subsubsection{The topological category $\Func_{L^2}(\mathcal{E}G,\varphi \smod)$}

Let $S$ be a finite set, and $\varphi$ be an $S$-object of $\Gamma \text{-Fib}_F$.
\begin{Def}
We define
$$\alpha_\varphi :
\begin{cases}
\Ob(\Func(\mathcal{E}G,\varphi \smod)) & \longrightarrow C_*(G) \times \mathbb{N}^S \\
\mathbf{F}: \mathcal{E}G \rightarrow \varphi \smod & \longmapsto \begin{cases}
\left([g \mapsto \|\underset{s \in S}{\oplus}\mathbf{F}(1,g)_s\|],(\dim(\mathbf{F}(1)_s))_{s \in S}\right)
& \text{if} \quad
\exists s \in S: \dim(\mathbf{F}(1)_s)\neq 0 \\
(0,(0,\dots,0)) & \text{otherwise}.
\end{cases}
\end{cases}$$
The map $\alpha_\varphi$ is obviously continuous.
We then define $\Func_{L^2}(\mathcal{E}G,\varphi \smod)$ as the full subcategory of
$\Func(\mathcal{E}G,\varphi \smod)$ whose set of objects is $\alpha_\varphi^{-1}((L^2_*(G)\cap C_*(G)) \times \mathbb{N}^S)$.

We will now let
$\alpha_\varphi : \Ob(\Func_{L^2}(\mathcal{E}G,\varphi \smod))
\longrightarrow (C_*(G)\cap L^2_*(G)) \times \mathbb{N}^S$
denote the restriction of $\alpha_\varphi$.
Finally, we equip $\Func_{L^2}(\mathcal{E}G,\varphi \smod)$
with the topology induced by the functor
$$\Func_{L^2}(\mathcal{E}G,\varphi \smod)
\longrightarrow \Func(\mathcal{E}G,\varphi \smod) \times \mathcal{E}\bigl(
(C_*(G)\cap L^2_*(G)) \times \mathbb{N}^S
\bigr).$$
The topology on $\Func_{L^2}(\mathcal{E}G,\varphi \smod)$ is also induced by the functor
$$\Func_{L^2}(\mathcal{E}G,\varphi \smod)
\longrightarrow \Func(\mathcal{E}G,\varphi \smod) \times \mathcal{E}\bigl(
C_*(G) \times L^2_*(G)
\bigr).$$
We finally set:
$$s\vEc_{G,L^2}^\varphi:=\Big|\Func_{L^2}(\mathcal{E}G,\varphi \smod) \Big|.$$
\end{Def}

\subsubsection{The action of $G$ on $\Func_{L^2}(\mathcal{E}G,\varphi \smod)$}

Here, we prove that the action of $G$ on $\Func(\mathcal{E}G,\varphi \smod)$ induces a continuous action of $G$ on
$\Func_{L^2}(\mathcal{E}G,\varphi \smod)$.

\vskip 2mm
\noindent We first need to check that $\Func_{L^2}(\mathcal{E}G,\varphi \smod)$ is a stable subcategory for the action of
$G$ on $\Func(\mathcal{E}G,\varphi \smod)$.  The left-action of $G$ on $\Ob(\Func(\mathcal{E}G,\varphi \smod))$
gives rise to a commutative square
$$\begin{CD}
G \times \Ob(\Func(\mathcal{E}G,\varphi \smod)) @>{\id_G \times \alpha_\varphi}>>
G \times C_*(G) \\
@VVV @VVV \\
\Ob(\Func(\mathcal{E}G,\varphi \smod))  @>{\alpha_\varphi}>> C_*(G),
\end{CD}$$
in which the right-hand vertical map is
$$\begin{cases}
G \times C_*(G) & \longrightarrow C_*(G) \\
(g,f) & \longmapsto
\begin{cases}
0 & \quad  \text{if} \quad f=0 \\
\left[h \longmapsto \cfrac{f(hg)}{f(g)}\right] & \quad \text{otherwise}.
\end{cases}
\end{cases}$$
Since we are working with a right-invariant Haar measure on $G$, the image of
$G \times (C_*(G) \cap L^2_*(G))$ by this map is included in $C_*(G) \cap L^2_*(G)$.
It follows that $\Func_{L^2}(\mathcal{E}G,\varphi \smod)$ is a stable subcategory for the action of
$G$ on $\Func(\mathcal{E}G,\varphi \smod)$.

\vskip 2mm
\noindent
Moreover, the induced map $G \times (C_*(G) \cap L^2_*(G)) \longrightarrow C_*(G) \cap L^2_*(G)$
is continuous since the action of $G$ on $L^2(G)$ is continuous, and it follows that the
induced action of $G$ on
$\Func_{L^2}(\mathcal{E}G,\varphi \smod)$ is continuous.

\subsubsection{The functor $\Func_{L^2}(\mathcal{E}G,- \smod)$}

Given a morphism $f: \varphi \rightarrow \psi$ in
$\Gamma \text{-Fib}_F$, we want to check that the functor
$f^*:\Func(\mathcal{E}G,\varphi \smod) \longrightarrow \Func(\mathcal{E}G,\psi \smod)$
induces a continuous functor
$f_{L^2}^*: \Func_{L^2}(\mathcal{E}G,\varphi \smod) \longrightarrow
\Func_{L^2}(\mathcal{E}G,\psi \smod)$.

Let $\gamma=\mathcal{O}_\Gamma^F(f): S \rightarrow T$. Then $\gamma$ induces a map
$$\gamma^{\mathbb{N}}_*: \begin{cases}
\mathbb{N}^T & \rightarrow \mathbb{N}^S \\
(n_t)_{t \in T} & \mapsto \left(\underset{i \in \gamma(s)}{\sum}n_i\right)_{s \in S,}
\end{cases}$$
and finally a map $$\gamma_* : \begin{cases}
C_*(G) \times \mathbb{N}^T & \longrightarrow C_*(G) \times \mathbb{N}^S \\
(f,(n_t)_{t\in T}) & \longmapsto \begin{cases}
(0,0) & \text{if} \quad \gamma^{\mathbb{N}}_*((n_t)_{t\in T})=0 \\
(f,\gamma^{\mathbb{N}}_*((n_t)_{t\in T})) & \text{otherwise}.
\end{cases}
\end{cases}$$
Clearly $\gamma_*$ maps $(L^2_*(G) \cap C_*(G)) \times \mathbb{N}^T$ into
$(L^2_*(G) \cap C_*(G)) \times \mathbb{N}^S$, and its restriction
$(L^2_*(G) \cap C_*(G)) \times \mathbb{N}^T \longrightarrow (L^2_*(G) \cap C_*(G)) \times \mathbb{N}^S$
is obviously continuous.

\vskip 2mm
\noindent
Since the square
$$\begin{CD}
\Ob(\Func(\mathcal{E}G,\varphi \smod)) @>{\alpha_\varphi}>> C_*(G) \times \mathbb{N}^T \\
@V{f^*}VV @VV{\gamma_*}V \\
\Ob(\Func(\mathcal{E}G,\psi \smod)) @>{\alpha_\psi}>> C_*(G) \times \mathbb{N}^S
\end{CD}$$
is commutative, it follows that $f^*$ restricts to a continuous functor
$$f^*_{L^2} : \Func_{L^2}(\mathcal{E}G,\varphi \smod) \longrightarrow \Func_{L^2}(\mathcal{E}G,\psi \smod).$$
We have just constructed a functor $\Func_{L^2}(\mathcal{E}G, - \smod)$ from
$\Gamma \text{-Fib}_F$ to the category of topological categories. By composing it with the canonical functor
$\text{TopCat} \rightarrow \text{kCat}$, we recover a functor from $\Gamma \text{-Fib}_F$ to the category of k-categories, and
we still write it $\Func_{L^2}(\mathcal{E}G, - \smod)$.

\noindent \textbf{Remark :} If $G$ is compact, then all continuous functions on $G$ are square integrable and $C(G) \hookrightarrow
L^2(G)$ is continuous, therefore $\Func_{L^2}(\mathcal{E}G, - \smod)=\Func(\mathcal{E}G, - \smod)$.

\subsection{The $\Gamma-G$-space $s\vEc_{G,L^2}^\varphi$}\label{10.1.2}

\begin{Def}
Let $\varphi$ denote a Hilbert $\Gamma$-bundle. We define a contravariant functor from $\Gamma$ to $CG_G$ by:
$$s\underline{\vEc}_{G,L^2}^\varphi:
S \longmapsto |\Func_{L^2}(\mathcal{E}G,\varphi(S) \smod)|=s\vEc_{G,L^2}^{\varphi(S)}.$$
\end{Def}

We wish to prove that $s\underline{\vEc}_{G,L^2}^\varphi$ is a $\Gamma-G$-space.
In order to do this, we need the following lemma.

\begin{lemme}\label{L2transformations}
Let $S$ be a finite set, $\varphi$ and $\psi$ be two $S$-objects in $\Gamma \text{-Fib}_F$
together with  two morphisms $f: \varphi \rightarrow \psi$ and $g:\psi \rightarrow \varphi$
such that $\mathcal{O}_\Gamma^F(f)=\mathcal{O}_\Gamma^F(g)=\id_S$. \\
Then the natural transformations
$$\eta : \id_{\varphi \smod} \longrightarrow (g \smod)\circ (f \smod) \quad \text{and} \quad
\epsilon : \id_{\psi \smod} \longrightarrow (f \smod)\circ (g \smod) $$
from Corollary 3.3 of \cite{Ktheo1} induce, for every Lie group $G$, continuous
natural transformations
$$\eta^*: \id_{\Func_{L^2}(\mathcal{E}G,\varphi \smod)} \longrightarrow
g^*_{L^2} \circ f^*_{L^2}$$
and
$$\epsilon^*: \id_{\Func_{L^2}(\mathcal{E}G,\psi \smod)} \longrightarrow
f^*_{L^2} \circ g^*_{L^2}.$$
\end{lemme}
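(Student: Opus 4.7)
The approach is to define $\eta^*$ and $\epsilon^*$ by simply whiskering $\eta$ and $\epsilon$ with functors out of $\mathcal{E}G$. Concretely, for $\mathbf{F} \in \Ob(\Func_{L^2}(\mathcal{E}G, \varphi \smod))$, set $\eta^*_{\mathbf{F}}: \mathbf{F} \to g^*_{L^2} f^*_{L^2} \mathbf{F}$ to be the natural transformation obtained by composing $\eta$ pointwise with $\mathbf{F}$; similarly for $\epsilon^*$. Three things have to be verified: (a) the target $g^*_{L^2} f^*_{L^2} \mathbf{F}$ actually lies in $\Func_{L^2}$, so that $\eta^*_{\mathbf{F}}$ is a morphism there; (b) naturality in $\mathbf{F}$; (c) continuity of $\mathbf{F} \mapsto \eta^*_{\mathbf{F}}$ for the $L^2$ topology. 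Part (b) is a direct consequence of the naturality of $\eta$ at the level of the ambient functor category $\Func(\mathcal{E}G, \varphi \smod)$, already supplied by Corollary 3.3 of \cite{Ktheo1}.

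The crucial point is (a). Since $\mathcal{O}_\Gamma^F(f) = \mathcal{O}_\Gamma^F(g) = \id_S$, the underlying $\Gamma$-map is the identity of $S$, and each component of $f$ (resp.\ $g$) is a strong morphism of Hilbert bundles, hence a fiberwise isometric isomorphism. The functor $f \smod$ sends a morphism $(\varphi_s : (E_s)_x \to (E_s)_y)_{s \in S}$ of $\varphi \smod$ (a family of similarities sharing a common norm) to the conjugated family, whose common norm is the same since conjugation by isometries preserves the norm of a similarity; the same holds for $g \smod$. Consequently both the dimension vector $(\dim \mathbf{F}(1)_s)_{s \in S}$ and the norm function $\gamma \mapsto \|\oplus_s \mathbf{F}(1,\gamma)_s\|$ are left unchanged by $g^* \circ f^*$, so $\alpha_\varphi(g^*_{L^2} f^*_{L^2} \mathbf{F}) = \alpha_\varphi(\mathbf{F})$. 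In particular the $L^2$ condition is transported, and (a) holds.

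For (c), recall that $\Mor(\Func_{L^2}(\mathcal{E}G, \varphi \smod))$ carries the initial topology with respect to the projection to $\Mor(\Func(\mathcal{E}G, \varphi \smod)) \times \Mor\bigl(\mathcal{E}((C_*(G) \cap L^2_*(G)) \times \mathbb{N}^S)\bigr)$. Continuity on the first factor is the continuity of whiskering with $\eta$ at the level of $\Func$, which is standard from the continuous naturality of $\eta$ itself. Continuity on the second factor is immediate: by the computation in (a), the morphism $\eta^*_{\mathbf{F}}$ is sent to the identity arrow at $\alpha_\varphi(\mathbf{F})$, i.e.\ to the pair $(\alpha_\varphi(\mathbf{F}), \alpha_\varphi(\mathbf{F}))$, and the assignment $\mathbf{F} \mapsto (\alpha_\varphi(\mathbf{F}), \alpha_\varphi(\mathbf{F}))$ factors continuously through the diagonal of the continuous map $\alpha_\varphi$. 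The argument for $\epsilon^*$ is identical after swapping the roles of $\varphi$ and $\psi$. The only substantive obstacle is (a), and it rests entirely on the hypothesis $\mathcal{O}_\Gamma^F(f) = \mathcal{O}_\Gamma^F(g) = \id_S$, which forces the fiber maps to be honest isometries; without this, the norm datum recorded by $\alpha_\varphi$ could be rescaled and the $L^2$ property need not survive.
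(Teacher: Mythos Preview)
Your proof is correct and follows essentially the same route as the paper. The paper is terser: it observes that $\Func_{L^2}(\mathcal{E}G,\varphi\smod)$ is a \emph{full} subcategory, so once source and target of $\eta^*_{\mathbf{F}}$ lie in $\Func_{L^2}$ (which was already established in the preceding subsection via the functoriality of $f^*_{L^2}$ and $g^*_{L^2}$), the morphism automatically lives there; it then reduces continuity of $\eta^*$ to continuity of $(\Fin,\In)\circ\eta^*$ into $\Ob(\Func_{L^2})^2$, which follows from the already-proven continuity of $g^*_{L^2}\circ f^*_{L^2}$. Your step (a) re-derives this target condition via the stronger identity $\alpha_\varphi\circ g^*_{L^2}\circ f^*_{L^2}=\alpha_\varphi$, and then you exploit that same identity in (c) to identify the second factor with the diagonal of $\alpha_\varphi$; this is a slightly sharper observation than the paper needs, but the underlying mechanism is identical.
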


\begin{proof}
Since $\Func_{L^2}(\mathcal{E}G,\varphi \smod)$ is a full subcategory
of $\Func(\mathcal{E}G,\varphi \smod)$, it suffices to check that the natural transformations $\eta^*$ and
$\epsilon^*$ are continuous.
In the case of $\eta^*$, we only need to prove that the composite map
$$\Ob(\Func_{L^2}(\mathcal{E}G,\varphi \smod)) \overset{\eta^*}{\longrightarrow}
\Mor(\Func_{L^2}(\mathcal{E}G,\varphi \smod)) \overset{(\Fin,\In)}{\longrightarrow}
\Ob(\Func_{L^2}(\mathcal{E}G,\varphi \smod))^2$$
is continuous. This is a simple consequence of the fact that $g_{L^2}^* \circ f_{L^2}^*$ is continuous. The case
of $\epsilon^*$ may be treated similarly.
\end{proof}

\begin{prop}
For any Hilbert $\Gamma$-bundle $\varphi$, $s\underline{\vEc}_{G,L^2}^\varphi$ is a $\Gamma-G$-space.
\end{prop}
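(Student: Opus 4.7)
The plan is to verify the two axioms defining a $\Gamma$-$G$-space for $\underline{A} := s\underline{\vEc}_{G,L^2}^\varphi$, by mirroring the argument of \cite{Ktheo1} in the non-$L^2$ setting, with Lemma \ref{L2transformations} as the tool for transferring the relevant natural transformations.

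For axiom (i), note that $\varphi(\mathbf{0}) = (\mathbf{0}, *, \emptyset)$ forces $\varphi(\mathbf{0})\smod$ to be the terminal k-category (one object, one morphism). The $L^2$ constraint is then automatic for any functor $\mathbf{F} : \mathcal{E}G \to \varphi(\mathbf{0})\smod$ since $\alpha_{\varphi(\mathbf{0})}(\mathbf{F}) = (0, ())$ lies in $(C_*(G) \cap L^2_*(G)) \times \mathbb{N}^\mathbf{0}$. Hence $\underline{A}(\mathbf{0})$ is a singleton with trivial $G$-action, which is both equivariantly well-pointed and equivariantly contractible.

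For axiom (ii), fix $n \geq 1$. Axiom (iii) of a Hilbert $\Gamma$-bundle supplies $f_n : n.\varphi(\mathbf{1}) \to \varphi(\mathbf{n})$ in $\Gamma\text{-Fib}_F$ with $\mathcal{O}_\Gamma^F(f_n) = \id_\mathbf{n}$. On the other side, the $n$ morphisms $\gamma_i : \mathbf{1} \to \mathbf{n}$ in $\Gamma$ ($1 \mapsto \{i\}$) yield morphisms $\varphi(\gamma_i) : \varphi(\mathbf{n}) \to \varphi(\mathbf{1})$ in $\Gamma\text{-Fib}_F$ which assemble into a single morphism $p_n : \varphi(\mathbf{n}) \to n.\varphi(\mathbf{1})$, again with $\mathcal{O}_\Gamma^F(p_n) = \id_\mathbf{n}$. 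Both compositions $f_n \circ p_n$ and $p_n \circ f_n$ still project to $\id_\mathbf{n}$ under $\mathcal{O}_\Gamma^F$, so Corollary 3.3 of \cite{Ktheo1} produces natural transformations identifying each composition with the identity of the corresponding $\smod$-category. By Lemma \ref{L2transformations}, these natural transformations remain continuous in the $L^2$ functor categories. Taking thick geometric realization converts a continuous equivariant natural transformation into an equivariant homotopy, so $(f_n)^*_{L^2}$ realizes to a $G$-homotopy inverse of the map realized from $(p_n)^*_{L^2}$. Under the canonical identification
$$\Func_{L^2}\bigl(\mathcal{E}G, n.\varphi(\mathbf{1})\smod\bigr) \;\cong\; \prod_{i=1}^n \Func_{L^2}\bigl(\mathcal{E}G, \varphi(\mathbf{1})\smod\bigr),$$
the latter map is exactly the canonical map $\underline{A}(\mathbf{n}) \to \prod_{i=1}^n \underline{A}(\mathbf{1})$, so axiom (ii) is verified. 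The product identification itself rests on the orthogonal-sum formula $\|\oplus_i F_i\|^2 = \sum_i \|F_i\|^2$, which makes $L^2_*(G) \cap C_*(G)$-membership of the norm of a direct sum equivalent to simultaneous membership of the norms of its summands.

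The main delicacy, already handled by Lemma \ref{L2transformations}, is ensuring that the natural transformations from \cite{Ktheo1} are continuous for the finer initial topology of the $L^2$ functor categories; beyond this, the proof is a direct transcription, with the only conceptual step being the observation that continuous equivariant natural transformations realize to equivariant homotopies of $G$-maps.
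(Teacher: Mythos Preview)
Your verification of axiom (i) is fine, and the use of Lemma \ref{L2transformations} to compare $\varphi(\mathbf{n})$ with $n.\varphi(\mathbf{1})$ is exactly what the paper does. The gap lies in the final step, where you invoke a ``canonical identification''
\[
\Func_{L^2}\bigl(\mathcal{E}G,\, n.\varphi(\mathbf{1})\smod\bigr) \;\cong\; \prod_{i=1}^n \Func_{L^2}\bigl(\mathcal{E}G,\, \varphi(\mathbf{1})\smod\bigr).
\]
No such isomorphism exists. By definition of $\smod$ for an $\mathbf{n}$-object, a morphism in $n.p\smod$ (with $p=\varphi(\mathbf{1})$) is an $n$-tuple of similarities \emph{sharing the same norm}. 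Consequently a functor $\mathcal{E}G\to n.p\smod$ decomposes into an $n$-tuple $(\mathbf{F}_1,\dots,\mathbf{F}_n)$ of functors $\mathcal{E}G\to p\smod$ subject to the constraint $\|\mathbf{F}_1(1,g)\|=\cdots=\|\mathbf{F}_n(1,g)\|$ for all $g$; a generic element of the product does not satisfy this. The comparison map is therefore injective but far from surjective. Your justification via $\|\oplus_i F_i\|^2=\sum_i\|F_i\|^2$ is also incorrect: for similarities acting on orthogonal summands the operator norm of the direct sum equals the common norm of the factors, not the $\ell^2$-combination.

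The paper addresses precisely this point by adapting Proposition~3.1 of \cite{Ktheo1}: one writes down explicit functors $F^p: n.p\smod\to (p\smod)^n$, $G^p: (p\smod)^n\to n.p\smod$ and a homotopy $H^p$, checks (via a commutative square involving $\alpha_{n.p}$ and the diagonal $C_*(G)\to C_*(G)^n$) that they restrict to continuous functors between the $L^2$ functor categories, and deduces that the realization
\[
\bigl|\Func_{L^2}(\mathcal{E}G,\,n.p\smod)\bigr|\;\longrightarrow\;\bigl|\Func_{L^2}(\mathcal{E}G,\,p\smod)\bigr|^n
\]
is an equivariant homotopy equivalence. Only after this does the paper compose with the equivalence coming from Lemma~\ref{L2transformations}. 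So your argument needs an additional nontrivial step: replacing the false identification by the $F^p,G^p,H^p$ homotopy-equivalence argument.
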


\begin{proof}
It is clear that $s\underline{\vEc}_{G,L^2}^\varphi(\mathbf{0})=*$. Let $n$ be a positive integer, and
$\varphi$ be a Hilbert $\Gamma$-bundle. We set $p:=\varphi(\mathbf{1})$.
The same strategy as in the proof of Proposition 3.1 of \cite{Ktheo1} yields three functors
$F^p: n.p \smod \longrightarrow (p \smod)^n$, $G^p: (p \smod)^n \longrightarrow n.p \smod$
and $H^p: (p \smod)^n \times I \longrightarrow (p \smod)^n$, which, in turn, induce three functors
$$F^p_{L^2}: \Func_{L^2}(\mathcal{E}G,n.p \smod) \longrightarrow (\Func(\mathcal{E}G,p \smod))^n,$$
$$G^p_{L^2}: (\Func_{L^2}(\mathcal{E}G,p \smod))^n \longrightarrow \Func(\mathcal{E}G,n.p \smod),$$
and
$$H^p_{L^2} : (\Func_{L^2}(\mathcal{E}G,p \smod))^n  \times I \longrightarrow
(\Func(\mathcal{E}G,p \smod))^n. $$
We need to prove that $F^p_{L^2}$ and $H^p_{L^2}$ (resp.\ $G^p_{L^2}$)
take values in $(\Func_{L^2}(\mathcal{E}G,p \smod))^n$
(resp.\ in $\Func_{L^2}(\mathcal{E}G,n.p \smod)$) and that they induce continuous functors
$$F^p_{L^2}: \Func_{L^2}(\mathcal{E}G,n.p \smod)
\longrightarrow (\Func_{L^2}(\mathcal{E}G,p \smod))^n,$$
$$G^p_{L^2}: (\Func_{L^2}(\mathcal{E}G,p \smod))^n \longrightarrow
\Func_{L^2}(\mathcal{E}G,n.p \smod),$$
and
$$H^p_{L^2} : (\Func_{L^2}(\mathcal{E}G,p \smod))^n  \times I \longrightarrow
(\Func_{L^2}(\mathcal{E}G,p \smod))^n.$$
This is however quite easy: for example, in the case of $F^p_{L^2}$, it suffices to consider the commutative square:
$$\begin{CD}
\Ob(\Func_{L^2}(\mathcal{E}G,n.p \smod)) @>{\alpha_{n.p}}>> C_*(G) \\
@V{F^p_{L^2}}VV @VVV \\
\Ob(\Func_{L^2}(\mathcal{E}G,p \smod))^n @>{(\alpha_p)^n}>> C_*(G)^n,
\end{CD}$$
where the right-hand vertical map is the diagonal map.

We then deduce that $|F^p_{L^2}|$ and $|G^p_{L^2}|$
are inverse one to the other up to equivariant homotopy, and it follows that the canonical map
$$|\Func_{L^2}(\mathcal{E}G,n.p \smod)| \longrightarrow
|\Func_{L^2}(\mathcal{E}G,p \smod)|^n$$ is an equivariant homotopy equivalence.

Using Lemma \ref{L2transformations} and the definition of a Hilbert $\Gamma$-bundle,
it is then easy to check that the canonical map
$$|\Func_{L^2}(\mathcal{E}G,\varphi(\mathbf{n}) \smod)| \longrightarrow
|\Func_{L^2}(\mathcal{E}G,n.\varphi(\mathbf{1}) \smod)|
$$
is an equivariant homotopy equivalence.

We conclude that the canonical map
$$|\Func_{L^2}(\mathcal{E}G,\varphi(\mathbf{n}) \smod)| \longrightarrow
|\Func_{L^2}(\mathcal{E}G,\varphi(\mathbf{1}) \smod)|^n
$$ is an equivariant homotopy equivalence.
\end{proof}

\begin{Def}Given a Hilbert $\Gamma$-bundle $\varphi$ and a second countable Lie group $G$, we define
$$\boxed{sKF_{G,L^2}^\varphi:=\Omega Bs\underline{\vEc}_{G,L^2}^\varphi.}$$
\end{Def}

\subsection{The morphism $s\vEc_{G,L^2}^\varphi \rightarrow
s\vEc_G^\varphi$ and its properties}\label{10.1.3}

\subsubsection{Main statements}

For every object $\psi$ in the category $\Gamma \text{-Fib}_F$, the inclusion of categories
defines a canonical continuous functor
$\Func_{L^2}(\mathcal{E}G,\psi \smod) \longrightarrow
\Func(\mathcal{E}G,\psi \smod)$. Thus, for every Hilbert-$\Gamma$-bundle
$\varphi$, we recover a morphism of equivariant $\Gamma$-spaces
$$s\underline{\vEc}_{G,L^2}^\varphi \longrightarrow s\underline{\vEc}_{G.}^\varphi$$

When $G$ is compact, this morphism is simply the identity.

\begin{prop}\label{L2=indif}
Let $p$ be a $1$-object of $\Gamma \text{-Fib}_F$.
Then, for any compact subgroup $H$ of $G$,
$$(s\vEc_{G,L^2}^p)^H \longrightarrow
(s\vEc_G^p)^H$$
is a homotopy equivalence.
\end{prop}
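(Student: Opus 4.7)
The plan is to realize the inclusion $i : \Func_{L^2}(\mathcal{E}G, p\smod)^H \hookrightarrow \Func(\mathcal{E}G, p\smod)^H$ as a categorical deformation retract: I will construct a continuous quasi-inverse functor $R$ together with continuous natural isomorphisms $i \circ R \cong \id$ and $R \circ i \cong \id$, whereupon the standard fact that a continuous natural transformation realizes to a continuous homotopy, combined with the identification of fixed points with levelwise fixed points for compact $H$, will produce the desired homotopy equivalence on realizations.

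The core construction is a rescaling operation. Given a positive continuous left-$H$-invariant function $\lambda : G \to \mathbb{R}_+^*$ and an $H$-fixed functor $\mathbf{F}$, I would define a functor $\mathbf{F}^{(\lambda)}$ with the same object assignment as $\mathbf{F}$ but with morphisms
$$\mathbf{F}^{(\lambda)}(g,g') := \frac{\lambda(g')}{\lambda(g)}\, \mathbf{F}(g,g').$$
The functor $\mathbf{F}^{(\lambda)}$ remains $H$-fixed (using both the $H$-fixedness of $\mathbf{F}$ and the left-$H$-invariance of $\lambda$), and a direct computation shows that $\eta^\lambda_g := \frac{1}{\lambda(g)} \cdot \id_{\mathbf{F}(g)}$, viewed as a similarity of $\mathbf{F}(g)$ into itself, assembles into a natural isomorphism $\eta^\lambda : \mathbf{F}^{(\lambda)} \to \mathbf{F}$. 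The norm function of $\mathbf{F}^{(\lambda)}$ at $(1,g)$ equals $\frac{\lambda(g)}{\lambda(1)}\alpha_\mathbf{F}(g)$, so a careful choice of $\lambda$ forces $\mathbf{F}^{(\lambda)}$ into the $L^2$ subcategory while keeping it naturally isomorphic to $\mathbf{F}$.

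To choose $\lambda$ uniformly in $\mathbf{F}$, I would fix once and for all a positive continuous left-$H$-invariant $L^2$ function $\mu : G \to \mathbb{R}_+^*$; such a $\mu$ exists because $H \backslash G$ is a $\sigma$-compact smooth manifold (quotient of a second-countable Lie group by a compact subgroup), so $\mu(g) := e^{-\rho(Hg)}$ works for any continuous proper $\rho : H \backslash G \to \mathbb{R}_+$. For any $H$-fixed $\mathbf{F}$, the norm function $\alpha_\mathbf{F}$ is automatically left-$H$-invariant: indeed $\alpha_\mathbf{F}(hg) = \alpha_\mathbf{F}(h)\alpha_\mathbf{F}(g)$, and $\alpha_\mathbf{F}|_H$ is a continuous homomorphism from the compact group $H$ into $\mathbb{R}_+^*$, hence constantly equal to $1$. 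Setting
$$\lambda_\mathbf{F}(g) := \frac{\mu(g)}{1 + \alpha_\mathbf{F}(g)}$$
produces a positive continuous left-$H$-invariant function satisfying $\lambda_\mathbf{F}(g)\alpha_\mathbf{F}(g) \leq \mu(g)$, so that $\mathbf{F}^{(\lambda_\mathbf{F})} \in \Func_{L^2}(\mathcal{E}G, p\smod)^H$. I then define $R(\mathbf{F}) := \mathbf{F}^{(\lambda_\mathbf{F})}$, and on a natural transformation $\tau : \mathbf{F} \to \mathbf{G}$ I set $R(\tau)_g := \frac{\lambda_\mathbf{G}(g)}{\lambda_\mathbf{F}(g)} \tau_g$. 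Routine verifications show that $R$ is a functor, that $\eta^{\lambda_{(-)}}$ is natural in $\mathbf{F}$ (giving $i \circ R \cong \id$ in $\Func(\mathcal{E}G, p\smod)^H$), and that upon restriction $R \circ i \cong \id$ in $\Func_{L^2}(\mathcal{E}G, p\smod)^H$.

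The main technical obstacle is continuity of $R$ in the correct topologies, specifically showing that $\mathbf{F} \mapsto R(\mathbf{F})$ is continuous into $\Func_{L^2}(\mathcal{E}G, p\smod)^H$ with its finer topology. By construction of that topology, this reduces to showing that the norm function of $R(\mathbf{F})$ depends continuously on $\mathbf{F}$ both in the compact-open topology (immediate from continuity of $\mathbf{F} \mapsto \alpha_\mathbf{F}$) and in the $L^2$-topology. The latter is exactly where the $L^2$-majorization by $\mu$ is essential: since $\alpha_\mathbf{F}(1) = 1$ for every $\mathbf{F}$, the norm function of $R(\mathbf{F})$ is bounded pointwise by $2\mu/\mu(1)$, a universal $L^2$ majorant, so that the pointwise convergence stemming from compact-open convergence upgrades to $L^2$ convergence by Lebesgue's dominated convergence theorem.
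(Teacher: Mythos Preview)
Your approach is essentially the same as the paper's: construct a rescaling functor $R$ by multiplying $\mathbf{F}(g,g')$ by a ratio $\lambda(g')/\lambda(g)$ where $\lambda$ is built from a fixed $L^2$ ``pit'' function and $\alpha_\mathbf{F}$, exhibit continuous natural isomorphisms $iR\cong\id$ and $Ri\cong\id$, and verify $L^2$-continuity via a uniform $L^2$ majorant. The paper's specific choice is $\beta(\mathbf{F})[g]=\lambda(gH)/\max_{h\in H}\alpha_\mathbf{F}(gh)$ (defining $\Phi$ on the whole category, not just the $H$-fixed part), but your simpler $\mu/(1+\alpha_\mathbf{F})$ works just as well once one observes, as you do, that $\alpha_\mathbf{F}$ is already $H$-invariant on $H$-fixed objects.

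Two small corrections. First, in the paper's conventions the $G$-action on $\Func(\mathcal{E}G,p\smod)$ comes from \emph{right} translation on $\mathcal{E}G$ (see the square preceding the discussion of stability under the $G$-action), so $H$-fixedness gives $\alpha_\mathbf{F}(gh)=\alpha_\mathbf{F}(g)\alpha_\mathbf{F}(h)$ and hence \emph{right}-$H$-invariance of $\alpha_\mathbf{F}$; your $\mu$ should accordingly be chosen right-$H$-invariant (a function on $G/H$), and the $H$-fixedness of $\eta^\lambda$ then needs $\lambda(gh)=\lambda(g)$. Second, the final continuity step is not literally the dominated convergence theorem (which yields only sequential continuity): what you actually need, and what the paper spells out explicitly, is the $\epsilon$-argument ``choose a compact $K$ with $\int_{G\setminus K}\mu^2<\epsilon$, then use uniform convergence on $K$'', which gives genuine topological continuity.
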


\begin{cor}\label{L2=indiff}
For every Hilbert $\Gamma$-bundle $\varphi$,
the morphism $s\underline{\vEc}_{G,L^2}^\varphi
\longrightarrow s\underline{\vEc}_G^\varphi$ induces a $G$-weak equivalence: $$sKF_{G,L^2}^\varphi \longrightarrow sKF_G^\varphi.$$
\end{cor}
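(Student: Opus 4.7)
The plan is to bootstrap Proposition \ref{L2=indif} (the $1$-object case) to arbitrary Hilbert $\Gamma$-bundles by exploiting the $\Gamma$-$G$-space structure of both sides, and then to push the resulting levelwise equivalence through the $\Omega B$ construction.

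First I would check that, for every $n \in \mathbb{N}^*$ and every compact subgroup $H$ of $G$, the map $(s\vEc_{G,L^2}^{\varphi(\mathbf{n})})^H \to (s\vEc_G^{\varphi(\mathbf{n})})^H$ is a weak homotopy equivalence. To this end, I would consider the commutative square
$$
\begin{CD}
s\vEc_{G,L^2}^{\varphi(\mathbf{n})} @>>> s\vEc_G^{\varphi(\mathbf{n})} \\
@VVV @VVV \\
(s\vEc_{G,L^2}^{\varphi(\mathbf{1})})^n @>>> (s\vEc_G^{\varphi(\mathbf{1})})^n
\end{CD}
$$
whose vertical maps are the canonical ones induced by the $n$ morphisms $\mathbf{1} \to \mathbf{n}$ that enter the definition of a $\Gamma$-$G$-space. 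By the preceding proposition both vertical maps are equivariant homotopy equivalences, and they therefore induce ordinary homotopy equivalences on $H$-fixed points. Since $\varphi(\mathbf{1})$ is a $1$-object, Proposition \ref{L2=indif} forces the bottom arrow to be a weak equivalence on $H$-fixed points componentwise, hence globally; a two-out-of-three argument then yields the same for the top arrow.

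Composing with the canonical functor $\Delta \to \Gamma$, this promotes $s\underline{\vEc}_{G,L^2}^\varphi \to s\underline{\vEc}_G^\varphi$ to a morphism of simplicial $G$-spaces that is a $G$-weak equivalence in each simplicial degree. The thick geometric realization commutes with $H$-fixed points (the simplices carrying trivial $G$-action) and preserves levelwise weak equivalences, so applying $B$ yields a $G$-weak equivalence $Bs\underline{\vEc}_{G,L^2}^\varphi \to Bs\underline{\vEc}_G^\varphi$ of pointed $G$-spaces. Finally, since $(\Omega Z)^H = \Omega(Z^H)$ for every pointed $G$-space $Z$ and $\Omega$ preserves weak equivalences between well-pointed spaces, applying $\Omega$ once more delivers the desired $G$-weak equivalence $sKF_{G,L^2}^\varphi \to sKF_G^\varphi$.

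The only delicate point I expect is the interaction of thick realization with $H$-fixed points, but this is standard for simplicial $G$-spaces on which the simplicial parameter carries trivial action: it can be handled either by a level-wise analysis of the defining pushouts or by invoking the general machinery of \cite{Segal-cat}, and no essentially new technical work is required.
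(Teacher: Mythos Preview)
Your proposal is correct and follows essentially the same route as the paper's own proof: reduce to the $1$-object case via the $\Gamma$-$G$-space square, apply Proposition~\ref{L2=indif} on $H$-fixed points at level $\mathbf{1}$, use two-out-of-three to get the levelwise equivalence at every $\mathbf{n}$, and then invoke Segal's Proposition~A.1 to pass through $B$ and finally $\Omega$. The only cosmetic difference is that the paper keeps track of actual homotopy equivalences (as Proposition~\ref{L2=indif} delivers) rather than mere weak equivalences, and cites Proposition~A.1 of \cite{Segal-cat} explicitly for the thick-realization step you flag as ``delicate''.
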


\begin{proof}[Proof of Corollary \ref{L2=indiff}, assuming Proposition \ref{L2=indif} holds:]
Let $H$ be a compact subgroup of $G$ and $n$ be any non-negative integer.
The canonical commutative diagram
$$\begin{CD}
s\underline{\vEc}_{G,L^2}^\varphi(\mathbf{n}) @>>> s\underline{\vEc}_G^\varphi(\mathbf{n}) \\
@VVV @VVV \\
\Bigl(s\underline{\vEc}_{G,L^2}^\varphi(\mathbf{1})\Bigr)^n @>>> \Bigl(s\underline{\vEc}_G^\varphi(\mathbf{1})\Bigr)^n
\end{CD}$$
gives rise, by restriction to sets of points fixed by $H$, to a commutative diagram
$$\begin{CD}
\bigl(s\underline{\vEc}_{G,L^2}^\varphi(\mathbf{n})\bigr)^H @>>> \bigl(s\underline{\vEc}_G^\varphi(\mathbf{n})\bigr)^H \\
@VVV @VVV \\
\Bigl(\bigl(s\underline{\vEc}_{G,L^2}^\varphi(\mathbf{1})\bigr)^H\Bigr)^n @>>>
\Bigl(\bigl(s\underline{\vEc}_G^\varphi(\mathbf{1})\bigr)^H\Bigr)^n.
\end{CD}$$
By Proposition \ref{L2=indif} applied to $p=\varphi(\mathbf{1})$,
the canonical map $\bigl(s\vEc_{G,L^2}^{\varphi(\mathbf{1})}\bigr)^H
\longrightarrow \bigl(s\vEc_G^{\varphi(\mathbf{1})}\bigr)^H$
is a homotopy equivalence.
Hence $\Bigl(\bigl(s\underline{\vEc}_{G,L^2}^\varphi(\mathbf{1})\bigr)^H\Bigr)^n \longrightarrow
\Bigl(\bigl(s\underline{\vEc}_G^\varphi(\mathbf{1})\bigr)^H\Bigr)^n$ is a
homotopy equivalence.
Since $s\underline{\vEc}_{G,L^2}^\varphi$ and $s\underline{\vEc}_G^\varphi$
are both  $\Gamma-G$-spaces, we use the previous commutative diagram
to deduce that the canonical map
$\bigl(s\underline{\vEc}_{G,L^2}^\varphi(\mathbf{n})\bigr)^H \longrightarrow
\bigl(s\underline{\vEc}_G^\varphi(\mathbf{n})\bigr)^H$
is a homotopy equivalence.

Since this is true for any non-negative integer $n$, it follows (by
Proposition A.1 of \cite{Segal-cat}), that the canonical map
$$\bigl(Bs\underline{\vEc}_{G,L^2}^\varphi\bigr)^H \longrightarrow
\bigl(Bs\underline{\vEc}_G^\varphi\bigr)^H$$
is a homotopy equivalence. Finally, taking loop spaces shows that the map
$$\bigl(sKF_{G,L^2}^\varphi\bigr)^H \longrightarrow \bigl(sKF_G^\varphi\bigr)^H$$
is a homotopy equivalence.
\end{proof}

\subsubsection{The proof of Proposition \ref{L2=indif}}

We may assume that $G$ is non-compact. Let $H$ denote a compact subgroup of $G$.
In order to prove that $(s\vEc_{G,L^2}^p)^H \longrightarrow
(s\vEc_G^p)^H$ is a homotopy equivalence, we will construct a particular continuous functor
$$\Func(\mathcal{E}G,p \smod)
\longrightarrow \Func_{L^2}(\mathcal{E}G,p \smod)$$
that maps $H$-invariant functors to $H$-invariant functors.

\vskip 2mm
\noindent Here is the basic idea: given a continuous functor $F: \mathcal{E}G \rightarrow p \smod$, we want
to build a continuous functor $F': \mathcal{E}G \rightarrow p \smod$ such that
$g \mapsto \|F'(1_G,g)\|$ is square integrable. Since only the norm of $F'(1_G,g)$ matters, we only have to
multiply the map $g \mapsto F(1_G,g)$ by some ``pit map'' $\psi$ from $G$ to $\mathbb{R}_+^*$
so that $g \mapsto \psi(g)\|F(1_G,g)\|$ is square integrable. Of course, $\psi$ should depend continuously on
$F$, and should be $H$-invariant.

\vskip 2mm
\noindent We start by defining the functor on the sets of objects.
When $F:\mathcal{E}G \rightarrow p \smod$ is a functor, we define the \emph{dimension} of $F$ as $\dim F:=\dim(F(1_G))$.
We define $\Func^0(\mathcal{E}G,p \smod)$ (resp.\ $\Func^{>0}(\mathcal{E}G,p \smod)$)
as the full subcategory of $\Func(\mathcal{E}G,p \smod)$
whose space of objects consists of those functors of dimension $0$ (resp.\ of positive dimension).
Similarly, we define $\Func^0_{L^2}(\mathcal{E}G,p \smod)$ (resp.\ $\Func^{>0}_{L^2}(\mathcal{E}G,p \smod)$)
as the full subcategory of $\Func_{L^2}(\mathcal{E}G,p \smod)$
whose space of objects consists of those functors of dimension $0$ (resp.\ of positive dimension).

We easily see that
$$\Func^0_{L^2}(\mathcal{E}G,p \smod)=\Func^0(\mathcal{E}G,p \smod),$$
$$\Func(\mathcal{E}G,p \smod) =\Func^0(\mathcal{E}G,p \smod) \coprod \Func^{>0}(\mathcal{E}G,p \smod)$$
and
$$\Func_{L^2}(\mathcal{E}G,p \smod) =\Func^0_{L^2}(\mathcal{E}G,p \smod) \coprod \Func^{>0}_{L^2}(\mathcal{E}G,p \smod).$$
We then set $\Phi(\mathbf{F}):=\mathbf{F}$ for every $\mathbf{F} \in \Func^0(\mathcal{E}G,p \smod)$.

\vskip 2mm
\indent The Haar measure $\mu$ induces a measure on $G/H$
which is finite on compact subsets (i.e. $\sigma$-compact). Since $G/H$ is locally compact, there
exists a map $\lambda : G/H \rightarrow \mathbb{R}_+^*$ which is continuous and square integrable.
We choose such a map. The following lemma is then straightforward:

\begin{lemme}
The map
$$\beta : \begin{cases}
\Ob(\Func_{L^2}^{>0}(\mathcal{E}G,p \smod)) & \longrightarrow C(G,\mathbb{R}_+^*) \\
\mathbf{F} & \longmapsto \left[g \longmapsto \cfrac{\lambda(gH)}{\underset{h \in H}{\max} \|F(1_G,gh)\|}\right]
\end{cases}$$
is well defined and continuous.
\end{lemme}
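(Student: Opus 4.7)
The plan is to dispose of well-definedness first, and then read continuity off the very construction of the topology on $\Func_{L^2}(\mathcal{E}G,p\smod)$; the proof is essentially a chain of continuous operations on function spaces, which is why the authors call the lemma straightforward.

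For well-definedness, I fix $\mathbf{F}$ of positive dimension. The morphism $F(1_G,gh)$ in $p\smod$ is then a similarity between simi-Hilbert spaces of common positive dimension, hence of strictly positive norm. The assignment $(g,h)\mapsto F(1_G,gh)$ is continuous on $G\times H$ as a composition of the group multiplication $G\times H\to G$, the embedding $g'\mapsto (1_G,g')$ into $\Mor(\mathcal{E}G)$, and the morphism part of $\mathbf{F}$. Composing with the continuous norm map on $\Mor(p\smod)$ (which exists because of the $\mathcal{B}\mathbb{R}_+^*$-factor in the construction of $\smod$) shows that $(g,h)\mapsto \|F(1_G,gh)\|$ is continuous and strictly positive. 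Compactness of $H$ then gives continuity and positivity of $g\mapsto \max_{h\in H}\|F(1_G,gh)\|$; combining with the continuity and positivity of $g\mapsto\lambda(gH)$ places $\beta(\mathbf{F})$ in $C(G,\mathbb{R}_+^*)$.

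For continuity of $\beta$, I will exploit the fact that, by construction, the topology on $\Func_{L^2}(\mathcal{E}G,p\smod)$ makes the map into $\Func(\mathcal{E}G,p\smod)\times\mathcal{E}(C_*(G)\times L^2_*(G))$ continuous. In particular the assignment $\mathbf{F}\mapsto [g\mapsto \|F(1_G,g)\|]$, which is the $C_*(G)$-component of $\alpha_p$, is continuous from $\Ob(\Func_{L^2}^{>0}(\mathcal{E}G,p\smod))$ into $C(G,\mathbb{R}_+^*)$ endowed with the compact-open topology. It then remains only to chain three continuous operations on $C(G,\mathbb{R}_+^*)$: the right-translate-and-max operation $f\mapsto [g\mapsto \max_{h\in H}f(gh)]$ associated to the compact group $H$, the inversion in $\mathbb{R}_+^*$, and the multiplication by the fixed continuous function $g\mapsto\lambda(gH)$.

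The only slightly nontrivial ingredient is the continuity of $f\mapsto [g\mapsto \max_{h\in H}f(gh)]$ on $C(G,\mathbb{R}_+^*)$ in the compact-open topology; this is a standard fact, following from uniform continuity of continuous functions on compact sets of the form $KH$ (for $K\subset G$ compact) together with the compactness of $H$. I do not foresee any other obstacle, and the proof should be short.
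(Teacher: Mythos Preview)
Your proof is correct and is precisely the kind of argument the paper has in mind when it calls the lemma ``straightforward'' without supplying a proof. The paper gives no details here; your decomposition into (a) continuity of $\mathbf{F}\mapsto[g\mapsto\|F(1_G,g)\|]$ via $\alpha_p$, followed by (b) the three continuous operations on $C(G,\mathbb{R}_+^*)$ (max over the compact $H$, inversion, multiplication by the fixed function $g\mapsto\lambda(gH)$), is the natural route and matches how the paper uses these same operations a few lines later when proving continuity of $\Phi_1$.
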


Finally, we define a continuous map: $$\gamma : \begin{cases}
C(G,\mathbb{R}_+^*) \times
\Ob\left(\Func^{>0}(\mathcal{E}G,p \smod)\right) & \longrightarrow \Ob\left(\Func^{>0}(\mathcal{E}G,p \smod)\right) \\
(\psi,\mathbf{F}) & \longmapsto
\begin{cases}
g & \longmapsto \mathbf{F}(g) \\
(g,g') & \longmapsto \frac{\psi(g')}{\psi(g)}\cdot \mathbf{F}(g,g')
\end{cases}
\end{cases}$$
and consider the composite map:
\begin{multline*}
\Phi : \Ob\left(\Func^{>0}(\mathcal{E}G,p \smod)\right) \Right5{(\beta,\id)}
C(G,\mathbb{R}_+^*) \times \Ob\left(\Func^{>0}(\mathcal{E}G,p \smod)\right) \\
\overset{\gamma}{\longrightarrow} \Ob\left(\Func(\mathcal{E}G,p \smod)\right),
\end{multline*}
which is clearly continuous.

\vskip 2mm
Given a functor
$\mathbf{F}:\mathcal{E}G \rightarrow p \smod$ of positive dimension,
$\Phi(\mathbf{F})$ is an object of $\Func_{L^2}(\mathcal{E}G,p \smod)$.
Indeed, for every $g \in G$,
$$\|\Phi(\mathbf{F})(1_G,g)\|=
\frac{1}{\beta(\mathbf{F})(1_G)}\cfrac{\|\mathbf{F}(1_G,g)\|}{\underset{h \in H}{\max}\,\|\mathbf{F}(1_G,gh)\|}\,\lambda(gH)
\leq \frac{1}{\beta(\mathbf{F})(1_G)}\, \lambda(gH)$$
Since $\lambda$ is square integrable, $g \mapsto \|\Phi(\mathbf{F})(1_G,g)\|$ is square integrable.
Hence $\Phi(\mathbf{F}) \in \Ob(\Func_{L^2}(\mathcal{E}G,p \smod))$.

\vskip 2mm
\noindent
We have thus constructed a map
$$\Phi : \Ob\bigl(\Func(\mathcal{E}G,p \smod)\bigr)
\longrightarrow \Ob\bigl(\Func_{L^2}(\mathcal{E}G,p \smod)\bigr).$$

\begin{lemme}
The map $\Phi$ is continuous.
\end{lemme}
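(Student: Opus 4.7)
The plan is to reduce continuity of $\Phi$ to three checkable components by exploiting the definitions of the topologies involved. Since the dimension map on $\Ob(\Func(\mathcal{E}G, p\smod))$ is locally constant, this space splits into the clopen pieces $\Ob(\Func^0(\mathcal{E}G, p\smod))$ and $\Ob(\Func^{>0}(\mathcal{E}G, p\smod))$, and it suffices to verify continuity on each. On the dimension-zero part, $\Phi$ is literally the identity into $\Func^0_{L^2}(\mathcal{E}G, p\smod) = \Func^0(\mathcal{E}G, p\smod)$, and there is nothing to prove.

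On the positive-dimensional part, the topology on $\Ob(\Func_{L^2}(\mathcal{E}G, p\smod))$ is by construction the initial topology for the pair of maps into $\Ob(\Func(\mathcal{E}G, p\smod))$ (the forgetful map) and into $C_*(G) \times L^2_*(G)$ (given by $\alpha_p$ followed by the diagonal inclusion). Continuity of $\Phi$ therefore reduces to: (i) continuity of $\Phi$ into $\Ob(\Func(\mathcal{E}G, p\smod))$, which is clear since $\Phi$ is the composite $\gamma \circ (\beta,\id)$ of continuous maps; (ii) continuity of $\mathbf{F} \mapsto \bigl[g \mapsto \|\Phi(\mathbf{F})(1_G, g)\|\bigr]$ into $C_*(G)$; and (iii) continuity of the same map into $L^2_*(G)$. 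For (ii), the explicit formula $\|\Phi(\mathbf{F})(1_G, g)\| = \frac{\beta(\mathbf{F})(g)}{\beta(\mathbf{F})(1_G)}\|\mathbf{F}(1_G, g)\|$ exhibits the function as a product of three factors, each depending continuously on $\mathbf{F}$ in the compact-open topology (using the preceding lemma for $\beta$, the local compactness of $G$ for joint continuity of the product, and the very definition of the source topology for $\|\mathbf{F}(1_G,-)\|$).

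The main obstacle is (iii), i.e.\ promoting compact-open to $L^2$ continuity. Here I would invoke the uniform domination $\|\Phi(\mathbf{F})(1_G, g)\| \leq \lambda(gH)/\beta(\mathbf{F})(1_G)$ already displayed. Fix $\mathbf{F}_0$ and choose a neighborhood $U$ of $\mathbf{F}_0$ on which $\beta(\cdot)(1_G) \geq c > 0$; then for every $\mathbf{F} \in U$, the function $g \mapsto \|\Phi(\mathbf{F})(1_G, g)\|$ is dominated pointwise by $g \mapsto \lambda(gH)/c$, which lies in $L^2(G)$. Given $\epsilon > 0$, select a compact $K \subset G$ outside of which the $L^2$-norm of the dominating function is less than $\epsilon/4$; the triangle inequality then bounds the $L^2$-tail $\|\Phi(\mathbf{F})(1_G,\cdot) - \Phi(\mathbf{F}_0)(1_G,\cdot)\|_{L^2(G \setminus K)}$ by $\epsilon/2$, uniformly on $U$. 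Inside $K$, compactness combined with the compact-open continuity from (ii) yields a sub-neighborhood of $\mathbf{F}_0$ on which the sup-distance on $K$ is smaller than $\epsilon/(2\sqrt{\mu(K)})$, so that the $L^2(K)$-distance is at most $\epsilon/2$. Adding the two estimates gives $L^2$-continuity of $\Phi$ at $\mathbf{F}_0$. The delicate point is precisely this simultaneous use of a uniform $L^2$-tail bound (to control behaviour at infinity) and compact-open continuity (to control behaviour on each compact piece), a sort of dominated-convergence argument in local uniform form.
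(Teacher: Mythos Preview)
Your proof is correct and follows essentially the same strategy as the paper: reduce to the $L^2$-continuity component via the initial topology, then control the tail off a compact set using the uniform domination by a multiple of $\lambda(\cdot H)$ and control the compact piece using compact-open continuity. The paper streamlines step (iii) slightly by factoring through an auxiliary map $\Phi_1 : C(G,\mathbb{R}_+^*) \to L^2(G,\mathbb{R}_+^*)$, $f \mapsto \bigl[g \mapsto \tfrac{f(g)}{\max_{h\in H} f(gh)}\,\lambda(gH)\bigr]$, which is uniformly bounded by $\lambda(\cdot H)$ itself (no auxiliary constant $c$ needed), but this is a cosmetic simplification rather than a different idea.
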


\begin{proof}
It suffices to prove that the composite map
$$\Ob\left(\Func^{>0}(\mathcal{E}G,p \smod)\right)
\overset{\Phi}{\longrightarrow} \Ob\left(\Func_{L^2}^{>0}(\mathcal{E}G,p \smod)\right) \overset{\alpha_p}{\longrightarrow} L^2(G)$$
is continuous. It thus suffices to establish the continuity of
$$\Phi_1 :\begin{cases}
C(G,\mathbb{R}_+^*) & \longrightarrow L^2(G,\mathbb{R}_+^*) \\
f & \longmapsto \left[g \mapsto \cfrac{f(g)}{\underset{h \in H}{\max}\,f(gh)}\,\lambda(gH)\right].
\end{cases}$$
Let $f \in C(G,\mathbb{R}_+^*)$ and $\epsilon >0$. Since $G/H$ is $\sigma$-compact, we may choose
a compact subset $K$ of $G$ such that $\int_{G \setminus K} \lambda(gH)\diff g \leq \epsilon$.
We already know that the map
$$\begin{cases}
C(G,\mathbb{R}_+^*) & \longrightarrow C(G,\mathbb{R}_+^*) \\
f & \longmapsto \left[g \mapsto \frac{f(g)}{\underset{h \in H}{\max}f(gh)} \lambda(gH)\right]
\end{cases}$$
Hence $$O_\epsilon :=\left\{f_1 \in C(G,\mathbb{R}_+^*):
\|\Phi_1(f_1)-\Phi_1(f)\|_\infty^K <\sqrt{\frac{\epsilon}{\mu(K)}} \right\}$$
is an open subset of $C(G,\mathbb{R}_+^*)$ which contains $f$. \\
For every $f_1\in O_\epsilon$,
$$\int_G \big|\Phi_1(f_1)-\Phi_1(f)\big|^2\diff \mu
\leq 2\left(\int_{G \setminus K}(\big|\Phi_1(f_1)\big|^2 +\big|\Phi_1(f)\big|^2)\diff \mu \right)
+ \int_K \big|\Phi_1(f_1)-\Phi_1(f)\big|^2 \diff \mu
\leq 5\epsilon.$$
It follows that $\Phi_1$ is continuous, hence $\Phi$ also is.
\end{proof}

We may now define $\Phi$ on morphisms by:
$$\Phi: \begin{cases}
\Mor(\Func(\mathcal{E}G,p \smod)) & \longrightarrow \Mor(\Func_{L^2}(\mathcal{E}G,p \smod)) \\
\mathbf{F} \overset{\eta}{\longrightarrow} \mathbf{F}' & \longmapsto
\begin{cases}
\left[g \longmapsto \cfrac{\beta(\mathbf{F}')[g]}{\beta(\mathbf{F})[g]}\cdot \eta_g \right] &
\text{if} \quad \dim \mathbf{F}>0 \\
[g \mapsto \eta_g] & \text{otherwise},
\end{cases}
\end{cases}$$
which yields a continuous functor
$$\Phi : \Func(\mathcal{E}G,p \smod) \longrightarrow
\Func_{L^2}(\mathcal{E}G,p \smod).$$

Since $\tilde{\beta}(x)$ is $H$-invariant for every $x\in \left]0,+\infty\right[^\mathbb{N}$, $\Phi$ induces a functor
$$\Phi^H: \Func(\mathcal{E}G,p \smod)^H
\longrightarrow \Func_{L^2}(\mathcal{E}G,p \smod)^H.$$
We define $i^H: \Func_{L^2}(\mathcal{E}G,p \smod)^H
\longrightarrow \Func(\mathcal{E}G,p \smod)^H$ as the functor induced by inclusion of categories.

\vskip 2mm
Finally, we define a continuous natural transformation
$\eta : \id_{\Func_{L^2}(\mathcal{E}G,p \smod)^H} \longrightarrow
\Phi^H \circ i^H$ by:
$$\eta: \mathbf{F} \longmapsto
\begin{cases}
 \left[g \mapsto \beta(\mathbf{F})[g].\id_{\mathbf{F}(g)} \right] & \text{if} \quad \dim(\mathbf{F})>0 \\
\left[g \mapsto \id_{\mathbf{F}(g)}\right] & \text{otherwise},
\end{cases}$$
and a continuous natural transformation
$\epsilon : \id_{\Func(\mathcal{E}G,p \smod)^H} \longrightarrow
i^H \circ \Phi^H$ by
$$\epsilon: \mathbf{F} \longmapsto
\begin{cases}
\left[g \mapsto \beta(\mathbf{F})[g].\id_{\mathbf{F}(g)}\right] & \text{if} \quad \dim(\mathbf{F})>0 \\
\left[g \mapsto \id_{\mathbf{F}(g)}\right] & \text{otherwise}.
\end{cases}
$$
We conclude that $|\Phi^H|$ is a homotopy-inverse of $|i^H|$. This finishes the proof of Proposition \ref{L2=indif}.

\subsection{A universal bundle over $|\Func_{L^2}(\mathcal{E}G,\varphi \smod)|$}\label{10.1.4}

\begin{Def}
Let $\varphi$ be a 1-object of $\Gamma \text{-Fib}_F$. We define
$\Func_{L^2}(\mathcal{E}G,\varphi \sBdl)$ as the full-subcategory
of $\Func(\mathcal{E}G,\varphi \sBdl)$ whose set of objects
is the inverse image of
$\Ob(\Func_{L^2}(\mathcal{E}G,\varphi \smod))$ by the canonical functor
$\Func(\mathcal{E}G,\varphi \sBdl) \longrightarrow
\Func(\mathcal{E}G,\varphi \smod).$ \\
We then equip $\Func_{L^2}(\mathcal{E}G,\varphi \sBdl)$ with the topological
category structure induced by the canonical functor
$$\Func_{L^2}(\mathcal{E}G,\varphi \sBdl)
\longrightarrow \Func_{L^2}(\mathcal{E}G,\varphi \smod)
\times \Func(\mathcal{E}G,\varphi \sBdl).$$

Similarly, if $n$ is a non-negative integer and $\varphi$ an $n$-dimensional $1$-object of $\Gamma \text{-Fib}_F$,
we define $\Func_{L^2}(\mathcal{E}G,\varphi \sframe)$ as the full subcategory
of $\Func(\mathcal{E}G,\varphi \sframe)$ whose set of objects
is the inverse image of $\Ob(\Func_{L^2}(\mathcal{E}G,\varphi \smod))$ by the canonical functor
$\Func(\mathcal{E}G,\varphi \sframe) \longrightarrow \Func(\mathcal{E}G,\varphi \smod)$.

We then equip $\Func_{L^2}(\mathcal{E}G,\varphi \sframe)$ with the topology induced by the canonical functor
$$\Func_{L^2}(\mathcal{E}G,\varphi \sframe)
\longrightarrow \Func_{L^2}(\mathcal{E}G,\varphi \smod)
\times \Func(\mathcal{E}G,\varphi \sframe).$$
For any $1$-object $\varphi$ of $\Gamma \text{-Fib}_F$, set:
$$Es\vEc_{G,L^2}^\varphi:=|\Func_{L^2}(\mathcal{E}G,\varphi \sBdl)|.$$
If $\varphi$ has dimension $n\in \mathbb{N}$, set:
$$s\widetilde{\vEc}_{G,L^2}^\varphi:=|\Func_{L^2}(\mathcal{E}G,\varphi \sframe)|.$$
\end{Def}
With those definitions, we find (compare with Theorem 2.6 of \cite{Ktheo1}):

\begin{prop}\label{theoL2}
Let $n\in \mathbb{N}$ and $\varphi$ be an $n$-dimensional $1$-object of $\Gamma \text{-Fib}_F$. Then:
\begin{itemize}
\item $s\widetilde{\vEc}_{G,L^2}^\varphi \rightarrow
s\vEc_{G,L^2}^\varphi$ is a $(G,\Sim_n(F))$-principal bundle;
\item $Es\vEc_{G,L^2}^\varphi \rightarrow s\vEc_{G,L^2}^\varphi$
is an $n$-dimensional $G$-simi-Hilbert bundle;
\item The morphism
$s\widetilde{\vEc}_{G,L^2}^\varphi \times _{\Sim_n(F)}F^n \rightarrow
Es\vEc_{G,L^2}^\varphi$ is an isomorphism of $G$-simi-Hilbert bundles.
\end{itemize}
\end{prop}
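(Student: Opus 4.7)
The overall strategy is to deduce the $L^2$-statements from Theorem 2.6 of \cite{Ktheo1}, which provides the analogous three assertions in the non-$L^2$ setting, by identifying the three $L^2$-spaces as pullbacks along the continuous $G$-map $\pi : s\vEc_{G,L^2}^\varphi \longrightarrow s\vEc_G^\varphi$ coming from the inclusion of categories.

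For the first assertion, I would begin by observing that the right $\Sim_n(F)$-action on $\Func(\mathcal{E}G,\varphi\sframe)$ preserves the $L^2$-subcategory: this is because the $L^2$-condition only involves norms of the underlying $\smod$-valued functor, and these are fixed by the action on frames. Moreover, the quotient of $\Func_{L^2}(\mathcal{E}G,\varphi\sframe)$ by this free action is, by construction, $\Func_{L^2}(\mathcal{E}G,\varphi\smod)$. Local triviality of $s\widetilde{\vEc}_{G,L^2}^\varphi \rightarrow s\vEc_{G,L^2}^\varphi$ would then be obtained by pulling back along $\pi$ the local trivializations of $s\widetilde{\vEc}_G^\varphi \rightarrow s\vEc_G^\varphi$ provided by Theorem 2.6 of \cite{Ktheo1}.

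The main technical point, which I expect to be the principal obstacle, is the pullback identification
\[
s\widetilde{\vEc}_{G,L^2}^\varphi \;\cong\; s\vEc_{G,L^2}^\varphi \underset{s\vEc_G^\varphi}{\times} s\widetilde{\vEc}_G^\varphi,
\]
and its analogue for $Es\vEc_{G,L^2}^\varphi$. To establish it, I would first check at each simplicial level that
\[
\mathcal{N}(\Func_{L^2}(\mathcal{E}G,\varphi\sframe))_m \;\cong\; \mathcal{N}(\Func_{L^2}(\mathcal{E}G,\varphi\smod))_m \underset{\mathcal{N}(\Func(\mathcal{E}G,\varphi\smod))_m}{\times} \mathcal{N}(\Func(\mathcal{E}G,\varphi\sframe))_m
\]
as k-spaces; this is a direct consequence of Lemma \ref{L2nerve}, of the pullback description of the $L^2$-categories given in Section \ref{10.1.4}, and of the fact that morphisms in $\Func(\mathcal{E}G,\varphi\sframe)$ and $\Func(\mathcal{E}G,\varphi\sBdl)$ already determine the underlying morphisms in $\Func(\mathcal{E}G,\varphi\smod)$. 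I would then invoke the commutation of thin geometric realization with pullbacks along locally trivial fibrations, which is automatic once the local trivializations of the previous step are in hand, since the pullback is locally a product.

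Finally, the second assertion follows by repeating this analysis with $\sframe$ replaced by $\sBdl$: Theorem 2.6 provides an $n$-dimensional $G$-simi-Hilbert bundle $Es\vEc_G^\varphi \rightarrow s\vEc_G^\varphi$, and its pullback along $\pi$ is an $n$-dimensional $G$-simi-Hilbert bundle that coincides with $Es\vEc_{G,L^2}^\varphi \rightarrow s\vEc_{G,L^2}^\varphi$ via the same nerve-level pullback identification. The third assertion is then immediate: the canonical isomorphism $s\widetilde{\vEc}_G^\varphi \times_{\Sim_n(F)} F^n \cong Es\vEc_G^\varphi$ from Theorem 2.6 of \cite{Ktheo1} pulls back under $\pi$ to the asserted isomorphism, since the associated-bundle construction commutes with pullback of principal bundles.
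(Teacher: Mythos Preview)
Your approach is genuinely different from the paper's, and the difference is worth flagging. The paper does \emph{not} argue via pullback along $\pi$; instead it simply re-runs the proof of Theorem~2.6 of \cite{Ktheo1} in the $L^2$ topology, observing that the specific maps $\nu_m,\epsilon_m,\chi_m,\psi_{g,m}$ which implement the local trivializations at each nerve level remain continuous once the topology is refined, and that this continuity follows from Lemma~\ref{L2nerve}. Everything else in the original argument then carries over verbatim.

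Your pullback strategy is reasonable in spirit, but the step you call ``automatic'' is where the work hides. Thin geometric realization in k-spaces preserves finite products, not arbitrary pullbacks; the identification
\[
\bigl|\Func_{L^2}(\mathcal{E}G,\varphi\sframe)\bigr| \;\cong\; s\vEc_{G,L^2}^\varphi \underset{s\vEc_G^\varphi}{\times} s\widetilde{\vEc}_G^\varphi
\]
does not follow formally from the levelwise pullback together with Lemma~\ref{L2nerve}. Your justification via local triviality implicitly assumes that the trivializing neighbourhoods in $s\vEc_G^\varphi$ are themselves realizations of simplicial open sets, so that one can reduce ``levelwise'' to a product and then use preservation of products. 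That is true here, but only because the trivializations in Theorem~2.6 are built degree by degree from the maps $\nu_m,\epsilon_m,\chi_m,\psi_{g,m}$; once you unpack this, you are checking exactly the continuity statements the paper checks. So your argument is not wrong, but its abstract packaging conceals that the substantive content is the same continuity verification the paper performs directly, and your sketch understates how much needs to be said at that point.
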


\begin{proof}
One needs to go back to the details of the proof of Theorem 2.6 of \cite{Ktheo1} adapted to the case of simi-Hilbert bundles.
The careful reader will then easily check that the corresponding maps
$\nu_m$, $\epsilon_m$, $\chi_m$, et $\psi_{g,m}$ are continuous in the present situation (the proof of this claim
relies upon Lemma \ref{L2nerve}).
The rest of the proof is identical to that of Theorem 2.6 of \cite{Ktheo1}.
\end{proof}

For $\varphi=\text{Fib}^{F^{(\infty)}}$,
set now
$$s\vEc_{G,L^2}^{F,\infty}:=s\vEc_{G,L^2}^{\varphi(\mathbf{1})} \quad \text{and} \quad
Es\vEc_{G,L^2}^{F,\infty}:=Es\vEc_{G,L^2.}^{\varphi(\mathbf{1})}$$
The canonical injection $\Func_{L^2}(\mathcal{E}G,\varphi(\mathbf{1}) \sBdl)
\rightarrow \Func(\mathcal{E}G,\varphi(\mathbf{1}) \sBdl)$
then induces a strong morphism of $G$-simi-Hilbert bundles
$$\begin{CD}
Es\vEc_{G,L^2}^{F,\infty} @>>> Es\vEc_G^{F,\infty} \\
@VVV @VVV \\
s\vEc_{G,L^2}^{F,\infty} @>>> s\vEc_G^{F,\infty}.
\end{CD}$$

It follows that, for every proper $G$-CW-complex $X$, the composite map

$$[X,s\vEc_{G,L^2}^{F,\infty}]_G \overset{\cong}{\longrightarrow}
[X,s\vEc_G^{F,\infty}]_G
\overset{\cong}{\longrightarrow} s\VEct_G^F(X)$$
is the one obtained by pulling-back the $G$-simi-Hilbert bundle
$Es\vEc_{G,L^2}^{F,\infty} \rightarrow s\vEc_{G,L^2}^{F,\infty}$. This last morphism is thus
systematically an isomorphism of monoids, which yields:

\begin{prop}
The $G$-simi-Hilbert bundle $Es\vEc_{G,L^2}^{F,\infty} \rightarrow s\vEc_{G,L^2}^{F,\infty}$ is
universal amongst $G$-simi-Hilbert bundles over proper $G$-CW-complexes.
\end{prop}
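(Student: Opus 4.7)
The plan is to reduce the claim to the already-established universality of the non-$L^2$ bundle $Es\vEc_G^{F,\infty} \to s\vEc_G^{F,\infty}$ together with Proposition \ref{L2=indif}. The paragraph preceding the statement already exhibits a commutative square of strong morphisms of $G$-simi-Hilbert bundles relating the two candidates; by naturality of pullback, the composite
$$[X,s\vEc_{G,L^2}^{F,\infty}]_G \longrightarrow [X,s\vEc_G^{F,\infty}]_G \overset{\cong}{\longrightarrow} s\VEct_G^F(X)$$
is precisely the classifying map obtained by pulling back the bundle $Es\vEc_{G,L^2}^{F,\infty} \to s\vEc_{G,L^2}^{F,\infty}$. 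So universality amounts to the assertion that this composite is bijective for every proper $G$-CW-complex $X$.

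First I would show that the first arrow is a bijection. Apply Proposition \ref{L2=indif} to the $1$-object $\varphi := \text{Fib}^{F^{(\infty)}}(\mathbf{1})$: for every compact subgroup $H$ of $G$, the map $\bigl(s\vEc_{G,L^2}^{F,\infty}\bigr)^H \to \bigl(s\vEc_G^{F,\infty}\bigr)^H$ is a plain homotopy equivalence, so $s\vEc_{G,L^2}^{F,\infty} \to s\vEc_G^{F,\infty}$ is a $G$-weak equivalence. Adjoining a disjoint basepoint to $X$ and using the standard identification $[X,Y]_G \cong [X_+,Y_+]_G^\bullet$, Proposition \ref{pointedweakeq} then yields the desired bijection on equivariant homotopy classes.

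Second, the bijectivity of the second arrow is exactly the universality of $Es\vEc_G^{F,\infty} \to s\vEc_G^{F,\infty}$ over $G$-CW-complexes, recalled earlier in the review section. Composing the two bijections and recognising the composite as the classifying map of $Es\vEc_{G,L^2}^{F,\infty} \to s\vEc_{G,L^2}^{F,\infty}$ (by the commutative square above and functoriality of pullback along strong morphisms of $G$-simi-Hilbert bundles) gives exactly the universal property.

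The proof is essentially bookkeeping: the substantive step, Proposition \ref{L2=indif}, has already been carried out. The one point that deserves care is the identification of the composite map with a pullback map, but this is forced by the commutativity of the displayed square of bundles and by the fact that both vertical arrows in that square are strong morphisms of $G$-simi-Hilbert bundles, so that pulling back along the bottom arrow and then along the right-hand bundle projection coincides with pulling back along the left-hand bundle projection.
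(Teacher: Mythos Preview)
Your proposal is correct and matches the paper's own argument essentially line for line: the paper's proof is the paragraph immediately preceding the proposition, which identifies the pullback map with the displayed composite via the strong-morphism square and then observes that both arrows are bijections (the second by the recalled universality of $Es\vEc_G^{F,\infty}$, the first implicitly by Proposition~\ref{L2=indif}). You have simply made explicit the appeal to Proposition~\ref{L2=indif} and Proposition~\ref{pointedweakeq} that the paper leaves tacit when it decorates the first arrow with $\cong$.
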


\section{The simplicial $G$-space $s\vEc_{G,L^2}^{F,\infty *}$}\label{10.2}

\subsection{Categorical background}\label{10.2.1}

\begin{Def}
Let $S$ be a totally ordered set.
A subset $I$ of $S$ is said to be an \textbf{interval} of $S$ when
$$\forall (x,y,z)\in I^2 \times S,\quad  x <z<y \Rightarrow z \in I.$$
For $(S_1,S_2)\in \mathcal{P}(S)^2$, we will write $S_1 <S_2$ when $\forall (x,y)\in S_1 \times S_2,\; x<y$, and
$S_1 \leq S_2$ when $\forall y \in S_2, \exists x \in S_1 : \, x \leq y$.
\end{Def}

\begin{Def}
Let $S$ and $T$ be two totally ordered sets, and $f:\mathcal{P}(S) \rightarrow \mathcal{P}(T)$ be a map.  \\
We say that $f$ is \textbf{increasing} when
$$\forall (S_1,S_2) \in \mathcal{P}(S)^2, \; S_1<S_2 \Rightarrow f(S_1) < f(S_2).$$
We say that $f$ is \textbf{hole-free} when $f(I)$ is an interval of $T$ for every interval $I$ of $S$.
\end{Def}

\textbf{Remarks :}
\begin{enumerate}[(i)]
\item If $S_1, S_2, S_3$ are totally ordered sets, and $f:\mathcal{P}(S_1) \rightarrow \mathcal{P}(S_2)$
and $g:\mathcal{P}(S_2) \rightarrow \mathcal{P}(S_3)$ are two maps, we easily see that
$g \circ f$ is increasing if $f$ and $g$ are both increasing, and
$g \circ f$ is hole-free if $f$ and $g$ are both hole-free.
\item If $S$ is a totally ordered finite set, $T$ is a totally ordered set, and $f: \mathcal{P}(S) \rightarrow \mathcal{P}(T)$
is a map which respects disjoint unions and such that $f(S)$ is finite, then
\begin{itemize}
\item $f$ is increasing if and only if $\forall (k,k')\in S^2, \; k<k' \Rightarrow f(\{k\}) < f(\{k'\})$;
\item in the case $f$ is increasing, it is hole-free if and only if
$f(S)=\emptyset$ or $f(S)=\{k\in T : \inf f(S) \leq k \leq \sup f(S)\}$.
\end{itemize}
\end{enumerate}

\begin{Def}\label{gammastar}
We define $\Gamma^*$ as the category whose
objects are the totally ordered finite sets, and whose morphisms are the morphisms of $\Gamma$
that are both increasing and hole-free. Forgetting the order structures gives rise to a ``forgetful'' functor
$\Gamma^* \rightarrow \Gamma$.
\end{Def}

The previous remarks make it then easy to check that the canonical
functor: $\Delta \rightarrow \Gamma$ induces a functor $\Delta \rightarrow \Gamma^*$.

\subsection{The simplicial $G$-space $s\vEc_{G,L^2}^{F,m*}$}\label{10.2.2}

\begin{Def}
For every totally ordered finite set $S$, we define
$\Gamma^*(S)$ as the subset of $\Gamma(S)$
consisting of those maps $\mathcal{P}(S) \rightarrow
\mathcal{P}(\mathbb{N})$ which are increasing and hole-free.
We define a structure of poset on $\Gamma^*(S)$ by
$$\forall (f,g)\in \Gamma^*(S)^2, \; f \leq g \underset{\text{def}}{\Longleftrightarrow} \bigl(\forall X \in \mathcal{P}(S),\; f(X) \leq g(X)\bigr).$$
\end{Def}

Every morphism $f:S \rightarrow T$ in $\Gamma^*$ induces a non-decreasing map
$f^*:\Gamma^*(T) \rightarrow \Gamma^*(S)$ by precomposition.

\vskip 2mm
Let $\mathcal{H}$ be an inner product space that is either finite-dimensional
or isomorphic to $F^{(\infty)}$.
For every object $S$ of $\Gamma^*$, we set
$$\text{Fib}^{\mathcal{H}*}(S):=
\left(S,\underset{f \in \Gamma^*(S)}{\coprod}\underset{s \in S}{\prod}
G_{f(s)}(\mathcal{H}),
\left(\underset{f \in \Gamma^*(S)}{\coprod}\left[p_{f(s)}
(\mathcal{H}) \times \underset{s' \in S\setminus\{s\}}{\prod}
\id_{G_{f(s')}(\mathcal{H})}\right]\right)_{s \in S}\right).$$
Since $\Gamma^*(S) \subset \Gamma(S)$, we may view $\text{Fib}^{\mathcal{H}*}(S)$ as a sub-object
of $\text{Fib}^{\mathcal{H}}(S)$ in the category $\Gamma \text{-Fib}_F$.
The morphism $\text{Fib}^{\mathcal{H}}(T) \longrightarrow \text{Fib}^{\mathcal{H}}(S)$
induced by a morphism $f : S \rightarrow T$ in $\Gamma^*$ then yields a morphism
$\text{Fib}^{\mathcal{H}*}(T) \longrightarrow \text{Fib}^{\mathcal{H}*}(S)$. This defines a contravariant functor
$\text{Fib}^{\mathcal{H}*}:\Gamma^* \longrightarrow \Gamma \text{-Fib}_F$. The canonical morphisms
$\text{Fib}^{\mathcal{H}*}(S) \longrightarrow \text{Fib}^{\mathcal{H}}(S)$ then induce a
natural transformation from $\text{Fib}^{\mathcal{H}*}$ to the composite of $\text{Fib}^{\mathcal{H}}:\Gamma \rightarrow \Gamma \text{-Fib}_F$
with the forgetful functor $\Gamma^* \rightarrow \Gamma$.

\vskip 2mm
\noindent
For every object $S$ of $\Gamma^*$, we equip the set of objects of $\text{Fib}^{\mathcal{H}*}(S) \smod$
with the following preorder: $$\forall (f,g)\in \Gamma^*(S)^2, \;\forall (x,y)\in \left(\underset{s \in S}{\prod}
G_{f(s)}(\mathcal{H})\right) \times \left(\underset{s \in S}{\prod} G_{g(s)}(\mathcal{H})\right), \quad x \leq y
\underset{\text{def}}{\Longleftrightarrow} f\leq g.$$
\begin{Def}
A morphism $x \overset{\varphi}{\rightarrow} y$ in $\text{Fib}^{\mathcal{H}*}(S) \smod$ is called \textbf{non-decreasing} when
$x \leq y$.
\end{Def}
It is then obvious that, for every morphism $\gamma : S \rightarrow T$ in $\Gamma^*$,
$\text{Fib}^{\mathcal{H}*}(\gamma) \smod$ maps the set of non-decreasing morphisms of $\text{Fib}^{\mathcal{H}*}(T) \smod$
into the set of non-decreasing morphisms of $\text{Fib}^{\mathcal{H}*}(S) \smod$.
\label{increasingfunc}
Finally, we let $\Func_{\uparrow L^2}\left(\mathcal{E}G,\text{Fib}^{\mathcal{H}*}(S) \smod \right)$
denote the subcategory of $\Func_{L^2}\left(\mathcal{E}G,\text{Fib}^{\mathcal{H}*}(S) \smod \right)$
which has the same space of objects and whose morphisms are the natural transformations that map $G$ into the set
of non-decreasing morphisms of $\text{Fib}^{\mathcal{H}*}(S)\smod$.

Composing $\text{Fib}^{\mathcal{H}*}$ with the canonical functor $\Delta \rightarrow \Gamma^*$, we
obtain a functor  $\Delta \rightarrow \Gamma \text{-Fib}_F$
which we will still denote by $\text{Fib}^{\mathcal{H}*}$.

Finally, we recover a simplicial $G$-space:
$$s\vEc_{G,L^2}^{\text{Fib}^{\mathcal{H}*}}: [n] \longmapsto
\left|\Func_{\uparrow L^2}\left(\mathcal{E}G,\text{Fib}^{\mathcal{H}*}([n]) \smod
\right)\right|.$$
With a slight abuse of notation, we will also denote by $s\vEc_{G,L^2}^{\text{Fib}^\mathcal{H}}$
the simplicial $G$-space deduced from the equivariant $\Gamma$-space $s\underline{\vEc}_{G,L^2}^{\text{Fib}^\mathcal{H}}$.
The previous natural transformation yields a morphism of simplicial $G$-spaces:
$s\vEc_{G,L^2}^{\text{Fib}^{\mathcal{H}*}} \longrightarrow s\vEc_{G,L^2}^{\text{Fib}^\mathcal{H}}$.
Our main result follows:

\begin{prop}\label{increasindif}
For every $n \in \mathbb{N}$,
$s\vEc_{G,L^2}^{\text{Fib}^{\mathcal{H}*}}[n]
\rightarrow s\vEc_{G,L^2}^{\mathcal{H}}[n]$
is an equivariant homotopy equivalence.
\end{prop}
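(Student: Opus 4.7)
The plan is to exhibit an explicit equivariant homotopy inverse via a \emph{canonicalization} functor $R$ that sends every label $f \in \Gamma(\mathbf{n})$ to the increasing hole-free label $f^\sharp \in \Gamma^*(\mathbf{n})$ of the same size vector, anchored at position $0$. The choice of anchoring at $0$ turns out to be precisely what makes the required natural transformations non-decreasing on the source side, avoiding any need for zigzag arguments through Whitney-type rotations.

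Concretely, for $f \in \Gamma(\mathbf{n})$ with size vector $(n_s)_{s \in \mathbf{n}}$, set
$$
f^\sharp(\{s\}) := \bigl\{n_1 + \cdots + n_{s-1},\, \ldots,\, n_1 + \cdots + n_s - 1\bigr\}.
$$
The unique order-preserving bijection $f(s) \to f^\sharp(s)$ induces an isometric isomorphism $\mathcal{H}^{f(s)} \cong \mathcal{H}^{f^\sharp(s)}$, and conjugating similarities along these isomorphisms yields a continuous $G$-equivariant functor $R : \text{Fib}^{\mathcal{H}}(\mathbf{n}) \smod \to \text{Fib}^{\mathcal{H}*}(\mathbf{n}) \smod$. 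Since $R$ is built from isometric identifications, it preserves the norm of every similarity and therefore lifts to a continuous $G$-equivariant functor $R_* : \Func_{L^2}(\mathcal{E}G, \text{Fib}^{\mathcal{H}}(\mathbf{n}) \smod) \to \Func_{\uparrow L^2}(\mathcal{E}G, \text{Fib}^{\mathcal{H}*}(\mathbf{n}) \smod)$; the lift lands in $\Func_{\uparrow L^2}$ because morphisms in $\Func_{L^2}$ only exist between functors of equal size vector, and $R$ collapses such a pair to the common label $f^\sharp$, so the induced natural transformations have equal source and target labels and are trivially non-decreasing.

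Writing $\iota_*$ for the inclusion, the inverse isometries $\mathcal{H}^{f^\sharp(s)} \to \mathcal{H}^{f(s)}$ define continuous $G$-equivariant natural transformations $\eta : \iota_* R_* \Rightarrow \operatorname{id}$ on the target side and $\eta' : R_* \iota_* \Rightarrow \operatorname{id}$ on the source side. The key check is that $\eta'$ is non-decreasing, i.e., $f^\sharp \leq f$ in the poset $\Gamma^*(\mathbf{n})$: for $f \in \Gamma^*(\mathbf{n})$ with $a := \min f \geq 0$, one computes $\min f(\{s\}) = a + n_1 + \cdots + n_{s-1} \geq n_1 + \cdots + n_{s-1} = \min f^\sharp(\{s\})$, giving $f^\sharp \leq f$. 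It is exactly here that anchoring $f^\sharp$ at position $0$ matters: any other canonical choice would fail this inequality and force a detour through auxiliary shifts $\sigma_N : i \mapsto i + N$ of $\mathbb{N}$ together with Whitney-type rotations in direct sums $\mathcal{H}^{f(s)} \oplus \mathcal{H}^{\sigma_N f^\sharp(s)}$. Given $\eta$ and $\eta'$, passing to geometric realizations yields $G$-equivariant paths $|\iota_* R_*| \simeq |\operatorname{id}|$ and $|R_* \iota_*| \simeq |\operatorname{id}|$, so $|\iota_*|$ and $|R_*|$ are $G$-homotopy inverses. The remaining compatibilities --- continuity on each label component, $G$-equivariance, and preservation of the $L^2$ condition --- follow immediately from the fact that the isometric identifications underlying $R$, $\eta$, and $\eta'$ are constant on each label component and commute with the $G$-action on fibers.
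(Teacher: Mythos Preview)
Your proof is correct and follows essentially the same approach as the paper's own argument: the paper defines the identical canonicalization $r_n(f)$ (anchored at $0$ with the same size vector as $f$), uses the unique order-preserving bijections $f(k)\to r_n(f)(k)$ to build the inverse functor, and relies on the same key inequality $r_n(f)\leq f$ for $f\in\Gamma^*(\mathbf{n})$ to ensure the natural transformation on the $\uparrow$-side is non-decreasing. The only cosmetic difference is that the paper packages the construction of the natural transformations via its Lemma~\ref{L2transformations} rather than writing them out directly.
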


\begin{proof} Let $n\in \mathbb{N}$ and $f\in \Gamma(\mathbf{n})$. If $f(\mathbf{n})=\emptyset$, we set $r_n(f):=f$. Otherwise, we define
$r_n(f)$ as the unique element of $\Gamma^*(\mathbf{n})$ such that
$0 \in r_n(f)(\mathbf{n})$ and  $\# r_n(f)(k)=\# f(k)$ for every $k\in \mathbf{n}$.
For every $k \in \mathbf{n}$, we then let $i_k^f$ denote the unique increasing map
from $f(k)$ to $r_n(f)(k)$. For every $k\in \mathbb{N}$, $i_k^f$ yields an isometry $\mathcal{H}^{f(k)} \longrightarrow \mathcal{H}^{r_n(f)(k)}$,
and these isometries give rise to a strong morphism of Hilbert bundles
$$\begin{CD}
E_{f(i)}(\mathcal{H}) \times \underset{k \in \mathbf{n}\setminus \{i\}}{\prod} G_{f(k)}(\mathcal{H})
@>>> E_{r_n(f)(i)}(\mathcal{H}) \times \underset{k \in \mathbf{n}\setminus \{i\}}{\prod} G_{r_n(f)(k)}(\mathcal{H}) \\
@VVV @VVV \\
\underset{k \in \mathbf{n}}{\prod} G_{f(k)}(\mathcal{H})  @>>> \underset{k \in \mathbf{n}}{\prod} G_{r_n(f)(k)}(\mathcal{H}) \\
\end{CD}$$
for every $i\in \mathbf{n}$.
This defines a morphism $r_n^{\mathcal{H}} : \text{Fib}^{\mathcal{H}}(\mathbf{n})
\rightarrow \text{Fib}^{\mathcal{H}*}(\mathbf{n})$
such that $\mathcal{O}_\Gamma^F(r_n^{\mathcal{H}})=\id_{\mathbf{n}}$ and
$r_n^{\mathcal{H}} \smod$ maps any morphism of $\text{Fib}^{\mathcal{H}}(\mathbf{n}) \smod$
to a non-decreasing morphism of $\text{Fib}^{\mathcal{H}*}(\mathbf{n}) \smod$.
Hence, $r_n^{\mathcal{H}}$ induces a $G$-map $s\vEc_{G,L^2}^{\text{Fib}^\mathcal{H}}[n]
\rightarrow s\vEc_{G,L^2}^{\text{Fib}^{\mathcal{H}*}}[n]$, and we now prove that it is an equivariant homotopy inverse of
the canonical map $s\vEc_{G,L^2}^{\text{Fib}^{\mathcal{H}*}}[n] \rightarrow s\vEc_{G,L^2}^{\text{Fib}^\mathcal{H}}[n]$.

\vskip 2mm
\noindent
Since the canonical map is induced by the inclusion of
$\Gamma^*(\mathbf{n})$ into $\Gamma(\mathbf{n})$, it is induced by a morphism
$\text{Fib}^{\mathcal{H}*}(\mathbf{n}) \rightarrow \text{Fib}^{\mathcal{H}}(\mathbf{n})$
over $\id_\mathbf{n}$, and we then deduce from Lemma \ref{L2transformations} that
the composite map $s\vEc_{G,L^2}^{\text{Fib}^\mathcal{H}}[n]
\rightarrow s\vEc_{G,L^2}^{\text{Fib}^{\mathcal{H}*}}[n] \rightarrow  s\vEc_{G,L^2}^{\text{Fib}^\mathcal{H}}[n]$ is $G$-homotopic
to the identity.

By construction, one has $r_n(f) \leq f$ for every $f \in \Gamma^*([n])$. We deduce that the inverse of the natural transformation
from $\id_{\Func_{L^2}\left(\mathcal{E}G,\text{Fib}^{\mathcal{H}*}([n]) \smod
\right)}$ to the composite functor $$\Func_{L^2}\left(\mathcal{E}G,\text{Fib}^{\mathcal{H}*}([n]) \smod \right)
\longrightarrow \Func_{L^2}\left(\mathcal{E}G,\text{Fib}^{\mathcal{H}}([n]) \smod\right)
\longrightarrow \Func_{L^2}\left(\mathcal{E}G,\text{Fib}^{\mathcal{H}}([n]) \smod\right)$$
obtained in Lemma \ref{L2transformations} and the proof of Corollary 3.3 in \cite{Ktheo1}
is really a natural transformation from
$$\Func_{\uparrow L^2}\left(\mathcal{E}G,\text{Fib}^{\mathcal{H}*}([n]) \smod \right)
\longrightarrow \Func_{L^2}\left(\mathcal{E}G,\text{Fib}^{\mathcal{H}}([n]) \smod\right)
\longrightarrow \Func_{\uparrow L^2}\left(\mathcal{E}G,\text{Fib}^{\mathcal{H}}([n]) \smod\right)$$
to $\id_{\Func_{\uparrow L^2}\left(\mathcal{E}G,\text{Fib}^{\mathcal{H}*}([n]) \smod
\right)}$. Hence the composite map
$s\vEc_{G,L^2}^{\text{Fib}^{\mathcal{H}*}}[n] \rightarrow s\vEc_{G,L^2}^{\text{Fib}^\mathcal{H}}[n] \rightarrow
s\vEc_{G,L^2}^{\text{Fib}^{\mathcal{H}*}}[n]$ is $G$-homotopic
to the identity map.
\end{proof}

\begin{Def}\label{ultimatevec}
For $m\in \mathbb{N}^* \cup \{\infty\}$,
we now set $s\vEc_{G,L^2}^{F,m *}:=s\vEc_{G,L^2}^{\text{Fib}^{F^{m}*}}$.
\end{Def}

\begin{cor}
The canonical map $\Omega Bs\vEc_{G,L^2}^{F,m*} \longrightarrow
\Omega Bs\underline{\vEc}_{G,L^2}^{F,m}$ is a $G$-weak equivalence.
\end{cor}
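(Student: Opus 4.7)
The plan is to pass from the levelwise equivariant homotopy equivalence of Proposition \ref{increasindif} to a $G$-weak equivalence on $\Omega B$, by working one compact subgroup at a time and applying Segal's realization lemma.

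First I would fix a compact subgroup $H$ of $G$. Proposition \ref{increasindif} supplies, for every $n\in\mathbb{N}$, an equivariant homotopy equivalence
$$s\vEc_{G,L^2}^{F,m*}[n] \longrightarrow s\vEc_{G,L^2}^{F,m}[n].$$
Restricting to the subspaces of $H$-fixed points (which is a functor that preserves equivariant homotopy equivalences) yields a degreewise homotopy equivalence of simplicial spaces $\bigl(s\vEc_{G,L^2}^{F,m*}\bigr)^H \longrightarrow \bigl(s\vEc_{G,L^2}^{F,m}\bigr)^H$, where the right-hand simplicial $G$-space is the one obtained from the $\Gamma$-$G$-space $s\underline{\vEc}_{G,L^2}^{F,m}$ by composition with the canonical functor $\Delta\rightarrow\Gamma$.

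Next I would invoke Proposition A.1 of \cite{Segal-cat}, which asserts that the thick geometric realization carries degreewise homotopy equivalences between simplicial spaces to homotopy equivalences, and which commutes with the fixed-point functor $(-)^H$ (as $(-)^H$ preserves colimits along closed inclusions and products with simplices, in which $H$ acts trivially). Combining these facts gives a homotopy equivalence $\bigl(Bs\vEc_{G,L^2}^{F,m*}\bigr)^H \longrightarrow \bigl(Bs\underline{\vEc}_{G,L^2}^{F,m}\bigr)^H$. Finally, since the basepoints of both realizations come from the contractible $0$-th piece in the $\Gamma$-structure and are therefore equivariantly well-pointed, $(-)^H$ also commutes with the loop space functor $\Omega$, so we obtain a homotopy equivalence $\bigl(\Omega Bs\vEc_{G,L^2}^{F,m*}\bigr)^H \longrightarrow \bigl(\Omega Bs\underline{\vEc}_{G,L^2}^{F,m}\bigr)^H$. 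As $H$ was an arbitrary compact subgroup of $G$, the canonical map is a $G$-weak equivalence.

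The step I expect to require the most care is the application of Segal's realization lemma: one has to check that the two simplicial spaces are ``good'' in the sense of \cite{Segal-cat} (so that thick realization is homotopy-invariant for levelwise equivalences) and that $(-)^H$ commutes with thick realization. Both are standard in this setting because the simplicial spaces arise as nerves of topological functor categories built out of $\mathcal{E}G$-diagrams, but verifying these technical hypotheses (degeneracy cofibrancy, compatibility of $(-)^H$ with the quotient defining $\| \cdot \|$) is where a careful reader should look. The remaining steps are essentially formal consequences of equivariant homotopy equivalences being preserved by $(-)^H$ and $\Omega$.
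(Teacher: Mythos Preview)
Your proposal is correct and follows essentially the same route as the paper: fix a compact subgroup $H$, use Proposition \ref{increasindif} to obtain levelwise homotopy equivalences on $H$-fixed points, apply Proposition A.1 of \cite{Segal-cat} to pass to the thick realization, and then take loop spaces. One small remark: your caution about checking ``goodness'' is unnecessary here, since Segal's Proposition A.1(ii) asserts that the \emph{thick} realization sends levelwise homotopy equivalences to homotopy equivalences without any cofibrancy hypothesis on degeneracies; goodness is only needed to compare thick and thin realizations.
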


\begin{proof}
We deduce from Proposition \ref{increasindif} that, for every compact subgroup $H$ of $G$, \\
$\bigl(s\vEc_{G,L^2}^{F,m *}[n]\bigr)^H \longrightarrow \bigl(s\vEc_{G,L^2}^{F,m}[n]\bigr)^H$
is a homotopy equivalence for every non negative integer $n$. It follows then from proposition A.1 of \cite{Segal-cat}
that $\bigl(\Omega Bs\vEc_{G,L^2}^{F,m*}\bigr)^H \longrightarrow \bigl(\Omega Bs\underline{\vEc}_{G,L^2}^{F,m}\bigr)^H$ is a homotopy equivalence
for every compact subgroup $H$ of $G$. \end{proof}

\begin{cor}
The $G$-space $\Omega Bs\vEc_{G,L^2}^{F,\infty*}$ is a classifying space for
$KF^*_G(-)$ on the category of proper $G$-CW-complexes.
\end{cor}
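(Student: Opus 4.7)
The strategy is essentially to assemble the zig-zag of $G$-weak equivalences that has been built over the previous sections and invoke Proposition \ref{pointedweakeq} to transport the representability property from $sKF_G^{[\infty]}$ to $\Omega B s\vEc_{G,L^2}^{F,\infty *}$.

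First, I would combine the preceding corollary with Corollary \ref{L2=indiff}, specialized to the Hilbert $\Gamma$-bundle $\varphi = \text{Fib}^{F^{(\infty)}}$. This produces a zig-zag
$$\Omega B s\vEc_{G,L^2}^{F,\infty *} \longrightarrow \Omega B s\underline{\vEc}_{G,L^2}^{F,\infty} \longrightarrow \Omega B s\underline{\vEc}_G^{F,\infty} = sKF_G^{[\infty]}$$
in which each arrow is a $G$-weak equivalence. The first arrow is $G$-weak by the corollary immediately preceding the statement, and the second is $G$-weak by Corollary \ref{L2=indiff}.

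Next, I would apply Proposition \ref{pointedweakeq} twice: for any proper pointed $G$-CW-complex $Y$, each of the two $G$-weak equivalences above induces a bijection on the pointed $G$-homotopy set $[Y,-]_G^\bullet$. Composing these two bijections yields a natural bijection
$$[Y,\Omega B s\vEc_{G,L^2}^{F,\infty *}]_G^\bullet \;\overset{\cong}{\longrightarrow}\; [Y,sKF_G^{[\infty]}]_G^\bullet.$$

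Finally, for any proper $G$-CW-pair $(X,A)$ and any integer $n \in \mathbb{N}$, the space $\Sigma^n(X/A)$ is a proper pointed $G$-CW-complex, so setting $Y := \Sigma^n(X/A)$ in the bijection above gives
$$[\Sigma^n(X/A),\Omega B s\vEc_{G,L^2}^{F,\infty *}]_G^\bullet \;\cong\; [\Sigma^n(X/A),sKF_G^{[\infty]}]_G^\bullet \;=\; KF_G^{-n}(X,A),$$
which is the desired classifying property in non-positive degrees. Positive degrees are then obtained from the extension of $KF_G(-)$ recalled at the end of Section~\ref{6}, which reduces to non-positive degrees by suspension via the equivariant cohomology theory axioms. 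There is no serious obstacle: everything has already been reduced to Propositions~\ref{pointedweakeq} and~\ref{L2=indif} and the preceding corollary; the only point requiring a moment of care is to note that $\Sigma^n(X/A)$ really is a proper \emph{pointed} $G$-CW-complex so that Proposition~\ref{pointedweakeq} applies, and this is immediate since smashing with $S^n$ (with trivial $G$-action) preserves the proper pointed $G$-CW structure.
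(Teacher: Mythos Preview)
Your proof is correct and is exactly the argument the paper has in mind: the corollary is stated without proof because it follows immediately from composing the $G$-weak equivalence of the preceding corollary with that of Corollary~\ref{L2=indiff} and then invoking Proposition~\ref{pointedweakeq}. You have simply made explicit what the paper leaves to the reader.
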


\subsection{The universal bundle $Es\vEc_{G,L^2}^{F,\infty*} \longrightarrow s\vEc_{G,L^2}^{F,\infty*}[1]$}\label{10.2.3}

We denote by $\Func_{\uparrow L^2}\left(\mathcal{E}G,\text{Fib}^{\mathcal{H}*}([1]) \sBdl \right)$
the subcategory of $\Func_{L^2}\left(\mathcal{E}G,\text{Fib}^{\mathcal{H}*}([1]) \sBdl \right)$
which has the same space of objects, and whose morphisms are the natural transformations which, after composition with the canonical
functor $\text{Fib}^{\mathcal{H}*}([1]) \sBdl \longrightarrow \text{Fib}^{\mathcal{H}*}([1]) \smod$, map $G$ into the set
of non-decreasing morphisms of $\text{Fib}^{\mathcal{H}*}(\mathbf{1})\smod$.

\label{ultimatebdl}
We define $$Es\vEc_{G,L^2}^{F,\infty*}:=
\left|\Func_{\uparrow L^2}\left(\mathcal{E}G,\text{Fib}^{\mathcal{H}*}([1]) \sBdl \right)\right|.$$
Then $Es\vEc_{G,L^2}^{F,\infty*}$ is simply the inverse image of
$s\vEc_{G,L^2}^{F,\infty*}[1]$ by the canonical map $Es\vEc_{G,L^2}^{F,\infty} \longrightarrow s\vEc_{G,L^2}^{F,\infty}[1]$.
The canonical morphism $\text{Fib}^{F^{(\infty)}*}([1])
\rightarrow \text{Fib}^{F^{(\infty)}}([1])$ then induces a strong
morphism of $G$-simi-Hilbert bundles:
$$\begin{CD}
Es\vEc_{G,L^2}^{F,\infty*} @>>> Es\vEc_{G,L^2}^{F,\infty} \\
@VVV @VVV \\
s\vEc_{G,L^2}^{F,\infty*}[1] @>>> s\vEc_{G,L^2}^{F,\infty}.
\end{CD}$$
The following result follows readily:

\begin{prop}
For every proper $G$-CW-complex $X$, the map
$[X,s\vEc_{G,L^2}^{F,\infty*}[1]]_G \rightarrow s\VEct_G^F(X)$ induced by pulling back
the $G$-simi-Hilbert bundle $Es\vEc_{G,L^2}^{F,\infty*} \rightarrow s\vEc_{G,L^2}^{F,\infty*}[1]$
is the composite map
$$\bigl[X,s\vEc_{G,L^2}^{F,\infty*}[1]\bigr]_G \overset{\cong}{\longrightarrow}
\bigl[X,s\vEc_{G,L^2}^{F,\infty}\bigr]_G \overset{\cong}{\longrightarrow} s\VEct_G^F(X).$$
\end{prop}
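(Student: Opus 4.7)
The plan is to exploit the observation made immediately before the statement: by construction, $Es\vEc_{G,L^2}^{F,\infty*}$ is the inverse image of $s\vEc_{G,L^2}^{F,\infty*}[1]$ under the canonical projection $Es\vEc_{G,L^2}^{F,\infty} \to s\vEc_{G,L^2}^{F,\infty}$. Consequently, the commutative square displayed just above the statement is actually a pullback square in the category of $G$-simi-Hilbert bundles, i.e.\ the left-hand vertical bundle is obtained from the right-hand one by base change along the bottom horizontal arrow.

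The first step is then to apply transitivity of pullbacks: for any equivariant continuous map $f : X \to s\vEc_{G,L^2}^{F,\infty*}[1]$, the $G$-simi-Hilbert bundle $f^*(Es\vEc_{G,L^2}^{F,\infty*})$ is canonically isomorphic to the pullback of $Es\vEc_{G,L^2}^{F,\infty}$ along the composite $X \to s\vEc_{G,L^2}^{F,\infty*}[1] \to s\vEc_{G,L^2}^{F,\infty}$. Since the classification of $G$-simi-Hilbert bundles over $G$-CW-complexes is homotopy-invariant, this canonical identification descends to the level of equivariant homotopy classes and produces exactly the triangle of natural maps whose commutativity is at stake.

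It then remains to recognise the two arrows of the display as bijections. The right-hand isomorphism $[X, s\vEc_{G,L^2}^{F,\infty}]_G \cong s\VEct_G^F(X)$ is the universal property recalled just before the definition of $s\vEc_{G,L^2}^{F,\infty*}$. The left-hand isomorphism $[X, s\vEc_{G,L^2}^{F,\infty*}[1]]_G \cong [X, s\vEc_{G,L^2}^{F,\infty}]_G$ is induced by the canonical map $s\vEc_{G,L^2}^{F,\infty*}[1] \to s\vEc_{G,L^2}^{F,\infty}$, and that map is an equivariant homotopy equivalence by Proposition \ref{increasindif} applied with $n=1$; hence it induces a bijection on equivariant homotopy classes out of any $G$-space.

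The only mildly delicate point will be to verify that the interpretation of the earlier commutative square as a pullback is genuinely valid at the level of the $G$-simi-Hilbert bundle structure (including the fibrewise similarity action), not merely at the level of underlying $G$-spaces. This however is immediate from the way $\Func_{\uparrow L^2}(\mathcal{E}G, \text{Fib}^{F^{(\infty)}*}([1]) \sBdl)$ is defined as the full subcategory obtained by imposing a condition on objects that is pulled back through the forgetful functor to $\Func_{L^2}(\mathcal{E}G, \text{Fib}^{F^{(\infty)}*}([1]) \smod)$, together with the description of the bundle structure provided by Proposition \ref{theoL2}: all structure maps on $Es\vEc_{G,L^2}^{F,\infty*}$ are inherited by restriction from $Es\vEc_{G,L^2}^{F,\infty}$.
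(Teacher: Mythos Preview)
Your proposal is correct and matches the paper's approach: the paper does not give an explicit proof but simply states that the proposition ``follows readily'' from the strong morphism of $G$-simi-Hilbert bundles displayed just before it, which is precisely the pullback-square-plus-transitivity argument you spell out. Your additional remarks on why the two arrows are bijections are accurate but not strictly needed, since the decorations $\cong$ in the displayed composite are already established earlier (Proposition~\ref{increasindif} and the universality of $Es\vEc_{G,L^2}^{F,\infty}$).
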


Finally, denote by $\varphi_n$ the $n$-dimensional part of the Hilbert bundle $\text{Fib}^{F^{(\infty)}*}([1])$,
and define
$\Func_{\uparrow L^2}\left(\mathcal{E}G,\varphi_n \sframe \right)$ as the subcategory of
$\Func_{L^2}\left(\mathcal{E}G,\varphi_n \sframe \right)$
which has the same space of objects and whose morphisms are the natural transformations which, after composition with the canonical
functor $\varphi_n \sframe \longrightarrow \text{Fib}^{\mathcal{H}*}([1]) \smod$, map $G$ into the set
of non-decreasing morphisms of $\text{Fib}^{\mathcal{H}*}([1])\smod$.

\label{ultimateframe}
Set $s\widetilde{\vEc}_{G,L^2}^{n,F,\infty*}:=|\Func_{\uparrow L^2}\left(\mathcal{E}G,\varphi_n \sframe \right)|$,
and notice that $s\widetilde{\vEc}_{G,L^2}^{n,F,\infty*}$ is simply the inverse image of
$\left|\Func_{\uparrow L^2}\left(\mathcal{E}G,\text{Fib}^{\mathcal{H}*}([1])\smod \right)\right|
\cap s\vEc_{G,L^2}^{\varphi_n}$ under the identification
map $s\widetilde{\vEc}_{G,L^2}^{\varphi_n} \longrightarrow  s\vEc_{G,L^2}^{\varphi_n}$. We conclude that
$\underset{n=0}{\overset{\infty}{\coprod}}s\widetilde{\vEc}_{G,L^2}^{n,F,\infty*} \longrightarrow
s\vEc_{G,L^2}^{F,\infty*}[1]$ is an identification map.

\vskip 2mm
\noindent Denote finally by $Es\vEc_{G,L^2}^{n,F,\infty*}$ the inverse image of
$\big|\Func_{\uparrow L^2}\left(\mathcal{E}G,\text{Fib}^{\mathcal{H}*}([1])\smod
\right)\big| \cap s\vEc_{G,L^2}^{\varphi_n}$ under the $G$-vector bundle
map $E\vEc_{G,L^2}^{\varphi_n} \longrightarrow s\vEc_{G,L^2}^{\varphi_n}$.
Proposition \ref{theoL2} then yields that the canonical morphism
$s\widetilde{\vEc}_{G,L^2}^{n,F,\infty*} \times _{\Sim_n(F)}F^n \rightarrow Es\vEc_{G,L^2}^{n,F,\infty*}$
is an isomorphism of $G$-simi-Hilbert bundles.

\section{The simplicial $G$-space $(\Fred(G,\mathcal{H})[n])_{n \in \mathbb{N}}$}\label{10.3}

\subsection{The $G$-space $\Fred(L^2(G,\mathcal{H}))$}\label{10.3.1}

Let $\mathcal{H}$ be a separable Hilbert space.
The map
$$(g,f) \longmapsto \left[x \mapsto f(xg)\right]$$
classically defines a structure of Hilbert $G$-module on $L^2(G,\mathcal{H})$.
It follows that
$$\begin{cases}
G \times \mathcal{B}(L^2(G,\mathcal{H})) & \longrightarrow \mathcal{B}(L^2(G,\mathcal{H})) \\
(g,f) & \longmapsto \left[x \mapsto g.f(g^{-1}.x) \right]
\end{cases}$$
defines an action of the group $G$ on the space $\mathcal{B}(L^2(G,\mathcal{H}))$ of
bounded operators on $L^2(G,\mathcal{H})$, but this action is not continuous \emph{a priori}
(if $G$ is non-discrete and $\mathcal{H} \neq \{0\}$, it may actually be shown that this action is non-continuous).

\vskip 2mm
\noindent
For every $g \in G$ and $f \in \mathcal{B}(L^2(G,\mathcal{H}))$, notice that
$$\Ker(g.f)=g.\Ker(f) \quad \text{and} \quad \im(g.f)=g.\im(f).$$
The previous action thus induces an action of the group $G$ on the space $\Fred(L^2(G,\mathcal{H}))$ of Fredholm
operators on $L^2(G,\mathcal{H})$ (again, a non-continuous action \emph{a priori}).
Denote by $\underline{\Fred}(L^2(G,\mathcal{H}))$ the subset of $\Fred(L^2(G,\mathcal{H}))$ consisting
of the operators $F$ such that $g \mapsto g.F$ is a continuous map.
Since $F \mapsto g.F$ is an isometry of $\Fred(L^2(G,\mathcal{H}))$ for every $g \in G$,
it may easily be shown that $\underline{\Fred}(L^2(G,\mathcal{H}))$ is a $G$-space.

\subsection{The simplicial $G$-space
$(\Fred(G,\mathcal{H})[n])_{n \in \mathbb{N}}$}\label{10.3.2}

Let $g\in G$ and $(f,f')\in \mathcal{B}(L^2(G,\mathcal{H}))$. Then  $g.(f\circ f')=(g.f)\circ (g.f')$.
It follows that the composition of operators in $\mathcal{B}(L^2(G,\mathcal{H}))$
induces a (continuous) composition law in $\underline{\Fred}(L^2(G,\mathcal{H}))$.
Therefore, $\underline{\Fred}(L^2(G,\mathcal{H}))$ has a structure of
(non-abelian) topological monoid for this composition law, and $G$ acts by morphisms on $\underline{\Fred}(L^2(G,\mathcal{H}))$ .

We then set
$$(\Fred(G,\mathcal{H})[n])_{n \in \mathbb{N}}:=\mathcal{N}\left(\mathcal{B}
\underline{\Fred}(L^2(G,\mathcal{H}))\right),$$
where the nerve is constructed in the category of k-spaces.

Thus, for every $n \in \mathbb{N}$, we have
$$\Fred(G,\mathcal{H})[n]=\underline{\Fred}(L^2(G,\mathcal{H}))^n,$$ with
the usual face and degeneracy maps: this is obviously a k-space.

\begin{prop}\label{fredweak}
The canonical map
$$\underline{\Fred}(L^2(G,\mathcal{H})) \longrightarrow
\Omega B\Fred(G,\mathcal{H})$$
is a $G$-weak equivalence.
\end{prop}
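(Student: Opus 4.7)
The plan is to reduce, for each compact subgroup $H$ of $G$, to the classical theorem (Stasheff--Segal--May) that a well-pointed topological monoid $N$ with $\pi_0(N)$ a group satisfies $N \xrightarrow{\simeq} \Omega BN$ via the canonical map. The first reduction is that $H$-fixed points behave well. Set $M := \underline{\Fred}(L^2(G,\mathcal{H}))$. Since $G$ acts on $M$ by monoid endomorphisms, $M^H$ is a topological submonoid, and $(\Fred(G,\mathcal{H})[n])^H = (M^H)^n$ in every simplicial degree. Because $H$ is a compact Lie group and the nerve is realized in k-spaces, $H$-fixed points commute with the thick geometric realization, giving $(B\Fred(G,\mathcal{H}))^H = B(M^H)$; and since the basepoint is $G$-fixed and $S^1$ carries the trivial $H$-action, $(\Omega B \Fred(G,\mathcal{H}))^H = \Omega B(M^H)$. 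Thus the map of the proposition restricts on $H$-fixed points to the non-equivariant canonical map $M^H \to \Omega B(M^H)$, and it suffices to prove this is a weak equivalence for every compact $H \leq G$.

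Well-pointedness of $M^H$ is straightforward: a small norm-ball around $\Id$ in $\underline{\Fred}$ consists of invertibles, is $G$-stable, and furnishes a contractible neighborhood in $M^H$. The main step is the group-likeness of $\pi_0(M^H)$. An element of $M^H$ is an $H$-equivariant Fredholm operator on $L^2(G,\mathcal{H})$ with norm-continuous $G$-orbit. Peter--Weyl applied to the right-regular $H$-representation on $L^2(G)$ shows that every finite-dimensional irreducible $H$-representation appears in $L^2(G,\mathcal{H})$ with infinite multiplicity, so $L^2(G,\mathcal{H})$ is a universal $H$-Hilbert module. The $H$-equivariant index $[\Ker F]-[\Ker F^*]$ then defines a monoid homomorphism $\pi_0(M^H) \to R(H)$ to the representation ring. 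Surjectivity is obtained by exhibiting explicit ``shifts'': for each finite-dimensional $H$-subrepresentation $V \subset L^2(G,\mathcal{H})$, an $H$-equivariant isometric embedding of $L^2(G,\mathcal{H})$ onto the orthogonal complement of $V$ yields a shift operator, and one can choose this embedding $G$-equivariantly, making the $G$-orbit manifestly norm-continuous. Injectivity follows from an equivariant Kuiper-type argument: after reducing to index zero by composition with a suitable shift and inverting the polar decomposition, the remaining $H$-equivariant unitary whose $G$-orbit is continuous may be deformed within $M^H$ to the identity. Since $R(H)$ is a group, so is $\pi_0(M^H)$, and the classical theorem concludes the proof.

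The main obstacle is the final item of Step 3: the classical equivariant index and Kuiper arguments must all be carried out \emph{inside} $\underline{\Fred}(L^2(G,\mathcal{H}))^H$, rather than inside the larger space of all bounded $H$-equivariant Fredholm operators. Concretely, one must produce explicit representatives of each index class (shifts and finite-rank perturbations of $\Id$ by $G$-stable projections) and deformation paths whose \emph{full} $G$-orbit maps remain norm-continuous, not only the $H$-orbit. The choice of $L^2(G,\mathcal{H})$ as model is precisely what enables this, since shifts and deformations can be built $G$-equivariantly and therefore automatically lie in $M^H$; verifying that every such construction stays within the $\underline{\Fred}$ subspace (and not merely within $\Fred$) is the delicate verification.
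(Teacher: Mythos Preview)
Your reduction to $H$-fixed points and the invocation of Segal's Proposition~1.5 are exactly what the paper does. Where you diverge is in establishing group-likeness of $M^H=\underline{\Fred}(L^2(G,\mathcal{H}))^H$: you propose to identify $\pi_0(M^H)$ with $R(H)$ via an equivariant index, which requires producing explicit shift operators and running an equivariant Kuiper-type deformation, all while checking that every path stays inside $\underline{\Fred}$ (i.e.\ keeps norm-continuous $G$-orbit). You correctly flag this last point as the delicate step, and it is genuinely nontrivial.

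The paper bypasses all of this with a one-line observation: the adjoint map $f\mapsto f^*$ is an equivariant homotopy inverse for the monoid structure. Indeed, $(g.f)^*=g.f^*$, so adjunction is a continuous $G$-self-map of $\underline{\Fred}(L^2(G,\mathcal{H}))$, and the linear homotopies
\[
h_1(t,f)=t\cdot\id+(1-t)\,f\circ f^*,\qquad h_2(t,f)=t\cdot\id+(1-t)\,f^*\circ f
\]
connect $ff^*$ and $f^*f$ to the identity through Fredholm operators (since $ff^*$ and $f^*f$ are positive semidefinite, $t\cdot\id+(1-t)ff^*$ is invertible for $t>0$). These homotopies are manifestly $G$-equivariant, hence automatically land in $\underline{\Fred}$ and restrict to $M^H$. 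This gives $M^H$ an inverse up to homotopy directly, with no index theory, no Kuiper, and no verification of the ``delicate'' $\underline{\Fred}$ condition beyond the trivial one. Your route would work if completed, but the paper's is dramatically shorter.
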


\begin{proof}
Let $H$ denote a compact subgroup of $G$.
For every $g \in G$ and $f \in \mathcal{B}(L^2(G,\mathcal{H}))$, one has
$(g.f)^*=g.f^*$. It follows that the adjunction map induces
a continuous self isometry on $\underline{\Fred}(L^2(G,\mathcal{H}))$. The maps
$$h_1: (t,f) \longmapsto t.\id_{L^2(G,\mathcal{H})}+(1-t)f\circ f^* \quad \text{and} \quad
h_2: (t,f) \longmapsto t.\id_{L^2(G,\mathcal{H})}+(1-t)f^*\circ f.$$
then yield equivariant homotopies respectively from
$\id_{\underline{\Fred}(L^2(G,\mathcal{H}))}$ to $f\circ f^*$,
and from $\id_{\underline{\Fred}(L^2(G,\mathcal{H}))}$ to $f^*\circ f$.
We deduce that the topological monoid $\underline{\Fred}(L^2(G,\mathcal{H}))^H$ has an inverse up to homotopy.

Since $\Fred(G,\mathcal{H})^H$ is a simplicial k-space such that
$$\forall n \in \mathbb{N}, \; \Fred(G,\mathcal{H})[n]^H
\overset{\cong}{\longrightarrow} \left(\Fred(G,\mathcal{H})[1]^H\right)^n, $$
and $\Fred(G,\mathcal{H})[1]^H$ is an H-space
with an inverse up to homotopy, we deduce from proposition 1.5 of \cite{Segal-cat}
that the canonical map $\Fred(G,\mathcal{H})[1]^H \longrightarrow \Omega B\Fred(G,\mathcal{H})^H$
is a homotopy equivalence. Hence $\underline{\Fred}(L^2(G,\mathcal{H})) \longrightarrow \Omega B\Fred(G,\mathcal{H})$
is a $G$-weak equivalence. \end{proof}

\subsection{From finite dimensional subspaces of $L^2(G,\mathcal{H})$ to Fredholm operators on
$L^2(G,\mathcal{H}^\infty)$: the shift map}\label{10.3.3}

Recall that when $\mathcal{H}$ is a Hilbert space,
$\mathcal{H}^\infty$ is defined as the Hilbert space completion of the inner product space $\mathcal{H}^{(\infty)}$.
Recall also the canonical isomorphism $L^2(G,\mathcal{H}^\infty) \cong L^2(G,\mathcal{H})^\infty$.

\subparagraph{}We will now let $H$ denote a Hilbert $G$-module. The case of interest is
$H=L^2(G,\mathcal{H})$.

We write $H^\infty=\overline{\underset{i\in \mathbb{N}}{\oplus} (H\times \{i\})}$.
We define the shift operator on $H^\infty$ as
$$S_H : (x_i,i)_{i\in \mathbb{N}} \longmapsto (x_{i+1},i)_{i\in \mathbb{N}.}$$
Clearly, $S_H$ is a bounded linear operator of norm $1$, it is onto and its kernel is $H \times \{0\}$. \\
Let $V$ be a closed linear subspace of $H$. Then $H^\infty=V^\infty \overset{\bot}{\oplus} (V^\bot)^\infty$, and
we can define $\Shift(V)$ as the linear operator on $H^\infty$
such that
$$\begin{cases}
\forall x \in V^\infty, & \Shift(V)[x]=S_V(x) \\
\forall x \in (V^\bot)^\infty, & \Shift(V)[x]=x
\end{cases}$$
i.e.\ $\Shift(V)$ should be thought of as the shift alongside $V^\infty$ in $H^\infty$.
Obviously, $\Shift(V)$ is a bounded operator of norm $1$. Also, $\Shift(V)$ is onto and $\Ker(\Shift(V))=V \times \{0\}$.
We have thus defined a map
$$\Shift: \begin{cases}
\sub(H) & \longrightarrow \mathcal{B}(H^\infty) \\
V & \longmapsto \Shift(V).
\end{cases}$$
The following properties are then classical and easily checked:

\begin{prop}\label{propertiesofshift} ${}$ \\
\begin{enumerate}[(i)]
\item The map $\Shift$ is a $G$-map.
\item For every pair $(V,V')\in \sub(H)^2$ such that $V \bot V'$,
$$\Shift(V \oplus V')=\Shift(V) \circ \Shift(V')=\Shift(V') \circ \Shift(V).$$
\end{enumerate}
\end{prop}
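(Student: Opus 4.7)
The plan is to verify both properties by unpacking the definition of $\Shift$ in terms of the orthogonal decomposition of $H^\infty$ and using that the $G$-action on the Hilbert module $H$ is by (unitary) isometries.

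For (i), I would first check equivariance and then continuity. Since $H$ is a Hilbert $G$-module, each $g\in G$ acts on $H$ by an isometry, and the diagonal action on $H^\infty=\overline{\bigoplus_{i\in\mathbb{N}}(H\times\{i\})}$ is $g\cdot(x_i,i)_i=(g.x_i,i)_i$. In particular $g.V^\bot=(g.V)^\bot$ for every $V\in\sub(H)$, so $H^\infty$ decomposes equivariantly as $(g.V)^\infty\oplus ((g.V)^\bot)^\infty$. A direct calculation on each summand shows that for $x=(x_i,i)_i\in (g.V)^\infty$, $g.\Shift(V)[x]=g.S_V(g^{-1}.x)=g.(g^{-1}.x_{i+1},i)_i=(x_{i+1},i)_i=\Shift(g.V)[x]$, and similarly both operators act as the identity on $((g.V)^\bot)^\infty$. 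This gives $g.\Shift(V)=\Shift(g.V)$. For continuity, one works with the natural topology on $\sub(H)$ (induced, say, by the gap metric / orthogonal projections) and notes that $V\mapsto \Shift(V)$ may be expressed as $\Shift(V)=S_H\circ\pi_V^\infty+(\id-\pi_V^\infty)$, where $\pi_V$ denotes orthogonal projection onto $V$ and $\pi_V^\infty$ the induced projection on $H^\infty$; continuity then reduces to continuity of $V\mapsto\pi_V$, which is by definition of the topology on $\sub(H)$.

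For (ii), the approach is to use that when $V\perp V'$ the ambient Hilbert space splits orthogonally as $H=V\oplus V'\oplus W$ with $W:=(V\oplus V')^\bot$, and accordingly $H^\infty=V^\infty\oplus (V')^\infty\oplus W^\infty$. I would compute both sides of each claimed identity summand by summand. On $V^\infty$, $\Shift(V')$ is the identity because $V\subset (V')^\bot$, so $\Shift(V)\circ\Shift(V')$ restricts to $S_V$; symmetrically $\Shift(V')\circ\Shift(V)$ restricts to $S_V$, and $\Shift(V\oplus V')$ also restricts to $S_V$ since $V\subset V\oplus V'$ and $S_{V\oplus V'}$ preserves each factor componentwise. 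The case of $(V')^\infty$ is symmetric. Finally on $W^\infty$, all three operators act as the identity since $W\perp V$, $W\perp V'$ and $W\perp(V\oplus V')$. Assembling the three summands yields the triple equality.

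The only mildly non-obvious step is the equivariance check in (i); the rest of (i) and all of (ii) are routine orthogonal-decomposition arguments. I do not anticipate any serious obstacle beyond being careful about which summands of $H^\infty$ are being acted on trivially by each $\Shift(V)$.
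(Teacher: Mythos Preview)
Your argument is correct and is exactly the kind of direct verification the paper has in mind; the paper itself omits the proof, stating only that these properties are ``classical and easily checked.'' One small remark: the paper's statement of (i) is primarily about equivariance (the $G$-action on $\mathcal{B}(H^\infty)$ is not continuous in general, as noted just above the proposition), so your continuity discussion via the formula $\Shift(V)=S_H\circ\pi_V^\infty+(\id-\pi_V^\infty)$ goes slightly beyond what is asserted here, though it is correct and useful for the subsequent corollary.
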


\begin{cor}
The map $\Shift$ induces a $G$-map
$$\Shift: \underset{n\in \mathbb{N}}{\bigcup}\sub_n(H) \longrightarrow \underline{\Fred}(H^\infty).$$
\end{cor}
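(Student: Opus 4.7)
The claim packages three assertions: (i) that $\Shift(V)$ is a Fredholm operator whenever $V$ is finite-dimensional; (ii) that the orbit map $g \mapsto g.\Shift(V)$ is continuous for each such $V$, so that $\Shift(V)$ actually lies in $\underline{\Fred}(H^\infty)$; and (iii) that $V \mapsto \Shift(V)$ is norm-continuous as $V$ varies over the topological coproduct $\underset{n\in\mathbb{N}}{\bigcup}\sub_n(H)$ of the Grassmannians. The $G$-equivariance has already been recorded in Proposition \ref{propertiesofshift}(i). Part (i) is immediate from the preceding paragraph of the paper: it was noted there that $\Shift(V)$ is onto and that $\Ker(\Shift(V)) = V \times \{0\}$, so if $V \in \sub_n(H)$ then $\Shift(V)$ is Fredholm of index $n$.

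The core of what remains is an explicit formula that serves both (ii) and (iii). Let $S : H^\infty \to H^\infty$ be the global backward shift $(x_i)_i \mapsto (x_{i+1})_i$ (of norm $1$), and let $P_V : H^\infty \to H^\infty$ be the coordinate-wise application of the orthogonal projection $p_V : H \to V$. A direct check against the definition of $\Shift(V)$ via the orthogonal splitting $H^\infty = V^\infty \oplus (V^\bot)^\infty$ yields
$$\Shift(V) \;=\; I + P_V \circ (S - I).$$
For $V, V' \in \sub_n(H)$ this gives
$$\|\Shift(V) - \Shift(V')\| \;\leq\; \|P_V - P_{V'}\| \cdot \|S - I\| \;\leq\; 2\,\|p_V - p_{V'}\|,$$
the final inequality coming from the coordinate-wise estimate $\|P_V - P_{V'}\| \leq \|p_V - p_{V'}\|$. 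Now $V \mapsto p_V$ is norm-continuous on every finite-dimensional Grassmannian $\sub_n(E) \subset \sub_n(H)$ (it is locally given by $\sum_k \langle\,\cdot\,,e_k\rangle\, e_k$ on a continuous orthonormal frame), so by the universal property of the colimit topology it is continuous on $\sub_n(H)$; this establishes (iii) after taking the topological coproduct over $n$.

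For (ii), I fix $V \in \sub_n(H)$ and use Proposition \ref{propertiesofshift}(i) to factor $g \mapsto g.\Shift(V) = \Shift(g.V)$ through $g \mapsto g.V$. The first arrow is continuous $G \to \sub_n(H)$: an orthonormal basis $(e_1, \dots, e_n)$ of $V$ yields the continuous family $(g.e_1, \dots, g.e_n)$ in $V_n(H)$ by continuity of the $G$-action on $H$, and passing to the span is continuous into $\sub_n(H)$. The second arrow is continuous by (iii). Hence $g \mapsto g.\Shift(V)$ is continuous, i.e.\ $\Shift(V) \in \underline{\Fred}(H^\infty)$.

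The only non-mechanical step is the norm continuity asserted in (iii). The formula $\Shift(V) = I + P_V(S - I)$ reduces it to the standard continuity of orthogonal projection on finite-dimensional Grassmannians; the remaining care is in verifying that the colimit topology on $\sub_n(H)$ used throughout the paper is fine enough to control the operator-norm topology on $\mathcal{B}(H^\infty)$, which is handled by the universal property once continuity is checked on each $\sub_n(E)$.
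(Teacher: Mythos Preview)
Your proof is correct and follows the same overall route as the paper: Fredholmness from surjectivity plus finite-dimensional kernel, and membership in $\underline{\Fred}(H^\infty)$ by factoring $g \mapsto g.\Shift(V) = \Shift(g.V)$ through the continuous $G$-action on the Grassmannian together with continuity of $\Shift$. The paper simply asserts the continuity of $\Shift$ on $\bigcup_n \sub_n(H)$ as classical, whereas you supply the explicit identity $\Shift(V) = I + P_V(S-I)$ and the resulting norm estimate; this is a genuine improvement in detail rather than a different strategy. One minor remark: for the Hilbert $G$-module $H$ in this section the paper does not equip $\sub_n(H)$ with a colimit topology (that device is reserved for the Grassmannians built from $F^{(\infty)}$), so your invocation of the colimit universal property is unnecessary---the standard norm topology on $\sub_n(H)$ already makes $V \mapsto p_V$ continuous directly.
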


\begin{proof}  Let $V \in \underset{n\in \mathbb{N}}{\bigcup}\sub_n(H)$. Then $\Shift(V)$ is surjective
and its kernel is $V \times \{0\}$, which is a finite-dimensional space. It follows that $\Shift(V)$ is a Fredholm operator on
$H^\infty$. Hence $\Shift$ induces a continuous equivariant map
$\underset{n\in \mathbb{N}}{\bigcup}\sub_n(H) \longrightarrow \Fred(H^\infty)$. However
$\underset{n\in \mathbb{N}}{\bigcup}\sub_n(H)$ is a $G$-space,
hence, for every $V \in \underset{n\in \mathbb{N}}{\bigcup}\sub_n(H)$, one recovers that $g \mapsto g.\Shift(V)$
is continuous on $G$, which shows that $\Shift$ maps $\underset{n\in \mathbb{N}}{\bigcup}\sub_n(H)$ into $\underline{\Fred}(H^\infty)$.
\end{proof}

In particular, for every separable Hilbert space $\mathcal{H}$, we have constructed a $G$-map
$$\Shift : \underset{n\in \mathbb{N}}{\bigcup}\sub_n(L^2(G,\mathcal{H})) \longrightarrow \underline{\Fred}(L^2(G,\mathcal{H}^\infty)).$$

\section{Construction of a morphism $sK_{G,L^2}^{F,\infty} \rightarrow \underline{\Fred}(L^2(G,\mathcal{H})^\infty)$}

\subsection{Main ideas}
\label{10.4}

The purpose of this short section is to help motivate the very technical constructions that
the reader will have to face in our construction of a ``good" morphism
$sK_{G,L^2}^{F,\infty} \rightarrow \underline{\Fred}(L^2(G,\mathcal{H}^\infty))$ in the category
$CG_G^{h\bullet}[W_G^{-1}]$ (where $\mathcal{H}$ is a separable Hilbert space which contains $F^{(\infty)}$, to be defined
later on).

First of all, we know that the canonical maps $\Omega Bs\vEc_{G,L^2}^{F,\infty*} \rightarrow sKF_G^{[\infty]}$
and $\underline{\Fred}(G,\mathcal{H}^\infty) \longrightarrow
\Omega B\Fred(G,\mathcal{H}^\infty)$ are $G$-weak equivalences, so all we need is 
 ``good" $G$-map from $\Omega Bs\vEc_{G,L^2}^{F,\infty*}$ to $\Omega B\Fred(G,\mathcal{H}^\infty)$.
Such a map will of course be obtained by constructing a morphism of hemi-simplicial $G$-spaces
from $s\underline{\vEc}_{G,L^2}^{F,\infty*}$ to $(\Fred(G,\mathcal{H}^\infty)[n])_{n \in \mathbb{N}}$, i.e.
a collection of $G$-maps
$s\vEc_{G,L^2}^{F,\infty*}(\mathbf{n}) \longrightarrow \underline{\Fred}(L^2(G,\mathcal{H}^\infty))^n$
which are compatible with the face maps\footnote{Note that only the hemi-simplicial space structure is taken into
account since the thick realization only uses face maps.}.
Assume that a $G$-map
$$\alpha_G : s\vEc_{G,L^2}^{F,\infty*}(\mathbf{1}) \longrightarrow \underline{\Fred}(L^2(G,\mathcal{H}^\infty))$$
has been obtained at the level $1$.
For every $n \in \mathbb{N}$, we have a canonical $G$-map $s\underline{\vEc}_{G,L^2}^{F,\infty}(\mathbf{n})
\longrightarrow (s\underline{\vEc}_{G,L^2}^{F,\infty}(\mathbf{1}))^n$. Composing it with
$\alpha_G^n : (s\underline{\vEc}_{G,L^2}^{F,\infty}(\mathbf{1}))^n \longrightarrow
(\underline{\Fred}(L^2(G,\mathcal{H}^\infty)))^n$ yields a $G$-map
$$s\underline{\vEc}_{G,L^2}^{F,\infty}(\mathbf{n}) \longrightarrow
(\underline{\Fred}(L^2(G,\mathcal{H}^\infty)))^n.$$
Elementary considerations on simplicial sets show that those maps yield a morphism of hemi-simplicial $G$-spaces
if and only if the diagram
\begin{equation}\label{comdiagcond}
\xymatrix{
s\underline{\vEc}_{G,L^2}^{F,\infty}(\mathbf{2}) \ar[d]_{d_1^2} \ar[r] &
(s\vEc_{G,L^2}^{F,\infty})^2 \ar[r]^(0.45){(\alpha_G)^2} & (\underline{\Fred}(L^2(G,\mathcal{H})^\infty))^2
\ar[d]^{\circ} \\
s\vEc_{G,L^2}^{F,\infty} \ar[rr]^{\alpha_G} & & \underline{\Fred}(L^2(G,\mathcal{H})^\infty)
}
\end{equation}
is commutative.

\vskip 2mm
Let us now see how $\alpha_G$ should be constructed.
Let $\varphi:=\text{Fib}^{F^{(\infty)}*}(\mathbf{1})$.
Recall that the objects of the category $\varphi \smod$ are the pairs consisting of
a finite \emph{interval} $A$ of $\mathbb{N}$ (the ``label''), and a $\#A$-dimensional subspace of $(F^{(\infty)})^A$, which may
of course be seen as a subspace of $\mathcal{H}:=(F^{(\infty)})^{(\mathbb{N})}$.
Let $\mathbf{F} : \mathcal{E}G \longrightarrow \text{Fib}^{F^{(\infty)}}(\mathbf{1}) \smod$
be a square integrable continuous functor, i.e.\ an object of $\Func_{\uparrow L^2}(\mathcal{E}G,\text{Fib}^{F^{(\infty)}}(\mathbf{1})\smod)$.
If we choose a basis $(e_1,\dots,e_n)$ of $\mathbf{F}(1_G)$, then the maps
$\begin{cases}
G & \longrightarrow (F^{(\infty)})^{(\mathbb{N})} \\
g & \longmapsto \mathbf{F}(1_G,g)[e_i]
\end{cases}$, for $i \in \{1,\dots,n\}$,
define an  $n$-tuple of linearly independent elements of
$L^2(G,\mathcal{H})$. In turn, this $n$-tuple defines an
$n$-dimensional subspace of $L^2(G,\mathcal{H})$ which does not
depend on the choice of $(e_1,\dots,e_n)$, and this subspace yields a Fredholm operator on $L^2(G,\mathcal{H})^\infty$ by
the usual shift construction (cf.\ Section \ref{10.3.3}). This
procedure defines a map from the set of $0$-simplices in
$s\vEc_{G,L^2}^{F,\infty*}=\big|\Func(\mathcal{E}G,\text{Fib}^{F^{(\infty)}}(\mathbf{1})\smod)\big|$
to the $G$-space $\underline{\Fred}(L^2(G,\mathcal{H})^\infty)$.

\vskip 2mm
So far, we have associated to every $0$-simplex in the geometric realization
$s\vEc_{G,L^2}^{F,\infty*}$ a finite-dimensional subspace of
$L^2(G,\mathcal{H})$, which itself defines a Fredholm operator on
$L^2(G,\mathcal{H})^\infty$. We now want to build a
map: $$\Mor\left(\Func_{\uparrow L^2}(\mathcal{E}G,\text{Fib}^{F^{(\infty)}}(\mathbf{1}) \smod)\right)
\times [0,1] \longrightarrow \underset{n=0}{\overset{\infty}{\bigcup}}\sub_n(L^2(G,\mathcal{H}))$$
which is compatible with the preceding one. Assume first that $G$ is trivial. Let $x$ be a morphism of $\Func_{\uparrow L^2}(\mathcal{E}\{1\},\text{Fib}^{F^{(\infty)}}(\mathbf{1}) \smod)=\text{Fib}^{F^{(\infty)}}(\mathbf{1}) \smod$. Then $x$ is a $5$-tuple consisting
of two finite intervals $A$ and $B$ of $\mathbb{N}$, a $\#A$-dimensional subspace $x_0$ of $(F^{(\infty)})^A$, a $\#B$-dimensional
subspace $x_1$ of $(F^{(\infty)})^B$, and a similarity $\varphi : x_0 \overset{\cong}{\rightarrow} x_1$. We set $n:=\#A=\#B$.

Let $\tilde{x}$ be a morphism
of $\varphi_n \sframe$ in the fiber of $x$. Then $\tilde{x}=(\mathbf{B}_0,\mathbf{B}_1)$, where
$\mathbf{B}_0$ is a simi-orthonormal basis of $x_0$, and $\mathbf{B}_1$ is a simi-orthonormal basis of
$x_1$. We may write $\mathbf{B}_0=\lambda_0.\mathbf{B}'_0$ and $\mathbf{B}_1=\lambda_1.\mathbf{B}'_1$, where
$(\lambda_0,\lambda_1) \in (\mathbb{R}_+^*)^2$, $\mathbf{B}'_0$ is an orthonormal basis of $x_0$,
and $\mathbf{B}'_1$ is an orthonormal basis of $x_1$.
The basic idea is to create a path from $\mathbf{B}'_0$ to $\mathbf{B}'_1$ in the space of
orthonormal $n$-tuples of elements of $\mathcal{H}$.

\begin{figure}[h]
\begin{center}
\psfrag{A}{$(F^{(\infty)})^A$}
\psfrag{B}{$(F^{(\infty)})^B$}
\psfrag{A1}{$\mathbf{B}'_0$}
\psfrag{B1}{$\mathbf{B}'_1$}
\includegraphics{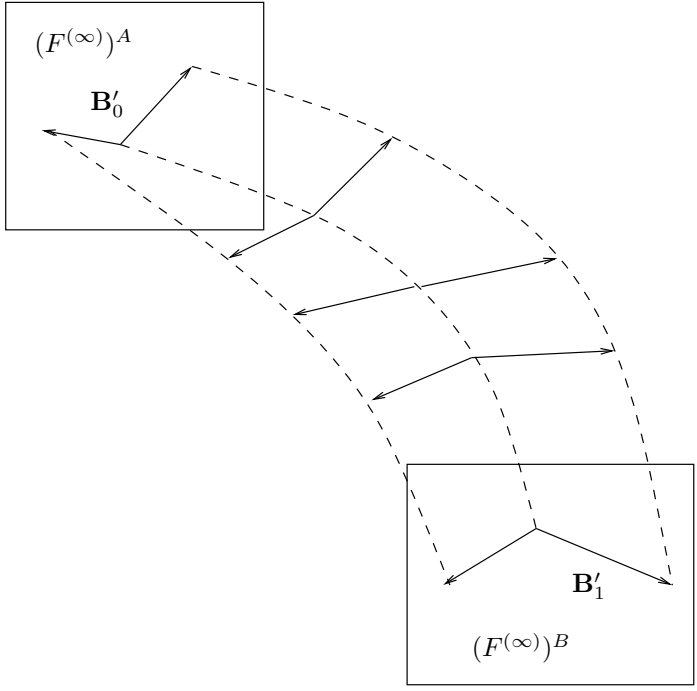}
\caption{A path from $\mathbf{B}'_0$ to $\mathbf{B}'_1$}
\end{center}
\end{figure}
Once we have a such a path, we may multiply it with the affine path from $\lambda_0$ to $\lambda_1$ in $\R_+^*$
in order to recover a path from $\mathbf{B}_0$ to $\mathbf{B}_1$ in the space of
simi-orthonormal $n$-tuples of elements of $\mathcal{H}$. This path would yield a continuous map
$\{x\} \times [0,1] \longrightarrow \sub_n(\mathcal{H})$. Of course, this map
should not depend on the choice of $\tilde{x}$ in the fiber of $x$. It should also be constant in the case $A=B$ and
$\mathbf{B}_0'=\mathbf{B}_1'$ (since we need compatibility with the degeneracy maps),
it should remain in $(F^{(\infty)})^A$ when $A=B$, and it should be continuous with respect to $x$.

\vskip 2mm
The construction is no more complicated in the case $G$ is non trivial.
In this case, a morphism of $\Func_{\uparrow L^2}(\mathcal{E}G,\varphi_n \sframe)$ is a family of morphisms
of $\varphi_n \sframe$ indexed over $G$: each of these morphisms will yield
a path in the space of simi-orthonormal $n$-tuples of elements of $\mathcal{H}$: we recover a family indexed over $G$ of
paths in the space of simi-orthonormal $n$-tuples of elements of $\mathcal{H}$; this can be seen as a path
in the space of maps from $G$ to the space of simi-orthonormal $n$-tuples of elements of $\mathcal{H}$. Finally, a map
from $G$ to the space of simi-orthonormal $n$-tuples of elements of $\mathcal{H}$ defines, under certain conditions, an $n$-dimensional
subspace of $L^2(G,\mathcal{H})$.

With similar ideas, it is quite easy to see how one can expect to construct a reasonable map
$$\mathcal{N}\left(
\Func_{\uparrow L^2}(\mathcal{E}G,\text{Fib}^{F^{(\infty)}}(\mathbf{1}) \smod)\right)_m
\times \Delta^m \longrightarrow \underset{n=0}{\overset{\infty}{\bigcup}}\sub_n(L^2(G,\mathcal{H}))$$
for every $m \in \mathbb{N}$.

\vskip 2mm
Assume now that we have a $G$-map $s\vEc_{G,L^2}^{F,\infty *} \longrightarrow
\underset{n\in \mathbb{N}}{\bigcup}\sub_n(L^2(G,\mathcal{H}))$ which extends the previous construction
already done on the $0$-simplices, and sketched on the $1$-skeleton of $s\vEc_{G,L^2}^{F,\infty *}$.
By composition with the shift map
$\Shift : \underset{n\in \mathbb{N}}{\bigcup}\sub_n(L^2(G,\mathcal{H})) \longrightarrow
\underline{\Fred}(L^2(G,\mathcal{H})^\infty)$ (cf.\ Section \ref{10.3}), we recover the $G$-map
$\alpha_G : s\vEc_{G,L^2}^{F,\infty *} \longrightarrow \underline{\Fred}(L^2(G,\mathcal{H})^\infty)$
we were looking for.

\vskip 2mm
Let us now look at the condition on $\alpha_G$ imposed by the commutativity of diagram
\eqref{comdiagcond}.
Assume $G$ is trivial for sake of simplicity.
Let $(n,m) \in \mathbb{N}^2$ and
$(x_1,x_2,y_1,y_2)\in (F^{(\infty)})^{\{n\}} \times (F^{(\infty)})^{\{n+1\}} \times
(F^{(\infty)})^{\{m\}} \times (F^{(\infty)})^{\{m+1\}}$ be a $4$-tuple of unit vectors. By the previous construction, we have a path from $(n,x_1)$ to $(m,y_1)$ and from $(n+1,x_2)$ to $(m+1,y_2)$. For the previous square to be commutative, it is necessary
that the path from $(\{n,n+1\},(x_1,x_2))$ to $(\{m,m+1\},(y_1,y_2))$ in the previous construction
be precisely obtained by juxtaposing the two previous paths.
In particular, this means that the two paths should be ``orthogonal'' at every step.
Even taking those conditions as granted, the preceding requirement will not be fulfilled by
every choice of paths. The key idea now is that,
when $m>n$, we create a new inner product space, labeled $\mathcal{H}_{n,m}$,
which is isomorphic to $(F^{(\infty)})^{\{n\}} \oplus (F^{(\infty)})^{\{m\}}$,
and we choose an isomorphism $\varphi : (F^{(\infty)})^{\{n\}} \oplus (F^{(\infty)})^{\{m\}} \overset{\cong}{\longrightarrow}
\mathcal{H}_{n,m}$.
Instead of creating a path from $x_1$ to $y_1$ in $(F^{(\infty)})^{\{n,m\}}$, we
create a path in the orthogonal direct sum $(F^{(\infty)})^{\{n,m\}} \overset{\bot}{\oplus} \mathcal{H}_{n,m}$ as follows: \begin{itemize}
\item First of all, we use a rotation to go from $x_1$ to $\varphi(x_1)$ in the orthogonal direct sum
$(F^{(\infty)})^{\{n\}} \overset{\bot}{\oplus} \mathcal{H}_{n,m}$.
\item Then we go from $\varphi(x_1)$ to $\varphi(y_1)$ in $\mathcal{H}_{n,m}$.
\item Finally, we go from $\varphi(y_1)$ to $y_1$ by a rotation
in the orthogonal direct sum $(F^{(\infty)})^{\{m\}} \overset{\bot}{\oplus} \mathcal{H}_{n,m}$.
\end{itemize}

\begin{figure}[h]
\begin{center}
\psfrag{A1}{$(F^{(\infty)})^{\{n\}}$}
\psfrag{A}{$x_1$}
\psfrag{C}{$(F^{(\infty)})^{\{m\}}$}
\psfrag{C1}{$y_1$}
\psfrag{B}{$\mathcal{H}_{n,m}$}
\psfrag{B1}{$\varphi(x_1)$}
\psfrag{B2}{$\varphi(y_1)$}
\includegraphics{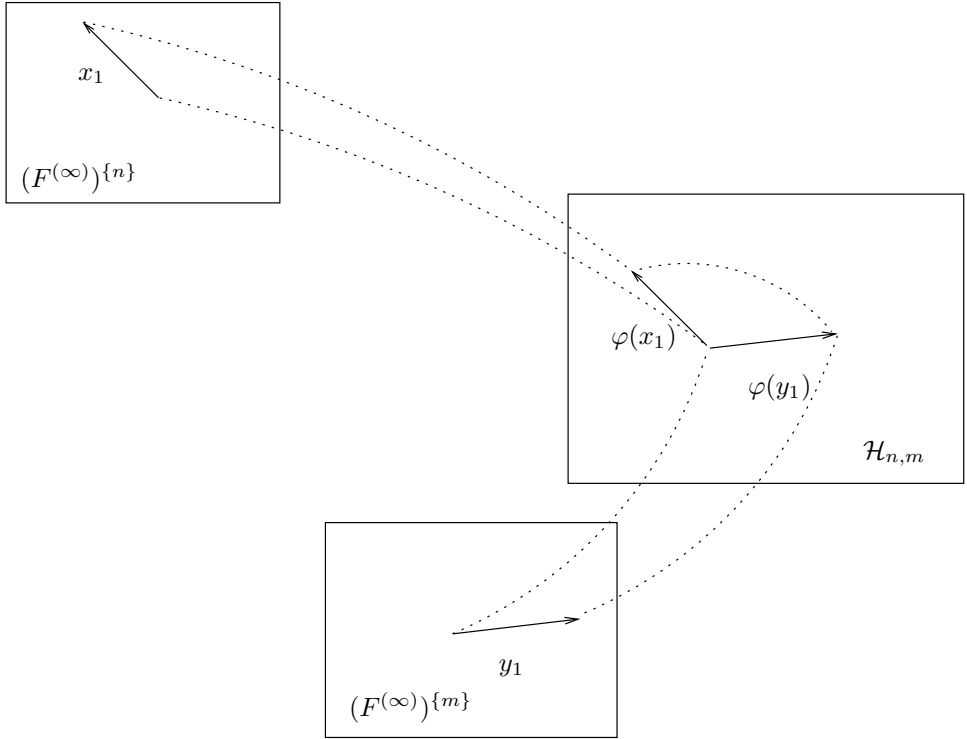}
\caption{The three-leg path construction}
\end{center}
\end{figure}

This will solve the problem of orthogonality. Of course, by doing so, we need
to define $\mathcal{H}$ as much larger than $(F^{(\infty)})^{(\N)}$: this will be done in Section \ref{10.4.3}.

\vskip 2mm
Finally let $n<m$ be non-negative integers and $x \in (F^{(\infty)})^{\{n\}}$, $y \in (F^{(\infty)})^{\{m\}}$
and $z \in (F^{(\infty)})^{\{m\}}$ be unit vectors.
Assume we already have constructed a path $f$ from $y$ to $z$
in $(F^{(\infty)})^{\{m\}}$, and, for every
$y \in (F^{(\infty)})^{\{m\}}$, a path from $x$ to $y'$. Then for every $t \in [0,1]$, we have a path from
$x$ to $f(t)$ in  the orthogonal direct sum
$(F^{(\infty)})^{\{n\}} \overset{\bot}{\oplus} \mathcal{H}_{n,m}$, and those
paths may be used directly to create a map from $\Delta^2$ to the orthogonal direct sum
$(F^{(\infty)})^{\{n\}} \overset{\bot}{\oplus} \mathcal{H}_{n,m}$,
using the ``associativity of barycenters'' in the $2$-simplex (cf.\ next figure).
\begin{center}
\mbox{
\psfrag{A}{$x$}
\psfrag{C}{$y$}
\psfrag{B}{$z$}
\psfrag{D}{$f(t_1)$}
\psfrag{E}{$f(t_2)$}
\psfrag{F}{$f(t_3)$}
\includegraphics{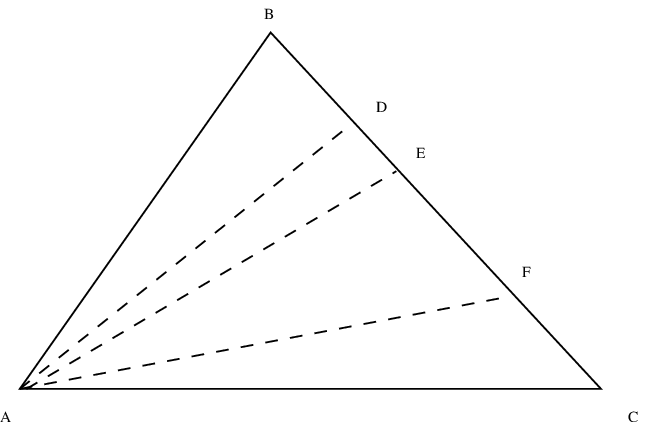}
}
\end{center}

\vskip 2mm
\noindent \textbf{Structure of the rest of the section:} \\
In Section \ref{10.4.1}, we develop an elementary construction of subdivisions of the $n$-simplex that generalizes the subdivision
of the interval into three parts which is the basis of the path construction that we have just discussed. Section \ref{10.4.2} is devoted to the
combinatorial material that we need in order to generalize the ``associativity of barycenters'' argument which was
mentioned earlier. In Section \ref{10.4.3}, we turn to the definition of the inner-product space $\mathcal{H}$
we will work with. The detailed construction of ``paths'' is the topic of Section \ref{10.4.4}, in which we formulate the
conjecture that a certain construction of paths exists. It is then proved in Section \ref{10.4.5}
that this conjecture is a consequence of a simple conjecture on the triangulation of smooth manifolds (see Section \ref{conjecturesection}).
Since the construction involves choices of extensions, we also use the conjecture on smooth manifolds to establish that
two possible constructions will necessarily be ``homotopic''. In Section \ref{10.4.6}, we use the previous construction to obtain
a $G$-map $s\vEc_{G,L^2}^{F,\infty*}[1] \longrightarrow \underline{\Fred}(L^2(G,\mathcal{H})^\infty)$, and
we finally recover a morphism $s\vEc_{G,L^2}^{F,\infty*} \longrightarrow \underline{\Fred}(G,\mathcal{H}^\infty)$
in Section \ref{10.4.7}, as explained earlier.

\subsection{Additional definitions}\label{10.4.0}

Recall that when $(A_n)_{n \in \mathbb{N}}$ is a simplicial set, an element $x \in A_n$ is called
\textbf{degenerate} when $x \in \sigma_*(A_k)$ for some $\sigma : [n] \twoheadrightarrow [k]$ with $k<n$.

For every $n\in \mathbb{N}$ and every $x \in A_n$, there is a unique pair
$(\sigma,y)$ such that $\sigma : [n] \twoheadrightarrow [k]$ is an epimorphism of $\Delta$,
the element $y$ is a non-degenerate element of $A_k$, and $x=\sigma_*(y)$. We will call
$y$ the \textbf{root} of $x$, and $\sigma$ its \textbf{reduction}.

\vskip 2mm
For every $n \in \mathbb{N}$, we denote by $\Hom_\uparrow([n],\mathbb{N})$ the set of non-decreasing
maps from $[n]$ to $\mathbb{N}$. The structure of cosimplicial set
on $\Delta$ thus induces a simplicial set $\Hom_\uparrow(\Delta,\mathbb{N})$ so that, for every morphism $\tau
: [k] \rightarrow [n]$ in $\Delta$,
$$\tau_*: \begin{cases}
\Hom_\uparrow([n],\mathbb{N}) & \longrightarrow \Hom_\uparrow([k],\mathbb{N}) \\
f & \longmapsto f \circ \tau.
\end{cases}$$ Of course, $f\in \Hom_\uparrow([n],\mathbb{N})$ is non-degenerate if and
only if it is an increasing map. Finally, for every $N \in \mathbb{N}$ and $f\in \Hom_\uparrow([n],\mathbb{N})$, we set
$f+N : \begin{cases}
[n] & \longrightarrow \mathbb{N} \\
i & \longmapsto f(i)+N.
\end{cases}$

\subsection{Subdivisions of the $n$-simplex}\label{10.4.1}

\subsubsection{The relation $<$}
Set
$$E:=\underset{(k,n)\in \mathbb{N}^2}{\coprod}\Hom_{\Delta^*}([k],[n]).$$
We define a binary relation $<$ on $E$ as follows:
for every $\delta: [k] \hookrightarrow [n]$ and $\delta' : [k'] \hookrightarrow [n']$,
$$\delta< \delta' \; \underset{\text{def}}{\Longleftrightarrow} \;
\begin{cases}
k<n  \\
n=k'
\end{cases}$$
(i.e. $\delta<\delta'$ if and only if $\delta' \circ \delta$ exists
and $\delta$ is not bijective). \\
A finite sequence $(\delta_1,\delta_2,\dots,\delta_m)$ in $E$ is called \textbf{increasing}
when \begin{itemize}
\item $\forall i \in \{1,\dots,m-1\},\; \delta_i <\delta_{i+1}$;
\item $\delta_m=\id_{[n]}$ for some $n \in \mathbb{N}$.
\end{itemize}
If $u$ is an increasing sequence, we simply write
$u=\delta_1<\delta_2<\dots <\delta_{m-1}<[n]$ if $\delta_m=\id_{[n]}$,
we will say that $u$ ends at $[n]$, and we will call $m$ the \textbf{length} of $u$.
We say that $u$ is \textbf{trivial} when its length is $1$.
Finally, we define $$\Lat(u):=
\bigl\{\delta_m \circ \delta_{m-1} \circ \dots \circ \delta_i \mid  i \in \{1,\dots,m\} \bigr\}.$$
Let $u=\delta_1<\dots<\delta_{m-1}<[n]$ be a non trivial increasing sequence.
We define the \textbf{differential} of $u$ as
$$u':=\delta_1<\dots<\delta_{m-2}<\id_{[k]},$$
where $[k]$ is the domain of $\delta_{m-1}$.
More generally, we define, for every $i \in \{1,\dots,m-1\}$, the sequence
$$u^{(i)}:=\delta_1<\dots<\delta_{m-i-1}<\id_{[k_i]},$$
where $[k_i]$ is the domain of $\delta_{m-i}$.
Obviously, $\Lat(u)=\{\id_{[n]}\} \cup \bigl\{\delta_{m-1} \circ \delta\; |\; \delta \in \Lat(u')\bigr\}$. \\
For every $n \in \mathbb{N}$, we denote by $S(n)$ the set of increasing sequences
in $E$ which end at $[n]$, and we define an order relation $\subset$ on $S(n)$ by
$$\forall (u,v)\in S(n)^2, \quad u \subset v \; \underset{\text{def}}{\Longleftrightarrow} \; \Lat(u) \subset \Lat(v).$$
Finally, for every $n \in \mathbb{N}$ and every pair $(u,v)\in S(n)^2$, we define
$$S_{u,v}:=\bigl\{w \in S(n) : \quad w \subset u \quad \text{and} \quad w \subset v\bigr\}.$$

\subsubsection{Subdivisions of the $n$-simplex}\label{core}

For $n \in \mathbb{N}$, we consider the $n$-simplex
$$\Delta^n=\bigl\{(t_0,\dots,t_n) \in (\mathbb{R}_+)^{n+1}:t_0+t_1+\dots+t_n=1\bigr\} \subset
\mathbb{R}^{n+1}.$$

For any $i \in \{0,\dots,n\}$, we set
$$\Delta_i^n:=\left\{(t_0,\dots,t_n) \in \Delta^n: \quad
t_i\leq \frac{1}{2(n+1)} \quad \text{and} \quad \forall
j \in \{0,\dots,n\}, \; t_i \leq t_j\right\}$$
and
$$C(\Delta^n):=\left\{(t_0,\dots,t_n) \in \Delta^n: \quad
\forall i \in \{0,\dots,n\}, \; t_i \geq \frac{1}{2(n+1)}\right\}.$$
\begin{figure}[h]
\begin{center}
\psfrag{0}{$0$}
\psfrag{1}{$1$}
\psfrag{2}{$2$}
\psfrag{a}{$\Delta_1^2$}
\psfrag{b}{$\Delta_0^2$}
\psfrag{c}{$\Delta_2^2$}
\psfrag{d}{$C(\Delta^2)$}
\includegraphics{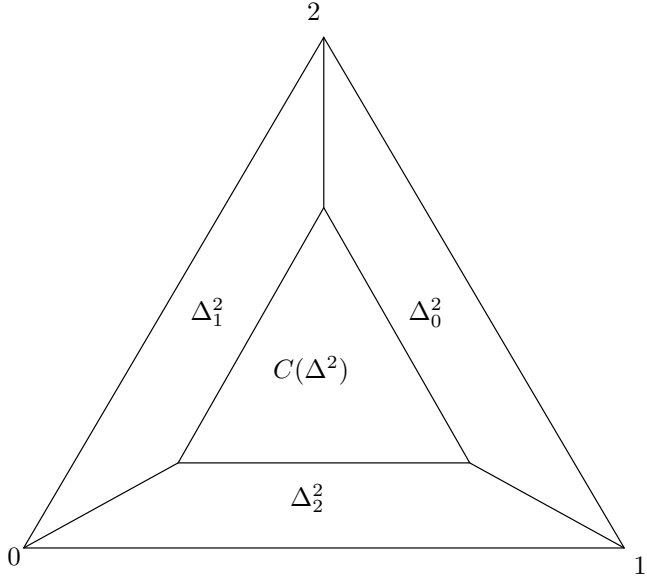}
\caption{A subdivision of $\Delta^2$}
\end{center}
\end{figure}
Notice the homeomorphism
$$\alpha^n : \begin{cases}
C(\Delta^n) & \overset{\cong}{\longrightarrow} \Delta^n \\
(t_0,\dots,t_n) & \longmapsto \left(2(t_0-\frac{1}{2(n+1)}),
2(t_1-\frac{1}{2(n+1)}),\dots, 2(t_n-\frac{1}{2(n+1)})\right).
\end{cases}$$
Furthermore,  for every $i\in \{0,\dots,n\}$, we define a homeomorphism
$$\alpha_i^n : \begin{cases}
\Delta_i^n & \overset{\cong}{\longrightarrow} \Delta^{n-1} \times I \\
(t_0,\dots,t_n) & \longmapsto \left[(\frac{t_0}{1-t_i},\dots,\frac{t_{i-1}}{1-t_i}
,\frac{t_{i+1}}{1-t_i},\dots\frac{t_n}{1-t_i}), 2(n+1)t_i \right].
\end{cases}$$
Gluing the maps $(\alpha_0^n)^{-1},(\alpha_1^n)^{-1},\dots,(\alpha_n^n)^{-1}$ together yields a homeomorphism
$$r_n : \partial \Delta^n \times [0,1] \overset{\cong}{\longrightarrow}
\underset{0\leq i\leq n}{\bigcup}\Delta_i^n.$$

\label{deltau}
\vskip 2mm
The following recursive definition yields, for every $n \in \mathbb{N}$ and every increasing sequence $u=\delta_1<\dots<\delta_{m-1}<n$ in $S(n)$,
a compact subset $\Delta_u$ of $\Delta^n$:
\begin{itemize}
\item If $u$ is trivial, then $\Delta_u:=C(\Delta^n)$.
\item Otherwise, $\Delta_u:=r_n(\delta_{m-1}^*(\Delta_{u'}) \times [0,1])$.
\end{itemize}
For every $n \in \mathbb{N}$, we obtain a subdivision of the $n$-simplex in this manner.

\vskip 2mm
\textbf{Example :} There are thirteen increasing sequences that end at $[2]$. They are: \begin{align*}
[2], & \quad u_1:=(0,1) <[2], & \quad   u_2:=(0,2) <[2],  \\
u_3:=(1,2) <[2] & \quad u_4:= 0<(0,1) <[2], & \quad  u_5:=1<(0,1) <[2],  \\
u_6:= 0<(0,2) <[2], & \quad u_7:= 1<(0,2) <[2], & \quad  u_8:=0<(1,2) <[2], \\
u_9:=1<(1,2) <[2], & \quad u_{10}:= 0<[2], & \quad u_{11}:=1 <[2], \quad u_{12}:=2<[2].
\end{align*}
The corresponding subdivision of $\Delta^2$ is detailed in the following figure:
\begin{figure}[h]
\begin{center}
\psfrag{a}{$0$}
\psfrag{b}{$1$}
\psfrag{c}{$2$}
\psfrag{u0}{$\Delta_{[2]}$}
\psfrag{u1}{$\Delta_{u_1}$}
\psfrag{u2}{$\Delta_{u_2}$}
\psfrag{u3}{$\Delta_{u_3}$}
\psfrag{u4}{$\Delta_{u_4}$}
\psfrag{u5}{$\Delta_{u_5}$}
\psfrag{u6}{$\Delta_{u_6}$}
\psfrag{u7}{$\Delta_{u_7}$}
\psfrag{u8}{$\Delta_{u_8}$}
\psfrag{u9}{$\Delta_{u_9}$}
\psfrag{v1}{$\Delta_{u_{10}}$}
\psfrag{v2}{$\Delta_{u_{11}}$}
\psfrag{v3}{$\Delta_{u_{12}}$}
\includegraphics{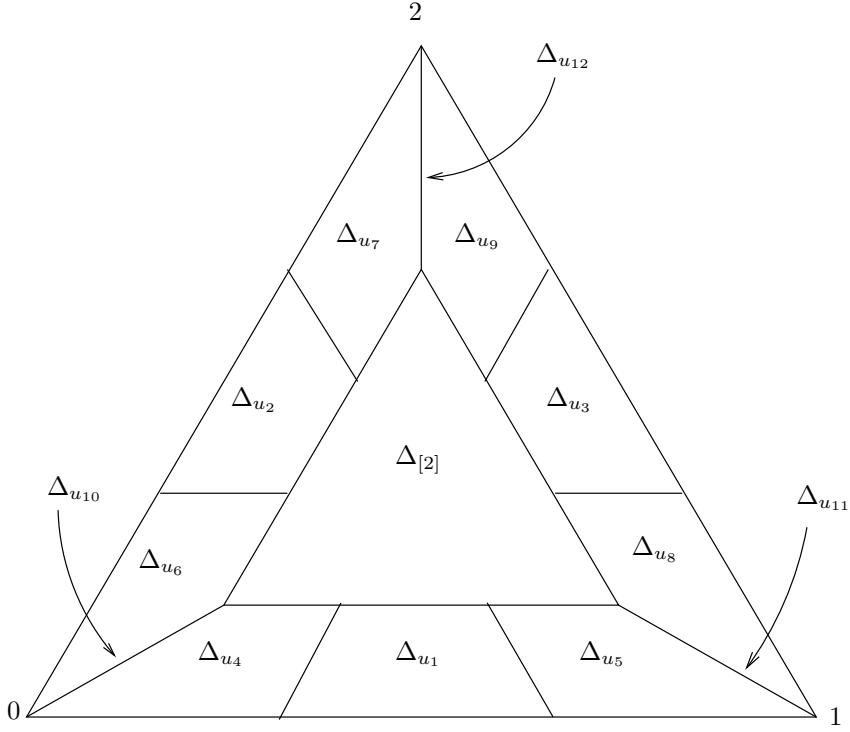}
\caption{The full subdivision of $\Delta^2$}
\end{center}
\end{figure}

\subsubsection{Some properties of the subdivisions}

In the rest of the construction, we will need the following two results:
\begin{prop}\label{glue}
Let $n \in \mathbb{N}$. Then:
\begin{itemize}
\item[\emph{(i)}] $\underset{u \in S(n)}{\bigcup}\Delta_u=\Delta^n$;
\item[\emph{(ii)}] $\underset{u \in S(n)\setminus\{[n]\}}{\bigcup}\Delta_u=\underset{0\leq i \leq N}{\bigcup}\Delta_i^n$;
\item[\emph{(iii)}] the space $\underset{0\leq i \leq N}{\bigcup}\Delta_i^n$ has the final topology for the inclusions $\Delta_u \subset
\underset{0\leq i \leq N}{\cup}\Delta_i^n$, with non-trivial $u\in S(n)$.
\end{itemize}
\end{prop}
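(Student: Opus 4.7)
The plan is to establish all three statements simultaneously by induction on $n$, exploiting the exact match between the recursive definition of $\Delta_u$ and the recursive decomposition $\Delta^n = C(\Delta^n) \cup r_n(\partial \Delta^n \times [0,1])$ built into the homeomorphisms $\alpha^n$ and $\alpha_i^n$. The base case $n=0$ is trivial: $S(0) = \{\id_{[0]}\}$, $\Delta_{[0]} = C(\Delta^0) = \Delta^0$ is a point, and (ii)--(iii) are vacuous.

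For the inductive step, assume (i)--(iii) hold for every $k < n$. I would tackle (ii) first. Any non-trivial $u \in S(n)$ can be written uniquely as $u' < \delta_{m-1}$ with $\delta_{m-1} : [k] \hookrightarrow [n]$ a strict face inclusion and $u' \in S(k)$; by the defining recursion, $\Delta_u = r_n(\delta_{m-1}^*(\Delta_{u'}) \times [0,1])$. Grouping by $\delta_{m-1}$ and using the inductive hypothesis (i) in dimension $k$,
\begin{align*}
\bigcup_{u \in S(n) \setminus \{[n]\}} \Delta_u
&= \bigcup_{k < n}\,\bigcup_{\delta : [k] \hookrightarrow [n]}\,\bigcup_{u' \in S(k)} r_n\bigl(\delta^*(\Delta_{u'}) \times [0,1]\bigr) \\
&= \bigcup_{k < n}\,\bigcup_{\delta : [k] \hookrightarrow [n]} r_n\bigl(\delta^*(\Delta^k) \times [0,1]\bigr).
\end{align*}
Since every strict face inclusion $[k] \hookrightarrow [n]$ factors through a codimension-one face, the union of the images $\delta^*(\Delta^k)$ for $k<n$ is exactly $\partial \Delta^n$, so the last expression equals $r_n(\partial \Delta^n \times [0,1]) = \bigcup_{0 \leq i \leq n} \Delta_i^n$, which is (ii). Statement (i) then follows at once, because $\Delta_{[n]} = C(\Delta^n)$ and the elementary inequality comparison gives $\Delta^n = C(\Delta^n) \cup \bigcup_{0 \leq i \leq n} \Delta_i^n$: at any $(t_0,\dots,t_n) \in \Delta^n$, either all $t_j \geq 1/(2(n+1))$, or an index $i$ realizing $\min_j t_j$ satisfies $t_i \leq 1/(2(n+1))$.

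For (iii), I would first observe that $S(n)$ is finite: any $u \in S(n)$ has length at most $n+1$ since the domain dimensions of its $\delta_i$'s strictly increase, and at each step the number of strict face inclusions is finite. Each $\Delta_u$ is the continuous image of a compact set (either $\Delta^n$ for $u = [n]$, or $\delta^*(\Delta_{u'}) \times [0,1]$ otherwise), hence compact; as $\bigcup_i \Delta_i^n$ is closed in the Hausdorff space $\Delta^n$, each $\Delta_u$ is also closed in $\bigcup_i \Delta_i^n$. A finite closed cover always induces the final topology equal to the subspace topology, settling (iii). The only substantive point is the bookkeeping that identifies the combinatorial indexing by $S(n)$ with the geometric decomposition; no serious obstacle arises, as compactness and finiteness dispose of all topological subtleties.
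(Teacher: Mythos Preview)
Your proof is correct and follows essentially the same inductive strategy as the paper. The only cosmetic difference is that the paper proves (i) and (ii) by element-chasing using only the codimension-one face through a given point (hence only the induction hypothesis for $n-1$), whereas you group non-trivial sequences by their last face $\delta_{m-1}:[k]\hookrightarrow[n]$ of arbitrary codimension and invoke strong induction for all $k<n$; part (iii) is handled identically via finiteness of $S(n)$ and compactness.
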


\begin{prop}\label{intersection}
Let $n \in \mathbb{N}$, and $(u,v)\in S(n)^2$ be a pair of non-trivial sequences. Then
$$\Delta_u \cap \Delta_v \subset \underset {w \in S_{u,v}\setminus \{[n]\}}{\bigcup}\Delta_w.$$
\end{prop}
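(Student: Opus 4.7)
My plan is to proceed by induction on $n$. The case $n = 0$ is vacuous, since $S(0)$ has no non-trivial element. For the inductive step, I write $u = \delta_1 < \cdots < \delta_{m-1} < [n]$ and $v = \delta'_1 < \cdots < \delta'_{m'-1} < [n]$ with $m, m' \geq 2$, and set $\delta := \delta_{m-1} : [k] \hookrightarrow [n]$ and $\delta' := \delta'_{m'-1} : [k'] \hookrightarrow [n]$. Since $r_n$ is a homeomorphism, the recursive definition of the $\Delta_u$'s yields
$$\Delta_u \cap \Delta_v = r_n\bigl((\delta_*(\Delta_{u'}) \cap \delta'_*(\Delta_{v'})) \times [0,1]\bigr),$$
and the problem reduces to analyzing $\delta_*(\Delta_{u'}) \cap \delta'_*(\Delta_{v'})$ inside $\partial \Delta^n$. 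The main combinatorial tool is factorization through the common face: letting $A := \im(\delta) \cap \im(\delta')$ and letting $\gamma : [\#A - 1] \hookrightarrow [n]$ be the unique increasing injection with image $A$, there are unique increasing injections $\alpha, \beta$ with $\gamma = \delta \circ \alpha = \delta' \circ \beta$, and the intersection is forced to lie inside $\gamma_*(\Delta^{\#A - 1})$.

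When $\delta = \delta'$ (so $\alpha = \beta = \id$), the intersection collapses to $\delta_*(\Delta_{u'} \cap \Delta_{v'})$ and the inductive hypothesis in dimension $k < n$ applies directly: given $w'' \in S_{u',v'} \setminus \{[k]\}$ with $\Delta_{u'} \cap \Delta_{v'}$ covered by the $\Delta_{w''}$, appending $\delta < [n]$ to $w''$ produces a non-trivial $w \in S(n)$ with $\Delta_w = r_n(\delta_*(\Delta_{w''}) \times [0,1])$, and the identity $\Lat(u) = \{\id_{[n]}\} \cup \delta \cdot \Lat(u')$ (valid for $u$, $v$, and $w$ alike) immediately forces $w \in S_{u,v}$. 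If $u'$ (or $v'$) is trivial, one simply takes $w = u$ (or $w = v$), since $\delta \in \Lat(v)$ is automatic.

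When $\delta \neq \delta'$, the set $\alpha_*(\Delta^{\#A - 1})$ is a proper face of $\Delta^k$, hence disjoint from $C(\Delta^k)$; if $u'$ is trivial this already gives $\Delta_u \cap \Delta_v = \emptyset$ and the statement holds vacuously. Otherwise $\Delta_{u'}$ is covered by cells $\Delta_{u''}$ indexed by non-trivial $u'' \in S(k)$, and intersecting with $\alpha_*(\Delta^{\#A - 1})$ is handled by descending one more step into the subdivision of $\Delta^k$. The main obstacle will be the bookkeeping in this iterated descent: I must track the factorizations $\gamma = \delta \circ \alpha = \delta' \circ \beta$ so that the witness obtained at the deeper recursion level assembles, via re-composition with the appropriate injections, into a single non-trivial $w \in S(n)$ whose $\Lat(w)$ lies inside $\Lat(u) \cap \Lat(v)$. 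The fact that compositions of increasing hole-free injections remain increasing and hole-free is crucial to guarantee that each $w$ constructed this way is a legitimate element of $S(n)$.
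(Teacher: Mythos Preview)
Your induction on $n$ and the treatment of Case~1 ($\delta = \delta'$) are correct. The gap is in Case~2.

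When $\delta \neq \delta'$, you correctly reduce to analyzing $\Delta_{u'} \cap \alpha_*(\Delta^{\#A-1})$ inside $\Delta^k$, and you propose an ``iterated descent'' through the subdivision of $\Delta^k$. But this is not an instance of the proposition you are inducting on: you are now intersecting a subdivision cell $\Delta_{u'}$ with a \emph{face} $\alpha_*(\Delta^{\#A-1})$, not with another subdivision cell. So the induction hypothesis on $n$ does not apply, and no well-founded recursion has been set up. To make the descent rigorous you would need a second induction (say on the combined lengths of $u$ and $v$), together with the observation that $\Delta_{u'} \cap \partial\Delta^k = (\delta_{m-2})_*(\Delta_{(u')'})$ when $u'$ is non-trivial; you have not stated either.

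The paper isolates exactly this face-versus-cell intersection as a separate lemma (Lemma~\ref{faceintersectionlemma}): if $\delta^*(\Delta^k) \cap \Delta_u \not\subset \delta^*(\partial\Delta^k)$, then $\delta \in \Lat(u)$ and $\delta^*(\Delta^k) \cap \Delta_u = \delta^*(\Delta_{u^{(i)}})$ for some $i$. That lemma has its own induction on $n$. With it in hand, the proof of the proposition avoids your case split entirely: for $x \in \Delta_u \cap \Delta_v$ one writes $(y,t) = r_n^{-1}(x)$, takes the unique $\delta$ with $y$ in the \emph{open} face $\delta^*(\Delta^k \setminus \partial\Delta^k)$, applies the lemma to both $u$ and $v$ to obtain $\delta \in \Lat(u) \cap \Lat(v)$ and $y \in \delta^*(\Delta_{u^{(i)}} \cap \Delta_{v^{(j)}})$, and then invokes the induction hypothesis in dimension $k$. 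Your ``iterated descent'' is essentially this lemma's proof inlined, but without the scaffolding to make it go through.

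A minor point: your final sentence about ``increasing hole-free injections'' confuses the monomorphisms of $\Delta^*$ that make up the sequences $u \in S(n)$ with the morphisms of the category $\Gamma^*$ from Section~\ref{10.2.1}. These are unrelated; the $\delta_i$ here are arbitrary monomorphisms $[k] \hookrightarrow [n]$ in $\Delta$, and no hole-free condition is present or needed.
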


\begin{proof}[Proof of proposition \ref{glue} :]
We prove (i) and (ii) by induction on $n$. When $n=0$, $\Delta^0=C(\Delta^0)=\Delta_{[0]}$.
Let $n \in \mathbb{N}^*$, and assume the result is true for $n-1$.
It suffices to prove that $\Delta^n \subset \underset{u \in S(n)}{\bigcup}\Delta_u$
and $\underset{0\leq i \leq N}{\bigcup}\Delta_i^n \subset \underset{u \in S(n)\setminus\{[n]\}}{\bigcup}\Delta_u$.\\
Let $x \in \Delta^n$. If $x \in C(\Delta^n)$, then $x \in \Delta_{[n]}$. Otherwise,
we choose $i \in [n]$ such that $x \in \Delta_i^n$, and we set $(y,t):=r_n^{-1}(x)$, and
$z \in \Delta^{n-1}$ such that $y=(\delta_i^n)^*(z)$. Then, by the induction hypothesis, there is some
$u \in S(n-1)$ such that $z \in \Delta^{n-1}$, and we write
$u=\delta_1<\dots<\delta_{m-1}<[n-1]$. We then set $v:=\delta_1<\dots<\delta_{m-1}<\delta_i^n<[n]$. Then
$v$ is non trivial, $v \in S(n)$, $v'=u$, and we deduce that $\Delta_v=r_n((\delta_i^n)^*(\Delta_u)\times [0,1])$.
Since $x=r_n((\delta_i^m)^*(z),t))$ and $z \in \Delta_u$, it follows that $x \in \Delta_v$. \\
We conclude that $\Delta^n \subset \underset{u \in S(n)}{\bigcup} \Delta_u$ and
$\underset{0\leq i \leq N}{\bigcup}\Delta_i^n \subset \underset{u \in S(n)\setminus \{[n]\}}{\bigcup} \Delta_u$,
and this proves (i) and (ii) by induction on $n$.

Finally, let $n \in \mathbb{N}$. Since $S(n)$ is finite, $\underset{u \in S(n)\setminus\{[n]\}}{\coprod}\Delta_u$ is a compact space.
Also, $\underset{0\leq i \leq n}{\bigcup}\Delta_i^n$ is compact. Since the canonical map
$\underset{u \in S(n)\setminus\{[n]\}}{\coprod}\Delta_u \longrightarrow \underset{0\leq i \leq n}{\cup}\Delta_i^n$ is continuous and onto,
it follows that it is an identification map. \end{proof}

In order to prove Proposition \ref{intersection}, we will need
the following technical lemma:
\begin{lemme}\label{faceintersectionlemma}
Let $n \in \mathbb{N}$, $u \in S(f)$, and $\delta: [k] \hookrightarrow [n]$. \\
If $\delta^*(\Delta^k) \cap \Delta_u \not\subset \delta^*(\partial \Delta^k)$, then $\delta \in \Lat(u)$ and
there exists an $i \in \mathbb{N}$ such that $$\delta^*(\Delta^k) \cap \Delta_u=\delta^*(\Delta_{u^{(i)}}).$$
\end{lemme}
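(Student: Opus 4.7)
The plan is to argue by induction on $n$. The base case $n=0$ is immediate, since $S(0) = \{\id_{[0]}\}$ and the only monomorphism into $[0]$ is the identity.

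For the inductive step, I fix $u \in S(n)$ and $\delta : [k] \hookrightarrow [n]$ satisfying the non-containment hypothesis, and first dispose of two easy sub-cases. When $k=n$, $\delta = \id_{[n]}$ automatically lies in $\Lat(u)$ by definition; with the convention $u^{(0)} := u$ (which I adopt to handle this boundary case), the equality $\delta^*(\Delta^k) \cap \Delta_u = \Delta_u = \delta^*(\Delta_{u^{(0)}})$ is tautological. When $u = [n]$ is the trivial sequence, $\Delta_u = C(\Delta^n)$ consists of points whose every barycentric coordinate is at least $1/(2(n+1)) > 0$, whereas $\delta^*(\Delta^k) \subset \partial \Delta^n$ as soon as $k<n$; thus the intersection is empty and the hypothesis fails, so this sub-case never arises.

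The substantive case is $k<n$ with $u = \delta_1 < \dots < \delta_{m-1} < [n]$ non-trivial, where $\Delta_u = r_n(\delta_{m-1}^*(\Delta_{u'}) \times [0,1])$ by the recursive definition. The first and hardest step will be to establish that $\delta$ must factor through $\delta_{m-1}$. I would argue contrapositively: assuming $\im \delta \not\subset \im \delta_{m-1}$, I take any $x \in \delta^*(\Delta^k) \cap \Delta_u$ and aim to show $x \in \delta^*(\partial \Delta^k)$. Writing $x = r_n(x', r)$ with $(x', r) \in \delta_{m-1}^*(\Delta_{u'}) \times [0,1]$ (so $x'_i = 0$ for all $i \notin \im \delta_{m-1}$), I would unpack the explicit formulas for the inverses $(\alpha_j^n)^{-1}$ on the face of $\partial \Delta^n$ that supports $x'$ and compare the resulting coordinates of $x$ against the vanishing/positivity pattern imposed by $\delta$ (namely $x_i>0$ exactly when $i \in \im \delta$ on the relative interior). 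The existence of an index in $\im \delta \setminus \im \delta_{m-1}$ will then force either $r=0$, in which case $x = x'$ lies in $\delta_{m-1}^*(\Delta^{d})$ with $d$ the domain of $\delta_{m-1}$, giving $\im \delta \subset \im \delta_{m-1}$ --- a contradiction --- or the vanishing of some coordinate of $x$ indexed by $\im \delta$, placing $x$ in $\delta^*(\partial \Delta^k)$ as needed. This careful tracking of coordinates under $r_n$ is the central technical obstacle.

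Once the factorization $\delta = \delta_{m-1} \circ \tilde\delta$ is in hand, the remainder is formal. Using that $r_n$ restricts to the identity on $\partial \Delta^n \times \{0\}$ and that $\delta_{m-1}^*$ is injective, I would transfer the non-containment hypothesis from $(\delta, u)$ to $(\tilde\delta, u')$ and apply the induction hypothesis in the strictly smaller dimension $d<n$. This yields $\tilde\delta \in \Lat(u')$ and an integer $j$ with $\tilde\delta^*(\Delta^k) \cap \Delta_{u'} = \tilde\delta^*(\Delta_{(u')^{(j)}})$. Invoking the identity $\Lat(u) = \{\id_{[n]}\} \cup \{\delta_{m-1} \circ \eta : \eta \in \Lat(u')\}$ recorded earlier, together with the direct verification $u^{(j+1)} = (u')^{(j)}$, I conclude that $\delta = \delta_{m-1} \circ \tilde\delta \in \Lat(u)$, and applying $\delta_{m-1}^* \times \id_{[0,1]}$ followed by $r_n$ to the intersection identity gives $\delta^*(\Delta^k) \cap \Delta_u = \delta^*(\Delta_{u^{(j+1)}})$, closing the induction.
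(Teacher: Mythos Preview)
Your inductive skeleton matches the paper's exactly: dispose of $\delta=\id_{[n]}$ and of the trivial $u$, then for non-trivial $u=\delta_1<\dots<\delta_{m-1}<[n]$ factor $\delta$ through $\delta_{m-1}$ and invoke the induction hypothesis on $(\tilde\delta,u')$. The difference lies entirely in how the factorization is obtained. The paper does not chase coordinates through $r_n$; it records at the outset the elementary fact that for monomorphisms $\delta:[k]\hookrightarrow[n]$ and $\delta':[k']\hookrightarrow[n]$ one has $\delta^*(\Delta^k)\cap(\delta')^*(\Delta^{k'})\not\subset(\delta')^*(\partial\Delta^{k'})$ if and only if $\delta'=\delta\circ\delta''$ for some $\delta''$. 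Combined with the observation (stated ``by construction'') that $\Delta_u\cap\partial\Delta^n=\delta_{m-1}^*(\Delta_{u'})$, this gives the whole step in one stroke: since $k<n$ forces $\delta^*(\Delta^k)\subset\partial\Delta^n$, one has $\delta^*(\Delta^k)\cap\Delta_u=\delta^*(\Delta^k)\cap\delta_{m-1}^*(\Delta_{u'})$, the hypothesis feeds into the simplicial fact to produce $\delta=\delta_{m-1}\circ\delta''$, and the same equality rewrites the intersection as $\delta_{m-1}^*\bigl((\delta'')^*(\Delta^k)\cap\Delta_{u'}\bigr)$, ready for induction.

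Your plan instead re-derives (the $\subset$ half of) the identity $\Delta_u\cap\partial\Delta^n=\delta_{m-1}^*(\Delta_{u'})$ by tracking coordinates under $r_n$, which is heavier and is what makes you call it ``the central technical obstacle''. There is also a small slip in your $r=0$ sub-case: from $x=x'\in\delta_{m-1}^*(\Delta^d)$ you conclude $\im\delta\subset\im\delta_{m-1}$, but a single point lying in that face does not force containment of images. The correct (and sufficient) conclusion is that $x_j=x'_j=0$ for any $j\in\im\delta\setminus\im\delta_{m-1}$, so $x\in\delta^*(\partial\Delta^k)$ directly --- your dichotomy collapses. Once you isolate the statement $\Delta_u\cap\partial\Delta^n\subset\delta_{m-1}^*(\Delta^d)$ as the one thing to prove, both the factorization and your later ``transfer of the non-containment hypothesis'' become one-liners, exactly as in the paper.
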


\begin{proof}
We will use the following simple fact of simplicial geometry:
\begin{center}
for every $n \in \mathbb{N}$, $\delta : [k] \hookrightarrow [n]$ and $\delta' : [k'] \hookrightarrow [n]$, one has
$\delta^*(\Delta^k) \cap (\delta')^*(\Delta^{k'}) \not \subset (\delta')^*(\partial \Delta^{k'})$ if and only
if there is some $\delta'': [k'] \hookrightarrow [k]$ such that $\delta' =\delta \circ \delta''$.
\end{center}
Now, we prove the claimed result by induction on $n$. It is obvious when $n=0$. \\
Let $n\in \mathbb{N}^*$, and assume the result holds for every $n'<n$, every $u \in S(n')$ and
$\delta : [k'] \hookrightarrow [n']$. \\
Let $u \in S(n)$ and $\delta : [k] \hookrightarrow [n]$. We assume that
$\delta^*(\Delta^k) \cap \Delta_u \not\subset \delta^*(\partial \Delta^k)$.
If $\delta=\id_{[n]}$, then $\delta \in \Lat(u)$ and $\delta^*(\Delta^k) \cap \Delta_u=\Delta^n \cap \Delta_u=\Delta_u$.

Assume now that $k<n$. If $u$ is trivial, then $\partial \Delta^n \cap \Delta_u =\emptyset$, and
this contradicts $\delta^*(\Delta^k) \cap \Delta_u \neq \emptyset$. Hence $u$ is non-trivial.
We now write $u=\delta_1<\dots<\delta_{m-1}<n$, with $\delta_{m-1} : [n'] \hookrightarrow [n]$.
By construction, $\Delta_u \cap \partial \Delta^n =\delta_{m-1}^*(\Delta_{u'})$.
It follows that $\delta_{m-1}^*(\Delta^{n'}) \cap \delta^*(\Delta^k) \not\subset \delta^*(\partial\Delta^k)$, and we deduce that
$\delta=\delta_{m-1} \circ \delta''$ for some $\delta'' : [k'] \hookrightarrow [n']$.

It follows that
\begin{align*}
\delta^*(\Delta^k) \cap \Delta_u & =\delta^*(\Delta^k) \cap (\partial \Delta^n \cap \Delta_u) \\
 & =\delta^*(\Delta^k) \cap \delta_{m-1}^*(\Delta_{u'}) \\
\delta^*(\Delta^k) \cap \Delta_u & =\delta_{m-1}^*((\delta'')^*(\Delta^{k}) \cap \Delta_{u'}).
\end{align*}
Moreover $\delta^*(\Delta^k)=(\delta_{m-1})^*((\delta'')^*(\Delta^{k}))$, hence
$(\delta'')^*(\Delta^k) \cap \Delta_{u'} \not\subset (\delta'')^*(\partial \Delta^k)$.

It follows from the induction hypothesis that $\delta''\in \Lat(u')$ and
$(\delta'')^*(\Delta^k) \cap \Delta_{u'}=(\delta'')^*(\Delta_{(u')^{(i)}})$ for some integer $i$.
Hence $\delta=\delta_{m-1} \circ \delta''\in \Lat(u)$, and
$$\delta^*(\Delta^k) \cap \Delta_u=\delta_{m-1}^*((\delta'')^*(\Delta_{u^{(i+1)}}))=\delta^*(\Delta_{u^{(i+1)}}).$$
This proves lemma \ref{faceintersectionlemma}.
\end{proof}

\begin{proof}[Proof of Proposition \ref{intersection} :]
We prove the result by induction on $n$. For $n=0$, the result is obvious.
Let $n\in \mathbb{N}^*$, and assume the result holds for every $n'<n$ and every pair $(u,v)\in S(n')^2$ of non-trivial sequences.

Let $(u,v)\in S(n)^2$ be a pair of non-trivial sequences. Let $x \in \Delta_u \cap \Delta_v$.
Then $x \in \underset{i \in [n]}{\bigcup} \Delta_i^n$ and we may therefore set
$(y,t):=(r_n)^{-1}(x)$. By construction of $\Delta_u$ and $\Delta_v$, we deduce that $y \in \Delta_u \cap \Delta_v$.
Then $y \in \partial \Delta^n$ and there is a unique lattice
$\delta : [k] \hookrightarrow [n]$, with $k<n$, such that $y \in \delta^*(\Delta^k \setminus \partial \Delta^k)$.
Since $y \in \Delta_u \cap \Delta_v$, we deduce that $\delta^*(\Delta^k) \cap \Delta_u \not\subset \delta^*(\partial \Delta^k)$
and $\delta^*(\Delta^k) \cap \Delta_v \not\subset \delta^*(\partial \Delta^k)$.
It then follows from Lemma \ref{faceintersectionlemma} that $\delta \in \Lat(u) \cap \Lat(v)$ and
that there exists a pair $(i,j)$ of integers such that
$\delta^*(\Delta^k) \cap \Delta_u=\delta^*(\Delta_{u^{(i)}})$ and
$\delta^*(\Delta^k) \cap \Delta_v=\delta^*(\Delta_{v^{(j)}})$.
Then $y \in \delta^*(\Delta_{u^{(i)}} \cap \Delta_{v^{(j)}})$.

Let now $z\in \Delta^k$ be such that $y=\delta^*(z)$. Hence
$z \in \Delta_{u^{(i)}} \cap \Delta_{v^{(j)}}$. If
$u^{(i)}$ or $v^{(j)}$ is trivial, then $w \in \Delta_{[k]}$.
Otherwise, we deduce from the induction hypothesis
that there exists $w \in S_{u^{(i)},v^{(j)}} \setminus\{[k]\}$ such that
$z \in \Delta_w$. In any case, we have $w \in S_{u^{(i)},v^{(j)}}$ such that $z \in \Delta_w$.
We write $w=\delta_1<\dots<\delta_{m-1} < [k]$, and set
$w_1:=\delta_1<\dots<\delta_{m-1} < \delta < [n]$. Then
\begin{align*}
\Lat(w_1) & =\{[n]\} \cup \{\delta \circ \delta', \delta' \in \Lat(w)\} \\
& \subset \{[n]\} \cup \{\delta \circ \delta', \delta' \in \Lat(u^{(i)})\} \\
\Lat(w_1)& \subset \Lat(u),
\end{align*}
and similarly $\Lat(w_1) \subset \Lat(v)$. We deduce that $w_1 \in S_{u,v}\setminus \{[n]\}$. \\
Finally, $w_1'=w$, and it follows that $x=r_n(\delta^*(z),t)) \in \Delta_w$. \\
We conclude that $\Delta_u \cap \Delta_v \subset \underset{w \in S_{u,v}\setminus\{[n]\}}{\bigcup}\Delta_w$.
\end{proof}

\subsection{Associativity of barycenters in the $n$-simplex}\label{10.4.2}

\subsubsection{Cartesian squares in the simplicial category}
In the category $\Delta$, every pair of morphisms $(\sigma,\delta)$, with
$\sigma : [n] \twoheadrightarrow [k]$, and $\delta : [k'] \hookrightarrow [k]$,
gives rise to a cartesian square: $$\xymatrix{
[m] \ar@{^{(}->}[d]^{\sigma \natural \delta} \ar@{>>}[r]^{\delta \natural \sigma} & [k'] \ar@{^{(}->}[d]^\delta \\
[n] \ar@{>>}[r]^{\sigma} & [k]
}$$
With $\delta : [k'] \hookrightarrow [k]$ fixed, every commutative triangle
$\xymatrix{
[n] \ar@{>>}[dr]^{\sigma} \ar[rr]^{\tau} & & [n'] \ar@{>>}[dl]^{\sigma'} \\
& [k]
}$ yields a commutative diagram
$$\xymatrix{
[m]
\ar@{>>}[dr]^{\delta \natural \sigma} \ar[rr]^{\tau'}
\ar[drrrr]^(0.7){\sigma \natural \delta}
 & & [m'] \ar@{>>}[dl]^{\delta \natural \sigma'}
\ar[drrrr]^{\sigma' \natural \delta}  \\
& [k'] \ar@{^{(}->}[drrrr]_\delta & & & [n] \ar[rr]^\tau \ar[dr]_\sigma & & [n'] \ar[dl]^{\sigma'} \\
& & & & & [k]
}.
$$
For a fixed epimorphism $\sigma : [n] \twoheadrightarrow [k]$,
we obtain a functor $\sigma\natural :\; \Delta^* \downarrow [k] \longrightarrow \Delta^* \downarrow [n]$.
For every $i \in [k]$, we also set $\sigma \natural i:=\sigma \natural \delta_i$, where
$\delta_i : [0] \hookrightarrow [k]$ maps $0$ to $i$.

\subsubsection{The maps defined by the associativity of barycenters}

Recall that $(\mathbb{R}^{[n]})_{n\in \mathbb{R}}$ is equipped with both a canonical structure of cosimplicial space
and a canonical structure of simplicial set.

Let $\sigma : [n] \twoheadrightarrow [k]$ be an epimorphism in $\Delta$, and set
$n_i:=\# \sigma^{-1}\{i\}$ for every $i\in [k]$.
The canonical ring structure on $\mathbb{R}^{[n]}$ yields a
continuous map
$$\lambda_\sigma : \begin{cases}
\mathbb{R}^{[n]} \times \mathbb{R}^{[k]} & \longrightarrow \mathbb{R}^{[n]} \\
(x,t) & \longmapsto \sigma_*(t) \times x
\end{cases}$$
where $\times$ is the standard $(n+1)$-fold product on $\R^{[n]}$ (the set of functions from $[n]$ to $\R$).
For every $i \in [k]$, we also set
$$\lambda^{(i)}_\sigma : \begin{cases}
\mathbb{R}^{\sigma^{-1}\{i\}} \times \mathbb{R}^{\{i\}} & \longrightarrow \mathbb{R}^{\sigma^{-1}\{i\}} \\
(x,t) & \longmapsto t.x
\end{cases}$$ and notice that
$\lambda_\sigma=\underset{i=0}{\overset{k}{\prod}}\lambda^{(i)}_\sigma$.

For every non-empty finite subset $I$ of $\mathbb{N}$, we denote by $\Delta(\mathbb{R}^I)$
the subset of $\mathbb{R}^I$ consisting of those families $(t_i)_{i \in I}$ such that $\underset{i \in I}{\sum} t_i=1$.
The previous decomposition of $\lambda_\sigma$ helps us see that $\lambda_\sigma$ induces a continuous map
$$\left(\underset{i=0}{\overset{k}{\prod}}\Delta\bigl(\mathbb{R}^{\sigma^{-1}\{i\}}\bigr)\right) \times \Delta^k
\longrightarrow \Delta^n,$$
which, composed with
$$\left(\underset{i=0}{\overset{k}{\sum}}(\sigma \natural i)^*\right)\times \id :
\left(\underset{i=0}{\overset{k}{\prod}}\Delta^{n_i-1}\right) \times \Delta^k
\longrightarrow
\left(\underset{i=0}{\overset{k}{\prod}}\Delta(\mathbb{R}^{\sigma^{-1}\{i\}})\right) \times \Delta^k,
$$
yields a map
$$\left(\underset{i=0}{\overset{k}{\prod}}\Delta^{n_i-1}\right) \times \Delta^k
\longrightarrow \Delta^n$$
which we still write $\lambda_\sigma$.

Note that $\lambda_\sigma$ is onto. Indeed, let $t=(t_i)_{0 \leq i \leq n}$:
set $t':=\sigma^*(t)$ and define $(x_i)_{0 \leq i \leq k}$ as: $x_i=(1,0,0,\dots,0)$ if $t'_i= 0$, and $x_i:=\left(\frac{t_{(\sigma \natural i)(j)}}{t_i}\right)_{j \in [n_i-1]}$
otherwise. It is then easily checked that $\lambda_\sigma(x,t')=t$.

If $k=0$ or $k=n$, then $\lambda_\sigma$ is the identity map.
In any other case, it may easily be proven that $\lambda_\sigma$ is \emph{not} one-to-one. This lack of injectivity
is a problem: in the following paragraphs, we explain what needs to be done in order to construct
a homeomorphism from $\lambda_\sigma$.

\subsubsection{The functors $F_\sigma$ and $H_\sigma$}

By composing $\sigma \natural -$ with
the forgetful functor $\Delta^* \downarrow [n] \longrightarrow \Delta$ and the functor defined by the canonical structure
of simplicial space on $(\mathbb{R}^{[n]})_{n\in \mathbb{N}}$, we recover a contravariant
functor $F_\sigma : \Delta^* \downarrow [k] \longrightarrow \text{Top}$.

By composing the forgetful functor $\Delta^* \downarrow [k] \longrightarrow \Delta$
with the canonical structure of cosimplicial space on $(\Delta^n)_{n\in \mathbb{N}}$, we recover
a functor $H_\sigma : \Delta^* \downarrow [k] \longrightarrow \text{Top}$
which maps $\delta : [m] \hookrightarrow [n]$ to $\Delta^m$ and
$\xymatrix{
[m] \ar@{^{(}->}[dr]^{\delta} \ar@{^{(}->}[rr]^{\delta''} & & [m'] \ar@{^{(}->}[dl]^{\delta'} \\
& [k]
}$ to $(\delta'')^*: \Delta^m \longrightarrow \Delta^{m'}$.

For every pair of morphisms $(\sigma,\delta)$, with
$\sigma : [n] \twoheadrightarrow [k]$, $\delta : [k'] \hookrightarrow [k]$, and
$\sigma \natural \delta : [m] \hookrightarrow [k']$,
the square
$$\begin{CD}
\mathbb{R}^{[k']} @>{(\delta \natural \sigma)_*}>> \mathbb{R}^{[m]} \\
@VV{\delta^*}V @VV{(\sigma \natural \delta)^*}V \\
\mathbb{R}^{[k]} @>{\sigma_*}>> \mathbb{R}^{[n]}
\end{CD}$$
is easily shown to be commutative. \\
It follows that, for every morphism $\varphi=\xymatrix{
[m] \ar@{^{(}->}[dr]^{\delta} \ar@{^{(}->}[rr]^{\delta''} & & [m'] \ar@{^{(}->}[dl]^{\delta'} \\
& [k]
}$ in $\Delta^*\downarrow [k]$, the square
$$\begin{CD}
F_\sigma(\delta') \times H_\sigma(\delta) @>>{\id \times H_\sigma(\varphi)}> F_\sigma(\delta') \times H_\sigma(\delta') \\
@VV{F_\sigma(\varphi)\times \id}V @VV{(\sigma \natural \delta')^* \circ \lambda_{\delta' \natural \sigma}}V \\
F_\sigma(\delta) \times H_\sigma(\delta) @>>{(\sigma \natural \delta)^* \circ \lambda_{\delta \natural \sigma}}> \Delta^n
\end{CD}$$
is commutative.

\subsubsection{The functor $G_\sigma$}\label{gsigma}

Let $\sigma : [n] \twoheadrightarrow [k]$ be an epimorphism in $\Delta$.
For every monomorphism $\delta : [m] \hookrightarrow [k]$,
set $G_\sigma(\delta):= \underset{i=0}{\overset{m}{\prod}}\Delta^{n_{\delta(i)}-1}$ and
$$\alpha_\sigma(\delta):=\underset{i=0}{\overset{m}{\sum}}((\delta \natural \sigma)\natural i)^*:
 G_\sigma(\delta) \longrightarrow F_\sigma(\delta).$$
For every morphism
$\varphi=\xymatrix{
[m] \ar@{^{(}->}[dr]^{\delta} \ar@{^{(}->}[rr]^{\delta''} & & [m'] \ar@{_{(}->}[dl]^{\delta'} \\
& [k]}$ in $\Delta^*\downarrow [k]$, set
$$G_\sigma(\varphi): \begin{cases}
\underset{i=0}{\overset{m'}{\prod}}\Delta^{n_{\delta'(i)}-1}
& \longrightarrow \underset{j=0}{\overset{m}{\prod}}\Delta^{n_{\delta(j)}-1} \\
(x_i)_{0 \leq i \leq m'} & \longmapsto (x_{\delta''(j)})_{0 \leq j \leq m.}
\end{cases}$$
This yields a contravariant functor
$G_\sigma : \Delta^*\downarrow [k] \longrightarrow \text{Top}$.
It is then easy to check that
$\alpha_\sigma : G_\sigma \longrightarrow F_\sigma$ is a natural transformation. \\
We deduce that for every morphism $\varphi=\xymatrix{
[m] \ar@{^{(}->}[dr]^{\delta} \ar@{^{(}->}[rr]^{\delta''} & & [m'] \ar@{_{(}->}[dl]^{\delta'} \\
& [k]}$ in $\Delta^*\downarrow [k]$, the square
$$\begin{CD}
G_\sigma(\delta') \times H_\sigma(\delta) @>>{\id \times H_\sigma(\varphi)}> G_\sigma(\delta') \times H_\sigma(\delta') \\
@VV{G_\sigma(\varphi)\times \id}V @VV{(\sigma \natural \delta')^* \circ \lambda_{\delta' \natural \sigma}}V \\
G_\sigma(\delta) \times H_\sigma(\delta) @>>{(\sigma \natural \delta)^* \circ \lambda_{\delta \natural \sigma}}> \Delta^n
\end{CD}$$
is commutative.

\label{nusigma}
It follows that for every object $\delta : [m] \hookrightarrow [k]$ of $\Delta^* \downarrow [k]$,
the maps
$(\sigma \natural \delta)^* \circ \lambda_{\delta \natural \sigma} : G_\sigma(\delta)\times \Delta^m
\longrightarrow \Delta^n$ yield a continuous map
$$\nu_\sigma : \left(\underset{ \delta \in \Ob(\Delta^* \downarrow [k])}{\coprod} G_\sigma(\delta) \times H_\sigma(\delta)\right)/_\sim
\longrightarrow \Delta^n,$$
where $\sim$ is the equivalence relation on
$\underset{\delta \in \Ob(\Delta^* \downarrow [k])}{\coprod} G_\sigma(\delta) \times H_\sigma(\delta)$
generated by the collection of elementary relations
$$\forall \varphi : \delta \rightarrow \delta', \; \forall (x,t) \in
G_\sigma(\delta') \times H_\sigma(\delta),\quad (x,\varphi^*(t)) \sim (\varphi_*(x),t).$$

Of course, since
$G_\sigma(\id_{[k]}) \times H_\sigma(\id_{[k]})=\left(\underset{i=0}{\overset{k}{\prod}}
\Delta^{n_i-1}\right) \times \Delta^k$ and
$\sigma \natural \id_{[k]}=\id_{[n]}$, the map $\lambda_\sigma$ is
the composite of $\nu_\sigma$ and of the canonical map
$$\left(\underset{i=0}{\overset{k}{\prod}}
\Delta^{n_i-1}\right) \times \Delta^k \longrightarrow
\left(\underset{\delta \in \Ob(\Delta^* \downarrow [k])}{\coprod} G_\sigma(\delta) \times H_\sigma(\delta)\right)/_{\sim.}$$

\begin{prop}\label{superglue}
The map $$\nu_\sigma : \left(\underset{ \delta \in
\Ob(\Delta^* \downarrow [k])}{\coprod} G_\sigma(\delta) \times H_\sigma(\delta)\right)/_\sim
\overset{\cong}{\longrightarrow} \Delta^n$$
is a homeomorphism.
\end{prop}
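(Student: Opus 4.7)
The plan is to prove that $\nu_\sigma$ is a continuous bijection from a compact space to the Hausdorff space $\Delta^n$, from which the homeomorphism conclusion will follow by general topology. The domain of $\nu_\sigma$ is compact because $\Delta^* \downarrow [k]$ is a finite category (its objects are the monomorphisms $[m] \hookrightarrow [k]$ with $m \leq k$) and each $G_\sigma(\delta) \times H_\sigma(\delta)$ is a finite product of compact simplices, so the quotient sits on top of a finite coproduct of compact spaces. Continuity of $\nu_\sigma$ is immediate from the quotient topology and from the continuity of each component map $(\sigma \natural \delta)^* \circ \lambda_{\delta \natural \sigma}$, and surjectivity is automatic since the already noted onto map $\lambda_\sigma$ factors through $\nu_\sigma$. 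So only injectivity is at stake.

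For injectivity I will exhibit an explicit canonical representative in each equivalence class. Given $t \in \Delta^n$, I form $t' \in \Delta^k$ by setting $t'_i := \sum_{j \in \sigma^{-1}\{i\}} t_j$, let $I := \{i \in [k] : t'_i > 0\}$, and let $\delta_t : [m] \hookrightarrow [k]$ denote the unique face with image $I$, where $m + 1 = \# I$. There is a unique $t''_t \in \Delta^m$ with all coordinates positive such that $(\delta_t)_*(t''_t) = t'$, and the ratios $x_{i,j} := t_j / t'_i$ for $j \in \sigma^{-1}\{i\}$ and $i \in I$ assemble into an element $x_t \in G_\sigma(\delta_t)$. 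A short computation confirms that $\nu_\sigma([x_t, t''_t]) = t$, so the assignment $t \mapsto [x_t, t''_t]$ is a set-theoretic right inverse of $\nu_\sigma$.

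The core task is then to check that every class is represented in this way. Take $(y, s) \in G_\sigma(\delta') \times H_\sigma(\delta')$ with $\nu_\sigma([y, s]) = t$; a direct inspection of the defining formula for $\nu_\sigma$ shows that the support $J$ of $s$ in $[m']$ is mapped bijectively by $\delta'$ onto $I$, which forces a unique factorization $\delta_t = \delta' \circ \mu$ with $\mu : [m] \hookrightarrow [m']$. The monomorphism $\mu$ realises a morphism from $\delta_t$ to $\delta'$ in $\Delta^* \downarrow [k]$; one verifies that $s = H_\sigma(\mu)(t''_t)$ and $G_\sigma(\mu)(y) = x_t$, so the elementary equivalence gives $(y,s) \sim (x_t, t''_t)$, proving injectivity. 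The main obstacle will be the index bookkeeping needed to establish $G_\sigma(\mu)(y) = x_t$, which reduces, via the cartesian square defining $\natural$, to the observation that the ratios $t_j / t'_{\sigma(j)}$ are intrinsic to $t$; everything else is formal once the compact-Hausdorff framework is in place.
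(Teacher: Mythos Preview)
Your proposal is correct and follows essentially the same approach as the paper: both establish surjectivity via the factorisation of $\lambda_\sigma$, use compactness of the finite coproduct of simplices against the Hausdorff target $\Delta^n$, and prove injectivity by producing the canonical representative $(x_t,t''_t)$ associated to the face $\delta_t$ determined by the support of $\sigma^*(t)$. The paper phrases the last step as constructing a retraction of $\nu_\sigma$ and leaves the verification that it is one as ``easily seen'', whereas you spell out the factorisation $\delta_t = \delta' \circ \mu$ and the resulting elementary equivalence $(y,s)\sim(x_t,t''_t)$; this is precisely the content the paper elides.
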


\begin{proof}
Since $\lambda_\sigma$ is onto, $\nu_\sigma$ is also onto.
Since $\Ob(\Delta^* \downarrow [k])$ is finite,
$G_\sigma (\delta) \times H_\sigma(\delta)$ is compact for every $\delta \in \Ob(\Delta^* \downarrow [k]$,
and $\Delta^n$ is compact, we deduce that $\nu_\sigma$
is an identification map.

In order to prove that $\nu_\sigma$ is one-to-one, it suffices to construct
a retraction of $\nu_\sigma$. Let $t \in \Delta^n$, and denote by $\delta : [m] \hookrightarrow [k]$
the unique monomorphism such that $\sigma^*(t) \in \delta^*(\partial \Delta^i)$.
Let $t' \in \partial \Delta^i$ such that $\delta^*(t')=\sigma^*(t)$, and set
$x:=\underset{i=0}{\overset{m}{\sum}}((\delta \natural \sigma)\natural i)^*(\frac{t'_j}{t'_i})
_{j \in [n_{\delta(i)}-1]}$.
Finally, we consider the class of $(x,t')$.
This construction yields a map which is easily seen to be a retraction of $\nu_\sigma$, and we deduce that $\nu_\sigma$ is one-to-one,
which finishes the proof.
\end{proof}

\subsubsection{Compatibility with the structure of cosimplicial space on $(\Delta^n)_{n\in \mathbb{N}}$}

Let $\tau : [n'] \rightarrow [n]$ be a morphism in $\Delta$. We then have a unique decomposition of $\sigma \circ \tau$ into
$$\xymatrix{
[n'] \ar[rr]^{\sigma \circ \tau}
\ar@{>>}[dr]^{\sigma'} & & [k] \\
& [k'] \ar@{^{(}->}[ur]_{\delta'}
}.$$
The morphism $\delta'$ is the root of $\sigma \circ \tau$, whilst $\sigma'$ is its reduction. \\
Let $\delta' \natural \sigma : [m] \rightarrow [k']$. The universal property of cartesian squares yields a morphism
$\tau': [n'] \rightarrow [m]$ which renders commutative the following diagram:
$$\xymatrix{
[n']  \ar@/_/[ddr]_\tau \ar@/^/[drr]^{\sigma'} \ar@{.>}[dr]|-{\tau'} \\
& [m] \ar@{>>}[r]^{\delta' \natural \sigma} \ar@{^{(}->}[d]^{\sigma \natural \delta'} & [k'] \ar@{^{(}->}[d]^{\delta'} \\
& [n] \ar@{>>}[r]^{\sigma} & [k].
}$$
For every $i \in [k']$, set $n'_i:=\#(\sigma')^{-1}\{i\}$: then
the commutative triangle $\xymatrix{
[n'] \ar@{>>}[dr]^{\sigma'} \ar[rr]^{\tau'} & & [m] \ar@{>>}[dl]^{\delta' \natural \sigma} \\
& [k']
}$ yields a commutative diagram
$$\xymatrix{
[n'_i-1]
\ar@{>>}[dr] \ar[rr]^{\tau_i}
\ar[drrrr]^(0.7){\sigma' \natural i}
 & & [n_{\delta'(i)}-1] \ar@{>>}[dl]
\ar[drrrr]^{(\delta' \natural \sigma) \natural i}  \\
& [0] \ar@{^{(}->}[drrrr]_i & & & [n'] \ar[rr]^\tau \ar[dr]_{\sigma'} & & [m]. \ar[dl]^{\delta' \natural \sigma} \\
& & & & & [k']
}
$$
The family $(\tau_i)_{0 \leq i \leq k'}$ will be called the \textbf{decomposition of $\tau$ over} $\sigma$.

\begin{prop}\label{splitting}
The square
$$\begin{CD}
G_{\sigma'}(\id_{[n']}) \times \Delta^{k'} @>{\nu_{\sigma'}}>> \Delta^{n'} \\
@V{\left(\underset{i=0}{\overset{k'}{\prod}}\tau_i^*\right) \times \id}VV @V{\tau^*}VV \\
G_\sigma(\delta') \times \Delta^{k'} @>{\nu_\sigma}>> \Delta^{n}
\end{CD}$$ is commutative.
\end{prop}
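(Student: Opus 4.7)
The plan is to verify commutativity by computing both composite maps coordinate-by-coordinate on $\Delta^n$, using the explicit description of $\lambda_\rho$ for an epi $\rho$ and the universal property of the cartesian squares in play. Fix $(x,t) \in G_{\sigma'}(\id_{[n']}) \times \Delta^{k'}$ with $x=(x^{(0)},\dots,x^{(k')})$, $x^{(i)} \in \Delta^{n'_i-1}$, and $t=(t_0,\dots,t_{k'})$. From the factorisation of $\lambda_\rho$ through $\alpha_\rho$, one sees that for any epi $\rho:[a]\twoheadrightarrow[b]$, the $j$-th coordinate of $\lambda_\rho(x,t) \in \Delta^a$ equals $t_{\rho(j)} \cdot (x^{(\rho(j))})_{\operatorname{pos}_\rho(j)}$, where $\operatorname{pos}_\rho(j)$ denotes the position of $j$ in $\rho^{-1}(\rho(j))$ listed in increasing order. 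Since $\id_{[n']}\natural\sigma'=\sigma'$ and $\sigma'\natural\id_{[n']}=\id_{[n']}$, the map $\nu_{\sigma'}$ on $G_{\sigma'}(\id_{[n']})\times\Delta^{k'}$ is just $\lambda_{\sigma'}$, whereas on $G_\sigma(\delta')\times\Delta^{k'}$ the map $\nu_\sigma$ is $(\sigma\natural\delta')^*\circ\lambda_{\delta'\natural\sigma}$.

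With this, both composites can be written out explicitly. For $j'\in[n]$, the first path yields
\[
\bigl(\tau^*\circ\nu_{\sigma'}(x,t)\bigr)_{j'} \;=\; \sum_{j\in\tau^{-1}(j')} t_{\sigma'(j)} \cdot (x^{(\sigma'(j))})_{\operatorname{pos}_{\sigma'}(j)}.
\]
For the second path, $\bigl(\prod_i \tau_i^*\bigr)(x)$ has $i$-th component whose $l$-th coordinate is $\sum_{p\in\tau_i^{-1}(l)}(x^{(i)})_p$, and application of $\lambda_{\delta'\natural\sigma}$ followed by $(\sigma\natural\delta')^*$ gives, at $j'\in[n]$, zero unless $j'$ belongs to $\operatorname{Im}(\sigma\natural\delta')=\sigma^{-1}(\delta'([k']))$, and otherwise
\[
t_i \cdot \sum_{p\in\tau_i^{-1}(l)} (x^{(i)})_p,
\qquad \text{where}\ \tilde{\jmath}=(\sigma\natural\delta')^{-1}(j'),\ i=(\delta'\natural\sigma)(\tilde{\jmath}),\ l=\operatorname{pos}_{\delta'\natural\sigma}(\tilde{\jmath}).
\]

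It remains to compare these two expressions at each $j'$, and this is exactly where the cartesian square and the definition of $\tau'$ deliver the identification. Because $\sigma\circ\tau=\delta'\circ\sigma'$, we have $\tau([n'])\subseteq\sigma^{-1}(\delta'([k']))$, so the first sum is empty whenever the second is zero. Assume now that $j'\in\sigma^{-1}(\delta'([k']))$. The universal property gives $\tau':[n']\to[m]$ with $(\sigma\natural\delta')\circ\tau'=\tau$ and $(\delta'\natural\sigma)\circ\tau'=\sigma'$. The first identity gives $\tau^{-1}(j')=(\tau')^{-1}(\tilde{\jmath})$, and the second forces $\sigma'(j)=i$ for every such $j$; thus only the index $i$ appears in the first sum, so the factor $t_{\sigma'(j)}$ becomes $t_i$. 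The order-isomorphisms $(\sigma')^{-1}(i)\cong[n'_i-1]$ and $(\delta'\natural\sigma)^{-1}(i)\cong[n_{\delta'(i)}-1]$ that define $\tau_i$ send $\operatorname{pos}_{\sigma'}(j)$ on the one side, and $\operatorname{pos}_{\delta'\natural\sigma}(\tilde{\jmath})=l$ on the other, in a way compatible with $\tau'$; consequently $\operatorname{pos}_{\sigma'}$ induces a bijection $\tau^{-1}(j')\overset{\cong}{\to}\tau_i^{-1}(l)$, which matches the two sums term-by-term. The only real obstacle is bookkeeping: four simultaneously varying indices $(j',\tilde{\jmath},i,l)$ and two nested cartesian squares (the one defining $[m]$ and the one producing $\tau'$) have to be kept coherent throughout, but once the pos/sum dictionary is set up no further geometric input is needed.
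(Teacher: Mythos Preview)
Your argument is correct. Both your proof and the paper's proceed by direct verification, unwinding the definitions of $\lambda_\rho$, $\alpha_\rho$, and the maps $\tau_i$; the only difference is presentational. The paper factors the check through a chain of intermediate commutative squares (first at the level of $\mathbb{R}^{\rho^{-1}\{i\}}$ via the decomposition $\lambda_\rho=\prod_i\lambda^{(i)}_\rho$, then for $\alpha_\rho$, and finally composes with $(\sigma\natural\delta')^*$ using $\tau=(\sigma\natural\delta')\circ\tau'$), whereas you carry out a single global coordinate computation on $\Delta^n$. Your explicit $\mathrm{pos}_\rho$ bookkeeping is exactly what underlies the paper's statement that the square
\[
\begin{CD}
\mathbb{R}^{[n'_i-1]} @>>> \mathbb{R}^{(\sigma')^{-1}\{i\}} \\
@V{(\tau_i)^*}VV @V{(\tau')^*}VV \\
\mathbb{R}^{[n_{\delta'(i)}-1]} @>>> \mathbb{R}^{(\delta'\natural\sigma)^{-1}\{i\}}
\end{CD}
\]
commutes; the bijection $\tau^{-1}(j')\cong\tau_i^{-1}(l)$ you extract from the order-isomorphisms is precisely this square read pointwise. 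Neither approach uses any idea the other does not; yours trades diagrammatic modularity for a self-contained coordinate check.
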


\begin{proof}
For every $i \in [k']$, one has $\tau'((\sigma')^{-1}\{i\}) \subset (\delta' \natural \sigma)^{-1}\{i\}$. We deduce that the square
$$\begin{CD}
\mathbb{R}^{(\sigma')^{-1}\{i\}} \times \mathbb{R}^{[k']} @>{\lambda^{(i)}_{\sigma'}}>> \mathbb{R}^{(\sigma')^{-1}\{i\}} \\
@V{(\tau')^* \times \id}VV @V{(\tau')^*}VV \\
\mathbb{R}^{(\delta' \natural \sigma)^{-1}\{i\}} \times \mathbb{R}^{[k']} @>{\lambda^{(i)}_{\delta' \natural \sigma}}>>
\mathbb{R}^{(\delta' \natural \sigma)^{-1}\{i\}}
\end{CD}
$$ is well defined and commutative for every $i \in [k']$, and it follows that the square

$$\begin{CD}
\mathbb{R}^{[n']} \times \mathbb{R}^{[k']} @>{\lambda_{\sigma'}}>> \mathbb{R}^{[n']} \\
@V{(\tau')^* \times \id}VV @V{(\tau')^*}VV \\
\mathbb{R}^{[m]} \times \mathbb{R}^{[k']} @>{\lambda_{\delta' \natural \sigma}}>> \mathbb{R}^{[m]}
\end{CD}
$$
is commutative. \\
Also, the square
$$\begin{CD}
\mathbb{R}^{[n'_i-1]} @>>> \mathbb{R}^{(\sigma')^{-1}\{i\}} \\
@V{(\tau_i)^*}VV @V{(\tau')^*}VV \\
\mathbb{R}^{[n_{\delta'(i)}-1]} @>>> \mathbb{R}^{(\delta' \natural \sigma)^{-1}\{i\}}
\end{CD}$$ is commutative for every $i\in [k']$. We deduce that the square
$$\begin{CD}
\underset{i=0}{\overset{k'}{\prod}}\Delta^{n'_i-1} @>{\alpha_{\sigma'}(\id_{[n']})}>>
\mathbb{R}^{[n']} \\
@V{\underset{i=0}{\overset{k'}{\prod}}\tau_i^*}VV @V{(\tau')^*}VV \\
\underset{i=0}{\overset{k'}{\prod}}\Delta^{n_{\delta'(i)}-1} @>{\alpha_{\sigma}(\delta')}>>
\mathbb{R}^{[n]}
\end{CD}$$
is commutative, and it follows that
the square
$$\begin{CD}
\underset{i=0}{\overset{k'}{\prod}}\Delta^{n'_i-1} \times \Delta^{n'} @>{\lambda_{\sigma'}}>>
\Delta^{n'} \\
@V{\left(\underset{i=0}{\overset{k'}{\prod}}\tau_i^*\right) \times \id}VV @V{(\tau')^*}VV \\
\underset{i=0}{\overset{k'}{\prod}}\Delta^{n_{\delta'(i)}-1} \times \Delta^{n'}
@>{\lambda_{\delta' \natural \sigma}}>> \Delta^{n'}
\end{CD}$$
is commutative. \\
Since $\tau=(\sigma \natural \delta')\circ \tau'$,
we conclude that the square
$$\begin{CD}
G_{\sigma'}(\id_{[n']}) \times \Delta^{k'} @>{\nu_{\sigma'}}>> \Delta^{n'} \\
@V{\left(\underset{i=0}{\overset{k'}{\prod}}\tau_i^*\right) \times \id}VV @V{\tau^*}VV \\
G_\sigma(\delta') \times \Delta^{k'} @>{\nu_\sigma}>> \Delta^{n}
\end{CD}$$
is commutative. \end{proof}

\subsection{The definition of $\mathcal{H}$}\label{10.4.3}

We now set $B:=\underset{n \in \mathbb{N}}{\coprod} \Hom_\uparrow([n],\mathbb{N})$
and $C:=\{f \in B : f \quad \text{is non degenerate}\}$. \\
For every non-negative integer $n$, and every non-degenerate $f : [n] \rightarrow \mathbb{N}$,
we set
$$\partial(f):=\left\{\delta_*(f),
\delta \in \underset{k<n}{\coprod}\Hom_{\Delta^*}([k],[n])\right\}$$
and $$\mathcal{H}_f:=(F^{(\infty)})^{\{f\}} \subset (F^{(\infty)})^{(B)}.$$
Let $f: [n] \rightarrow \mathbb{N}$ be a non-degenerate map. Then $\mathcal{H}_f$ has a
countable dimension (as a real vector space), and
$\forall g \in C, \; f \neq g \Rightarrow \mathcal{H}_f \bot \mathcal{H}_g$.
If $n>0$, we may therefore choose a bijective isometry
$$\varphi_f: \underset{g \in \partial(f)}{\overset{\bot}{\oplus}}\mathcal{H}_g
\overset{\cong}{\longrightarrow} \mathcal{H}_{f.}$$
We finally set
$$\mathcal{H}:=\overline{(F^{(\infty)})^{(C)}}=
\overline{\underset{f \in C}{\overset{\bot}{\oplus}}\mathcal{H}_f}$$
which is a separable Hilbert space since $C$ is countable.

By identifying every $f: [0] \rightarrow \mathbb{N}$ with $f(0)$, we may view $(F^{(\infty)})^{(\mathbb{N})}$ as a subspace
of $\mathcal{H}$. For every $n \in \mathbb{N}$, we also set
$\mathcal{H}_n:=(F^{(\infty)})^{\{n\}}$, seen as a subspace of $\mathcal{H}$.

\vskip 2mm
The filtration of $F^{(\infty)}$ by the
sequence $$F^1\hookrightarrow F^2 \hookrightarrow \cdots \hookrightarrow F^l \hookrightarrow F^{l+1} \cdots $$
gives rise, for every $k\in \mathbb{N}$, to a filtration of $\mathcal{H}_k$ by an increasing sequence of finite dimensional subspaces
$$\mathcal{H}_k^{(1)} \subset \mathcal{H}_k^{(2)} \subset \cdots \subset \mathcal{H}_k^{(l)} \subset \mathcal{H}_k^{(l+1)} \cdots $$
By induction on $n$, we recover, for every non degenerate $f : [n] \rightarrow \mathbb{N}$, a filtration
of  $\mathcal{H}_f$ by an increasing sequence of finite dimensional subspaces
$$\mathcal{H}_f^{(1)} \subset \mathcal{H}_f^{(2)} \subset \cdots \subset \mathcal{H}_f^{(l)} \subset \mathcal{H}_f^{(l+1)} \cdots $$
which is identical to the preceding one when $n=0$, and such that
$\varphi_f\left(\underset{g \in \partial(f)}{\bigoplus}\mathcal{H}_g^{(l)}\right)=\mathcal{H}_f^{(l)}$
for every positive integer $l$, when $n>0$. \\
For every positive integer $l$, we finally set
$$\mathcal{H}^{(l)} :=\overline{\underset{f \in C}{\overset{\bot}{\oplus}}\mathcal{H}^{(l)}_f}.$$
This defines a filtration of $\mathcal{H}$ by an increasing sequence of subspaces
$$\mathcal{H}^{(1)} \subset \mathcal{H}^{(2)} \subset \cdots \subset \mathcal{H}^{(l)} \subset \mathcal{H}^{(l+1)} \cdots $$

\vskip 2mm\label{hnuf}
Let $m\in \mathbb{N}$, $f: [m] \rightarrow \mathbb{N}$ be an increasing map, $u\in S(m)$, and $n\in \mathbb{N}$. Set
$$\mathcal{H}_{n,u,f}:= \underset{(i,\delta)\in [n-1] \times \Lat(u)}{\bigoplus} \mathcal{H}_{\delta_*(f)+i} \subset  \mathcal{H},$$
and, for every $l \in \mathbb{N}^*$, $\mathcal{H}^{(l)}_{n,u,f}:= \mathcal{H}_{n,u,f} \cap \mathcal{H}^{(l)}$. \\
Assume $u$ is non-trivial, and write $u=\delta_1<\cdots<\delta_{k-1}<[m]$. Then
$$\mathcal{H}_{n,u',f} \subset \underset{i=0}{\overset{n-1}{\bigoplus}}\left[\underset{g \in \partial (f+i)}{\bigoplus} \mathcal{H}_g\right],$$
and it follows that the condition
$$\forall (i,g)\in [n-1] \times \Lat(u'), \; \forall x \in \mathcal{H}_{\delta_*((\delta_{k-1})_*(f))+i},
\quad \varphi_{n,u,f}(x)=\varphi_{f+i}(x)$$
defines an isometry
$$\varphi_{n,u,f} :  \mathcal{H}_{n,u',(\delta_{k-1})_*(f)} \hookrightarrow \underset{i=0}{\overset{n-1}{\oplus}}\mathcal{H}_{f+i}=\mathcal{H}_{n,[m],f}$$
which is compatible with the respective filtrations of $\mathcal{H}_{n,u',(\delta_{k-1})_*(f)}$ and $\mathcal{H}_{n,[m],f.}$ \\
Obviously, if $v$ is another non-trivial class in $S(m)$ such that $u \subset v$, where, $v=\delta'_1<\cdots<\delta'_{k'-1}<[m]$,
then $\mathcal{H}_{n,u',(\delta_{k-1})_*(f)} \subset \mathcal{H}_{n,v',(\delta'_{k'-1})_*(f)}$, and
$(\varphi_{n,v,f})_{|\mathcal{H}_{n,u',(\delta_{k-1})_*(f)}}=\varphi_{n,u,f}$. \\
Finally, if $n'$ is another non-negative integer, then
$$\mathcal{H}_{n,u,f} \overset{\bot}{\oplus} \mathcal{H}_{n',u,f+n}=\mathcal{H}_{n+n',u,f},$$
$$\mathcal{H}_{n,u',(\delta_{m-1})_*(f)} \overset{\bot}{\oplus} \mathcal{H}_{n',u',(\delta_{m-1})_*(f)+n}=\mathcal{H}_{n+n',u',(\delta_{m-1})_*(f),}$$
and
$$\forall (x,y) \in \mathcal{H}_{n,u',(\delta_{m-1})_*(f)} \times \mathcal{H}_{n',u',(\delta_{m-1})_*(f)+n},
\quad \varphi_{n+n',u,f}(x+y)=\varphi_{n,u,f}(x)+\varphi_{n',u,f+n}(y).$$

\subsection{Universal paths between orthonormal bases}\label{10.4.4}

\subsubsection{The simplicial space $(V_n(m))_{m\in \mathbb{N}}$}

For every $l \in \mathbb{N}^*$, every linear subspace $\mathcal{H'}$ of $\mathcal{H}^{(l)}$, and every non-negative integer $n$,
we denote by $V_n(\mathcal{H'})$ the subspace of $(\mathcal{H}')^n$ consisting of orthonormal $n$-tuples,
which we consider as a $U_n(F)$-space with the canonical right-action of $U_n(F)$.

For any linear subspace $\mathcal{H'}$ of $\mathcal{H}$ and any integer $n$, we denote by $V_n(\mathcal{H}')$
the subset of $(\mathcal{H}')^n$ consisting of orthonormal $n$-tuples, equipped with the
final topology for the canonical map
$$\underset{l \in \mathbb{N}^*}{\coprod}V_n(\mathcal{H}'\cap \mathcal{H}^{(l)}) \longrightarrow V_n(\mathcal{H}').$$
This definition is clearly compatible with the preceding one, and the canonical
right-action of $U_n(F)$ on $V_n(\mathcal{H'})$ is clearly continuous.
Moreover, given two subspaces $\mathcal{H}'$ and  $\mathcal{H}''$ of $\mathcal{H}$ such that
$\mathcal{H}' \subset \mathcal{H}''$, then the inclusion $V_n(\mathcal{H}') \subset V_n(\mathcal{H}'')$ is an immersion.

For every $n \in \mathbb{N}$, $k \in \mathbb{N}$ and $l \in \mathbb{N}$, we define $V_{n,k}^{(l)}$
as $V_n\left(\underset{j=k}{\overset{k+n-1}{\bigoplus}}\mathcal{H}^{(l)}_j \right)$.
For every $n \in \mathbb{N}$ and $k \in \mathbb{N}$, we define
$V_{n,k}$ as $V_n\left(\underset{j=k}{\overset{k+n-1}{\bigoplus}}\mathcal{H}_j \right)$, so that
$V_{n,k}=\underset{l \in \mathbb{N}^*}{\Indlim}\, V_{n,k}^{(l)}$. \\
For every $f:[m] \rightarrow \mathbb{N}$ and $n \in \mathbb{N}$ set
$$V_n(f):=\underset{0 \leq i \leq n}{\prod} V_{n,f(i)}$$
which is $U_n(F)$-space for the diagonal action. \\
For every $l \in \mathbb{N}^*$, set $V_n^{(l)}(f):=\underset{0 \leq i \leq n}{\prod}V^{(l)}_{n,f(i)}$, so that
$V_n(f)=\underset{l \in \mathbb{N}^*}{\Indlim}\, V_n^{(l)}(f)$. \\
If $n$ and $n'$ are non-negative integers, and $f: [m] \rightarrow \mathbb{N}$ is a map,
then $\underset{j=f(i)}{\overset{f(i)+n-1}{\bigoplus}}\mathcal{H}^{(l)}_j$ and
$\underset{j=f(i)+n}{\overset{f(i)+n+n'-1}{\bigoplus}}\mathcal{H}^{(l)}_j$ are orthogonal for every $i \in [m]$ in the case $n>0$; it follows that
the juxtaposition of families yields a canonical injection: $$V_n(f) \times V_{n'}(f+n) \hookrightarrow V_{n+n'}(f).$$

\vskip 2mm
For every $m\in \mathbb{N}$, we define
$$V_n(m):=\underset{f\in \Hom_\uparrow([m],\mathbb{N})}{\coprod} V_n(f)$$
as a $U_n(F)$-space, with the previous action on every component. \\
For every $n\in \mathbb{N}$, and every $f : [m] \rightarrow \mathbb{N}$, every morphism $\tau : [m'] \rightarrow [m]$ in
the category $\Delta$ induces a $U_n(F)$-map:
$$\tau_* : \begin{cases}
V_n(f) & \longrightarrow V_n(f \circ \tau) \\
(x_i)_{0 \leq i \leq m} & \longmapsto (x_{\tau(i)})_{0 \leq i \leq m'.}
\end{cases}$$
This defines a structure of simplicial $U_n(F)$-space on $(V_n(m))_{m\in \mathbb{N}}$, for every $n \in \mathbb{N}$.

\vskip 2mm
For every $n\in \mathbb{N}$ and every compact space $K$,
the structure of cosimplicial space of $(\Delta^m)_{m \in \mathbb{N}}$ induces a structure
of simplicial $U_n(F)$-space on $(\Hom(\Delta^m \times K,V_n(\mathcal{H})))_{m \in \mathbb{N}}$ where, for every $m \in \mathbb{N}$,
$\Hom(\Delta^m \times K,V_n(\mathcal{H}))$ denotes the space of continuous maps from $\Delta^m \times K$
to $V_n(\mathcal{H})$ with the compact-open topology.

\subsubsection{The main goal}

Our main goal in the rest of the section is to construct, for every $n \in \mathbb{N}$, a morphism of
simplicial $U_n(F)$-spaces
$$\psi_n: (V_n(m))_{m \in \mathbb{N}} \longrightarrow (\Hom(\Delta^m,V_n(\mathcal{H})))_{m \in \mathbb{N}}$$

which fullfills a certain list of requirements.

Such a morphism is simply the data of a family $(\psi_{n,m})_{m\in \mathbb{N}}$
of continuous maps, with $\psi_{n,m} : V_n(m) \times \Delta^m \longrightarrow V_n(\mathcal{H})$
for every $m \in \mathbb{N}$, which satisfies a set of compatibility conditions.

Assuming that we have built two such families of morphisms $\psi=(\psi_n)_{n\in \mathbb{N}}$ and
$\psi'=(\psi'_n)_{n\in \mathbb{N}}$, we also want to build a ``homotopy'' from $\psi$ to $\psi'$, i.e.
a family $\Psi=(\Psi_n)_{n\in \mathbb{N}}$ of morphisms, such that
$$\Psi_n: (V_n(m))_{m \in \mathbb{N}} \longrightarrow (\Hom(\Delta^m \times I,V_n(\mathcal{H})))_{m \in \mathbb{N}}$$
is a morphism of simplicial $U_n(F)$-spaces for every $n \in \mathbb{N}$, which satisfies a set of compatibility conditions, and
such that, for every  $(n,m) \in \mathbb{N}^2$, $(\Psi_{n,m})_{|V_n(m) \times \{0\}}=\psi_{n,m}$
and $(\Psi_{n,m})_{|V_n(m) \times \{1\}}=\psi'_{n,m}$.

\vskip 2mm
We will actually describe a construction that will fulfill both needs at once.
We fix $p \in \mathbb{N}$, and we assume that we have a family $(\psi_n^\partial)_{n\in \mathbb{N}}$, such that
$$\psi_n^\partial : (V_n(m))_{m \in \mathbb{N}} \longrightarrow (\Hom(\Delta^m \times \partial \Delta^p,V_n(\mathcal{H})))_{m \in \mathbb{N}}$$
is a morphism of simplicial $U_n(F)$-spaces for every $n \in \mathbb{N}$, which satisfies compatibility conditions (i) to (vi) detailed
in the next paragraph.
We want to construct a family $(\psi_n)_{n\in \mathbb{N}}$, such that
$$\psi_n : (V_n(m))_{m \in \mathbb{N}} \longrightarrow (\Hom(\Delta^m \times \Delta^p,V_n(\mathcal{H})))_{m \in \mathbb{N}}$$
is a morphism of simplicial $U_n(F)$-spaces for every $n \in \mathbb{N}$, which satisfies compatibility conditions (i) to (vi) detailed in the next paragraph, and, for
every $(n,m)\in \mathbb{N}^2$, $(\psi_{n,m})_{|V_n(m) \times \Delta^m \times \partial \Delta^p}=\psi_{n,m}^\partial$.

\vskip 2mm
In the case $p=0$, $\psi_n^\partial$ is trivial, and the construction
will yield the family $(\psi_n)_{n \in \mathbb{N}}$
we are looking for. In the case $p=1$, if we have two sequences of morphisms $\psi$ and $\psi'$, they define a family of morphisms
$(\Psi_n^\partial)_{n\in \mathbb{N}}$, with
$$\Psi_n^\partial : (V_n(m))_{m \in \mathbb{N}} \longrightarrow
(\Hom(\Delta^m \times \{0,1\},V_n(\mathcal{H})))_{m \in \mathbb{N}},$$
and the construction of $\Psi$ from $\Psi^\partial$ will yield a homotopy from $\psi$ to $\psi'$.

\subsubsection{The compatibility conditions}

Let $K$ be a compact space, and $\psi=(\psi_n)_{n\in \mathbb{N}}$ be a family such that
$\psi_n : (V_n(m))_{m \in \mathbb{N}} \longrightarrow \Hom(\Delta^n \times K,V_n(\mathcal{H}))$
is a morphism of simplicial $U_n(F)$-spaces for every $n\in \mathbb{N}$. We define the following conditions on $\psi$, some of which
depend on three integers $n$, $n'$ and $m$, and a non-decreasing map $f : [m] \rightarrow \mathbb{N}$.

\begin{enumerate}[(i)]
\item $\psi_{n,0}: V_n(0) \times \Delta^0 \times K \rightarrow V_n(\mathcal{H})$
is the composite of the projection on the first factor and the map
$V_n(0) \rightarrow V_n(\mathcal{H})$ induced by the inclusion
$\underset{i \in \mathbb{N}}{\oplus} \mathcal{H}_i \subset \mathcal{H}$.

\vskip 3mm
\item
$$\forall (\mathbf{B},\mathbf{B}',t)\in V_n(f) \times V_{n'}(f+n) \times (\Delta^m \times K), \quad
\psi_{n,m}(\mathbf{B},t) \,\bot\, \psi_{n',m}(\mathbf{B}',t).$$

\vskip 3mm
\item The diagram
$$\xymatrix{
V_n(f) \times V_{n'}(f+n) \ar[dd] \ar[drr]^{\psi_{n,m} \times \psi_{n',m}}  \\
& &  \Hom(\Delta^m \times K,\mathcal{H}^n\times \mathcal{H}^{n'}) \\
V_{n+n'}(f) \ar[urr]_{\psi_{n+n',m}}
}$$
is commutative.

\vskip 3mm
\item If $g : [k] \rightarrow \mathbb{N}$ denotes the root of $f$ and $\sigma : [m] \twoheadrightarrow [k]$ its reduction, then, for every $u\in S(k)$,
$$\psi_{n,m}\bigl(V_n(f) \times (\sigma^*)^{-1}(\Delta_u) \times K\bigr) \subset V_n(\mathcal{H}_{n,u,g}).$$

\vskip 3mm
\item If $g : [k] \rightarrow \mathbb{N}$ denotes the root of $f$ and $\sigma:[m] \twoheadrightarrow [k]$ its reduction,
and $\forall i \in [k], n_i:=\# \sigma^{-1}\{i\}$, then,
$$\forall \mathbf{B} \in V_n(f),
\forall i \in [k],
\forall t \in \Delta^{n_i-1} \times K, \;
 \psi_{n,n_i-1}((\sigma \natural i)_*(\mathbf{B}),t) \in V_n(\mathcal{H}_{g(i)}),$$
and, for every $\delta : [k'] \hookrightarrow [k]$,
the composite of $\psi_{n,m}$
with \\
$\nu_\sigma : V_n(f) \times (G_\sigma(\delta) \times \Delta^{k'}) \times K
 \overset{\delta_* \times \nu_\sigma \times \id_K}{\longrightarrow} V_k(f) \times \Delta^m \times K$
is the composite map
\begin{multline*}
V_n(f) \times (G_\sigma(\delta) \times \Delta^{k'}) \times K
\overset{(\sigma \natural \delta)_*\times \id}{\longrightarrow}
\left(\underset{i=0}{\overset{k'}{\prod}}(V_n(\mathcal{H}_{g(i)}))^{n_{\delta(i)}}
\times \Delta^{n_{\delta(i)}-1} \times K\right)
\times \Delta^{k'} \times K \\
\overset{\underset{i=0}{\overset{k'}{\prod}}\psi_{n,n_{\delta(i)}-1}}{\longrightarrow}
V_n(\delta_*(g)) \times \Delta^{k'} \times K
\overset{\psi_{n,k'}}{\longrightarrow} V_n(\mathcal{H}).
\end{multline*}
\vskip 3mm
\item In the case $f$ is non-degenerate: for every non-trivial increasing sequence \\
$u=\delta_1<\dots<\delta_{k-1}<[m]$ in $S(m)$, with $\delta_{k-1} : [m'] \hookrightarrow [m]$,
and every quadruple $(\mathbf{B},y,t,z)\in V_n(f) \times \Delta_{u'} \times [0,1] \times K$:
if we set $\mathbf{B}':=\psi_{n,m'}((\delta_{k-1})_*(\mathbf{B}),y,z)$
and $x=r_m(\delta_{k-1}^*(y),t)$, then $\mathbf{B}'\in V_n(\mathcal{H}_{n,u',(\delta_{k-1})_*(f)})$ and
$$\psi_{n,m}(\mathbf{B},x,z)=\cos\left(\frac{\pi}{2}t\right).\mathbf{B}'
+\sin\left(\frac{\pi}{2}t\right).\varphi_{n,u,f}(\mathbf{B}') .$$
\end{enumerate}

\noindent
\textbf{Remarks:}
\begin{itemize}
\item Conditions (ii) and (iii) hold for all $m$ and $f: [m] \rightarrow \mathbb{N}$ when $n=0$ or $n'=0$.
\item When $f$ is constant, condition (iv) simply means that
$\psi_{n,m}(V_n(f) \times \Delta^m \times K) \subset V_{n,f(0).}$
\item Conditions (iv), (v) and (vi) are only there so that we can carry out the construction, and they will be
useless when the construction is over.
\end{itemize}

\subsubsection{Relationships between the conditions}

\begin{itemize}
\item In condition (v), the first requirement holds if and only if condition (iv) holds for every
triple $(n,n_i-1,f \circ (\sigma \natural i))$ with $i\in [k]$.
\item In the case $f$ is constant, condition (v) for $(n,m,f)$ is logically equivalent to condition (iv) for the same triple.
\item In the case $f$ is non-degenerate and (i) holds for $n$,
condition (v) holds if and only if
the square
$$\begin{CD}
V_n(f) @>{\psi_{n,k}}>> \Hom(\Delta^m \times K,V_n(\mathcal{H})) \\
@V{\delta_*}VV @V{\delta_*}VV \\
V_n(\delta_*(f)) @>{\psi_{n,k'}}>> \Hom(\Delta^k \times K,V_n(\mathcal{H}))
\end{CD}$$
is commutative for every $\delta : [k] \hookrightarrow [m]$.
\item In condition (vi), only the second requirement is interesting
since the first one is obviously true when condition (iv) holds for $(n,m',f\circ \delta_{k-1})$.
\item Assume that condition (iv) holds for the two triples $(n,m,f)$ and $(n',m,f)$. \\
Let $g$ be the root of $f$ and $\sigma : [m] \twoheadrightarrow [k]$ its reduction.
Let $(\mathbf{B},\mathbf{B}') \in V_n(f) \times V_{n'}(f+n)$, $t \in \Delta^m$ and $z\in K$.
By Proposition \ref{glue}, we may choose $u\in S(k)$
such that $\sigma^*(t) \in \Delta_u$.

Since condition (iv) is true for both triples $(n,m,f)$ and $(n',m,f)$, we deduce
that $\psi_{n,m}(\mathbf{B},x,z) \in V_n(\mathcal{H}_{n,u,g})$
and $\psi_{n',m}(\mathbf{B}',x,z) \in V_n'(\mathcal{H}_{n',u,g+n})$. Since
$\mathcal{H}_{n,u,g} \bot \mathcal{H}_{n',u,g+n}$, we deduce that
$\psi_{n,m}(\mathbf{B},x,z) \bot \psi_{n',m}(\mathbf{B}',x,z)$.
Hence condition (ii) holds for $(n,n',m,f)$.
\end{itemize}

\subsubsection{Main result}

We may now state our main result:

\begin{prop}\label{mighty}
Let $p \in \mathbb{N}$, and $\psi^\partial=(\psi_n^\partial)_{n\in \mathbb{N}}$ be a family
such that:
\begin{itemize}
\item For every $n \in \mathbb{N}$, $\psi_n^\partial : (V_n(m))_{m\in \mathbb{N}} \longrightarrow
(\Hom(\Delta^m \times \partial \Delta ^p,V_n(\mathcal{H})))_{m\in \mathbb{N}}$
is a morphism of simplicial $U_n(F)$-spaces;
\item Conditions (i) to (vi) are satisfied by $\psi^\partial$ for every compatible $4$-tuple $(n,n',m,f)$.
\end{itemize}
\vskip 2mm
Then there exists a family $\psi=(\psi_n)_{n\in \mathbb{N}}$ such that
\vskip 2mm
\begin{itemize}
\item For every $n \in \mathbb{N}$, $\psi_n : (V_n(m))_{m\in \mathbb{N}} \longrightarrow
(\Hom(\Delta^m \times \Delta ^p,V_n(\mathcal{H})))_{m\in \mathbb{N}}$
is a morphism of simplicial $U_n(F)$-spaces.
\item Conditions (i) to (vi) are satisfied by $\psi$ for every compatible $4$-tuple $(n,n',m,f)$.
\item On has $(\psi_{n,m})_{|V_n(m) \times \Delta^m \times \partial \Delta^p}=\psi_{n,m}^\partial$ for every $(n,m)\in \mathbb{N}^2$.
\end{itemize}
\end{prop}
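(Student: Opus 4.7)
The plan is to prove Proposition \ref{mighty} by induction on $m$, constructing $\psi_{n,m}$ for all $n \in \mathbb{N}$ and all $f \in \Hom_\uparrow([m],\mathbb{N})$ while simultaneously verifying conditions (i)--(vi). The base case $m = 0$ is forced by condition (i), and the remaining conditions hold trivially or vacuously. In the inductive step I would split the $f$'s into two classes: those which are either non-degenerate (strictly increasing) or constant, which I construct by an equivariant extension from the boundary; and the remaining degenerate but non-constant $f$, whose values are then forced by condition (v).

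For the first class, the boundary data is already prescribed: on $V_n(f) \times \Delta^m \times \partial \Delta^p$ by the given $\psi_{n,m}^\partial$, and on $V_n(f) \times \partial \Delta^m \times \Delta^p = \bigcup_{u \in S(m)\setminus\{[m]\}} V_n(f) \times \Delta_u \times \Delta^p$ (Proposition \ref{glue}) by condition (vi) applied recursively in terms of $\psi_{n,m'}$ with $m' < m$, available by induction. Consistency on overlaps of the $\Delta_u$ follows from Proposition \ref{intersection} combined with condition (vi) for $\psi^\partial$. Moreover, condition (vi) forces these boundary values on $V_n(f) \times \partial C(\Delta^m) \times \Delta^p$ to lie in $V_n(\mathcal{H}_{n,[m],f})$, a Stiefel manifold of orthonormal $n$-tuples in an infinite-dimensional subspace of $\mathcal{H}$, which is weakly contractible.

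It then remains to extend equivariantly to $V_n(f) \times C(\Delta^m) \times \Delta^p$, landing in $V_n(\mathcal{H}_{n,[m],f})$. I would proceed by a secondary induction on $n$: condition (iii) is vacuous for $n = 1$; for $n \geq 2$, I first define $\psi_{n,m}$ on the juxtaposition stratum $V_1(f) \times V_{n-1}(f+1) \hookrightarrow V_n(f)$ via condition (iii), using $\psi_{1,m}$ and $\psi_{n-1,m}$ already obtained, then extend $U_n(F)$-equivariantly to all of $V_n(f)$. This last extension is the key technical step and relies on the conjecture of Section \ref{conjecturesection} on relative triangulations of smooth compact manifolds: one picks a $U_n(F)$-equivariant relative triangulation of the pair formed by $V_n(f) \times C(\Delta^m) \times \Delta^p$ and its already-defined boundary, compatible with the stratification of $C(\Delta^m)$ coming from condition (iv), and extends simplex by simplex using the contractibility of the target. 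For the second class, when $f = g \circ \sigma$ with $g : [k] \to \mathbb{N}$ non-degenerate and $1 \leq k < m$, condition (v) with $\delta = \id_{[k]}$ combined with the homeomorphism $\nu_\sigma$ of Proposition \ref{superglue} prescribes $\psi_{n,m}$ everywhere in terms of $\psi_{n,k}$ on the root (with $k < m$) and $\psi_{n,n_i - 1}$ on the constant maps with values $g(i)$ (with $n_i - 1 \leq m - 1$, since the potentially circular case $n_i - 1 = m$ would force $k = 0$, excluded here); well-definedness and condition (v) for arbitrary $\delta$ then follow from Proposition \ref{splitting}, and the remaining conditions reduce to the inductive hypothesis at smaller $m$.

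The main obstacle is the $U_n(F)$-equivariant extension in the preceding paragraph: beyond the pure contractibility of the target, the extension must be simultaneously compatible with the juxtaposition condition (iii) for every pair $(n,n')$ and with the stratification of $C(\Delta^m)$ coming from condition (iv). This forces working through a relative $U_n(F)$-equivariant cell structure on the source, which is precisely the content of the conjecture to be formulated in Section \ref{conjecturesection}. All remaining verifications --- consistency on intersections $\Delta_u \cap \Delta_v$, commutativity with the face and degeneracy maps, and compatibility with the barycentric decomposition $\nu_\sigma$ --- are combinatorial manipulations using Propositions \ref{glue}, \ref{intersection}, \ref{superglue}, and \ref{splitting}.
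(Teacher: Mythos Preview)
Your overall strategy matches the paper's closely: the three-way split into (a) degenerate non-constant $f$ (forced by condition (v) via Proposition~\ref{superglue}), (b) constant $f$, and (c) non-degenerate $f$, together with an equivariant extension argument resting on the triangulation conjecture, is exactly how the paper proceeds. There are, however, two genuine gaps in your sketch.

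First, for the product stratum you impose condition (iii) only for the single split $(1,n-1)$ and then extend freely. This is not sufficient: condition (iii) must hold for \emph{every} pair $(p,q)$ with $p+q=n$, and the strata $V_n^{(p,q)}(f):=(V_p(f)\times V_q(f+p)).U_n(F)$ for distinct $(p,q)$ are not contained in one another. A free $U_n(F)$-extension off the orbit of $V_1(f)\times V_{n-1}(f+1)$ places no constraint on, say, $V_2(f)\times V_{n-2}(f+2)$ away from the triple-split locus, so your extension will in general violate condition (iii) for the $(2,n-2)$ split. The paper defines $\psi_{N,M}$ separately on each $V_N^{(p,q)}(f)$ from the inductive hypothesis, then checks pairwise compatibility on the intersections $V_N^{(p,q)}(f)\cap V_N^{(p',q')}(f)=V_N^{(p,p'-p,q')}(f)$ (Proposition~\ref{productintersection}), glues to a map on $V_N^{\text{prod}}(f):=\bigcup_{p}V_N^{(p,N-p)}(f)$, and only then extends.

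Second, you group constant $f$ with non-degenerate $f$ and invoke condition (vi) for both. But condition (vi) is stated only for non-degenerate $f$; for constant $f$ with $m>0$ it is vacuous. What replaces it is compatibility with \emph{degeneracy} maps: the requirement that $\psi_N$ be a morphism of simplicial spaces forces $\psi_{N,M}$ on the locus $V_N^{\text{deg}}(f):=\bigcup_i s_i^M\bigl((V_{N,f(0)})^{[M-1]}\bigr)$ in terms of $\psi_{N,M-1}$, and this constraint must be imposed alongside the face-map and product constraints before extending. The paper treats the constant and non-degenerate cases in separate subsections precisely because the forcing data differ (degeneracy locus vs.\ the collar formula of condition (vi)). A minor related point: $\bigcup_{u\neq[m]}\Delta_u$ is the collar $\bigcup_i\Delta_i^m$, not $\partial\Delta^m$ itself; condition (vi) prescribes values on this whole collar, with the face-map compatibility appearing at $t=0$.
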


\subsection{The proof of Proposition \ref{mighty}}\label{10.4.5}

Our proof of Proposition \ref{mighty} will be done by induction.
An essential is played by a general conjecture on relative triangulations that
we were not able to prove: we will begin by stating it and drawing the consequences
that will be necessary in our proof.

\subsubsection{A conjecture on relative triangulations, and some consequences}\label{conjecturesection}

\begin{Def}
Let $M$ be an $n$-dimensional smooth manifold, and $(M_i)_{i \in I}$ be a finite family of closed subspaces of
$M$. For every $x \in M$, set $I_x:=\{i\in I :\; x \in M_i\}$.
We say that the family $(M_i)_{i \in I}$ \textbf{intersects cleanly} if, for every $x \in M$,
there is an open neighborhood $U_x$ of $x$ in $M$, an open neighborhood $V$ of $0$ in $\mathbb{R}^n$,
a family $(F_i)_{i\in I_x}$ of
linear subspaces of $\mathbb{R}^n$, and a smooth diffeomorphism $\varphi : U_x \overset{\cong}{\longrightarrow} V$
such that
$$\forall i \in I \setminus I_x, \quad U_x \cap M_i =\emptyset$$
and
$$\forall J \in \mathcal{P}(I_x), \quad \varphi\left(U \cap \left(\underset{j  \in J}{\cap}M_j\right)\right) =
V \cap\left(\underset{j  \in J}{\cap}F_j\right).$$
\end{Def}

In this case, $(M_j)_{j \in J}$ obviously intersects cleanly for every $J \subset I$,
and the $M_j$'s are all smooth submanifolds of $M$.

\vskip 2mm
\noindent \textbf{Remarks:}
\begin{itemize}
\item Let $M$ be a smooth manifold, $I$ be a finite set, and
$(M_i)_{i \in I}$ be a family of closed smooth submanifolds of
$M$ indexed over $I$. Assume that, for every $x \in M$, there is a smooth manifold $N_x$, a family
$(N_{x,i})_{i \in I}$ of closed smooth submanifolds of $N_x$ which intersects cleanly, an open neighborhood
$U_x$ of $x$ in $M$, an open subset $V_x$ of $N$, and a smooth diffeomorphism
$\varphi_x : U_x \overset{\cong}{\longrightarrow} V_x$ such that
$U_x \cap (\underset{i \in I  \setminus I_x}{\bigcup}M_i)=\emptyset$ and
$\forall i \in I_x,\; \varphi_x(U_x \cap M_i)=V_x \cap N_{i,x}$. Then
$(M_i)_{i \in I}$ intersects cleanly.

\item Let $M$ and $N$ be two smooth manifolds, $I$ a finite set, and $(M_i)_{i\in I}$
(resp.\ $(N_i)_{i\in I}$) a family of closed smooth submanifolds of $M$ (resp.\ of $N$) indexed
over $I$ which intersects cleanly. Then $(M_i \times N_i)_{i \in I}$ intersects cleanly in $M \times N$.
\end{itemize}

\noindent
\textbf{Example 1:}
Let $E$ be an affine variety, and $(E_i)_{i \in I}$ be a finite family of affine subvarieties of $E$. Then
$(E_i)_{i \in I}$ intersects cleanly.

\vskip 2mm
\noindent
\textbf{Example 2:}
Let $G$ be a Lie group and $(H_i)_{i \in I}$ be a finite family of closed subgroups of $G$.
Then $(H_i)_{i \in I}$ intersects cleanly.

\begin{proof}
Let $x \in G$, $I_x:=\{i\in I : x \in H_i\}$, and let
$V$ be an open neighborhood of $x$ in $G \setminus \underset{i \in I \setminus I_x}{\bigcup}H_i$. We set
$\varphi_x : \begin{cases}
G & \longrightarrow G \\
g & \longmapsto g.x^{-1}
\end{cases}$
and choose an open neigborhood $V_1$ of $0$ in $LG$ and an open neighborhood $V_2$ of $1_G$ in $G$
such that $\exp_{|V_1} : V_1 \overset{\cong}{\longrightarrow} V_2$ is a diffeomorphism. We set $V':=(\varphi_x)^{-1}(V_2) \cap V$ and
$\varphi : \begin{cases}
V' & \overset{\cong}{\longrightarrow} V_1 \\
z & \longmapsto  (\exp_{|V_2})^{-1}(\varphi_x(z)).
\end{cases}$ \\
Then, for every $J \subset I_x$, one has
$$\varphi\left(V' \cap \left(\underset{j\in J}{\cap}H_j\right) \right)=V_1 \cap\left(\underset{j\in J}{\cap}LH_j\right).$$
Then Example 1 and the previous remarks show that $(H_i)_{i\in I}$ intersects cleanly.
\end{proof}

We may now state our conjecture:

\begin{conj}[Triangulation conjecture for clean intersections]\label{unionofsubmanifolds}
Let $M$ be a smooth compact manifold and $(M_i)_{i \in I}$ be a finite family of closed subspaces of $M$ which intersects cleanly.
Then the pair $\left(M, \underset{i \in I}{\bigcup}M_i\right)$ is a finite relative CW-complex.
\end{conj}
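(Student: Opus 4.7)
The plan is to construct a smooth triangulation of $M$ in which every intersection $\bigcap_{j \in J} M_j$ (for $J \subseteq I$) appears as a subcomplex; the union $N := \bigcup_{i \in I} M_i$ will then automatically be a subcomplex of a finite CW-structure on the compact manifold $M$, giving $(M,N)$ as a finite relative CW-complex.

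The first step is to observe that the clean-intersection hypothesis makes $N$ into a Whitney-stratified subset of $M$. For every non-empty $J \subseteq I$, put
$$S_J := \Bigl(\bigcap_{j \in J} M_j\Bigr) \setminus \Bigl(\bigcup_{k \in I\setminus J} M_k\Bigr),$$
and take the connected components of the various $S_J$, together with the connected components of $M \setminus N$, as candidate strata. Clean intersection guarantees that each $\bigcap_{j \in J} M_j$ is a smooth submanifold of $M$, so each $S_J$ is locally closed and smooth. The frontier condition and Whitney's conditions (a) and (b) can then be checked entirely in the linear local model $(V, (F_i)_{i \in I_x})$, where tangent spaces are literally constant along strata and both conditions are trivial.

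The second step is to appeal to Goresky's theorem (\emph{Triangulation of stratified objects}, Proc.\ Amer.\ Math.\ Soc.\ 72 (1978)), or the variant due to Verona, asserting that any compact Whitney-stratified subset of a smooth manifold admits a smooth triangulation compatible with the stratification, which can be extended to the ambient manifold. Applied here, this yields a finite smooth triangulation of $M$ in which each stratum, and hence $N$, is a subcomplex, which is exactly the content of the conjecture.

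The main obstacle, I expect, is avoiding a direct appeal to Goresky--Verona, which the author may have wished to circumvent. A self-contained proof would proceed by induction on $|I|$ together with $\dim M$: starting from the classical Whitney--Munkres triangulation adapted to the single submanifold of maximal depth among the strata, one would iteratively refine near the deeper intersections using the fact that, after passing to a collar neighbourhood of a stratum, the combinatorial problem reduces to triangulating a lower-dimensional linear arrangement inside a link sphere. The technical burden is purely combinatorial-geometric, but matching these local refinements consistently across charts where $|I_x|$ varies is the crux of what makes the conjecture nontrivial to prove from scratch.
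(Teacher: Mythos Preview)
The paper does \emph{not} prove this statement: it is explicitly labelled a conjecture, and the author writes in Section~\ref{conjecturesection} that it is ``a general conjecture on relative triangulations that we were not able to prove''. The paper only assumes it and derives Corollaries~\ref{clean} and~\ref{CWconjecture} from it. So there is no proof in the paper to compare your proposal against.

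That said, your approach is a natural and essentially sound route to \emph{resolving} the conjecture. The passage from clean intersections to a Whitney stratification is the right observation: in the local linear model $(V,(F_i)_{i\in I_x})$ the strata of the arrangement are pieces of linear subspaces with constant tangent spaces, so Whitney's conditions (a) and (b) hold trivially there, and the global conditions follow since Whitney regularity is local. Once one has a Whitney stratification of the compact manifold $M$ in which $N=\bigcup_i M_i$ is a union of strata, Goresky's triangulation theorem (or the Verona/Johnson variants) gives a compatible finite triangulation, hence the relative CW structure.

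Two points deserve more care than you give them. First, the frontier condition is not entirely automatic: you need that if a connected component of $S_{J'}$ meets $\overline{S_J}$ then it lies entirely in $\overline{S_J}$. This reduces, via a path argument inside $S_{J'}$, to checking that the local linear type of the arrangement $(F_j)_{j\in J'}$ is constant along connected components of $S_{J'}$; this is true but should be stated. Second, invoking Goresky's theorem is a substantial black box --- one that the author apparently either overlooked or wished to avoid. Your closing paragraph correctly identifies that a self-contained argument would be considerably more work, but if one is willing to cite Goresky (1978), your outline does settle the conjecture.
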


Let us now draw some important consequences of this conjecture. \\
Let $m$ and $n$ be two positive integers. Let $(E_0,\dots,E_m)$ be an $(m+1)$-tuple of
finite dimensional inner product spaces (with ground field $F$).
For every $k \in [m-1]$, we consider an isometry $\varphi_k : E_k \overset{\cong}{\longrightarrow} E_{k+1}$
For every $k \in [m]$, we consider a linear subspace $F_k$ of $E_k$, and a decomposition
$E_k=\underset{1\leq i \leq n}{\overset{\bot}{\bigoplus}}E_k^{(i)}$
such that: \begin{itemize}
\item for all $k \in [m-1], \; \varphi_k(F_k)=F_{k+1}$.
\item for all $k \in [m-1], \; \forall i \in \{1,\dots,n\}, \varphi_k(E_k^{(i)})=E_{k+1}^{(i)}$.
\item for all $k \in [m], \; F_k= \underset{1\leq i \leq n}{\overset{\bot}{\bigoplus}}F_k^{(i)}$,
where $F_k^{(i)}:=F_k\cap E_k^{(i)}$ for all $i\in \{1,\dots,n\}$.
\end{itemize}
For every $k \in [m]$, every subspace $V$ of $E_k$, and every subset $A$ of $\{1,\dots,n\}$, we define
$$V^{[A]}:=V \cap \bigl(\underset{i \in A}{\oplus}E_k^{(i)}\bigr).$$
Set then
$$\mathcal{M}:= \left(\bigl(E_k,F_k,(E_k^{(i)})_{1 \leq i \leq n}\bigr)_{0 \leq k \leq m},
(\varphi_k)_{0 \leq k \leq m-1} \right).$$

We define $V_N(\mathcal{M})$ as the product space
$\underset{k=0}{\overset{m}{\prod}}V_n(E_k)$ with the diagonal right-action of $U_n(F)$.
For every $(p,q) \in (\mathbb{N}^*)^2$ such that $p+q=n$, we define
$$V^{(p,q)}_n(\mathcal{M}) :=
\left(\underset{k=0}{\overset{m}{\prod}}\left[V_p\left(\underset{1\leq i \leq p}{\overset{\bot}{\bigoplus}}E_k^{(i)}\right)
\times V_q\left(\underset{p+1\leq j \leq n}{\overset{\bot}{\bigoplus}}E_k^{(j)}\right)\right]\right).U_n(F) \subset
V_n(\mathcal{M}),$$ and we set
$$V_n^{\text{prod}}(\mathcal{M}):=\underset{p \geq 1,q \geq 1, p+q=n}{\bigcup}
V^{(p,q)}_n(\mathcal{M}) \subset V_n(\mathcal{M}).$$
For every $k \in [M-1]$, set
$$V_n^k(\mathcal{M}):=\bigl\{(\mathbf{B}_i)_{0 \leq i \leq m} \in V_n(\mathcal{M}) :
\varphi_k(\mathbf{B}_k)=\mathbf{B}_{k+1}\bigr\},$$
$$V_n^{\text{deg}}(\mathcal{M}):=\underset{0 \leq k \leq m-1}{\bigcup}V_n^k(\mathcal{M}) \quad \text{and}
\quad V'_n(\mathcal{M}):=\underset{k=0}{\overset{m}{\prod}}V_n(F_k).$$
We now define $G_n(\mathcal{M})$ as the quotient space $V_n(\mathcal{M})/U_n(F)$. Since
$V_n(\mathcal{M})$ is a smooth compact manifold with a smooth free action of $U_n(F)$, we deduce that
there is a unique structure of smooth manifold on $G_n(\mathcal{M})$ such that the canonical projection
$V_n(\mathcal{M}) \longrightarrow G_n(\mathcal{M})$ is a smooth principal $U_n(F)$-bundle.
For every $(p,q)\in (\mathbb{N}^*)^2$ such that $p+q=n$ (resp.\ for every $k \in [m-1]$), we define
$G_N^{(p,q)}(\mathcal{M})$ as the direct image of $V^{(p,q)}_N(\mathcal{M})$ (resp.\ of $V_N^k(\mathcal{M})$) by the canonical projection
$V_n(\mathcal{M}) \rightarrow G_n(\mathcal{M})$. Finally, we define
$G'_n(\mathcal{M})$ as the direct image of $V'_n(\mathcal{M})$ by the canonical projection.
Obviously, all those subspaces of $G_n(\mathcal{M})$ are compact, and therefore closed.

\vskip 2mm
\noindent \textbf{Remark :}
An element $x$ of $G_n(\mathcal{M})$ may be identified with a diagram $x_0 \overset{f_0}{\rightarrow} x_1 \overset{f_1}{\rightarrow}
 \dots \overset{f_{m-1}}{\rightarrow} x_m$, where, for every $k \in [m]$, $x_k$ is an $n$-dimensional subspace
of $E_k$, and, for every $k \in [m-1]$, $f_k$ is a linear isometry from $x_k$ to $x_{k+1}$. \\
Let $(p,q)\in (\mathbb{N}^*)^2$ such that $p+q=n$. Then $x \in G_n^{(p,q)}(\mathcal{M})$ if and only if, for every
$k \in [m]$, $x_k=x_k^{[\{1,\dots,p\}]} \oplus x_k^{[\{p+1,\dots,n\}]}$,
and, for every $k \in [m-1]$, $f_k\left(x_k^{[\{1,\dots,p\}]}\right)=x_{k+1}^{[\{1,\dots,p\}]}$
and $f_k\left(x_k^{[\{p+1,\dots,n\}]}\right)=x_{k+1}^{[\{p+1,\dots,n\}]}$. \\
Let $k \in [m-1]$. Then $x \in G_n^k(\mathcal{M})$ if and only if $f_k=\varphi_k\big|_{x_k.}^{x_{k+1}}$ \\
Finally, $x \in G'_n(\mathcal{M})$ if and only if $x_k \subset F_k$ for every $k \in [m]$.

\vskip 2mm
\noindent We are now ready to state the two consequences of conjecture \ref{unionofsubmanifolds}
that will be used in the proof of Proposition \ref{mighty}.

\begin{cor}\label{clean}
The family $\left((G_n^{(p,q)}(\mathcal{M}))_{p+q=n},\,(G^k_n(\mathcal{M}))_{0 \leq k \leq m},\, G'_n(\mathcal{M})\right)$
intersects cleanly in $G_n(\mathcal{M})$.
\end{cor}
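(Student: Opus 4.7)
The plan is to verify the definition of clean intersection pointwise. Fix $x = (x_0,\dots,x_m,f_0,\dots,f_{m-1}) \in G_n(\mathcal{M})$ and let $I_x$ denote the set of indices in our family whose corresponding submanifold contains $x$. Since each listed submanifold is compact, hence closed, a small enough neighborhood of $x$ automatically misses every submanifold not in $I_x$. It therefore suffices to construct a smooth chart around $x$ in which each submanifold indexed by $I_x$ corresponds to the intersection with a linear subspace of the model vector space.

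For the chart, I would combine the standard graph chart on each Grassmannian with a Lie-algebra description of nearby isometries: a neighborhood of $x_k$ in $\mathrm{Gr}_n(E_k)$ is modeled by $\Hom(x_k,x_k^\bot)$ via $A_k \mapsto \{v+A_k(v):v\in x_k\}$, and, given nearby subspaces $x'_k$ and $x'_{k+1}$, the orthogonal projections $x'_k \to x_k$ and $x'_{k+1}\to x_{k+1}$ allow one to pull any nearby isometry $f'_k$ back to an isometry of $x_k$ with $x_{k+1}$, which is uniquely written as $\exp(X_k)\circ f_k$ for a small $X_k$ in the Lie algebra $\mathfrak{u}(x_{k+1})$ of skew-adjoint (resp.\ skew-Hermitian) operators on $x_{k+1}$. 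This yields a smooth chart
\[
\Phi : U_x \xrightarrow{\ \cong\ } V \subset \prod_{k=0}^{m} \Hom(x_k,x_k^\bot) \times \prod_{k=0}^{m-1}\mathfrak{u}(x_{k+1}).
\]

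In these coordinates, each member of the family indexed by $I_x$ becomes a linear subspace. The condition $x'_k \subset F_k$ defining $G'_n(\mathcal{M})$ reduces to the linear constraint $\im A_k \subset F_k \cap x_k^\bot$. The condition $f'_k = \varphi_k|_{x'_k}^{x'_{k+1}}$ defining $G^k_n(\mathcal{M})$ forces both $\varphi_k(x'_k) = x'_{k+1}$ (which, using $\varphi_k(x_k)=x_{k+1}$ at the basepoint, becomes the linear equation $A_{k+1}=\varphi_k\circ A_k\circ(\varphi_k|_{x_k})^{-1}$) and the linear condition $X_k=0$. Finally, at $x\in G_n^{(p,q)}(\mathcal{M})$ with splitting $x_k=y_k\oplus z_k$ into subspaces of $E_k^+:=\bigoplus_{i\leq p}E_k^{(i)}$ and $E_k^-:=\bigoplus_{j>p}E_k^{(j)}$, the requirement that $x'_k$ stay split along $E_k=E_k^+\oplus E_k^-$ and that $f'_k$ preserve this splitting amounts to $A_k(y_k)\subset E_k^+\cap y_k^\bot$, $A_k(z_k)\subset E_k^-\cap z_k^\bot$, together with $X_k$ being block diagonal in $x_{k+1}=y_{k+1}\oplus z_{k+1}$.

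Because each condition cuts out a linear subspace of the codomain, the intersections are automatically linear and $\Phi$ satisfies the clean-intersection property. I expect the main obstacle to lie in the careful verification that these linearizations are \emph{simultaneously} valid when several submanifolds of the family meet at $x$; this coherence ultimately follows from the fact that all the splittings $E_k=E_k^+\oplus E_k^-$ used for different $(p,q)$ come from the single fixed orthogonal decomposition $E_k=\bigoplus_i E_k^{(i)}$, and from the hypotheses $\varphi_k(E_k^{(i)})=E_{k+1}^{(i)}$ and $\varphi_k(F_k)=F_{k+1}$ which guarantee compatibility of the $G^k_n$-condition with the $G_n^{(p,q)}$- and $G'_n$-conditions.
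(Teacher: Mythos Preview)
Your strategy matches the paper's: build a product chart combining Grassmann graph charts with a parametrization of nearby isometries, then check that each listed submanifold becomes a ``linear'' locus in the model. There is, however, a real gap in your chart: the pull-back of $f'_k$ via the orthogonal projections $x'_k\to x_k$ and $x'_{k+1}\to x_{k+1}$ is \emph{not} an isometry $x_k\to x_{k+1}$ (orthogonal projection between distinct subspaces contracts lengths), so it cannot be written as $\exp(X_k)\circ f_k$ with $X_k\in\mathfrak{u}(x_{k+1})$, and your map $\Phi$ as described is not a diffeomorphism onto an open set of the stated vector space.

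The paper repairs this with a polar-decomposition correction, replacing the bare projection by the genuine isometry
\[
\psi_{x_k,x'_k}:=\pi_{x'_k}\circ\bigl(\pi_{x_k}^{x'_k}\circ\pi_{x'_k}^{x_k}\bigr)^{-1/2}:x_k\longrightarrow x'_k,
\]
and then records $\beta_k:=f_k^{-1}\circ\psi_{x_{k+1},x'_{k+1}}^{-1}\circ f'_k\circ\psi_{x_k,x'_k}\in U(x_k)$ itself rather than its logarithm. The submanifold conditions on the $\beta$-side then read $\beta_k=\id$ for $G^k_n$ and $\beta_k\in U(y_k)\times U(z_k)$ for $G_n^{(p,q)}$, i.e.\ membership in closed subgroups of $\prod_k U(x_k)$; the paper invokes its earlier example that closed subgroups of a Lie group intersect cleanly, instead of linearizing via $\exp$. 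Once you insert the $\psi$-correction, your exponential-chart variant works as well and your linearization claims become correct; the two routes then differ only in whether one appeals to linear subspaces alone (your version) or to linear subspaces on one factor and closed subgroups on the other (the paper's version).
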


\begin{proof}
Obviously, the subspaces considered here are all closed in $G_n(\mathcal{M})$ since they are compact. \\
Let $x=x_0 \overset{f_0}{\rightarrow} x_1 \overset{f_1}{\rightarrow}
 \dots \overset{f_{m-1}}{\rightarrow} x_m$ in $G_n(\mathcal{M})$, and set:
$$I_x:=\{(p,q) \in (\mathbb{N}^*)^2 : x \in G_n^{(p,q)}(\mathcal{M})\} \cup \{k \in [m-1] : x \in G_n^k(\mathcal{M})\}$$
and
$$J_x:=\{(p,q) \in (\mathbb{N}^*)^2 : x \not\in G_n^{(p,q)}(\mathcal{M})\} \cup \{k \in [m-1] : x \not\in G_n^k
(\mathcal{M})\}.$$
Denote by $U_x$ the subset of $G_n(\mathcal{M})$ consisting of those elements
$x'=x'_0 \overset{f'_0}{\rightarrow} x'_1 \overset{f'_1}{\rightarrow} \dots \overset{f'_{m-1}}{\rightarrow} x'_m$
such that $x'_k \cap x_k^\bot =\{0\}$ for all $k\in [m]$. If $x'$ is such an element, we
set $\psi_{x_k,x'_k}:= \pi_{x'_k} \circ (\pi_{x_k}^{x'_k} \circ \pi_{x'_k}^{x_k})^{-\frac{1}{2}}$ for every $k \in [m]$
(this is a well-defined isometry from $x_k$ to $x'_k$). Obviously, $U_x$ is an open neighborhood of $x$ in $G_N(\mathcal{M})$. \\
We finally set
$$\overline{M}:=\underset{k=0}{\overset{m}{\prod}}L(x_k,x^\bot_k) \quad ,
\quad \overline{N}:=\underset{k=0}{\overset{m-1}{\prod}}U(x_k)$$
and
$$\varphi_x : \begin{cases}
U_x & \longrightarrow \overline{M} \times \overline{N}  \\
x'_0 \overset{f'_0}{\rightarrow} x'_1 \overset{f'_1}{\rightarrow} \dots \overset{f'_{m-1}}{\rightarrow} x'_m
& \longmapsto
\left(\left(\pi_{x_k^\bot} \circ (\pi_{x_k}^{x'_k})^{-1}\right)_{k \in [m]},
\left(f_k^{-1} \circ (\psi_{x_{k+1},x'_{k+1}})^{-1}
\circ f'_k \circ \psi_{x_k,x'_k}\right)_{k \in [m-1]}\right).
\end{cases}$$
For every $(p,q)\in I_x$, we set
$$M_{(p,q)}:=\underset{k=0}{\overset{m}{\prod}}
\left(L\left(x^{[\{1,\dots,p\}]}_k,(x^\bot_k)^{[\{1,\dots,p\}]}\right)\oplus
L\left(x^{[\{p+1,\dots,n\}]}_k,(x^\bot_k)^{[\{p+1,\dots,n\}]}\right)\right)
$$
and
$$N_{(p,q)}:=\underset{k=0}{\overset{m-1}{\prod}}\left(U(x_k^{[\{1,\dots,p\}]})\times
U(x_k^{[\{p+1,\dots,n\}]})\right).$$
For every $k \in I_x$, we set
$$M_k:=\left\{(\alpha_i)_{0\leq i \leq m}\in \overline{M} : \;\alpha_k=\alpha_{k+1}\right\},$$
and
$$N_k:=\left\{(\beta_i)_{0\leq i \leq m-1}\in \overline{N} : \; \beta_k=\id_{x_k}\right\}.$$
Finally, we set $M':=\underset{k=0}{\overset{m}{\prod}}L(x_k,x^\bot_k \cap F_k)$.
By Example 1, $((M_i)_{i \in I_x},M')$ intersects cleanly in $\overline{M}$.
By Example 2, $((N_i)_{i \in I_x},N)$ intersects cleanly in $\overline{N}$.
Therefore $((M_i \times N_i)_{i \in I_x},M' \times \overline{N})$
is a family of subsets of $\overline{M} \times \overline{N}$
which intersects cleanly.
It is then a straightforward task to check that $\varphi_x(U_x \cap G_n^i(\mathcal{M}))=M_i \times N_i$ for
every $i \in I_x$, and $\varphi_x(U_x \cap G_n'(\mathcal{M}))=M' \times \overline{N}$ if $x \in G_n'(\mathcal{M})$. \\
Set finally
$$V_x:=
\begin{cases}
G_N(\mathcal{M}) \setminus \underset{j \in J_x}{\bigcup}G_N^j(\mathcal{M}) & \text{if} \quad x \in G'_N(\mathcal{M}) \\
G_N(\mathcal{M}) \setminus \left(G'_N(\mathcal{M}) \cup \underset{j \in J_x}{\bigcup}G_N^j(\mathcal{M})\right) &
\text{otherwise}.
\end{cases}$$
By restricting $\varphi_x$ to the open neighborhood $U_x \cap V_x$ of $x$ in $G_n(\mathcal{M})$,
we deduce from a previous remark that
the family $\left((G_n^{(p,q)}(\mathcal{M}))_{p+q=n},(G^k_n(\mathcal{M}))_{0 \leq k \leq m}, G'_n(\mathcal{M})\right)$
of subsets of $G_n(\mathcal{M})$ intersects cleanly. \end{proof}

\begin{cor}\label{CWconjecture}
$(V_n(\mathcal{M}),V_n^{\text{prod}}(\mathcal{M}) \cup V_n^{\text{deg}}(\mathcal{M}) \cup V_n'(\mathcal{M}))$
and $(V_n(\mathcal{M}),V_n^{\text{prod}}(\mathcal{M}) \cup V_n'(\mathcal{M}))$
are finite relative $U_n(F)$-CW-complexes.
\end{cor}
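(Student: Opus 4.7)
The plan is to establish finite relative CW-structures on the base $G_n(\mathcal{M})$ of the principal $U_n(F)$-bundle $\pi : V_n(\mathcal{M}) \to G_n(\mathcal{M})$ and then lift them to $V_n(\mathcal{M})$. The starting observation is that each of $V_n^{(p,q)}(\mathcal{M})$, $V_n^k(\mathcal{M})$ and $V_n'(\mathcal{M})$ is saturated under the diagonal $U_n(F)$-action, so it coincides with the $\pi$-preimage of $G_n^{(p,q)}(\mathcal{M})$, $G_n^k(\mathcal{M})$ and $G_n'(\mathcal{M})$ respectively. Hence the two subspaces appearing in the statement are the $\pi$-preimages of the compact subspaces
$$U_1 := \underset{p+q=n}{\bigcup}G_n^{(p,q)}(\mathcal{M}) \;\cup\; \underset{0 \leq k \leq m-1}{\bigcup}G_n^k(\mathcal{M}) \;\cup\; G_n'(\mathcal{M})$$
and
$$U_2 := \underset{p+q=n}{\bigcup}G_n^{(p,q)}(\mathcal{M}) \;\cup\; G_n'(\mathcal{M}).$$

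Corollary \ref{clean} states that the full family of closed submanifolds $\bigl\{G_n^{(p,q)}(\mathcal{M}),\,G_n^k(\mathcal{M}),\,G_n'(\mathcal{M})\bigr\}$ intersects cleanly in the smooth compact manifold $G_n(\mathcal{M})$; any subfamily of a cleanly intersecting family inherits this property directly from the definition. I would therefore apply Conjecture \ref{unionofsubmanifolds} twice, once with the subfamily whose union is $U_1$ and once with the subfamily whose union is $U_2$, to obtain that $(G_n(\mathcal{M}),U_1)$ and $(G_n(\mathcal{M}),U_2)$ are both finite relative CW-complexes.

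The last step is to transfer these CW-structures to $V_n(\mathcal{M})$ along $\pi$. Since $\pi$ is a smooth principal $U_n(F)$-bundle, it is locally trivial, and in particular trivial over every closed cell $D^k$ of either CW-structure on the base (closed cells being contractible). Consequently, each attaching map $\varphi : D^k \to G_n(\mathcal{M})$ may be upgraded, via the chosen trivialisation of $\varphi^*\pi$, to a free equivariant attaching map $U_n(F) \times D^k \to V_n(\mathcal{M})$, producing a free $U_n(F)$-cell above each cell of the base. Iterating along the skeletal filtration — which is finite because the base CW-structure is finite — this yields a finite relative $U_n(F)$-CW-complex structure on $(V_n(\mathcal{M}),\pi^{-1}(U_i))$ for $i \in \{1,2\}$, i.e. exactly the two structures required. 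The only genuine obstacle is Conjecture \ref{unionofsubmanifolds} itself, which is taken as a black box; the principal bundle lifting at the end is a standard construction once the base CW-structure is in hand.
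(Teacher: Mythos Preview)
Your proof is correct and follows the same approach as the paper: reduce to the base $G_n(\mathcal{M})$ via Corollary \ref{clean} and Conjecture \ref{unionofsubmanifolds}, then lift the resulting finite relative CW-structures through the principal $U_n(F)$-bundle $\pi$. The only cosmetic difference is in the lifting step---the paper passes to a barycentric subdivision so that each cell lies inside a trivialising chart, whereas you trivialise $\varphi^*\pi$ directly using contractibility of $D^k$; both are standard and yield the same conclusion.
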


\begin{proof}
By Corollary \ref{clean}, each pair
$\bigl(G_n(\mathcal{M}),G_n^{\text{prod}}(\mathcal{M}) \cup G_n^{\text{deg}}(\mathcal{M}) \cup G_n'(\mathcal{M})\bigr)$
and $\bigl(G_n(\mathcal{M}),G_n^{\text{prod}}(\mathcal{M}) \cup G_n'(\mathcal{M})\bigr)$
is a finite relative CW-complex. Since
the respective inverse images of $G_n^{\text{prod}}(\mathcal{M}) \cup G_n^{\text{deg}}(\mathcal{M}) \cup
G_n'(\mathcal{M})$ and $G_n^{\text{prod}}(\mathcal{M}) \cup G_n'(\mathcal{M})$ by the canonical projection
$V_n(\mathcal{M}) \rightarrow G_n(\mathcal{M})$ are
$V_n^{\text{prod}}(\mathcal{M}) \cup V_n^{\text{deg}}(\mathcal{M}) \cup V_n'(\mathcal{M})$ and
$V_n^{\text{prod}}(\mathcal{M}) \cup V_n'(\mathcal{M})$, and since the projection is a $U_n(F)$-principal bundle
with a compact total space,
there exists some barycentric subdivision of each relative CW-complex structure that lifts, and
we can thus define a structure of finite relative
$U_n(F)$-CW-complex for each pair
$\bigl(V_n(\mathcal{M}),V_n^{\text{prod}}(\mathcal{M}) \cup V_n^{\text{deg}}(\mathcal{M}) \cup V_n'(\mathcal{M})\bigr)$
and $\bigl(V_n(\mathcal{M}),V_n^{\text{prod}}(\mathcal{M}) \cup V_n'(\mathcal{M})\bigr)$.
\end{proof}

We finish with another technical result. Let $(p,q,r)\in (\mathbb{N}^*)^3$ such that $p+q+r=n$. We define
$$V_N^{(p,q,r)}(\mathcal{M}) :=
\left(\underset{k=0}{\overset{M}{\prod}}\left[V_p\left(\underset{1\leq i \leq p}{\overset{\bot}{\bigoplus}}E_k^{(i)}\right)
\times V_q\left(\underset{p+1\leq i \leq p+q}{\overset{\bot}{\bigoplus}}E_k^{(i)}\right)
\times V_r\left(\underset{p+q+1\leq i \leq n}{\overset{\bot}{\bigoplus}}E_k^{(i)}\right)
\right]\right).U_n(F).$$

\begin{prop}\label{productintersection}
Let $(p,q) \in (\mathbb{N}^*)^2$ and $(p',q') \in (\mathbb{N}^*)^2$ such that
$n=p+q=p'+q'$ and $p<p'$. Then: $V_n^{(p,q)}(\mathcal{M}) \cap V_n^{(p',q')}(\mathcal{M}) = V_n^{(p,p'-p,q')}(\mathcal{M})$.
\end{prop}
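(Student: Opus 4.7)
The plan is to reformulate each of the three sets $V_n^{(p,q)}(\mathcal{M})$, $V_n^{(p',q')}(\mathcal{M})$, and $V_n^{(p,p'-p,q')}(\mathcal{M})$ intrinsically, as a condition on a (nested family of) subspace(s) of $F^n$, and then to deduce the proposition by a short piece of finite-dimensional linear algebra.

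Viewing each $\mathbf{B}_k \in V_n(E_k)$ as an isometric linear embedding $\mathbf{B}_k : F^n \hookrightarrow E_k$, the first step is to check that $(\mathbf{B}_k)_{0 \leq k \leq m}$ lies in $V_n^{(p,q)}(\mathcal{M})$ if and only if there exists a single $p$-dimensional subspace $y \subset F^n$ (the image of $F^p \times \{0\}$ under the $g \in U_n(F)$ appearing in the definition) such that, for every $k$,
$$\mathbf{B}_k(y) \subset \bigoplus_{1 \leq i \leq p} E_k^{(i)} \quad \text{and} \quad \mathbf{B}_k(y^\bot) \subset \bigoplus_{p+1 \leq j \leq n} E_k^{(j)}.$$
Analogous characterizations are to be written down for $V_n^{(p',q')}(\mathcal{M})$ (involving a single $p'$-dimensional subspace $y'$) and for $V_n^{(p,p'-p,q')}(\mathcal{M})$ (involving a mutually orthogonal triple $(y_1,y_2,y_3)$ of dimensions $(p,p'-p,q')$ with $F^n = y_1 \oplus y_2 \oplus y_3$, satisfying the three expected inclusions).

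The inclusion $V_n^{(p,p'-p,q')}(\mathcal{M}) \subset V_n^{(p,q)}(\mathcal{M}) \cap V_n^{(p',q')}(\mathcal{M})$ is then immediate: starting from a triple $(y_1,y_2,y_3)$, I would take $y := y_1$ and $y' := y_1 \oplus y_2$ and check the two required pairs of inclusions by simply grouping the three pieces. For the converse, given common subspaces $y$ and $y'$, I would first establish $y \subset y'$: setting $x_k := \mathbf{B}_k(F^n)$, one extracts from the intrinsic characterization the identity $\mathbf{B}_k(y) = x_k \cap \bigoplus_{1 \leq i \leq p}E_k^{(i)}$ (and its analogue for $y'$), and since $\bigoplus_{1 \leq i \leq p}E_k^{(i)} \subset \bigoplus_{1 \leq i \leq p'}E_k^{(i)}$ this forces $\mathbf{B}_k(y) \subset \mathbf{B}_k(y')$, hence $y \subset y'$ by injectivity of $\mathbf{B}_k$. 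Then set $y_1 := y$, $y_2 := y' \cap y^\bot$, and $y_3 := (y')^\bot$: the dimensions fall out to $(p,p'-p,q')$, and the three required inclusions for $V_n^{(p,p'-p,q')}(\mathcal{M})$ reduce to intersecting the known inclusions for $y$ and $y'$---for instance,
$$\mathbf{B}_k(y_2) \subset \mathbf{B}_k(y') \cap \mathbf{B}_k(y^\bot) \subset \Bigl(\bigoplus_{1 \leq i \leq p'} E_k^{(i)}\Bigr) \cap \Bigl(\bigoplus_{p+1 \leq j \leq n} E_k^{(j)}\Bigr) = \bigoplus_{p+1 \leq i \leq p'}E_k^{(i)}.$$

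The main point requiring care is the reformulation itself: one has to recognize that the second inclusion $\mathbf{B}_k(y^\bot) \subset \bigoplus_{p+1 \leq j \leq n} E_k^{(j)}$ is a genuine additional condition (not implied by the first one in general), and that the subspace $y$ produced from $g$ really is independent of $k$---both points ultimately stem from the fact that the $U_n(F)$-action on $V_n(\mathcal{M})$ is diagonal. Once this dictionary is in place, the rest of the argument is short and requires neither Conjecture \ref{unionofsubmanifolds} nor Corollary \ref{CWconjecture}.
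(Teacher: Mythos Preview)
Your proposal is correct and follows the same essential strategy as the paper: translate membership in $V_n^{(p,q)}(\mathcal{M})$ into a subspace condition, then finish with a short piece of linear algebra. The packaging differs slightly. The paper passes to the quotient $G_n(\mathcal{M})=V_n(\mathcal{M})/U_n(F)$, describes an element there as a diagram $x_0\to\cdots\to x_m$ of $n$-dimensional subspaces with isometries, and reduces the nontrivial inclusion to the lemma that a subspace $F$ of $E_1\oplus E_2\oplus E_3$ which splits along both $(E_1,E_2\oplus E_3)$ and $(E_1\oplus E_2,E_3)$ must split along $(E_1,E_2,E_3)$. You instead stay at the level of $V_n(\mathcal{M})$ and encode the orbit condition by a single subspace $y\subset F^n$ on the \emph{domain} side; your argument that $y\subset y'$ together with the construction $(y_1,y_2,y_3)=(y,\,y'\cap y^\bot,\,(y')^\bot)$ is exactly the paper's lemma transported to the domain picture. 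A small payoff of your formulation is that the compatibility with the isometries $f_k$---which the paper leaves implicit after invoking its lemma---comes for free, since your $y$ and $y'$ are independent of $k$.
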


\begin{proof}
We let $G_n^{(p,p'-p,q')}(\mathcal{M})$ denote the image
of $V_n^{(p,p'-p,q')}(\mathcal{M})$
by the canonical projection $\pi : V_n(\mathcal{M}) \rightarrow G_n(\mathcal{M})$.
It suffices to check that $G_n^{(p,q)}(\mathcal{M}) \,\cap\, G_n^{(p',q')}(\mathcal{M})
=G_n^{(p,p'-p,q')}(\mathcal{M})$.

Let $x=x_0 \overset{f_0}{\rightarrow} x_1 \overset{f_1}{\rightarrow}
 \dots \overset{f_{m-1}}{\rightarrow} x_m$ be an element of $G_n(\mathcal{M})$.
Then $x \in G_n^{(p,p'-p,q')}$ if and only if,
$x_k=x_k^{[\{1,\dots,p\}]} \oplus x_k^{[\{p+1,\dots,p'\}]} \oplus  x_k^{[\{p'+1,\dots,n\}]}$ for every $k \in [m]$, and,
for every $k \in [m-1]$, one has
$$f_k\bigl(x_k^{[\{1,\dots,p\}]}\bigr) =x_{k+1}^{[\{1,\dots,p\}]}, \quad
f_k\bigl(x_k^{[\{p+1,\dots,p'\}]}\bigr) =x_{k+1}^{[\{p+1,\dots,p'\}]} \quad \text{and} \quad
f_k\bigl(x_k^{[\{p'+1,\dots,n\}]}\bigr) =x_{k+1}^{[\{p'+1,\dots,n\}]}.$$
We are thus reduced to the following easy lemma. \end{proof}

\begin{lemme}
Let $E$ be a vector space, and $E=E_1 \oplus E_2 \oplus E_3$
be a decomposition of $E$. Let $F$ be a linear subspace of $E$
such that $F=(F\cap E_1)\oplus (F\cap(E_2 \oplus E_3))$ and
$F= (F\cap(E_1 \oplus E_2))\oplus (F\cap E_3)$.
Then $F=(F\cap E_1) \oplus (F\cap E_2) \oplus (F\cap E_3)$.
\end{lemme}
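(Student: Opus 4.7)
The plan is to use the two given decompositions of $F$ in succession, and extract extra information about the middle piece by projecting onto $E_1$, $E_2$, $E_3$ along the ambient decomposition $E=E_1\oplus E_2\oplus E_3$.

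First I would take an arbitrary $x\in F$ and apply the first hypothesis to write $x=a+y$ with $a\in F\cap E_1$ and $y\in F\cap(E_2\oplus E_3)$. Since $y=x-a\in F$, I can then apply the second hypothesis to $y$ and write $y=b+c$ with $b\in F\cap(E_1\oplus E_2)$ and $c\in F\cap E_3$. The key observation is that $y\in E_2\oplus E_3$ has zero $E_1$-component (in the decomposition $E=E_1\oplus E_2\oplus E_3$), while $b\in E_1\oplus E_2$ and $c\in E_3$. Writing $b=b_1+b_2$ with $b_1\in E_1$ and $b_2\in E_2$, uniqueness of the $E_1\oplus E_2\oplus E_3$-decomposition of $y$ forces $b_1=0$, hence $b=b_2\in E_2$. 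Combined with $b\in F$, this gives $b\in F\cap E_2$.

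We thus obtain $x=a+b+c$ with $a\in F\cap E_1$, $b\in F\cap E_2$, $c\in F\cap E_3$, proving that $F=(F\cap E_1)+(F\cap E_2)+(F\cap E_3)$. That the sum is direct is immediate from $E=E_1\oplus E_2\oplus E_3$, since any decomposition $0=u_1+u_2+u_3$ with $u_i\in F\cap E_i$ is in particular a decomposition with $u_i\in E_i$, forcing each $u_i=0$.

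The proof is genuinely routine and I do not expect any obstacle: the content is purely the fact that the two hypotheses, taken together, force each $F$-vector to admit a decomposition along all three summands of $E$, and the uniqueness needed to identify the middle term comes for free from the ambient direct sum.
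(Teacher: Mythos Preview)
Your proof is correct and follows essentially the same idea as the paper's: both arguments show that for $x\in F$ the three components $\pi_i(x)$ with respect to $E=E_1\oplus E_2\oplus E_3$ all lie in $F$. The paper does this by observing that the hypotheses give $\pi_1(x),(\pi_2+\pi_3)(x),(\pi_1+\pi_2)(x),\pi_3(x)\in F$ and then subtracting to get $\pi_2(x)\in F$; you apply the second decomposition to $y=(\pi_2+\pi_3)(x)$ instead of to $x$, which is a minor reordering of the same computation.
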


\begin{proof}
We denote by $\pi_1$ (resp.\ $\pi_2$, resp.\ $\pi_3$) the projection on $E_1$ alongside
$E_2 \oplus E_3$ (resp.\ on $E_2$ alongside $E_1 \oplus E_3$, resp.\ on $E_3$ alongside $E_1\oplus E_2$). \\
Then $\pi_1+\pi_2$ is the projection on $E_1\oplus E_2$ alongside $E_3$,
and $\pi_2+\pi_3$ is the projection on $E_2\oplus E_3$ alongside $E_1$.
The assumptions on $F$ show that, for any $x \in F$,
all the vectors $\pi_1(x)$, $(\pi_2+\pi_3)(x)$, $\pi_3(x)$ and $(\pi_1+\pi_2)(x)$ belong to $F$.
It follows that $\forall x \in F,\; (\pi_1(x),\pi_2(x),\pi_3(x))\in F^3$, which yields the claimed result.
\end{proof}

\subsubsection{Starting the induction}

We let $p \in \mathbb{N}$, and $\psi^\partial=(\psi_n^\partial)_{n\in \mathbb{N}}$ be a family which satisfies
the conditions of conjecture \ref{mighty}.

We are going to construct a family $(\psi_n)_{n \in \mathbb{N}}$ by a double induction process on both
$m$ and $n$. Since $V_0(\mathcal{H})=*$, we define $\psi_{0,m}$ for every $m \in \mathbb{N}$ as the trivial map . This yields
a morphism $\psi_0$ of simplicial $U_0(F)$-spaces which clearly satisfies conditions (i) to (vi), and
of course $\psi_0^\partial$ is a restriction of $\psi_0$.

For every non negative integer $n \in \mathbb{N}$, we define $\psi_{n,0}$ by condition (i), then conditions (iii), (iv), (v) and (vi)
are easily seen to be true, and, since $\psi^\partial_{n,0}$ also satisfies condition (i),
$\psi^\partial_{n,0}$ is a restriction of $\psi_{n,0}$.

\subsubsection{The induction hypothesis}
We now fix a pair $(N,M) \in (\mathbb{N}^*)^2$,
we define $I(N,M):=\bigl([N-1] \times \mathbb{N}\bigr) \cup \bigl(\{N\} \times [M-1]\bigr)$, and we assume that we have a
family $(\psi_{n,m})_{(n,m)\in I(N,M)}$ such that: \vskip 2mm
\begin{itemize}
\item $\psi_{n,m} : V_n(m) \times \Delta^m \times \Delta^p \longrightarrow V_n(\mathcal{H})$
is a $U_n(F)$-map for every $(n,m)\in I(N,M)$;
\item $\psi_n$ is a morphism of simplicial sets for every $n<N$;
\item For every $(m,m')\in [M-1]^2$ and every morphism $\tau : [m'] \rightarrow [m]$ in $\Delta$, the
square
$$\begin{CD}
V_n(m) @>{\psi_{n,m}}>> \Hom(\Delta^m \times \Delta^p,V_n(\mathcal{H})) \\
@VV{\tau_*}V @VV{\tau_*}V \\
V_n(m') @>{\psi_{n,m'}}>> \Hom(\Delta^{m'} \times \Delta^p,V_n(\mathcal{H}))
\end{CD}$$
is commutative;
\item The restriction of $\psi_{n,m}$ to $V_n(m) \times \Delta^m \times \partial\Delta^p$ is
$\psi_{n,m}^\partial$ for every $(n,m)\in I(N,M)$;
\item Condition (i) is satisfied;
\item Condition (ii) is satisfied for every $4$-tuple $(n,n',m,f)$ such that $(n,m) \in I(N,M)$ and
$(n',m) \in I(N,M)$;
\item Condition (iii) is satisfied for every $4$-tuple $(n,n',m,f)$ such that $(n,m) \in I(N,M)$,
$(n',m) \in I(N,M)$ and $n+n'\leq N$;
\item Conditions (iv), (v) and (vi) are satisfied for every triple $(n,m,f)$ such that $(n,m) \in I(N,M)$.
\end{itemize}

\subsubsection{The requirements}

We need to build a $U_N(F)$-map
$\psi_{N,M}: V_N(M) \times \Delta^M \times \Delta^p \longrightarrow V_N(\mathcal{H})$
such that
\vskip 2mm
\begin{itemize}
\item The restriction of $\psi_{N,M}$ to $V_N(M) \times \Delta^M \times \partial\Delta^p$ is $\psi_{N,M}^\partial$;
\item For every $i \in [M-1]$, the square
$$\begin{CD}
V_N(M-1) @>{\psi_{N,M-1}}>> \Hom(\Delta^{M-1} \times \Delta^p,V_N(\mathcal{H})) \\
@VV{s_i^M}V @VV{s_i^M}V \\
V_N(M) @>{\psi_{N,M}}>> \Hom(\Delta^M \times \Delta^p,V_N(\mathcal{H}))
\end{CD}$$
is commutative;
\item For every $i \in [M]$, the square
$$\begin{CD}
V_N(M) @>{\psi_{N,M}}>> \Hom(\Delta^M \times \Delta^p,V_N(\mathcal{H})) \\
@VV{d_i^{M-1}}V @VV{d_i^{M-1}}V \\
V_N(M-1) @>{\psi_{N,M-1}}>> \Hom(\Delta^{M-1} \times \Delta^p,V_N(\mathcal{H}))
\end{CD}$$
is commutative.
\item Condition (iii) is satisfied for every $4$-tuple
$(n,n',m,f)$ such that $m=M$, $n \leq N$, $n' \leq N$ and $n+n'= N$.
\item Conditions (iv), (v) and (vi) are satisfied by the family $(\psi_{n,m})_{(n,m)\in I(N,M+1)}$
for every triple $(n,m,f)$ with
$(n,m)=(N,M)$.
\end{itemize}

\subsubsection{The basic strategy to construct $\psi_{N,M}$}

We fix a map $f : [M] \rightarrow \mathbb{N}$, and we build
$\psi_{N,M}$ on $V_N(f)\times \Delta^M \times \Delta^p$. We have to distinguish between three cases,
whether $f$ is degenerate but non-constant, $f$ is constant, or $f$ is non-degenerate.

In the first case, the definition is completely forced by condition (v), and all that needs to be done is check that
the requirements are satisfied by this definition. In the other two cases, the requirements force the definition on
subsets of $V_N(f)\times \Delta^M \times \Delta^p$. What we first do is check that those definitions are compatible. We then find that
they help define $\psi_{N,M}$ on a subset of $V_N(f)\times \Delta^M \times \Delta^p$. In order to complete the definition, we use
an extension argument which relies on practical consequences of our conjecture on triangulations.

\subsubsection{The case $f$ is degenerate and non-constant}

We assume here that $f$ is degenerate and non-constant. We denote by $g : [k] \rightarrow \mathbb{N}$
its root and by $\sigma : [M] \twoheadrightarrow [k]$ its reduction, and for all $i \in [k]$, we set $n_i:=\# \sigma^{-1}\{i\}$.
Notice, since $0<k<n$, that the definition of
$\psi_{N,M}$ on $V_N(f)\times \Delta^M \times \Delta^p$ is forced by condition (v) for $(N,M,f)$.

Since $k>0$, we have $n_i-1 <M$ for all $i\in [k]$. Since
$(\sigma \natural i)_*(f)$ is constant and its value is $g(i)$ for every $i \in [k]$,
we deduce from condition (iv) applied to $(N,n_i,(\sigma \natural i)_*(f))$ that
the first requirement in condition (v) holds for $(N,M,f)$. \\
It follows that we may define a map
\begin{multline*}
V_N(f) \times (G_\sigma(\delta) \times \Delta^{k'}) \times \Delta^p
\Right5{(\sigma \natural \delta)_*\times \id}
\left(\underset{i=0}{\overset{k'}{\prod}}(V_N(\mathcal{H}_{g(\delta(i))}))^{n_{\delta(i)}}
\times \Delta^{n_{\delta(i)}-1} \times \Delta^p \right)
\times \Delta^{k'} \times \Delta^p \\
\Right5{\underset{i=0}{\overset{k'}{\prod}}\psi_{N,n_{\delta(i)}-1}}
V_N(\delta_*(g)) \times \Delta^{k'} \times \Delta^p
\overset{\psi_{N,k'}}{\longrightarrow} V_N(\mathcal{H})
\end{multline*}
for every $\delta : [k'] \hookrightarrow [k]$. By the induction hypothesis applied to
$\psi_{N,k'}$ and $\psi_{N,n_i-1}$ for every $i \in [k']$, this map
is a $U_N(F)$-map. \\
Let $\varphi=\xymatrix{
[k'] \ar@{^{(}->}[dr]^{\delta} \ar@{^{(}->}[rr]^{\delta''} & & [k''] \ar@{_{(}->}[dl]^{\delta'} \\
& [k]}$ be a morphism in $\Delta^*\downarrow [k]$. Let then $\mathbf{B} \in V_N(f)$, and for every $i \in [k]$,
set $\mathbf{B}_i:=(\sigma \natural i) _*(\mathbf{B})$. Let also $(t_0,\dots,t_{k'})\in \Delta^{k'}$,
$\mathbf{t}=(\mathbf{t}_0,\dots,\mathbf{t}_{k''}) \in G_\sigma(\delta')$ and $x \in \Delta^p$.
Then $(\delta'')_*(\mathbf{t})=(\mathbf{t}_{\delta''(i)})_{0 \leq i \leq k'}$. \\
By the induction hypothesis, the square
$$\begin{CD}
V_N(k'') @>{\psi_{N,k''}}>> \Hom(\Delta^{k''} \times \Delta^p,V_N(\mathcal{H})) \\
@VV{\delta''_*}V @VV{\delta_*''}V \\
V_N(k') @>{\psi_{N,k'}}>> \Hom(\Delta^{k'} \times \Delta^p,V_N(\mathcal{H}))
\end{CD}$$
is commutative, and we deduce that
 \begin{align*}
\psi_{N,k'}\left(\left(\psi_{N,n_{\delta(i)}-1}\left(\mathbf{B}_{\delta(i)},\mathbf{t}_{\delta''(i)},x\right)\right)_{0 \leq i  \leq k'}
,t,x \right) & = \psi_{N,k'}\left((\delta'')_*\left((\psi_{N,n_{\delta'(i)}-1}
\left(\mathbf{B}_{\delta'(i)},\mathbf{t}_i,x)\right)_{0 \leq i  \leq k''}
,t,x \right)\right) \\
& = \psi_{N,k''}\left(\left(\psi_{N,n_{\delta'(i)}-1}
(\mathbf{B}_{\delta'(i)},\mathbf{t}_i,x)\right)_{0 \leq i  \leq k''}
,(\delta'')^*(t),x \right).
\end{align*}
Hence the previous maps are compatible, and it follows from Proposition \ref{superglue}
that they yield an equivariant map $\psi_{N,M} : V_N(f) \times \Delta^M \times \Delta^p \longrightarrow V_N(\mathcal{H})$.
Since $V_N(f)$ is filtered by an increasing sequence of compact spaces, we also deduce from
Proposition \ref{superglue} that $\psi_{N,M}$ is actually a continuous map.

\vskip 2mm
Since condition (v) is now checked for $(n,M,f)$ for any $n\leq N$, it follows from the induction hypothesis\footnote{
and, more specifically,
from the part concerning condition (iii), applied to $(n,n',k,g)$, and
$(n,n',n_i-1,(\sigma \natural i)_*(f))$, for every $i\in [k]$ and
every $(n,n')$ such that $n+n'=N$.}
that condition (ii) holds for
$(n,n',M,f)$, for every $(n,n')$ such that $n+n'=N$.

\vskip 2mm
Since condition (v) is satisfied by both $\psi_{N,M}$ and $\psi_{N,M}^\partial$, it follows
from the induction hypothesis\footnote{and, more specifically, the part concerning the compatibility of $\psi$ with $\psi^\partial$
for $(N,k)$, and $(N,n_i-1)$ for every $i\in [k]$.}
that the restriction of $\psi_{N,M}$ to $V_N(f) \times \Delta^M \times \partial \Delta^p$ is $\psi^\partial_{N,M}$.

\vskip 2mm
Let $u \in S(k)$, $\mathbf{B} \in V_N(f)$, $t \in (\sigma^*)^{-1}(\Delta_u)$ and $x \in \Delta^p$.
Let $\mathbf{t}=(\mathbf{t}_i)_{0\leq i \leq n} \in \underset{i=0}{\overset{k}{\prod}}\Delta^{n_i-1}$ such that
$t=\lambda_\sigma(\mathbf{t},\sigma^*(t))$. By the induction hypothesis, condition (iv) holds for the triple
$(N,k,g)$; we deduce that
$$\psi_{N,k}\left(\left(\psi_{N,n_i-1}\left(\mathbf{B}_i,\mathbf{t}_i,x\right)\right)_{0 \leq i  \leq k},\sigma^*(t),x \right)
\in V_N(\mathcal{H}_{n,u,g})$$
and we conclude that $\psi_{N,M}(\mathbf{B},t,x) \in V_N(\mathcal{H}_{n,u,g})$. This proves that condition (iv) holds for
the triple $(N,M,f)$.

\vskip 2mm
We finish by checking the compatibility with face and degeneracy maps.
Let $\tau : [m] \rightarrow [M]$ be any morphism in $\Delta$, with $m<M$.
Let $\tau=\delta' \circ \sigma'$ be the decomposition of $\sigma \circ \tau$ into the
composite of an epimorphism and a monomorphism.
$$\xymatrix{
[k'] \ar@{^{(}->}[r]^{\delta'} & [k] \ar@{^{(}->}[r]^{g} & \mathbb{N}. \\
[m] \ar@{>>}[u]^{\sigma'} \ar[r]^{\tau} & [M] \ar@{>>}[u]^{\sigma} \ar[ur]_f
}
$$
Then $g \circ \delta'$ is the root of $f \circ \tau$, and $\sigma'$ is its reduction.
We set $n'_i:=\#(\sigma')^{-1}\{i\}$ for every $i\in [k']$.
Let $(\tau_i)_{0 \leq i \leq k'}$ denote the decomposition of $\tau$ over $\sigma$.
Let $\mathbf{B} \in V_N(f)$.  For every $i \in [k]$,
set $\mathbf{B}_i:=(\sigma \natural i) _*(\mathbf{B})$, and for every $i \in [k']$, set
$\mathbf{B}'_i:=(\sigma' \natural i) _*(\tau_*(\mathbf{B}))=(\tau_i)_*((\delta'\natural \sigma)_*(\mathbf{B}))$.
Let also $(t_0,\dots,t_m)\in \Delta^m$, $\mathbf{t}=(\mathbf{t}_0,\dots,\mathbf{t}_{k'}) \in G_{\sigma'}(\id_{[k']})$ and $x \in \Delta^p$. \\
By the induction hypothesis, the square
$$\begin{CD}
V_N(n_{\delta'(i)}-1) @>{\psi_{N,n_{\delta(i)}-1}}>> \Hom(\Delta^{n_{\delta'(i)}-1} \times \Delta^p,V_N(\mathcal{H})) \\
@VV{(\tau_i)_*}V @VV{(\tau_i)_*}V \\
V_N(n'_i-1) @>{\psi_{N,n'_i-1}}>> \Hom(\Delta^{n'_i-1} \times \Delta^p,V_N(\mathcal{H}))
\end{CD}$$
is commutative for every $i \in [k']$. \\
It follows that
$$\psi_{N,k'}\left(\left(\psi_{N,n'_i-1}\left(\mathbf{B}'_i,\mathbf{t}_i,x\right)\right)_{0 \leq i  \leq k'},t,x \right)
= \psi_{N,k'}\left(\left(\psi_{N,n_{\delta'(i)}-1}\left(((\delta' \natural \sigma)\natural i)_*(\mathbf{B}),\tau_i^*(\mathbf{t}_i)
,x\right)\right)_{0 \leq i  \leq k'},t,x \right),$$
and we deduce from Proposition \ref{splitting} and condition (v) that
$\psi_{N,m}(\tau_*(\mathbf{B}),t',x)=\psi_{N,M}(\mathbf{B},\tau^*(t'),x)$, where
$t'=\lambda_{\sigma'}(\mathbf{t},t) \in \Delta^m$. This proves that the square
$$\begin{CD}
V_N(f) @>{\psi_{N,M}}>> \Hom(\Delta^M \times \Delta^p,V_N(\mathcal{H})) \\
@VV{\tau_*}V @VV{\tau_*}V \\
V_N(\tau_*(f)) @>{\psi_{N,m}}>> \Hom(\Delta^m \times \Delta^p,V_N(\mathcal{H}))
\end{CD}$$
is commutative.

\vskip 2mm
Finally, let $\tau : [M] \twoheadrightarrow [m]$ and $f_1 : [m] \rightarrow \mathbb{N}$ such that
$f =f_1 \circ \tau$. Then $\sigma=\sigma' \circ \tau$, where $\sigma' : [m] \twoheadrightarrow [k]$
is the reduction of $f_1$ (and $g$ is its root):

$$\xymatrix{
[M] \ar@{>>}[r]^{\tau} \ar[rd]_f & [m] \ar@{>>}[r]^{\sigma'} \ar[d]_{f_1} & [k] \ar[dl]_g \\
& \mathbb{N}.
}$$
For every $i\in [k]$, set $n'_i:=\#(\sigma')^{-1}\{i\}$.
Let $(\tau_i)_{0 \leq i \leq k}$ denote the decomposition of $\tau$ over $\sigma'$.
Let $\mathbf{B} \in V_N(f_1)$.  For every $i \in [k]$,
set $\mathbf{B}_i:=(\sigma' \natural i) _*(\mathbf{B})$ and
$\mathbf{B}'_i:=(\sigma \natural i) _*(\tau_*(\mathbf{B}))=(\tau_i)_*(\sigma_*(\mathbf{B}))$.
Let also $(t_0,\dots,t_M)\in \Delta^M$, $\mathbf{t}=(\mathbf{t}_0,\dots,\mathbf{t}_{k}) \in G_{\sigma}(\id_{[k]})$ and $x \in \Delta^p$. \\
By the induction hypothesis, the square
$$\begin{CD}
V_N(n'_i-1) @>{\psi_{N,n'_i-1}}>> \Hom(\Delta^{n'_i-1} \times \Delta^p,V_N(\mathcal{H})) \\
@VV{(\tau_i)_*}V @VV{(\tau_i)_*}V \\
V_N(n_i-1) @>{\psi_{N,n_i-1}}>> \Hom(\Delta^{n_i-1} \times \Delta^p,V_N(\mathcal{H}))
\end{CD}$$
is commutative for every $i \in [k]$.
It follows that
$$\psi_{N,k}\left(\left(\psi_{N,n_i-1}\left(\mathbf{B}'_i,\mathbf{t}_i,x\right)\right)_{0 \leq i  \leq k},t,x \right)
= \psi_{N,k}\left(\left(\psi_{N,n'_i-1}\left(\mathbf{B}_i,\tau_i^*(\mathbf{t}_i),x\right)\right)_{0 \leq i  \leq k},t,x \right),$$
and we deduce from Proposition \ref{splitting} and condition (v) that
$\psi_{N,M}(\tau_*(\mathbf{B}),t',x)=\psi_{N,m}(\mathbf{B},\tau^*(t'),x)$, where
$t'=\lambda_\sigma(\mathbf{t},t) \in \Delta^M$.
This proves that the square
$$\begin{CD}
V_N(f_1) @>{\psi_{N,m}}>> \Hom(\Delta^m \times \Delta^p,V_N(\mathcal{H})) \\
@VV{\tau_*}V @VV{\tau_*}V \\
V_N(f) @>{\psi_{N,M}}>> \Hom(\Delta^M \times \Delta^p,V_N(\mathcal{H}))
\end{CD}$$
is commutative.

\subsubsection{The case $f$ is constant}

We now assume $f$ is constant and we set $n_0:=f(0)$.
The root of $f$ is the map $[0] \rightarrow \mathbb{N}$ whose image is $\{n_0\}$, and its reduction is the
canonical map $[M] \twoheadrightarrow [0]$. For conditions (iv) and (v) to be satisfied, it suffices to
build $\psi_{N,M} : V_N(f) \times \Delta^M \times \Delta^p \longrightarrow V_{N,n_0}$. Notice that condition (vi)
is irrelevant here.

\vskip 2mm
Let $i\in [M]$. We define $\psi_{N,M}$ on $V_N(f) \times (\delta_i^M)^*(\Delta^{M-1}) \times \Delta^p$ by
$$\forall (\mathbf{B},t,x) \in V_N(f) \times \Delta^{M-1} \times \Delta^p, \quad
\psi_{N,M}(\mathbf{B},(\delta_i^{M-1})^*(t),x):=\psi_{N,M-1}(d_i^{M-1}(\mathbf{B}),t,x).$$
By compatibility with the face maps at lower levels and condition (iv) at lower levels,
these definitions are compatible, and
they yield a $U_N(F)$-map
$$\boxed{\psi_{N,M} : V_N(f) \times \partial \Delta^M \times \Delta^p \longrightarrow V_{N,n_0}}.$$

\vskip 2mm
Let $i \in [M-1]$. We define $\psi_{N,M}$ on $s_i^M((V_{N,n_0})^{[M-1]}) \times \Delta^M \times \Delta^p$
by
$$\forall (\mathbf{B},t,x) \in s_i^M((V_{N,n_0})^{[M-1]}) \times \Delta^M \times \Delta^p,
\psi_{N,M}(\mathbf{B},t,x):=\psi_{N,M-1}(d_i^{M-1}(\mathbf{B}),(\sigma_i^M)^*(t),x).$$
By the induction hypothesis, this is a $U_N(F)$-map.
Those maps are compatible. Let $(i,j)\in [M-1]^2$ be such that $i<j$, and
let
$(\mathbf{B},t,x) \in \left(s_i^{M-1}((V_{N,n_0})^{[M-1]}) \cap s_j^{M-1}((V_{N,n_0})^{[M-1]})\right)
\times \Delta^M \times \Delta^p$. \\
Let $\mathbf{B'}\in (V_{N,n_0})^{[M-2]}$ be such that
$\mathbf{B}=s_i^{M-1}(s_{j-1}^{M-2}(\mathbf{B'}))=s_j^{M-1}(s_i^{M-2}(\mathbf{B'}))$.
Then, by the induction hypothesis,
\begin{align*}
\psi_{N,M-1}(d_i^M(\mathbf{B}),(\sigma_i^{M-1})^*(t),x) & = \psi_{N,M-1}(s_{j-1}^{M-2}(\mathbf{B}'),(\sigma_i^{M-1})^*(t),x) \\
 & = \psi_{N,M-1}(\mathbf{B}',(\sigma_{j-1}^{M-2} \circ \sigma_i^{M-1})^*(t),x) \\
 & = \psi_{N,M-1}(\mathbf{B}',(\sigma_{i}^{M-2} \circ \sigma_j^{M-1})^*(t),x) \\
 & =  \psi_{N,M-1}(s_i^{M-2}(\mathbf{B}'),(\sigma_j^{M-1})^*(t),x) \\
 & = \psi_{N,M-1}(d_j^M(\mathbf{B}),(\sigma_j^{M-1})^*(t),x).
\end{align*}
We deduce that the preceding maps yield an equivariant map
$$\boxed{\psi_{N,M} : V_N^{\text{deg}}(f) \times \Delta^M \times \Delta^p \longrightarrow V_{N,n_0},}$$
where $V_N^{\text{deg}}(f):=\underset{i \in [M-1]}{\prod}s_i^M((V_{N,n_0})^{[M-1]})$ (the continuity of this map will be checked later on).

\vskip 2mm
We now check that this map is compatible with the preceding one. \\
Let $(i,j)\in [M-1]\times [M]$, and
$(\mathbf{B},t,x) \in s_i^M((V_{N,n_0})^{[M-1]}) \times (\delta_j^{M-1})^*(\Delta^{M-1}) \times \Delta^p$,
$t'\in \Delta^{M-1}$ such that $t=(\delta_j^{M-1})^*(t')$, and
$\mathbf{B}'\in (V_{N,n_0})^{[M-1]}$ such that $\mathbf{B}=s_i^M(\mathbf{B'})$.
Then $d_i^{M-1}(\mathbf{B})=\mathbf{B}'$, and we deduce from the induction hypothesis that
\begin{align*}
\psi_{N,M-1}(d_i^{M-1}(\mathbf{B}),(\sigma_i^M)^*(t),x) & =
\psi_{N,M-1}(\mathbf{B}',(\sigma_i^M\circ \delta_j^{M-1})^*(t'),x) \\
& = \psi_{N,M-1}((d_j^{M-1} \circ s_i^M)(\mathbf{B}'),t',x) \\
& = \psi_{N,M-1}(d_j^{M-1}(\mathbf{B}),t',x).
\end{align*}
This proves that the two previous definitions are compatible.

\vskip 2mm
Now, let $p \in \{1,\dots,N-1\}$ and set $q:=N-p$.
Should we identify $V_p(f) \times V_q(f+p)$ with a subspace of $V_N(f)$, we may use
the induction hypothesis concerning condition (ii) to
define $\psi_{N,M}$ on $V_p(f) \times V_q(f+p) \times (\Delta^M \times \Delta^p)$ by:
$$\forall (\mathbf{B},\mathbf{B}',t)\in V_p(f) \times V_q(f+p) \times (\Delta^M \times \Delta^p), \quad
\psi_{N,M}(\mathbf{B},\mathbf{B}',t):=\bigl(\psi_{p,M}(\mathbf{B},t),\psi_{q,M}(\mathbf{B}',t)\bigr).$$
Since $\psi_p$ (resp.\ $\psi_q$) is a morphism of simplicial $U_p(F)$-spaces (resp.\ of simplicial
$U_q(F)$-spaces), we deduce that we have just defined a $(U_p(F) \times U_q(F))$-map. By the induction hypothesis, it is compatible with the
preceding ones.
Set
$$V_N^{(p,q)}(f):=(V_p(f) \times V_q(f+p)).U_N(F) \subset V_N(f).$$
The canonical $U_N(F)$-map
$$(V_p(f) \times V_q(f+p)) \times U_N(F) \longrightarrow V_N^{(p,q)}(f)$$
is an identification map (indeed, for every $l\in \mathbb{N}^*$,
$(V_p^{(l)}(f) \times V_q^{(l)}(f+p)) \times U_N(F) \longrightarrow
 V_N^{(p,q)}(f)\cap V_N^{(l)}(f)$ is a continuous surjection between compact spaces). We deduce that the canonical map
$$(V_p(f) \times V_q(f+p)) \times_{U_p(F) \times U_q(F)} U_N(F) \longrightarrow V_N^{(p,q)}(f)$$
is an equivariant homeomorphism.
It follows that the following definition extends $\psi_{N,M}$ as a $U_N(F)$-map on
$V_N^{(p,q)}(f) \times \Delta^M \times \Delta^p$:
$$\forall (\mathbf{B},\mathbf{B}',t,\mathbf{M}) \in V_p(f) \times V_q(f+p)
\times (\Delta^M \times \Delta^p) \times U_N(F), \quad
\psi_{N,M}((\mathbf{B},\mathbf{B}').\mathbf{M},t)=\psi_{N,M}((\mathbf{B},\mathbf{B}'),t).\mathbf{M}.$$

\vskip 2mm
Let $p'\in \{1,\dots,N-1\}$ such that $p<p'$, and set $q':=N-q$.
We have to check that the respective definitions of $\psi_{N,M}$ on $V_N^{(p,q)}(f) \times \Delta^M \times \Delta^p$
and $V_N^{(p',q')}(f) \times \Delta^M \times \Delta^p$ are compatible. \\
Let $(\mathbf{B},t,x)\in (V_N^{(p,q)}(f) \cap V_N^{(p',q')}(f))\times \Delta^M \times \Delta^p$.
Proposition \ref{productintersection} then yields
a triple $(\mathbf{B}_1,\mathbf{B}_2,\mathbf{B}_3)\in V_p(f) \times V_{p'-p}(f+p) \times V_{q'}(f+p')$ and
$\mathbf{M} \in U_N(F)$ such that $\mathbf{B}=(\mathbf{B}_1,\mathbf{B}_2,\mathbf{B}_3).\mathbf{M}$.

We deduce from the induction hypothesis that
\begin{align*}
(\psi_p(\mathbf{B}_1,t,x),\psi_q((\mathbf{B}_2,\mathbf{B}_3),t,x)) & = (\psi_p(\mathbf{B}_1,t,x),\psi_{p'-p}(\mathbf{B}_2,t,x),
\psi_{q'}(\mathbf{B}_3,t,x)) \\
(\psi_p(\mathbf{B}_1,t,x),\psi_q((\mathbf{B}_2,\mathbf{B}_3),t,x)) & =
(\psi_{p'}((\mathbf{B}_1,\mathbf{B}_2),t,x),\psi_{q'}(\mathbf{B}_3,t,x))
\end{align*}
This proves that the previous maps
are all compatible, and that they therefore yield a $U_N(F)$-map: $$\boxed{\psi_{N,M} : V_N^{\text{prod}}(f) \times \Delta^M \times \Delta^p \longrightarrow V_{N,n_0},}$$
where $V_N^{\text{prod}}(f):= \underset{p \in \{1,\dots,N-1\}}{\bigcup} V_N^{(p,N-p)}(f)$. This map
is compatible with the preceding ones.

\vskip 2mm
Finally, we may define $\psi_{N,M}$ on $V_N(f)\times \Delta^M \times \partial\Delta^p$ as $\psi_{N,M}^\partial$.
Since $\psi^\partial$ satisfies conditions (i) to (iv) and is a family of morphisms of simplicial spaces, we
deduce from the induction hypothesis that this definition is compatible with the preceding ones. \\
We have just constructed an equivariant map
$$\boxed{\psi_{N,M} : \left(V_N(f) \times \partial(\Delta^M \times \Delta^p)\right) \cup
\left((V_N^{\text{prod}}(f) \cup V_N^{\text{deg}}(f)) \times \Delta^M \times \Delta^p\right) \longrightarrow
V_{N,n_0}}.$$
For every $l \in \mathbb{N}^*$, its restriction to the intersection of
$V_N^{(l)}(f) \times \Delta^M \times \Delta^p$ and its domain is obtained
by gluing together a finite family of continuous maps, where each map is defined on a compact subspace of
$V_N^{(l)}(f) \times \Delta^M \times \Delta^p$, and is therefore continuous. Since
$V_N(f)=\underset{l\in \mathbb{N}^*}{\Indlim}\, V_N^{(l)}$, we deduce that $\psi_{N,M}$ is continuous.

\vskip 2mm
By construction, it suffices to extend $\psi_{N,M}$ to a $U_N(F)$-map from $V_N(f) \times \Delta^M \times \Delta^p$
to $V_{N,n_0}$, and such an extension will fulfill all the expected conditions: indeed, the compatibility with face maps follows from
the definition of $\psi_{N,M}$ on $V_N(f) \times \partial \Delta^M \times \Delta^p$; the compatibility with degeneracy maps follows from
the definition of $\psi_{N,M}$ on $V_N^{\text{deg}}(f) \times \Delta^M \times \Delta^p$; it follows from the definition of $\psi_{N,M}$ on $V_N^{\text{prod}}(f) \times \Delta^M \times \Delta^p$ that
condition (iii) holds; finally, the compatibility with $\psi^\partial$ comes from the definition of
$\psi_{N,M}$ on $V_N(f) \times \Delta^M \times \partial\Delta^p$.

The extension is based upon the following result:

\begin{prop}
The pair
$$\left(V_N(f)\times \Delta^M \times \Delta^p,\left(V_N(f) \times \partial(\Delta^M \times \Delta^p)\right) \cup
\left((V_N^{\text{prod}}(f) \cup V_N^{\text{deg}}(f)) \times (\Delta^M \times \Delta^p)\right)
\right)$$ is filtered by a sequence of relative $U_N(F)$-CW-complexes.
\end{prop}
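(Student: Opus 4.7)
The plan is to exploit the natural filtration of $V_N(f)$ by the compact smooth $U_N(F)$-manifolds $V_N^{(l)}(f)$, and to build a finite relative $U_N(F)$-CW-complex structure level by level by applying Corollary~\ref{CWconjecture}. Adopting the convention $\mathcal{H}_j^{(0)}:=\{0\}$ (so that $V_N^{(0)}(f)=\emptyset$), and writing $Y'$ for the subspace appearing in the statement, I define, for $l\in\N$,
$$Y_l:=\bigl(V_N^{(l)}(f)\times\Delta^M\times\Delta^p\bigr)\cup Y'.$$
Then $Y_0=Y'$ and $Y=\underset{l\in\N^*}{\Indlim}\,Y_l$, so it will suffice to show that each inclusion $Y_{l-1}\hookrightarrow Y_l$ is a finite relative $U_N(F)$-CW-complex.

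To apply Corollary~\ref{CWconjecture} at level $l\geq 1$, I will use the following data: since $f$ is constant with value $n_0$, take, for $k\in[M]$ and $i\in\{1,\dots,N\}$,
$$E_k:=\underset{j=n_0}{\overset{n_0+N-1}{\bigoplus}}\mathcal{H}_j^{(l)},\qquad E_k^{(i)}:=\mathcal{H}_{n_0+i-1}^{(l)},\qquad F_k:=\underset{j=n_0}{\overset{n_0+N-1}{\bigoplus}}\mathcal{H}_j^{(l-1)},\qquad \varphi_k:=\id_{E_k}$$
(using the tautological identifications between the $E_k$'s). Writing $\mathcal{M}_l$ for the resulting system, a direct unpacking of the definitions shows that $V_N(\mathcal{M}_l)=V_N^{(l)}(f)$,
$$V_N^{\text{prod}}(\mathcal{M}_l)=V_N^{\text{prod}}(f)\cap V_N^{(l)}(f),\quad V_N^{\text{deg}}(\mathcal{M}_l)=V_N^{\text{deg}}(f)\cap V_N^{(l)}(f),\quad V_N'(\mathcal{M}_l)=V_N^{(l-1)}(f),$$
so that Corollary~\ref{CWconjecture} yields a finite relative $U_N(F)$-CW-complex structure on $\bigl(V_N^{(l)}(f),B_l\bigr)$, where $B_l:=\bigl[(V_N^{\text{prod}}(f)\cup V_N^{\text{deg}}(f))\cap V_N^{(l)}(f)\bigr]\cup V_N^{(l-1)}(f)$.

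Since $(\Delta^M\times\Delta^p,\partial(\Delta^M\times\Delta^p))$ is a finite relative CW-complex, the standard product construction (applied equivariantly on the left factor only) promotes this to a finite relative $U_N(F)$-CW-complex structure on the pair $\bigl(V_N^{(l)}(f)\times\Delta^M\times\Delta^p,A_l\bigr)$, where $A_l:=B_l\times\Delta^M\times\Delta^p\cup V_N^{(l)}(f)\times\partial(\Delta^M\times\Delta^p)$. A routine set-theoretic check shows that $A_l=Y_{l-1}\cap\bigl(V_N^{(l)}(f)\times\Delta^M\times\Delta^p\bigr)$, hence $(Y_l,Y_{l-1})$ inherits a finite relative $U_N(F)$-CW-complex structure, and the filtration $(Y_l)_{l\in\N}$ is of the claimed type.

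The main obstacle is the appeal to Corollary~\ref{CWconjecture}, which itself rests on the unproved Conjecture~\ref{unionofsubmanifolds} of Section~\ref{conjecturesection}; once that is granted, the remaining delicate bookkeeping is the identification of $V_N^{\text{prod}}(\mathcal{M}_l)$, $V_N^{\text{deg}}(\mathcal{M}_l)$ and $V_N'(\mathcal{M}_l)$ given above, and the verification that the product of a finite relative $U_N(F)$-CW-complex with a finite relative CW-complex on which $U_N(F)$ acts trivially is again a finite relative $U_N(F)$-CW-complex. Both points should cause no real trouble once the decomposition is set up carefully.
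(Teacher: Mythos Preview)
Your proposal is correct and follows essentially the same approach as the paper: both filter by the compact levels $V_N^{(l)}(f)$, identify each successive pair with an instance of $\mathcal{M}_l$ to which Corollary~\ref{CWconjecture} applies, and then take the product with the relative CW pair $(\Delta^M\times\Delta^p,\partial(\Delta^M\times\Delta^p))$. The only cosmetic difference is indexing: you introduce the convention $\mathcal{H}_j^{(0)}=\{0\}$ to start the filtration at $l=0$, whereas the paper starts at $l=1$ and invokes $V_N^{(1)}(f)=V_N^{\text{prod}(1)}(f)$ to absorb the base case.
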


\begin{proof}
To start with, $(\Delta^M \times \Delta^p,\partial(\Delta^M \times \Delta^p))$ is a relative CW-complex.
Also, for every $l \in \mathbb{N}^*$, we set
$V_N^{\text{prod} (l)}(f):=V_N^{\text{prod}}(f)\cap V_N^{(l)}(f)$,
$V_N^{\text{deg} (l)}(f):=V_N^{\text{deg}}(f)\cap V_N^{(l)}(f)$ and
$$\mathcal{M}_l:=\left(\left(\underset{i=0}{\overset{N-1}{\oplus}}\mathcal{H}_{n_0+i}^{(l+1)},
\underset{i=0}{\overset{N-1}{\oplus}}\mathcal{H}_{n_0+i}^{(l)},(\mathcal{H}_{n_0+i-1}^{(l)})_{1 \leq i \leq N}\right)_{0 \leq k \leq M}
,(\id)_{0 \leq k \leq M-1}\right).$$
We then deduce from corollary \ref{CWconjecture} - applied to
$\mathcal{M}_l$ - that the pair \\
$(V_N^{(l+1)}(f), V_N^{(l)}(f) \cup V_N^{\text{prod}(l+1)}(f) \cup
V_N^{\text{deg}(l+1)}(f))$ is a relative $U_N(F)$-CW-complex for every positive integer $l$.
Since $V_N^{(1)}(f)=V_N^{\text{prod}(1)}(f)$, we deduce that the pair
$(V_N(f), V_N^{\text{prod}}(f) \cup V_N^{\text{deg}}(f))$
is filtered by a sequence of $U_N(F)$-CW-complexes, which proves the claimed result.
\end{proof}

Finally, $V_{N,n_0}$ is contractible, and $U_N(F)$ acts freely on
$V_N(f) \times \Delta^M \times \Delta^p$; we may thus extend $\psi_{N,M}$ to obtain a $U_N(F)$-map
$\psi_{N,M} : V_N(f) \times \Delta^M \times \Delta^p \longrightarrow V_{N,n_0}$.
We choose such an extension, and this finishes the construction in the case $f$ is constant.

\subsubsection{The case $f$ is non-degenerate}

We finally assume that $f:[M] \rightarrow \mathbb{N}$ is non-degenerate.

\vskip 2mm
Let $i\in [M]$. We define $\psi_{N,M}$ on $V_N(f) \times (\delta_i^{M-1})^*(\Delta^{M-1}) \times \Delta^p$ by:
$$\forall (\mathbf{B},t,x) \in V_N(f) \times \Delta^{M-1} \times \Delta^p, \quad
\psi_{N,M}(\mathbf{B},(\delta_i^{M-1})^*(t),x):=\psi_{N,M-1}(d_i^{M-1}(\mathbf{B}),t,x).$$
By compatibility with the face maps at lower levels and since condition (iv) is satisfied at lower levels, those definitions are compatible
and, since condition (iv) is satisfied at the lower levels, yield a $U_N(F)$-map:
$$\boxed{\psi_{N,M} : V_N(f) \times \partial \Delta^M \times \Delta^p \longrightarrow V_N(\mathcal{H}).}$$
Let $u=\delta_1<\dots<\delta_{k-1}<[M]$ be a non-trivial sequence in $S(M)$, where
$\delta_{k-1} : [m] \hookrightarrow [M]$. Let $\mathbf{B} \in V_N(f)$, $x \in \Delta_u$, $z \in \Delta^p$, and
$(y,t)\in \Delta^m \times [0,1]$ such that $x=r_M(\delta_{k-1}^*(y),t)$. Then
$y \in \Delta_{u'}$ and we deduce from the induction hypothesis
that $\psi_{N,m}((\delta_{k-1})_*(\mathbf{B}),y,z) \in V_N(\mathcal{H}_{N,u',(\delta_{k-1})_*(f)})$.
We deduce from the previous definition and the compatibility with face maps at lower levels that
$\mathbf{B}':=
\psi_{N,M}(\mathbf{B},(\delta_{k-1})^*(y),z)=\psi_{N,m}((\delta_{k-1})_*(\mathbf{B}),y,z) \in V_N(\mathcal{H}_{N,u',(\delta_{k-1})_*(f)})$.
It follows that we may define
$$\psi_{N,M}(\mathbf{B},x,z):=\cos\left(\frac{\pi t}{2}\right)\mathbf{B'}+\sin\left(\frac{\pi t}{2}\right).\varphi_{N,u,f}(\mathbf{B'})
\in V_N(\mathcal{H}_{N,u,f}),$$
and this yields a $U_N(F)$-map $V_N(f) \times \Delta_u \times \Delta^p \longrightarrow V_N(\mathcal{H}_{N,u,f})$ which
is compatible with the previous one.

\vskip 2mm
Let $v=\delta'_1<\dots<\delta_{k'-1}<[M]\in S(M)$ be another non-trivial class.
We need to check that the respective definitions of $\psi_{N,M}$
on $V_N(f) \times \Delta_u \times \Delta^p$ and $V_N(f) \times \Delta_v \times \Delta^p$ agree.
By Proposition \ref{intersection}, it suffices to do this when $u \subset v$.
Let $\mathbf{B} \in V_N(f)$, $x \in \Delta_u$, and $(y,t)\in \partial \Delta^M \times \Delta^p$ such that $x=r_M(y,t)$, and
$\mathbf{B}' :=\psi_{N,M}(\mathbf{B},y,z)$. Then $y \in \Delta_u$, and we deduce from the preceding remarks
that $\mathbf{B}' \in V_N(\mathcal{H}_{N,u',(\delta_{k'-1})_*(f)})$. From the remarks at the end of Section
\ref{10.4.3}, we derive that $\varphi_{N,u,f}(\mathbf{B'})=\varphi_{N,v,f}(\mathbf{B'})$, and
it follows that
$$\cos\left(\frac{\pi t}{2}\right)\mathbf{B'}+\sin\left(\frac{\pi t}{2}\right).\varphi_{N,u,f}(\mathbf{B'})
=\cos\left(\frac{\pi t}{2}\right)\mathbf{B'}+\sin\left(\frac{\pi t}{2}\right).\varphi_{N,v,f}(\mathbf{B'}).$$
We deduce that all the definitions are compatible so far, and it follows from Proposition \ref{glue} that they yield a $U_N(F)$-map: $$\boxed{\psi_{N,M} : V_N(f) \times \left(\underset{0 \leq i \leq M}{\cup}\Delta_i^M\right)\times \Delta^p \longrightarrow V_N(\mathcal{H})
.}
$$
By construction, this map is compatible with face maps (and therefore condition (v) holds),
and condition (vi) also holds. Moreover, condition (iv) holds for every non-trivial sequence $u \in S(M)$.
The compatibility with degeneracy maps is not relevant here.

\vskip 2mm
Let now $p \in \{1,\dots,N-1\}$ and $q:=N-p$.
Identifying $V_p(f) \times V_q(f+p)$ with a subspace of $V_N(f)$, we may use
the induction hypothesis concerning condition (ii) to
define $\psi_{N,M}$ on $(V_p(f) \times V_q(f+p)) \times (\Delta^M \times \Delta^p)$ by:
$$\forall (\mathbf{B},\mathbf{B}',t)\in V_p(f) \times V_q(f+p) \times (\Delta^M \times \Delta^p), \;
\psi_{N,M}(\mathbf{B},\mathbf{B}',t)=(\psi_{p,M}(\mathbf{B},t),\psi_{q,M}(\mathbf{B}',t)).
$$
Since $\psi_p$ (resp.\ $\psi_q$) is a morphism of simplicial $U_p(F)$-spaces (resp.\ of simplicial
$U_q(F)$-spaces), we deduce that we have just defined a $(U_p(F) \times U_q(F))$-map. \\
Set
$$V_N^{(p,q)}(f):=(V_p(f) \times V_q(f+p)).U_N(F) \subset V_N(f).$$
The canonical $U_N(F)$-map
$$(V_p(f) \times V_q(f+p)) \times U_N(F) \longrightarrow V_N^{(p,q)}(f)$$
is an identification map (since for every positive integer $l$,
$(V_p^{(l)}(f) \times V_q^{(l)}(f+p)) \times U_N(F) \longrightarrow
 V_N^{(p,q)}(f)\,\cap\, V_N^{(l)}(f)$ is a continuous surjection between compact spaces). We deduce that the canonical map
$$(V_p(f) \times V_q(f+p)) \times_{U_p(F) \times U_q(F)} U_N(F) \longrightarrow V_N^{(p,q)}(f)$$
is an equivariant homeomorphism.
It follows that we can extend $\psi_{N,M}$ as a $U_N(F)$-map on
$V_N^{(p,q)}(f) \times \Delta^M \times \Delta^p$ by: $$\forall (\mathbf{B},\mathbf{B}',t,\mathbf{M}) \in V_p(f) \times V_q(f+p) \times (\Delta^M \times \Delta^p) \times U_N(F),
\; \psi_{N,M}((\mathbf{B},\mathbf{B}').\mathbf{M},t)=\psi_{N,M}((\mathbf{B},\mathbf{B}'),t).\mathbf{M}.$$
With the same line of reasoning as in the case $f$ is constant, those definitions are compatible
and yield a $U_N(F)$-map: $$\boxed{\psi_{N,M} : V_N^{\text{prod}}(f) \times \Delta^M \times \Delta^p \longrightarrow V_N(\mathcal{H}),}$$
where
$$V_N^{\text{prod}}(f):= \underset{1 \leq p \leq N-1}{\bigcup} V_N^{(p,N-p)}(f).$$
We now check that this map is compatible with the previous one. The compatibility with the definition
on $V_n(F) \times \partial \Delta^M \times \Delta^p$ is a straightforward consequence of the induction hypothesis.
Let $u \in S(M)$ be a non trivial class,
$p \in \{1,\dots,N-1\}$ and $q:=N-p$. Let $\mathbf{B} \in V_N(f)$, $x \in \Delta_u$, $z \in \Delta^p$, and
$(y,t)\in \partial\Delta^M \times [0,1]$ such that $x=r_M(y,t)$.

Let $(\mathbf{B}',\mathbf{B}'') \in V_p(f)\times V_q(f+p)$ and $\mathbf{M} \in U_N(F)$ such that
$\mathbf{B}=(\mathbf{B}',\mathbf{B}'').\mathbf{M}$. By the induction hypothesis,
$$
\psi_{p,M}(\mathbf{B}',x,z)=\cos \left(\frac{\pi t}{2}\right)\psi_{p,M}(\mathbf{B}',y,z)
+\sin \left(\frac{\pi t}{2}\right)\varphi_{p,u,f}(\psi_{p,M}(\mathbf{B}',y,z)),
$$
$$
\psi_{q,M}(\mathbf{B}'',x,z)=\cos \left(\frac{\pi t}{2}\right)\psi_{q,M}(\mathbf{B}'',y,z)+
\sin \left(\frac{\pi t}{2}\right)\varphi_{q,u,f+p}(\psi_{q,M}(\mathbf{B}'',y,z)),
$$
and we deduce from the last remark in Section \ref{10.4.3} that
$$
\varphi_{N,u,f}(\psi_{p,M}(\mathbf{B}',y,z),\psi_{q,M}(\mathbf{B}'',y,z))=
\left(\varphi_{p,u,f}(\psi_{p,M}(\mathbf{B}',y,z)),\varphi_{q,u,f+p}(\psi_{q,M}(\mathbf{B}'',y,z))\right).
$$
We then deduce from the compatibility of the definitions on $V_N(f) \times \partial \Delta^M \times \Delta^p$ that
$$\bigl(\psi_{p,M}(\mathbf{B}',x,z),\psi_{q,M}(\mathbf{B}'',x,z)\bigr).\mathbf{M}=
\cos \left(\frac{\pi}{2}t\right)\psi_{N,M}(\mathbf{B},y,z)+
\sin \left(\frac{\pi}{2}t\right)\varphi_{N,u,f}(\psi_{N,M}(\mathbf{B},y,z)),$$
where $\psi_{N,M}(\mathbf{B},y,z)$ is given by the earlier definition.

\vskip 2mm
Finally, we may define $\psi_{N,M}$ on $V_N(f)\times \Delta^M \times \partial\Delta^p$ as $\psi_{N,M}^\partial$.
Since $\psi^\partial$ satisfies condition (i) to (vi) and is a family of morphisms of simplicial spaces, we
deduce from the induction hypothesis that this definition is compatible with the above ones.

\vskip 2mm
Proposition \ref{glue} then shows that we have just constructed an equivariant map
$$\boxed{\psi_{N,M} : \left(V_N(f) \times \left((\Delta^M \times \partial \Delta^p)\cup\left((\underset{0\leq i \leq M}{\bigcup}\Delta_i^M) \times \Delta^p
\right)\right)\right) \bigcup\left(V_N^{\text{prod}}(f) \times \Delta^M \times \Delta^p\right)
\longrightarrow V_N(\mathcal{H}).}$$
For every positive integer $l$ and every non-trivial $u \in S(M)$, the restriction of $\psi_{N,M}$ to the intersection of
$V_N^{(l)}(f) \times \Delta_u \times \Delta^p$ and its domain is obtained
by gluing together a finite family of continuous maps, where each map is defined on a compact subspace of
$V_N^{(l)}(f) \times \Delta_u \times \Delta^p$: therefore, this restriction is continuous. Since
$V_N(f)=\underset{l\in \mathbb{N}^*}{\Indlim}\, V_N^{(l)}(f)$, we deduce from Proposition \ref{glue} that $\psi_{N,M}$ is continuous.

\vskip 2mm
Notice that $\psi_{N,M}$ maps $\Bigl(V_N(f) \times \partial (C(\Delta^M) \times \Delta^p)\Bigr)
\bigcup \left(V_N^{\text{prod}}(f) \times C(\Delta^M) \times \Delta^p\right)$ into $V_N(\mathcal{H}_{N,[M],f})$.
If we can extend this map to a $U_N(F)$-map $V_N(f)\times C(\Delta^M) \times \Delta^p \longrightarrow V_N(\mathcal{H}_{N,[M],f})$, condition
(iv) will be checked, and we will recover a $U_N(F)$-map: $V_N(f) \times \Delta^M \times \Delta^p \longrightarrow V_N(\mathcal{H})$
which fulfills all the requirements. That $\psi_{N,M}$ may be extended in this manner is a consequence of the following result:

\begin{prop}
The pair $$\left(V_N(f)\times C(\Delta^M) \times \Delta^p,\left(V_N(f) \times \partial(C(\Delta^M) \times \Delta^p)\right) \cup
\left(V_N^{\text{prod}}(f)\times C(\Delta^M) \times \Delta^p\right)
\right)$$ is filtered by a sequence of relative $U_N(F)$-CW-complexes.
\end{prop}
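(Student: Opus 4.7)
I propose to follow the strategy used in the analogous proof in the constant case, adapted to account for the factor $C(\Delta^M) \times \Delta^p$ and its boundary. Write
$$Y := \bigl(V_N(f) \times \partial(C(\Delta^M) \times \Delta^p)\bigr) \cup \bigl(V_N^{\text{prod}}(f) \times C(\Delta^M) \times \Delta^p\bigr)$$
for the subspace to which we want to filter. For $l \in \mathbb{N}$, set
$$X_l := \bigl(V_N^{(l)}(f) \times C(\Delta^M) \times \Delta^p\bigr) \cup Y,$$
with the convention $V_N^{(0)}(f) := \emptyset$, so that $X_0 = Y$. Since $V_N(f) = \underset{l}{\Indlim}\, V_N^{(l)}(f)$, the $X_l$'s exhaust $V_N(f) \times C(\Delta^M) \times \Delta^p$, so the task reduces to showing that $(X_{l+1}, X_l)$ is a relative $U_N(F)$-CW-complex for every $l \in \mathbb{N}$.

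For each $l \in \mathbb{N}^*$, set
$$\mathcal{M}_l := \left(\Bigl(\underset{i=0}{\overset{N-1}{\bigoplus}}\mathcal{H}_{f(k)+i}^{(l+1)},\, \underset{i=0}{\overset{N-1}{\bigoplus}}\mathcal{H}_{f(k)+i}^{(l)},\, \bigl(\mathcal{H}_{f(k)+i-1}^{(l+1)}\bigr)_{1 \leq i \leq N}\Bigr)_{0 \leq k \leq M},\, (\varphi_k)_{0 \leq k \leq M-1}\right),$$
where the isometries $\varphi_k$ may be chosen arbitrarily since $f$ being non-degenerate means we will not need the $V_N^{\text{deg}}$ component of Corollary \ref{CWconjecture}. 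Applying the second variant of that corollary to $\mathcal{M}_l$ yields a finite relative $U_N(F)$-CW-structure on the pair $\bigl(V_N^{(l+1)}(f),\, V_N^{(l)}(f) \cup V_N^{\text{prod}(l+1)}(f)\bigr)$. Moreover, the homeomorphism $\alpha^M \times \id : C(\Delta^M) \times \Delta^p \overset{\cong}{\longrightarrow} \Delta^M \times \Delta^p$ exhibits $\bigl(C(\Delta^M) \times \Delta^p,\, \partial(C(\Delta^M) \times \Delta^p)\bigr)$ as a finite relative CW-complex, on which $U_N(F)$ acts trivially.

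Taking the product of these two relative CW-pairs produces a finite relative $U_N(F)$-CW-structure on the pair with total space $V_N^{(l+1)}(f) \times C(\Delta^M) \times \Delta^p$ and subspace
$$\bigl[V_N^{(l)}(f) \cup V_N^{\text{prod}(l+1)}(f)\bigr] \times C(\Delta^M) \times \Delta^p \,\cup\, V_N^{(l+1)}(f) \times \partial(C(\Delta^M) \times \Delta^p).$$
Using $V_N^{(l)}(f) \subset V_N^{(l+1)}(f)$ and $V_N^{\text{prod}}(f) \cap V_N^{(l+1)}(f) = V_N^{\text{prod}(l+1)}(f)$, this subspace is exactly $X_l \cap \bigl(V_N^{(l+1)}(f) \times C(\Delta^M) \times \Delta^p\bigr)$, and attaching the corresponding cells to $X_l$ gives $X_{l+1}$ with the desired structure.

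The main technical care required will be in the combinatorial bookkeeping for the base case: the pair $(X_1, X_0) = (X_1, Y)$ must itself be a relative $U_N(F)$-CW-complex. This is taken care of by observing, as in the constant case (where one uses $V_N^{(1)}(f) = V_N^{\text{prod}(1)}(f)$), that the smallest filtration piece $V_N^{(1)}(f)$ is already absorbed into $V_N^{\text{prod}(1)}(f) \subset V_N^{\text{prod}}(f)$, so that $X_1 \subset Y$ and the base step is trivial; in particular one may start the inductive attachment at $l = 1$. Once this is settled, the result follows at once from Corollary \ref{CWconjecture} and the product construction described above.
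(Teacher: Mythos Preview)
Your proposal is correct and follows essentially the same route as the paper: reduce to showing that $(V_N(f),V_N^{\text{prod}}(f))$ is filtered by relative $U_N(F)$-CW-complexes via Corollary~\ref{CWconjecture} applied to a suitable $\mathcal{M}_l$, use the base case $V_N^{(1)}(f)=V_N^{\text{prod}(1)}(f)$, and take the product with the finite relative CW-pair $(C(\Delta^M)\times\Delta^p,\partial(C(\Delta^M)\times\Delta^p))$. One small imprecision: the isometries $\varphi_k$ cannot be chosen \emph{arbitrarily}, since the data $\mathcal{M}_l$ must satisfy the structural constraints $\varphi_k(F_k)=F_{k+1}$ and $\varphi_k(E_k^{(i)})=E_{k+1}^{(i)}$ required before Corollary~\ref{clean}; the paper accordingly picks $\varphi_k$ sending $\bigoplus_i\mathcal{H}^{(l)}_{f(k)+i}$ to $\bigoplus_i\mathcal{H}^{(l)}_{f(k+1)+i}$, and you should do likewise (your point that the $V_N^{\text{deg}}$ part is irrelevant remains valid).
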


\begin{proof}
First of all, $(C(\Delta^M) \times \Delta^p,\partial(C(\Delta^M) \times \Delta^p))$ is a relative CW-complex.
For every positive integer $l$, set
$V_N^{\text{prod} (l)}(f):=V_N^{\text{prod}}(f)\cap V_N^{(l)}(f)$, and
$$\mathcal{N}_l:=\left(\left(\underset{i=0}{\overset{N-1}{\oplus}}\mathcal{H}_{f(k)+i}^{(l+1)},
\underset{i=0}{\overset{N-1}{\oplus}}\mathcal{H}_{f(k)+i}^{(l)},(\mathcal{H}_{f(k)+i-1}^{(l)})_{1 \leq i \leq N})\right)
_{0 \leq k \leq M},(f_k)_{0 \leq k \leq M-1}\right),$$
where, for every $k \in [M-1]$, we have chosen a linear isometry $f_k$ from $\underset{i=0}{\overset{N-1}{\oplus}}\mathcal{H}_{f(k)+i}^{(l+1)}$  to
$\underset{i=0}{\overset{N-1}{\oplus}}\mathcal{H}_{f(k+1)+i}^{(l+1)}$ which maps
$\underset{i=0}{\overset{N-1}{\oplus}}\mathcal{H}_{f(k)+i}^{(l)}$  to
$\underset{i=0}{\overset{N-1}{\oplus}}\mathcal{H}_{f(k+1)+i.}^{(l)}$
We then deduce from conjecture \ref{CWconjecture}, applied to
$\mathcal{N}_l$, that the pair
$(V_N^{(l+1)}(f), V_N^{(l)}(f) \cup V_N^{\text{prod}(l+1)}(f))$ is a relative $U_N(F)$-CW-complex for every positive integer $l$.
Since $V_N^{(1)}(f)=V_N^{\text{prod}(1)}(f)$, we deduce that the pair $(V_N(f), V_N^{\text{prod}}(f))$
is filtered by a sequence of $U_N(F)$-CW-complexes: this proves the claimed result.
\end{proof}

Finally, $V_N(\mathcal{H}_{N,[M],f})$ is contractible, and $U_N(F)$ acts freely on
$V_N(f) \times C(\Delta^M) \times \Delta^p$. It follows that we may extend $\psi_{N,M}$ to recover a $U_N(F)$-map
$$\psi_{N,M} : V_N(f) \times C(\Delta^M) \times \Delta^p \longrightarrow V_N(\mathcal{H}_{N,[M],f}).$$
This finishes the construction in the case $f$ is non-degenerate (up to a choice of extension, of course),
and we have thus completed our proof of Proposition \ref{mighty}.

\subsection{The map $s\vEc_{G,L^2}^{F,\infty *}[1] \longrightarrow \sub(L^2(G,\mathcal{H}))$}\label{10.4.6}

For every $n \in \mathbb{N}$, set
$$\varphi_n : \underset{k \in \mathbb{N}}{\coprod}E_n\bigl((F^{(\infty)})^{\{k,\dots,k+n-1\}}\bigr) \longrightarrow
\underset{k \in \mathbb{N}}{\coprod}G_n\bigl((F^{(\infty)})^{\{k,\dots,k+n-1\}}\bigr),$$
so that $\text{Fib}^{F^{(\infty)} *}(\mathbf{1})=\underset{n\in \mathbb{N}}{\coprod} \varphi_n$. \\
For every object $x$ in $\varphi_n \sframe$, we let $k_x$ denote the only integer such that
$x \in V_n((F^{(\infty)})^{\{k_x,\dots,k_x+n-1\}})$.

\vskip 2mm
Let $n \in \mathbb{N}^*$.
For every object $\mathbf{F}$ of $\Func_{\uparrow L^2}(\mathcal{E}G,\varphi_n \sframe)$, we define
$$\alpha(\mathbf{F}) : \begin{cases}
G & \longrightarrow \mathbb{R}_*^+ \\
g & \longmapsto \|\mathbf{F}(g)\|
\end{cases} \quad ; \quad
\beta(\mathbf{F}) : \begin{cases}
G & \longrightarrow V_n(\mathcal{H}) \\
g & \longmapsto \frac{1}{\|\mathbf{F}(g)\|}\mathbf{F}(g)
\end{cases}
\quad \text{and} \quad
k(\mathbf{F}) : \begin{cases}
G & \longrightarrow \mathbb{N} \\
g & \longmapsto k_{\beta(\mathbf{F})[g]}.
\end{cases}
$$
Clearly, those maps are continuous and yield continuous maps
$$\alpha : \Ob(\Func_{\uparrow L^2}(\mathcal{E}G,\varphi_n \sframe)) \rightarrow C_*(G) \cap L^2_*(G),$$
$$\beta : \Ob(\Func_{\uparrow L^2}(\mathcal{E}G,\varphi_n \sframe)) \rightarrow \mathcal{H}^n \quad \text{and} \quad
k :  \Ob(\Func_{\uparrow L^2}(\mathcal{E}G,\varphi_n \sframe)) \rightarrow \mathbb{N}^G.$$
Let $m \in \mathbb{N}$, and $\mathbf{F}:F_0 \rightarrow \dots \rightarrow F_m$ be an element of
$\mathcal{N}(\Func_{\uparrow L^2}(\mathcal{E}G,\varphi_n \sframe))_m$.
The definition of $\Func_{\uparrow L^2}(\mathcal{E}G,\varphi_n \sframe)$ shows that the list $k(\mathbf{F})[g]:=(k(F_i)[g])_{0 \leq i \leq m}$
is a non-decreasing map from $[m]$ to $\mathbb{N}$ for every $g \in G$.
We then define
$$\beta(\mathbf{F}) :
\begin{cases}
G & \longrightarrow V_n(m) \\
g & \longmapsto (\beta(F_i)[g])_{0\leq i \leq m},
\end{cases}
$$
where, for every $g \in G$, $(\beta(F_i)[g])_{0\leq i \leq m}$ is considered as an element of
$V_n(k(\mathbf{F})[g])$. Notice that $\beta(\mathbf{F})$ is continuous. This yields
a continuous map
$$\beta : \mathcal{N}(\Func_{\uparrow L^2}(\mathcal{E}G,\varphi_n \sframe))_m \longrightarrow V_n(m).$$
Let now $K$ be a compact space, and assume that we have a family
$\psi=(\psi_n)_{n\in \mathbb{N}}$ such that
$\psi_n : (V_n(m))_{m \in \mathbb{N}} \longrightarrow (\Hom(\Delta^n \times K,V_n(\mathcal{H})))_{m \in \mathbb{N}}$
is a morphism of simplicial $U_n(F)$-spaces for every $n\in \mathbb{N}$, which satisfies conditions (i), (ii) and (iii).

Let $n$ be a positive integer. Let $\mathbf{F}:F_0 \rightarrow \dots \rightarrow F_m$ be an element
of $\mathcal{N}(\Func_{\uparrow L^2}(\mathcal{E}G,\varphi_n \sframe))_m$, $t=(t_0,\dots,t_m) \in \Delta^m$ and $z \in K$.
Set
$$\chi_{n,m}(\mathbf{F},t,z):
\begin{cases}
G & \longrightarrow \mathcal{H}^n \\
g & \longmapsto \left(\underset{0 \leq i \leq m}{\sum}t_i.\alpha(F_i)[g]\right).
\psi_{n,m}(\beta(\mathbf{F})[g],t,z).
\end{cases}$$

For every $g \in G$, $\chi_{n,m}(\mathbf{F},t,z)[g]$ is the product of a positive real number with an orthonormal $n$-tuple.
Also $\underset{0 \leq i \leq m}{\sum}t_i.\alpha(F_i) \in L^2(G)$ since $L^2(G)$ is convex. It follows
that the factors $\pi_i\circ \chi_{n,m}(\mathbf{F},t,z) :G \rightarrow \mathcal{H}$ of $\chi_{n,m}(\mathbf{F},t,z)$
(where $\pi_i : \mathcal{H}^n \rightarrow \mathcal{H}$ denotes the projection on the $i$-th factor)
form a (linearly independent) orthogonal $n$-tuple of elements of $L^2(G,\mathcal{H})$, and we conclude that
$\chi_{n,m}(\mathbf{F},t,z)$ defines an element of $B_n(L^2(G,\mathcal{H}))$.

Using the fact that $\psi_{n,m}$ is continuous and the respective definitions of the various left $G$-actions considered here,
we find that we have just defined a $G$-map:
$$\chi_{n,m} : \mathcal{N}(\Func_{\uparrow L^2}(\mathcal{E}G,\varphi_n \sframe))_m \times \Delta^m \times K \longrightarrow
B_n(L^2(G,\mathcal{H})).$$
Note that $\chi_{n,m}$ is also a $\Sim_n(F)$-map. Indeed, let
$\mathbf{F}:F_0 \rightarrow \dots \rightarrow F_m$ be an element of $\mathcal{N}(\Func_{\uparrow L^2}(\mathcal{E}G,\varphi_n \sframe))_m$,
$t=(t_0,\dots,t_m) \in \Delta^m$ and $z \in K$. Let also $(\lambda,\mathbf{M}) \in \mathbb{R}_+^* \times U_n(F)$. It then follows from the
$U_n(F)$-equivariance of $\psi_{n,m}$ that
\begin{align*}
\chi_{n,m}(\mathbf{F}.(\lambda\mathbf{M}),t,z)[g]
& =\left(\underset{0 \leq i \leq m}{\sum}t_i.\alpha(F_i.(\lambda\mathbf{M}))[g]\right).\psi_{n,m}(\beta(\mathbf{F}.(\lambda\mathbf{M})),t,z) \\
& = \left(\underset{0 \leq i \leq m}{\sum}t_i\lambda \,\alpha(F_i)[g]\right).
\psi_{n,m}(\beta(\mathbf{F}).\mathbf{M},t,z) \\
 & = \chi_{n,m}(\mathbf{F},t,z)[g].(\lambda\mathbf{M}).
\end{align*}
We conclude that $\chi_{n,m}$ is a $(G \times \Sim_n(F))$-map.

\vskip 2mm
\noindent
We now check that the maps $\chi_{n,0},\dots,\chi_{n,m},\dots$ are compatible with the simplicial structures.
Let $\tau : [m'] \rightarrow [m]$ be a morphism in $\Delta$. Let
$\mathbf{F}:F_0 \rightarrow \dots \rightarrow F_m$ be an element of $\mathcal{N}(\Func_{\uparrow L^2}(\mathcal{E}G,\varphi_n \sframe))_m$.
Let $t=(t_0,\dots,t_m) \in \Delta^m$ and $z \in K$. Clearly
$\beta(\tau_*(\mathbf{F}))=\tau_*(\beta(\mathbf{F}))$, so we deduce from the compatibility of
$\psi_n$ with the simplicial structure that, for any $g \in G$,
\begin{align*}
\chi_{n,m}(\mathbf{F},\tau^*(t),z)[g]
& = \left(\underset{0 \leq i \leq m}{\sum}\left(\underset{j \in \tau^{-1}(\{i\})}{\sum}t_j\right).\alpha(F_i)[g]\right).
\psi_{n,m}(\beta(\mathbf{F})[g],\tau^*(t),z) \\
& = \left(\underset{0 \leq j \leq m'}{\sum}t_j.\alpha(F_{\tau(j)})[g]\right).\psi_{n,m'}(\tau_*(\beta(\mathbf{F})[g]),t,z) \\
 & = \chi_{n,m}(\tau_*(\mathbf{F}),t,z)[g].
\end{align*}
We deduce that the family $(\chi_{n,m})_{m\in \mathbb{N}}$ yields a $(G\times \Sim_n(F))$-map:
$$\chi_n : s\widetilde{\vEc}_{G,L^2}^{n,F,\infty *} \times K \longrightarrow B_n(L^2(G,\mathcal{H})).$$
We define $\chi_0$ as the trivial map.

\vskip 2mm
Since the canonical map $\underset{n \in \mathbb{N}}{\coprod} s\widetilde{\vEc}_{G,L^2}^{n,F,\infty *} \longrightarrow
s\vEc_{G,L^2}^{F,\infty *}[1]$ is an identification map and $K$ is compact,
we may finally define $\chi$ as the unique $G$-map which renders commutative the diagram
$$\begin{CD}
\left(\underset{n \in \mathbb{N}}{\coprod} s\widetilde{\vEc}_{G,L^2}^{n,F,\infty *}\right) \times K
@>{\underset{n\in \mathbb{N}}{\coprod}\chi_n}>>
\underset{n \in \mathbb{N}}{\coprod} B_n(L^2(G,\mathcal{H})) \\
@VVV @VVV \\
 s\vEc_{G,L^2}^{F,\infty *}[1] \times K @>{\chi}>> \underset{n \in \mathbb{N}}{\bigcup}\sub_n(L^2(G,\mathcal{H})).
\end{CD}$$

\subsection{The morphism $s\vEc_{G,L^2}^{F,\infty *} \longrightarrow \Fred(G,\mathcal{H}^\infty)$}\label{10.4.7}

We define
$$\alpha_G^\psi(0) : s\vEc_{G,L^2}^{F,\infty *}[0] \times K \longrightarrow \Fred(G,\mathcal{H}^\infty)[0]$$
as the trivial map. \\
For every positive integer $n$, we define $\alpha_G^\psi(n)$ as the composite $G$-map
$$s\vEc_{G,L^2}^{F,\infty *}[n] \times K
\longrightarrow \left(s\vEc_{G,L^2}^{F,\infty *}[1]\times K\right)^n
\overset{\chi^n}{\longrightarrow}
\left(\underset{k \in \mathbb{N}}{\bigcup}\sub_k(L^2(G,\mathcal{H}))\right)^n \overset{\Shift^n}{\longrightarrow}
\Fred(G,\mathcal{H}^\infty)[n],$$
where the first map is the product of the simplicial maps $(\alpha_i^n)_* : s\vEc_{G,L^2}^{F,\infty *}[n] \rightarrow
s\vEc_{G,L^2}^{F,\infty *}[1]$ for $i \in \{1,\dots,n\}$ (where
$\alpha_i^n : [1] \rightarrow [n]$ maps $0$ to $i-1$ and $1$ to $i$).
The compact space $K$ is considered here as the simplicial space with $K$ as the space at every level
and all morphisms equal to $\id_K$.

\begin{prop}
$\alpha_G^\psi : s\vEc_{G,L^2}^{F,\infty *}\times K \longrightarrow
\Fred(G,\mathcal{H}^\infty)$ is a morphism of hemi-simplicial $G$-spaces.
\end{prop}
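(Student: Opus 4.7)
Since $\alpha_G^\psi$ has already been constructed level-wise as a $G$-map, and the case $n=0$ is trivial, it remains only to verify the face-map relations $d_j \circ \alpha_G^\psi(n) = \alpha_G^\psi(n-1) \circ d_j$ for every $n\geq 1$ and every $j \in \{0,\dots,n\}$. By construction, $\alpha_G^\psi(n)$ is the composition of three layers: the product of the projections $(\alpha_i^n)^* : s\vEc_{G,L^2}^{F,\infty *}[n] \to s\vEc_{G,L^2}^{F,\infty *}[1]$ for $i=1,\dots,n$; the map $\chi$ applied coordinatewise; and the $\Shift$ construction.

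For the extremal faces $j=0$ and $j=n$, the corresponding face map on $\Fred(G,\mathcal{H}^\infty)$ simply drops the first or last entry of an $n$-tuple. The identity reduces to the cosimplicial relations $d^0 \circ \alpha_i^{n-1} = \alpha_{i+1}^n$ and $d^n \circ \alpha_i^{n-1} = \alpha_i^n$ in $\Delta$, which I will check by direct inspection of the definition of $\alpha_i^n$.

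The essential case is $0<j<n$. Here the $j$-th coordinate of $d_j(\alpha_G^\psi(n)(x))$ is the composition $\Shift(\chi((\alpha_j^n)^*x)) \circ \Shift(\chi((\alpha_{j+1}^n)^*x))$, whereas the $j$-th coordinate of $\alpha_G^\psi(n-1)(d_j(x))$ is $\Shift(\chi((d^j \circ \alpha_j^{n-1})^* x))$, corresponding to the composed edge $F_{j-1} \to F_{j+1}$. By Proposition \ref{propertiesofshift}(ii), it will be enough to prove the two identities
\[
\chi((\alpha_j^n)^* x) \,\bot\, \chi((\alpha_{j+1}^n)^* x), \qquad
\chi((\alpha_j^n)^* x) \oplus \chi((\alpha_{j+1}^n)^* x) = \chi((d^j \circ \alpha_j^{n-1})^* x)
\]
in $\sub(L^2(G,\mathcal{H}))$. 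This is the main obstacle, and the heart of the matter.

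To handle these, I will unwind the definition of $\chi$ back to the defining maps $\chi_{p,m}$. Writing a representative of $x$ as an $m$-simplex $\mathbf{F}: F_0 \to \cdots \to F_m$ in the nerve of $\Func_{\uparrow L^2}(\mathcal{E}G, \varphi_p \sframe)$ (for an appropriate $p$), together with $(t,z) \in \Delta^m \times K$, the subspaces $\chi((\alpha_j^n)^* x)$ and $\chi((\alpha_{j+1}^n)^* x)$ are respectively spanned by the coordinates of the functions $g \mapsto \chi_{p,m}((\alpha_j^n)^*\mathbf{F},t,z)[g]$ and $g \mapsto \chi_{q,m}((\alpha_{j+1}^n)^*\mathbf{F},t,z)[g]$. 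The orthogonality now follows from condition (ii) on $\psi$: indeed, the $\Gamma^*$-structure of $\text{Fib}^{F^{(\infty)}*}([n])$ guarantees that under the edge-projection to $(\alpha_j^n)^*$ and $(\alpha_{j+1}^n)^*$, the associated non-decreasing label map for the two edges fits into the product situation $V_p(f) \times V_q(f+p)$ of Section~\ref{10.4.4}, so that $\psi_{p,m}$ and $\psi_{q,m}$ produce pointwise orthogonal outputs. Integrating over $G$ gives orthogonality in $L^2(G,\mathcal{H})$. The direct-sum identity follows similarly from condition (iii) on $\psi$, which asserts precisely that $\psi_{p+q,m}$ factors through $\psi_{p,m} \times \psi_{q,m}$ under the embedding $V_p(f) \times V_q(f+p) \hookrightarrow V_{p+q}(f)$: the function $g \mapsto \chi_{p+q,m}((d^j \circ \alpha_j^{n-1})^*\mathbf{F},t,z)[g]$ is the concatenation of the two previous ones, and since the $n$-tuple already belongs to $B_{p+q}(L^2(G,\mathcal{H}))$ (linearly independent), the spans agree. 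Combined with Proposition \ref{propertiesofshift}(ii), this yields the sought compatibility with $d_j$ and completes the proof.
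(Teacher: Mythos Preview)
Your proof is correct and uses the same core idea as the paper: the key step is the orthogonality and direct-sum identity for the subspaces produced by $\chi$, derived from conditions (ii) and (iii) on $\psi$, followed by Proposition \ref{propertiesofshift}(ii). The only difference is organizational: the paper observes (cf.\ Section \ref{10.4}, diagram \eqref{comdiagcond}) that, because $\alpha_G^\psi(n)$ is defined via the projections $(\alpha_i^n)_*$, compatibility with \emph{all} face maps follows formally once the single square for $d_1^2$ commutes, so it treats only that case and avoids your extremal/interior case split. One small imprecision: a representative of $x \in s\vEc_{G,L^2}^{F,\infty *}[n]$ lives in the nerve of $\Func_{\uparrow L^2}(\mathcal{E}G,\text{Fib}^{F^{(\infty)}*}([n])\smod)$, not directly in a $\varphi_p\sframe$ category; you should project via $(\alpha_j^n)_*$ and $(\alpha_{j+1}^n)_*$ to the $[1]$-level and then lift to $\sframe$, as the paper does.
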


\noindent \textbf{Remark :} In fact, $\alpha_G^\psi$ is even a morphism of simplicial $G$-spaces, but this is not relevant here since
we are only interested in thick geometric realizations.

\begin{proof}
Obviously, $\alpha_G^\psi(n)$ is a $G$-map for every $n\in \mathbb{N}$.
In order to prove that $\alpha_G^\psi$ is compatible with degeneracy maps, it suffices to
check that the square
$$\begin{CD}
s\vEc_{G,L^2}^{F,\infty *}[2] \times K @>{\alpha_G^\psi(2)}>> \Fred(L^2(G,\mathcal{H})^\infty)^2 \\
@VV{d_1^2 \times \id_K}V @VVV \\
s\vEc_{G,L^2}^{F,\infty *}[1]\times K  @>{\alpha_G^\psi(1)}>> \Fred(L^2(G,\mathcal{H})^\infty)
\end{CD}
$$ is commutative.

Let $x \in s\vEc_{G,L^2}^{F,\infty *}[2]$ and $a \in K$.
Let $(y,t) \in \mathcal{N}(\Func_{\uparrow L^2}(\mathcal{E}G,\text{Fib}^{F^{(\infty)}*}(\mathbf{2}) \smod))_m \times
\Delta^m$ such that $x=[y,t]$. We write $t=(t_0,\dots,t_m)\in \Delta^m$.
 We set $y_i:= (\alpha_i^2)_*(y)$ for every
$i \in \{1,2\}$. Set $(n,n')\in \mathbb{N}^2$ such that
$y_1 \in \mathcal{N}(\Func_{\uparrow L^2}(\mathcal{E}G,\varphi_n \smod))_m$
and
$y_2 \in \mathcal{N}(\Func_{\uparrow L^2}(\mathcal{E}G,\varphi_{n'} \smod))_m$.

We choose
$z_1 \in \mathcal{N}(\Func_{\uparrow L^2}(\mathcal{E}G,\varphi_n \sframe))_m$
and $z_2 \in \mathcal{N}(\Func_{\uparrow L^2}(\mathcal{E}G,\varphi_{n'} \sframe))_m$
in the respective fibers of $y_1$ and $y_2$, and we write $z_1=F_0 \rightarrow \dots \rightarrow F_m$ and
$z_2=F'_0 \rightarrow \dots \rightarrow F'_m$.
Set $z:=(F_0,F'_0) \rightarrow \dots \rightarrow (F_m,F'_m)$, and define
$y' \in \mathcal{N}(\Func_{\uparrow L^2}(\mathcal{E}G,\varphi_{n+n'} \smod))_m$
as the image of $z$ by the canonical map. Then $[y',t]=d_1^2(x)$.

\vskip 2mm
The definition of $\Func_{\uparrow L^2}(\mathcal{E}G,\text{Fib}^{F^{(\infty)}*}(\mathbf{2}) \smod)$,
shows that $\|F_i(g)\|=\|F'_i(g)\|$ and $k(F'_i)[g]=k(F_i)[g]+n$ for any $i \in [m]$ and any $g \in G$.
Also, $\beta(z)=(\beta(z_1),\beta(z_2))$.
Hence $\alpha(F_i)=\alpha(F'_i)=\alpha(F_i,F'_i)$ for every $i\in [m]$, and $k(z_2)=k(z_1)+n=k(z)+n$.
It follows that $(\beta(z_1)[g],\beta(z_2)[g]) \in V_n(k(z_1)[g]) \times V_{n'}(k(z_1)[g]+n)$
for every $g \in  G$.

\vskip 2mm
Since condition (iii) holds for $\psi$, we deduce that, for any $g \in G$,
\begin{align*}
\chi_{n+n',m}(z,t,a)[g] & = \left(\underset{0 \leq i \leq n}{\sum} t_i.\alpha((F_i,F'_i))[g]\right).
\psi_{n+n',m}(\beta(z)[g],t,a) \\
& = \left(\underset{0 \leq i \leq n}{\sum} t_i.\alpha((F_i,F'_i))[g]\right).
\left(\psi_{n,m}(\beta(z_1)[g],t,a),\psi_{n',m}(\beta(z_2)[g],t,a)\right) \\
& = \left(\left(\underset{0 \leq i \leq n}{\sum} t_i.\alpha(F_i)[g]\right).
\psi_{n,m}(\beta(z_1)[g],t,a), \left(\underset{0 \leq i \leq n}{\sum} t_i.\alpha(F'_i)[g]\right).\psi_{n',m}(\beta(z_2)[g],t,a)\right) \\
& = \bigl(\chi_{n,m}(z_1,t,a)[g],\chi_{n',m}(z_2,t,a)[g]\bigr).
\end{align*}
It follows that the subspaces of $L^2(G,\mathcal{H})$ respectively generated by
$\chi_{n,m}(z_1,t,a)$ and $\chi_{n',m}(z_2,t,a)$ are orthogonal
and that their direct sum is the subspace generated by $\chi_{n+n',m}(z,t,a)$. \\
We deduce that $\chi([y_2,t],a) \overset{\bot}{\oplus} \chi([y_1,t],a)=\chi([y',t],a)$, i.e.
$\chi((\alpha_{2,2})_*(x),a)  \overset{\bot}{\oplus} \chi((\alpha_{1,2})_*(x),a)
=\chi(d_1^2(x),a)$. We finally derive from Proposition \ref{propertiesofshift} that
$\Shift(\chi(d_1^2(x),a))=\Shift(\chi((\alpha_{2,2})_*(x),a)) \circ \Shift(\chi((\alpha_{1,2})_*(x),a))$.
 \end{proof}

Since $\alpha_G^\psi : s\vEc_{G,L^2}^{F,\infty *}\times K \longrightarrow
\Fred(G,\mathcal{H}^\infty)$ is a morphism of hemi-simplicial $G$-spaces, it
yields a $G$-map: $Bs\vEc_{G,L^2}^{F,\infty *}\times K \longrightarrow B\Fred(G,\mathcal{H}^\infty)$, and, furthermore,
a $G$-map
$$\Omega B\alpha_G^\psi : \bigl(\Omega Bs\vEc_{G,L^2}^{F,\infty *}\bigr)\times K \longrightarrow \Omega B\Fred(G,\mathcal{H}^\infty).$$
Apply now the result of Proposition \ref{mighty}. For $p=0$, we recover a family
$(\psi_n)_{n\in \mathbb{N}}$ such that $\psi_n : (V_n(m))_{m \in \mathbb{N}} \longrightarrow
(\Hom(\Delta^m,V_n(\mathcal{H})))_{m \in \mathbb{N}}$ is a morphism of simplicial $U_n(F)$-spaces
for every $n \in \mathbb{N}$, and which satisfies conditions (i) to (vi). This family yields a $G$-map
$$\Omega B\alpha_G^\psi : \Omega Bs\vEc_{G,L^2}^{F,\infty *} \longrightarrow \Omega B\Fred(G,\mathcal{H}^\infty).$$
If $\psi'$ is another such family, it yields another $G$-map
$$\Omega B\alpha_G^{\psi'} : \Omega Bs\vEc_{G,L^2}^{F,\infty *} \longrightarrow \Omega B\Fred(G,\mathcal{H}^\infty).$$
Those two maps are $G$-homotopic. Indeed, by Proposition \ref{mighty} applied in the case
$p=1$ to $\psi \coprod \psi'$, we recover a family
$(\Psi_n)_{n\in \mathbb{N}}$ such that $\Psi_n : (V_n(m))_{m \in \mathbb{N}} \longrightarrow
(\Hom(\Delta^m \times [0,1],V_n(\mathcal{H})))_{m \in \mathbb{N}}$ is a morphism of simplicial $U_n(F)$-spaces
for every $n \in \mathbb{N}$, which satisfies conditions
(i) to (vi), and such that $\Psi_{|\{0\}}=\psi$ and $\Psi_{|\{1\}}=\psi'$.
This family then yields an equivariant homotopy
$$\Omega B\alpha_G^\Psi : \bigl(\Omega Bs\vEc_{G,L^2}^{F,\infty *}\bigr) \times I \longrightarrow \Omega B\Fred(G,\mathcal{H}^\infty)$$
from $\Omega B\alpha_G^\psi$ to $\Omega B\alpha_G^{\psi'}$.

\section{The natural transformation $KF_G(-) \rightarrow KF_G^{\text{Ph}}(-)$}\label{10.5}

\subsection{The definition of $\eta$}

We denote by $KF_G^{\text{Ph}}(-)$ the equivariant K-theory of Phillips (as defined and studied in \cite{Phillips})
on the category of finite proper $G$-CW-complexes.

Since $\mathcal{H}^\infty$ is a separable Hilbert space, we may consider, for every
finite proper $G$-CW-complex $X$, the map: $$\begin{cases}
[X,\underline{\Fred}(L^2(G,\mathcal{H}^\infty))]_G & \longrightarrow KF_G^{\text{Ph}}(X)
\\
\left[f:X \rightarrow \underline{\Fred}(L^2(G,\mathcal{H}^\infty))\right]
& \longmapsto \left[(X \times L^2(G,\mathcal{H}^\infty),
X \times L^2(G,\mathcal{H}^\infty),f)\right].
\end{cases}$$
This yields a natural transformation of group-valued functors: $$[-,\underline{\Fred}(L^2(G,\mathcal{H}^\infty))]_G \longrightarrow KF_G^{\text{Ph}}(-).$$
Since we know from Proposition \ref{fredweak} that the canonical map
$\underline{\Fred}(L^2(G,\mathcal{H}^\infty)) \longrightarrow \Omega B\Fred(G,\mathcal{H}^\infty)$
is a $G$-weak equivalence, it yields a natural equivalence
$$[-,\underline{\Fred}(L^2(G,\mathcal{H}^\infty))]_G
\overset{\cong}{\longrightarrow} [-,\Omega B\Fred(G,\mathcal{H}^\infty)]_G$$
on the category of proper $G$-CW-complexes. \\
The $G$-map $\alpha_G^\psi : \Omega Bs\vEc_{G,L^2}^{F,\infty *} \longrightarrow \Omega B\Fred(G,\mathcal{H}^\infty)$
constructed earlier then induces a uniquely defined natural transformation
$$[-,\Omega Bs\vEc_{G,L^2}^{F,\infty *}]_G
\overset{\alpha_G^\psi \circ -}{\longrightarrow} [-,\Omega B\Fred(G,\mathcal{H}^\infty)]_G$$
on the category of $G$-spaces.

Moreover, the composite
$\Omega Bs\vEc_{G,L^2}^{F,\infty *} \longrightarrow
\Omega Bs\underline{\vEc}_{G,L^2}^{F,\infty} \longrightarrow KF_G^{[\infty]}$
of the previously discussed $G$-weak equivalences is a $G$-weak equivalence and yields
a natural equivalence
$$[-,\Omega Bs\vEc_{G,L^2}^{F,\infty *}]_G \overset{\cong}{\longrightarrow}
[-,KF_G^{[\infty]}]_G=KF_G(-)$$
on the category of proper $G$-CW-complexes.

\vskip 2mm
We may now define a natural transformation
$\eta : KF_G(-) \longrightarrow KF_G^{\text{Ph}}(-)$  on the category of finite
proper $G$-CW-complexes by composing from left to right: $$KF_G(-) \overset{\cong}{\longleftarrow}
[-,\Omega Bs\vEc_{G,L^2}^{F,\infty *}]_G
\overset{\alpha_G^\psi \circ -}{\longrightarrow}
[-,\Omega B\Fred(G,\mathcal{H}^\infty)]_G
\overset{\cong}{\longleftarrow}
[-,\underline{\Fred}(L^2(G,\mathcal{H}^\infty))]_G \longrightarrow KF_G^{\text{Ph}}(-).$$
Obviously, $\eta_X$ is a homomorphism of abelian groups for every finite proper $G$-CW-complex $X$.

\vskip 2mm
The definition of Phillips' K-theory in negative degrees \cite{Phillips} pp 80-81
makes it clear that $\eta$ may be extended to negative degrees as a natural transformation
$KF_G^*(-) \longrightarrow KF_G^{*\text{Ph}}(-)$ which preserves the long exact sequence of a
finite proper $G$-CW-pair.

\subsection{Properties of $\eta$}
Recall from \cite{Phillips} example 3.4 page 40 that there is a natural transformation
$\mathbb{K}F_G(-) \longrightarrow  KF_G^{\text{Ph}}(-)$ on the category of locally
compact $G$-spaces, defined by mapping every finite dimensional $G$-Hilbert bundle $E \rightarrow X$
to the class of the cocycle $(E,0,0)$ in $KF_G^{Ph}(X)$.

\begin{prop}\label{transformationcommute}
The diagram
$$\xymatrix{
KF_G(-) \ar[rr]^{\eta} & &  KF_G^{\text{Ph}}(-) \\
& \mathbb{K}F_G(-) \ar[ul]^{\gamma} \ar[ur]
}$$ is commutative on the category of finite proper $G$-CW-complexes.
\end{prop}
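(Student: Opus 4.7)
The plan is to verify the diagram commutes pointwise on each finite proper $G$-CW-complex $X$. Since $\mathbb{K}F_G(X)$ is generated as an abelian group by the classes of $G$-simi-Hilbert bundles and both legs of the diagram are group homomorphisms, it is enough to treat a single class $[E]$ where $p: E \to X$ is such a bundle of dimension $n$. By the classifying property preceding Section \ref{6}, one can select a $G$-map $f : X \to s\vEc_G^{F,\infty}$ with $f^* Es\vEc_G^{F,\infty} \cong E$. Using Corollary \ref{L2=indiff} and the corollary to Proposition \ref{increasindif}, $f$ can be lifted up to $G$-homotopy to a $G$-map $\tilde f : X \to s\vEc_{G,L^2}^{F,\infty*}[1]$ whose pullback of the universal bundle $Es\vEc_{G,L^2}^{F,\infty*} \to s\vEc_{G,L^2}^{F,\infty*}[1]$ (cf.\ the last proposition of Section \ref{10.2.3}) is again isomorphic to $E$; and $\gamma_X([E])$ is then represented by $\tilde f$ viewed through the chain of $G$-weak equivalences $\Omega Bs\vEc_{G,L^2}^{F,\infty*} \simeq sKF_G^{[\infty]}$.

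The central step is then to compute $\alpha_G^\psi \circ \tilde f$ explicitly. Over any point of $s\vEc_{G,L^2}^{F,\infty*}[1]$ lifted to the $\Sim_n(F)$-frame bundle $s\widetilde{\vEc}_{G,L^2}^{n,F,\infty*}$, i.e.\ a functor $\mathbf{F} : \mathcal{E}G \to \varphi_n \sframe$, the map $\chi_{n,0}$ takes the value $\chi_{n,0}(\mathbf{F},0)[g] = \mathbf{F}(g) \in \mathcal{H}^n$ (by condition (i) in the definition of $\psi$), which is precisely the image of the frame $\mathbf{F}(1_G)$ under the canonical linear map $v \mapsto (g \mapsto \mathbf{F}(1_G,g)[v])$; this map is injective (evaluation at $g=1_G$ returns $v$) and lies in $L^2(G,\mathcal{H})$ by the $L^2$-condition on $\mathbf{F}$. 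On the higher simplices of the nerve, the interpolation carried out by $\psi_{n,m}$ does not alter the dimension, as condition (iv) confines the output to a finite-dimensional subspace of $\mathcal{H}$. Consequently, $x \mapsto V_x := \chi(\tilde f(x),0) \subset L^2(G,\mathcal{H})$ is a $G$-simi-Hilbert subbundle $V_\bullet$ of $X \times L^2(G,\mathcal{H})$, and the formula just exhibited promotes $\chi$ to a $(G \times \Sim_n(F))$-equivariant morphism from the frame bundle of $E$ to the frame bundle of $V_\bullet$, yielding a canonical $G$-simi-Hilbert bundle isomorphism $E \cong V_\bullet$ after associating with $F^n$. The corollary to Proposition \ref{propertiesofshift} then identifies $\alpha_G^\psi(\tilde f(x))$ with $\Shift(V_x)$, a surjective Fredholm operator on $L^2(G,\mathcal{H})^\infty$ with kernel $V_x \times \{0\}$ and trivial cokernel.

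To conclude, the Phillips transformation sends $\alpha_G^\psi \circ \tilde f$ to the class of the cocycle $(X \times L^2(G,\mathcal{H})^\infty,\, X \times L^2(G,\mathcal{H})^\infty,\, \Shift(V_\bullet))$ in $KF_G^{\text{Ph}}(X)$. Because this family of operators is fiberwise surjective with kernel $G$-bundle canonically isomorphic to $E$, the standard equivalence-relation reduction in Phillips' theory (example 3.5 p.41 of \cite{Phillips}) identifies it with $(E,0,0)$, which is exactly the image of $[E]$ under the direct natural transformation $\mathbb{K}F_G(-) \to KF_G^{\text{Ph}}(-)$ of example 3.4 p.40 of \cite{Phillips}. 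The principal obstacle I foresee is the careful bookkeeping needed to ensure that the isomorphism $E \cong V_\bullet$ is realized naturally along the entire chain of $G$-weak equivalences used to define $\eta$ -- rather than merely up to stable equivalence -- so that no Grothendieck correction is smuggled in; this is most cleanly handled by arguing at the level of $\Sim_n(F)$-frame bundles, where the identity $\chi_n(\mathbf{F})[g] = \mathbf{F}(g)$ makes the preservation of bundle data transparent.
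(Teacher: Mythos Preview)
Your argument is essentially the paper's own proof: reduce to a single bundle class, represent it by a map $\tilde f$ into $s\vEc_{G,L^2}^{F,\infty*}[1]$, compute the image in $\underline{\Fred}(L^2(G,\mathcal{H}^\infty))$ via $\Shift\circ\chi$, and identify the resulting Fredholm family with the class $[E\to 0]$ using that $\chi_n$ is $\Sim_n(F)$-equivariant (the paper packages this last step as the bundle morphism $\underline{\chi}$ onto $\Ker\overline{\chi}$, which is the associated-bundle reformulation of your frame-bundle argument).

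The one point the paper makes explicit and you leave implicit is the commutative square
\[
\begin{CD}
\Omega Bs\vEc_{G,L^2}^{F,\infty *} @>{\alpha_G^\psi}>> \Omega B\Fred(G,\mathcal{H}^\infty) \\
@AAA @AAA \\
s\vEc_{G,L^2}^{F,\infty *}[1] @>{\Shift\,\circ\,\chi}>> \underline{\Fred}(L^2(G,\mathcal{H}^\infty)),
\end{CD}
\]
which is what licenses computing $\eta_X(\gamma_X([E]))$ at level~$1$ rather than in the loop space. Your sentence ``the corollary to Proposition~\ref{propertiesofshift} then identifies $\alpha_G^\psi(\tilde f(x))$ with $\Shift(V_x)$'' conflates two things: that corollary only says $\Shift$ lands in $\underline{\Fred}$, while the identification you need is precisely the above square (which holds because $\alpha_G^\psi$ is by definition the realization of a hemi-simplicial morphism whose level-$1$ component is $\Shift\circ\chi$). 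Once you state this, your proof and the paper's coincide.
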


\begin{proof}

It suffices to prove that the triangle
$$\xymatrix{
KF_G(-) \ar[rr]^{\eta} & &  KF_G^{\text{Ph}}(-) \\
& \mathbb{V}\text{ect}_G^F(-) \ar[ul]^{\gamma} \ar[ur]
}$$ is commutative on the category of finite proper $G$-CW-complexes. \\
However $\gamma$ is the composite of the natural transformations:
$$\mathbb{V}\text{ect}_G^F(-) \overset{\cong}{\longrightarrow} [-,s\vEc_{G,L^2}^{F,\infty *}]_G
\longrightarrow [-,\Omega Bs\vEc_{G,L^2}^{F,\infty *}]_G
\longrightarrow [-,KF_G^{[\infty]}]_G$$
where the first morphism is the one induced by pulling back the universal bundle
$Es\vEc_{G,L^2}^{F,\infty *} \longrightarrow s\vEc_{G,L^2}^{F,\infty *}[1]$. \\
Since the square $$\begin{CD}
\Omega Bs\vEc_{G,L^2}^{F,\infty *} @>{\alpha_G^\psi}>> \Omega B\Fred(G,\mathcal{H}^\infty) \\
@AAA @AAA \\
s\vEc_{G,L^2}^{F,\infty *}[1] @>{\chi}>> \underline{\Fred}(L^2(G,\mathcal{H}^\infty))
\end{CD}$$
is commutative, the composite natural transformation
$\mathbb{V}\text{ect}_G^F(-) \longrightarrow KF_G(-) \overset{\eta}{\longrightarrow} KF_G^{\text{Ph}}(-)$
is simply the composite natural transformation
$$\mathbb{V}\text{ect}_G^F(-) \overset{\cong}{\longrightarrow} [-,s\vEc_{G,L^2}^{F,\infty *}]_G
\overset{\chi \circ -}{\longrightarrow}
[-,\underline{\Fred}(L^2(G,\mathcal{H}^\infty))]_G \longrightarrow KF_G^{\text{Ph}}(-).$$
Moreover,  $\chi$ defines a morphism of Hilbert $G$-bundles
$$\overline{\chi} : \begin{cases}
s\vEc_{G,L^2}^{F,\infty *}[1] \times L^2(G,\mathcal{H}^\infty) & \longrightarrow
s\vEc_{G,L^2}^{F,\infty *}[1] \times L^2(G,\mathcal{H}^\infty) \\
(x,y) & \longmapsto \bigl(x,\chi(x)[y]\bigr).
\end{cases}$$
and the maps
$$\begin{cases}
s\widetilde{\vEc}_{G,L^2}^{n,F,\infty*} \times \Sim_n(F) & \longrightarrow L^2(G,\mathcal{H}) \\
(x,M) & \longmapsto \tilde{\chi_n}(x).M,
\end{cases}
$$ for $n \in \mathbb{N}$, yield a morphism of $G$-simi-Hilbert bundles
$$\xymatrix{
Es\vEc_{G,L^2}^{F,\infty*} \ar[dr] \ar[rr]^{\underline{\chi}} & &
s\vEc_{G,L^2}^{F,\infty *}[1] \times L^2(G,\mathcal{H}^\infty)
\ar[dl] \\
&
s\vEc_{G,L^2}^{F,\infty *}[1]
}$$
which maps $Es\vEc_{G,L^2}^{F,\infty*}$ onto the kernel of $\overline{\chi}$.

\vskip 2mm
Let now $X$ be a finite proper $G$-CW-complex,
and $X \overset{f}{\longrightarrow} s\vEc_{G,L^2}^{F,\infty *}$
be a $G$-map. \\
Pulling back the diagram of bundle morphisms
$$\xymatrix{
Es\vEc_{G,L^2}^{F,\infty*} \ar[dr] \ar[r]^(0.35){\underline{\chi}} &
s\vEc_{G,L^2}^{F,\infty *}[1] \times L^2(G,\mathcal{H}^\infty) \ar[r]^{\overline{\chi}}
\ar[d] & s\vEc_{G,L^2}^{F,\infty *}[1] \times L^2(G,\mathcal{H}^\infty) \ar[dl] \\
& s\vEc_{G,L^2}^{F,\infty *}[1]
}$$
by $f$ yields a diagram of bundle morphisms
$$\xymatrix{
E \ar[dr] \ar[r]^(0.3){f_*(\underline{\chi})} &
X \times L^2(G,\mathcal{H}^\infty) \ar[r]^{f_*(\overline{\chi})} \ar[d] & X \times L^2(G,\mathcal{H}^\infty), \ar[dl] \\
& X
}$$
where $f_*(\underline{\chi})$ maps $E$ onto the kernel of $f_*(\overline{\chi})$.

\vskip 2mm
Thus $\eta(\gamma([f]))$ is the class of $f_*(\overline{\chi})$ in $KF_G^{\text{Ph}}(X)$, and
we need to prove that it is also the class $[E \rightarrow 0]$. \\
Since the dimension of the kernel of $f_*(\overline{\chi})$ is locally constant,
it follows that $\Ker f_*(\overline{\chi})$ is a finite-dimensional $G$-vector bundle over $X$ (see \cite{Janich}).
The above diagram thus induces a strong morphism of $G$-vector bundles:
$$\xymatrix{
E \ar[rr]^{\cong} \ar[dr] & & \Ker f_*(\overline{\chi}). \ar[dl] \\
& X
}$$
Hence $\Ker f_*(\overline{\chi})$ and $E$ are isomorphic.

\vskip 2mm
Moreover $(\Ker f_*(\overline{\chi}))^\bot$ is a sub-$G$-Hilbert bundle of
$X \times L^2(G,\mathcal{H}^\infty)$, and $f_*(\overline{\chi})$ induces an isomorphism of $G$-Hilbert bundles
$$\Ker f_*(\overline{\chi}) ^\bot \overset{\cong}{\longrightarrow} X \times L^2(G, \mathcal{H}^\infty).$$
We deduce that the class
of $\Ker f_*(\overline{\chi})^\bot \longrightarrow X \times L^2(G, \mathcal{H}^\infty)$
in $KF_G^{\text{Ph}}(X)$ is $0$. \\
Finally
$$[f_*(\overline{\chi})]=\left[\Ker f_*(\overline{\chi}) \rightarrow 0\right] +
\left[\Ker f_*(\overline{\chi})^\bot \rightarrow X \times L^2(G,
  \mathcal{H}^\infty) \right]=\left[E \rightarrow 0\right]$$
in $KF_G^{\text{Ph}}(X)$.  \end{proof}

\begin{cor}
For every finite proper $G$-CW-complex $X$
$$\eta_X : KF_G(X) \overset{\cong}{\longrightarrow} KF_G^{\text{Ph}}(X)$$
is an isomorphism of abelian groups.
\end{cor}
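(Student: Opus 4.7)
The plan is to reduce the claim to the case of coefficient spaces $(G/H)\times Y$, where $H$ is a compact subgroup of $G$ and $Y$ is a finite CW-complex with trivial $G$-action, and then run a standard cellular induction.

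First I would show that for every integer $n$, every compact subgroup $H$ of $G$, and every finite CW-complex $Y$ with trivial $G$-action, the map $\eta_{(G/H)\times Y}^{-n}$ is an isomorphism. By the commutative triangle of Proposition \ref{transformationcommute} together with its extension to negative degrees, it suffices to establish that the natural transformation $\mathbb{K}F_G^{-n}(-)\longrightarrow KF_G^{-n,\text{Ph}}(-)$ of Phillips (example 3.4 p.\ 40 of \cite{Phillips}) is an isomorphism on such spaces, since the analogous property for $\gamma_{(G/H)\times Y}^{-n}$ has been recalled from \cite{Ktheo1} in the proposition preceding Section 1.3. For the Phillips transformation, I would combine two standard ingredients: the induction isomorphism $KF_G^{-n,\text{Ph}}((G/H)\times Y)\cong KF_H^{-n,\text{Ph}}(Y)$, valid for any compact subgroup $H$ of the second-countable Lie group $G$ (and the analogous isomorphism for $\mathbb{K}F$), together with the fact that for the \emph{compact} group $H$ acting on the finite CW-complex $Y$, Phillips' theory coincides with Segal's $\mathbb{K}F_H^{-n}(Y)$ and the comparison map is the identity.

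Second, I would extend this to an arbitrary finite proper $G$-CW-complex $X$ by induction on the number of equivariant cells. Write $X$ as the result of attaching equivariantly to a subcomplex $A$ a single cell $(G/H)\times D^n$ along $(G/H)\times S^{n-1}$, with $H$ compact. Both $KF_G^*$ and $KF_G^{*\text{Ph}}$ are equivariant cohomology theories on the category of finite proper $G$-CW-pairs (the former by the results of \cite{Ktheo1}, the latter by \cite{Phillips}), and the natural transformation $\eta$ extends to negative degrees compatibly with the long exact sequences of a pair, as remarked at the end of Section 10.5.1. The cofiber $X/A$ is equivariantly homotopy equivalent to $(G/H)_+\wedge S^n$, hence by suspension its reduced $K$-theories reduce to $KF_G^{-n-*}(G/H)$ on both sides. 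Applying the two long exact sequences for $(X,A)$ together with the five-lemma, and using the inductive hypothesis for $A$ and the coefficient result for $X/A$, one concludes that $\eta_X^{-n}$ is an isomorphism for every $n\geq 0$. Starting with $X_{(-1)}=\emptyset$, this induction terminates after finitely many steps since $X$ has finitely many equivariant cells.

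The main obstacle is the coefficient step, specifically checking that Phillips' transformation $\mathbb{K}F_G^{-n}((G/H)\times Y)\longrightarrow KF_G^{-n,\text{Ph}}((G/H)\times Y)$ is bijective. This depends on two classical but delicate facts of Phillips' formalism: the change-of-groups isomorphism reducing from $G$ to the compact subgroup $H$ (which uses properness of the action and second countability of $G$), and the agreement of Phillips' theory with Segal's on the category of finite $H$-CW-complexes when $H$ is compact. Both are established in \cite{Phillips}, and once they are in hand the rest of the argument is a straightforward five-lemma induction that does not require revisiting the technical construction of $\eta$ from Section 10.
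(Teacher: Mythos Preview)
Your proposal is correct and follows essentially the same route as the paper: reduce to the coefficient case $(G/H)\times Y$ via the commutative triangle of Proposition~\ref{transformationcommute}, use that both $\gamma$ and Phillips' comparison map are isomorphisms there, and propagate to arbitrary finite proper $G$-CW-complexes by a cellular induction with the five-lemma and long exact sequences. The paper is more terse---it cites \cite{Phillips} directly for the isomorphism $\mathbb{K}F_G((G/H)\times Y)\to KF_G^{\text{Ph}}((G/H)\times Y)$ rather than unpacking it via change-of-groups, and it packages the cellular induction by reference to an argument analogous to Proposition~1.5 of \cite{Bob2}---but the logical skeleton is the same as yours.
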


\begin{proof}
Since $\eta$ is a natural transformation between good equivariant cohomology theories in
negative degrees, it suffices to establish the result in the case
$X=(G/H) \times Y$, where $H$ is a compact subgroup of $G$, and $Y$ is a finite CW-complex on which $G$ acts trivially
(by an argument that is similar to that of the proof of Proposition 1.5 in \cite{Bob2}).
In this case, we deduce from Proposition
\ref{transformationcommute} that the diagram
$$\xymatrix{
KF_G((G/H) \times Y) \ar[rr]_{\cong}^{\eta_{(G/H) \times Y}} & &   KF_G^{\text{Ph}}((G/H) \times Y) \\
& \mathbb{K}F_G((G/H) \times Y) \ar[ul]_{\gamma_{(G/H) \times Y}}^{\cong} \ar[ur]_{\cong}
}$$
is commutative. By \cite{Phillips}, the map
$\mathbb{K}F_G((G/H) \times Y) \longrightarrow  KF_G^{\text{Ph}}((G/H) \times Y)$ is an isomorphism.
By Proposition 4.4 of \cite{Ktheo1}, $\gamma_{(G/H) \times Y}$ is an isomorphism.
Hence $\eta_{(G/H) \times Y}$ is an isomorphism. \end{proof}

\vskip 2mm
\noindent \textbf{An open problem :}
there remains essentially one problem to be solved here: is $\eta$
compatible with the product maps (or with Bott-periodicity maps)? This has proved out of our reach, for two main reasons:
\begin{enumerate}
\item it is not clear at all how the multiplicative structure on $KF_G^{\text{Ph}}(-)$
may be understood in terms of the space $\underline{\Fred}(L^2(G,\mathcal{H}^\infty))$;
\item in constructing $\eta$, we have dumped the very categorical structures that
helped constructed the product maps, i.e. the $\Gamma-G$-space structure.
\end{enumerate}

\section*{Acknowledgements}

I would like to thank Bob Oliver for his constant support and the countless good advice he gave me
during my research on this topic.

\bibliographystyle{plain}

\end{document}